\documentclass[a4paper,11pt]{amsart}
\usepackage{cite, bm}
\usepackage{comment}
\usepackage{amsmath}  
\title[ground state solutions for a coupled Kirchhoff--Choquard system]{Existence of ground state solutions to Kirchhoff--Choquard system in $\mathbb{R}^3$ with nonconstant potentials}
\author[H. Matsuzawa]{Hiroshi Matsuzawa$^{\dag*}$}
\thanks{2020 Mathematics Subject Classification. 35J50, 35J20, 35J47, 35Q55, 45K05}
\thanks{\textit{Key words and phrases.} Nonlocal elliptic equations; Variational methods; Splitting lemma; Hardy--Littlewood--Sobolev inequality}
\thanks{$^\dag$  Faculty of Science, Kanagawa University, 3-27-1 Rokkakubashi, Kanagawa-ku, Yokohama-city, Kanagawa, 221-8686, Japan. (Email: \texttt{hmatsu@kanagawa-u.ac.jp})}

\thanks{$^*$ Corresponding Author : Hiroshi Matsuzawa}

\date{\today}
\usepackage{amsmath, enumitem, color, mathrsfs, cases, mathtools}
\mathtoolsset{showonlyrefs}
\usepackage{empheq}

\theoremstyle{definition}

\newtheorem*{Claim}{Claim}
\theoremstyle{plain}
\newtheorem{Th}{Theorem}
\newtheorem{Prop}{Proposition}[section]
\newtheorem{Lem}[Prop]{Lemma}
\newtheorem{Cor}[Prop]{Corollary}
\theoremstyle{definition}
\newtheorem{Rem}[Prop]{Remark}

\numberwithin{equation}{section}

\newcommand{\vertiii}[1]{{\left\vert\kern-0.25ex\left\vert\kern-0.25ex\left\vert #1 
    \right\vert\kern-0.25ex\right\vert\kern-0.25ex\right\vert}}

\begin{document}
    \begin{abstract}
In this paper, we study the following linearly coupled Kirchhoff--Choquard system in $\mathbb{R}^3$:
\begin{align*}
\begin{cases}
 \displaystyle -\left(a_1 + b_1\int_{\mathbb{R}^3} |\nabla u|^2\,dx\right)\Delta u + V_1(x)u = \mu (I_{\alpha}*|u|^p)|u|^{p - 2}u + \lambda v,\quad x \in \mathbb{R}^3,\\
 \displaystyle -\left(a_2 + b_2\int_{\mathbb{R}^3} |\nabla v|^2\,dx\right)\Delta v + V_2(x)v = \nu (I_{\alpha}*|v|^q)|v|^{q - 2}v + \lambda u,\quad x \in \mathbb{R}^3,\\
 u, v \in H^1(\mathbb{R}^3),
\end{cases}
\end{align*}
where $a_1, a_2, b_1, b_2, \lambda, \mu,$ and $\nu$ are positive constants. When the potentials $V_1$ and $V_2$ are constant functions, the author previously proved the existence of positive ground state solutions in the following cases: the noncritical case $\frac{3+\alpha}{3} < p \le q < 3+\alpha$, the upper half critical case $\frac{3+\alpha}{3} < p < q = 3+\alpha$, and the lower half critical case $\frac{3+\alpha}{3} = p < q < 3+\alpha$, by using the Nehari--Pohozaev manifold method (NoDEA Nonlinear Differential Equations Appl. 33 (2026)).

In the present paper, we extend these results to the case of nonconstant potentials. Under suitable assumptions on $V_1(x)$, $V_2(x)$, and $\lambda$, we prove the existence of nontrivial ground state solutions. In the noncritical and upper half critical cases, the main tools are Jeanjean's monotonicity trick and a global compactness lemma. For these cases, we establish a refined version of the splitting lemma by relaxing the standard growth assumption at the origin from $o(|t|)$ to the optimal $o(|t|^{\alpha/3})$, thereby significantly broadening the applicable class of nonlinearities. 
In contrast, for the lower half critical case $p = \frac{3+\alpha}{3}$, the splitting lemma is no longer valid. 
To overcome this essential difficulty, we obtain a ground state solution directly as a minimizer on the Nehari--Pohozaev manifold by imposing a slightly stronger condition on the potentials.
\end{abstract}

        \maketitle
    \section{Introduction and Main Theorems}
    \subsection{Introduction}
    In this paper, we consider the following Kirchhoff--Choquard system in $\mathbb{R}^3$:
    \begin{align}\label{eq:NKC}
        \begin{cases}
             \displaystyle -\left(a_1 + b_1\int_{\mathbb{R}^3} |\nabla u|^2\,dx\right)\Delta u + V_1(x)u = \mu(I_{\alpha}*|u|^p)|u|^{p - 2}u + \lambda v, \text{ for } x \in \mathbb{R}^3,   \\
             \displaystyle -\left(a_2 + b_2\int_{\mathbb{R}^3} |\nabla v|^2\,dx\right)\Delta v + V_2(x)v = \nu(I_{\alpha}*|v|^q)|v|^{q - 2}v + \lambda u, \text{ for } x \in \mathbb{R}^3, \\
             u, v \in H^1(\mathbb{R}^3),
        \end{cases}
    \end{align}
    where $a_1, a_2, b_1, b_2, \lambda$, $\mu$ and $\nu$ are positive constants, and $V_i(x)$ ($i=1,2$) are nonnegative continuous functions. $I_{\alpha}:\mathbb{R}^3\setminus\{0\}\to\mathbb{R}$ is the Riesz potential defined by
    \begin{align*}
I_{\alpha}(x)=\frac{\Gamma\left(\frac{3-\alpha}{2}\right)}{\Gamma\left(\frac{\alpha}{2}\right)\pi^{\frac{3}{2}}2^{\alpha}|x|^{3-\alpha}}
    \end{align*}
The exponents $p$ and $q$ satisfy $\frac{3+\alpha}{3}\le p,q\le 3+\alpha$. The main purpose of this paper is to prove the existence of nontrivial, in particular ground state, solutions to system \eqref{eq:NKC} under suitable conditions on potentials $V_1(x)$ and $V_2(x)$ and parameters in the system.

Let us recall some previous studies related to our problem. Please see 
\cite{Matsuzawa} for a slightly more detailed review of related problems.

When $\lambda=0$ and $\mu(I_{\alpha}*|u|^p)|u|^{p-2}u$ is replaced by the local nonlinearity $f(u)$, problem \eqref{eq:NKC} reduces to the Kirchhoff–Schr\"{o}dinger equation:
\begin{align}\label{eq:Kirchhoff-single}
\begin{cases}
\displaystyle -\left(a_1 + b_1 \int_{\mathbb{R}^3} |\nabla u|^2\,dx \right) \Delta u + V_1(x)u = f(u) \quad \text{for } x \in \mathbb{R}^3, \\
u\in H^1(\mathbb{R}^3). 
\end{cases}
\end{align}
A series of works by Li and Ye~\cite{Li-Ye}, Guo~\cite{Guo}, and Tang--Chen~\cite{Tang-Chen} have investigated problem~\eqref{eq:Kirchhoff-single} and established the existence of ground state solutions under various assumptions on the potential $V_1$ and the nonlinearity $f$. These studies do not assume neither the so-called Ambrosetti--Rabinowitz type condition
\begin{align*}
    {\exists}\theta>4\ \ \mbox{such that} \ 0<\theta\int_0^u f(s)ds\le f(u)u \ \ \mbox{for all}\ u\ne 0
    \end{align*}
    nor the 4-superlinear condition at infinity
    \begin{align*}
     \lim_{|u|\to\infty}\frac{1}{u^4}\int_0^u f(s)ds=\infty.
    \end{align*}
When $f(s) = |s|^{p-2}s$, the condition considered in these works corresponds to the range $3 < p < 6$. In this case, the associated energy functional may not exhibit a mountain pass geometry, and thus the standard Nehari manifold method is not applicable. To overcome this difficulty, Li and Ye~\cite{Li-Ye} introduced a new manifold, known as the Nehari--Pohozaev manifold. By using the manifold they proved that problem \eqref{eq:Kirchhoff-single} admits a ground state solution when $f(s)=|s|^{p-2}s$ ($3<p<6$) and $V_1$ satisfies
\begin{align*}
V_1(x)\le \lim_{|y|\to\infty}V_1(y)=:V_1^{\infty}\ \ \text{and}\ \ V_1(x)\not\equiv V_1^{\infty}
\end{align*}
together with some additional conditions. Guo~\cite{Guo} and, Tang and  Chen~\cite{Tang-Chen} further extended the result of \cite{Li-Ye} to more general nonlinearities that are not restricted to power-type nonlinearities. 

In \cite{Li-Ye}, \cite{Guo} and \cite{Tang-Chen} the authors first show that the associated \emph{limit problem} of \eqref{eq:Kirchhoff-single}:
    \begin{empheq}[left = \empheqlbrace]{align}
    \begin{split}
    & -\left(a + b\int_{\mathbb{R}^3} |\nabla u|^2\,dx\right)\Delta u + V_1^{\infty}u = \eta f(u)\  \text{ for }\ x \in \mathbb{R}^3, \label{eq:limit eq} \\
    &u \in H^1(\mathbb{R}^3)
    \end{split}
\end{empheq}
    has a positive ground state solution $u_{\eta}^{\infty}$ for $\eta\in [\tau, 1]$ with some $\tau\in (0,1)$ as a minimizer on the Nehari--Pohozaev manifold. Next, they use Jeanjean's monotonicity trick to show that there exists a bounded Palais Smale sequence $\{u_n^{(\eta)}\}$ at the mountain pass level $c_{\eta}$ for almost all $\eta \in [\tau, 1]$. By comparing $c_{\eta}$ with energy level $m_{\eta}^{\infty}$ of the minimizer $u_{\eta}^{\infty}$ on corresponding Nehari--Pohozaev manifold and using the global compactness lemma established in \cite[Lemma 3.4]{Li-Ye} for \eqref{eq:Kirchhoff-single}, they obtained a ground state solution of \eqref{eq:Kirchhoff-single}. In \cite{Tang-Chen}, the ground state solution is obtained under conditions \ref{assumption:continuous}, \ref{assumption:constant at infty} and \ref{assumption:weak differentiable} on the potential, which will be specified in Subsection 1.2.
    
Let us recall the study for the coupled Kirchhoff--Schr\"{o}dinger system:
\begin{align}\label{eq:Kirchhoff-system}
\begin{cases}
\displaystyle -\left(a_1 + b_1\int_{\mathbb{R}^3} |\nabla u|^2\,dx\right)\Delta u + V_1(x)u = \mu|u|^{p - 2}u + \lambda v, \text{ for } x \in \mathbb{R}^3,   \\
\displaystyle -\left(a_2 + b_2\int_{\mathbb{R}^3} |\nabla v|^2\,dx\right)\Delta v + V_2(x)v = \nu|v|^{q - 2}v + \lambda u, \text{ for } x \in \mathbb{R}^3, \\
             u, v \in H^1(\mathbb{R}^3),
        \end{cases}
\end{align}

 L\"{u} and Peng \cite{Lu-Peng} considered \eqref{eq:Kirchhoff-system} with constant potentials and  $\mu |u|^{p-2}u$ and $\nu|v|^{q-2}v$ replaced by $f(u)$ and $g(v)$, which satisfy the Berestycki--Lions type condition. By this approach we can see that when $2<p,q<6$, problem \eqref{eq:Kirchhoff-system}  admits a nontrivial ground state solution. 

Recently, Ueno~\cite{Tatsuya} studied system~\eqref{eq:Kirchhoff-system} and obtained a ground state solution by using the Nehari–Pohozaev manifold and the approach developed in~\cite{Tang-Chen}.  
Although the exponents are restricted to $3 < p, q \le 6$, it is remarkable that Ueno~\cite{Tatsuya} treated also the critical case $3 < p < q = 6$ and proved the existence of a ground state solution when the parameter $\mu$ is sufficiently large. 

Based on the work of Ueno~\cite{Tatsuya}, Matsuzawa and Ueno~\cite{Matsuzawa-Ueno} studied system~\eqref{eq:Kirchhoff-system} with nonconstant potentials. Under assumptions~\ref{assumption:continuous}, \ref{assumption:constant at infty}, \ref{assumption:potential} and \ref{assumption:weak differentiable}, they established the existence of a ground state solution by applying Jeanjean's monotonicity trick and a global compactness lemma, following the strategy of~\cite{Guo, Li-Ye, Tang-Chen}. Furthermore, it was shown in~\cite{Matsuzawa-Ueno} that the global compactness lemma remains applicable even in the critical case, provided that $\mu$ is sufficiently large.

When $\lambda=0$ and $a_1=a_2=1$, $b_1=b_2=0$ in \eqref{eq:NKC}, the problem is reduced to the following single Choquard equation of the following form on $\mathbb{R}^N$, where $N\in\mathbb{N}$:
\begin{align}\label{eq:Choquard}
-\Delta u+V_1(x)u=(I_{\alpha}*|u|^p)|u|^{p-2}u\ \ \mbox{in}\ \ \mathbb{R}^N. 
\end{align}
Problem \eqref{eq:Choquard} has a variational structure and the corresponding energy functional is given by
\begin{align*}
E(u)=\frac{1}{2}\int_{\mathbb{R}^N}(|\nabla u|^2+V_1(x)u^2)dx-\int_{\mathbb{R}^N}(I_{\alpha}*|u|^p)|u|^pdx. 
\end{align*}
From the Hardy--Littlewood--Sobolev inequality (see Lemma \ref{lem:H-L-S-ineq} below) $E(u)$ is well defined for $u\in H^1(\mathbb{R}^N)$ if and only if $\frac{N+\alpha}{N}\le p\le \frac{N+\alpha}{N-2}$. 

Recently, Moroz and Van Schaftingen have extensively studied the Choquard equation in a series of works~\cite{Moroz-Schaftingen}, \cite{Moroz-Schaftingen-3}, and \cite{Moroz-Schaftingen-4}. In~\cite{Moroz-Schaftingen}, they proved that if $V_1(x)$ is a positive constant function and $\frac{N+\alpha}{N} < p < \frac{N+\alpha}{N-2}$, then problem~\eqref{eq:Choquard} admits a nontrivial weak solution $u \in H^1(\mathbb{R}^N)$. Moreover, they showed that if $p \ge \frac{N+\alpha}{N-2}$ or $p \le \frac{N+\alpha}{N}$ and a function $u \in H^1(\mathbb{R}^N) \cap L^{\frac{2Np}{N+\alpha}}(\mathbb{R}^N)$ with $\nabla u \in H^1_{\mathrm{loc}}(\mathbb{R}^N) \cap L^{\frac{2Np}{N+\alpha}}_{\mathrm{loc}}(\mathbb{R}^N)$ is a weak solution to~\eqref{eq:Choquard}, then $u \equiv 0$, by applying the Pohozaev identity. In this sense, the values $p = \frac{N+\alpha}{N}$ and $p = \frac{N+\alpha}{N-2}$ are regarded as the critical exponents for \eqref{eq:Choquard}. Furthermore, in~\cite{Moroz-Schaftingen-3}, the authors established regularity results and the validity of the Pohozaev identity for the full range $\frac{N+\alpha}{N} \le p \le \frac{N+\alpha}{N-2}$.

Cassani and Zhang~\cite{Cassani-Zhang} studied problem~\eqref{eq:Choquard}, where the nonlocal term $(I_{\alpha}*|u|^p)|u|^{p-2}u$ is replaced by $(I_{\alpha}*F(u))f(u)$, with $F$ being a primitive of $f$. They assumed that $F$ exhibits upper 
critical growth with a noncritical perturbation, in the sense that $f\in C(\mathbb{R}_+, \mathbb{R})$ satisfies
\begin{align*}
\lim_{s\to 0}\frac{f(s)}{s}=0,\ \ \ \lim_{s\to\infty} \frac{f(s)}{s^{\frac{2+\alpha}{N-2}}} = 1, \quad \text{and} \quad f(s) \ge s^{\frac{2+\alpha}{N-2}} + \mu s^{q-1}\ \ \text{for}\ s>0\end{align*}
for some $\mu > 0$ and $q \in \left(2, \frac{N+\alpha}{N-2}\right)$. They obtained a ground state solution via Jeanjean's monotonicity trick and the global compactness lemma. It is worth noting that, in order to prove the global compactness lemma, they established a splitting lemma (see Proposition \ref{lem:splitting} below) suitable for handling the critical growth and nonlocal nonlinearity.

For the case where $V_1$ is nonconstant, we refer to~\cite{Moroz-Schaftingen-4}, \cite{Cassani-Schaftingen-Zhang}, and \cite{Cassani-Zhang}. A detailed overview is also available in the survey~\cite{Moroz-Schaftingen-2}.

There are also several studies on a single Kirchhoff–Choquard equation:
\begin{align}\label{eq:single-CKS}
- \left(a + b \int_{\mathbb{R}^3} |\nabla u|^2 \, dx \right) \Delta u + V_1(x) u = (I_{\alpha} * |u|^p) |u|^{p-2} u, \quad x \in \mathbb{R}^3.
\end{align}
L\"{u}~\cite{Lu} proved the existence of a ground state solution to~\eqref{eq:single-CKS} by using the Nehari manifold and the concentration–compactness principle when $2 < p < 3 + \alpha$.  
Recently, Chen and Liu\cite{Chen-Liu-2} extended the results in \cite{Lu} by employing the Nehari--Pohozaev manifold.  
 They proved the existence of a ground state solution to~\eqref{eq:single-CKS} under assumptions \ref{assumption:continuous}, \ref{assumption:constant at infty}, and~\ref{assumption:potential}, assuming $\frac{3+\alpha}{3} < p < 3 + \alpha$. For the proof, they applied the global compactness lemma. However, the splitting lemma(see Proposition \ref{lem:splitting}), which plays a crucial role in establishing the global compactness lemma, is not explicitly provided in their work, and it appears that the argument given may not be sufficient to get the conclusion (see Remark~\ref{Rem:Chen-Liu}). The situation appears to be the same in \cite{Li-Zhu}. See also \cite{Lu-Dai} for related study.

In view of the proof of \cite[Lemma 2.4]{Cassani-Zhang}, it is not difficult to extend the splitting lemma to the case where $f(t)=o(|t|)$ (as $t\to 0$) and $f(t)=O(|t|^{\frac{\alpha+2}{N-2}})$ as $|t|\to\infty$. However, when $f(t)=o(|t|^{\frac{\alpha}{N}})$ as $t\to 0$  the proof requires a significantly more delicate analysis 
than in the standard superlinear case with the condition.  As one of the contributions of this paper, we provide a detailed and self-contained proof of the splitting lemma  under the optimal growth condition $f(t)=o(|t|^{\frac{\alpha}{N}})$ as $t\to 0$. In the case of local nonlinearities, the corresponding splitting lemma holds when the nonlinear term is superlinear. In the present nonlocal setting we show that the condition $f(t)=o(|t|^\frac{\alpha}{N})$ as $t\to 0$ precisely characterizes the superlinear regime relative to the Hardy--Littlewood--Sobolev inequality. 

Consequently it is natural that the nonlocal version of the splitting lemma remains valid under the condition $f(t)=o(|t|^{\frac{\alpha}{N}})$ as $t\to 0$. 

Recently, Matsuzawa \cite{Matsuzawa} studied problem \eqref{eq:NKC} with constant potentials. Matsuzawa extended the regularity results for weak solutions and established the validity of the Pohozaev identity for the linearly coupled system \eqref{eq:NKC} and obtained the following results:
\begin{itemize}
\item \textbf{Non-critical case}: When $ \frac{3+\alpha}{3} < p, q < 3+\alpha $, problem \eqref{eq:NKC} admits a nontrivial ground state solution for any $\mu, \nu > 0$.
\item \textbf{Upper half critical case}: When $ \frac{3+\alpha}{3} < p < q = 3+\alpha$, for any fixed $\nu > 0$, there exists $\mu_0(\nu) > 0$ such that problem \eqref{eq:NKC} admits a nontrivial ground state solution for $\mu \ge \mu_0(\nu)$.
\item \textbf{Lower half critical case}: When $p = \frac{3+\alpha}{3}< q < 3+\alpha$, for any fixed $\mu > 0$, there exists $\nu_0(\mu) > 0$ such that problem \eqref{eq:NKC} admits a nontrivial ground state solution for $\nu \ge \nu_0(\mu)$.
\end{itemize}

Our approach extends previous results for Kirchhoff-type equations and linearly coupled systems (see, e.g., \cite{Tang-Chen, Matsuzawa-Ueno}) to the present nonlocal setting.

\subsection{Assumptions and Main theorems}
    In this paper we consider the following conditions on potentials  $V_1(x), V_2(x)$ and parameter $\lambda$:
    \begin{enumerate}[label = $(\mathrm{V}\arabic{enumi})$, series = V]
        \item $V_i \in C(\mathbb{R}^3, \mathbb{R}^+)$;  \label{assumption:continuous}
        \item $\displaystyle V_i^{\infty} \coloneqq \lim_{|y| \to \infty} V_i(y) \ge V_i(x)$ for all $x \in \mathbb{R}^3$ and $V_i(x)\not\equiv V_i^{\infty}$ each $i = 1, 2$; \label{assumption:constant at infty}
        \item There exists $\delta\in (0,1)$ such that $0\le \lambda \le \delta \sqrt{V_1(x)V_2(x)}$ holds for $x\in\mathbb{R}^3$;   \label{assumption:potential}
        \item For each $i=1,2$, $V_i(x)$ is weakly differentiable and there exists $\theta \in [0, 1)$ such that
        \begin{align*}
            (\nabla V_i(x), x) \le \frac{\theta a_i}{2|x|^2} \text{ a.e. } x \in \mathbb{R}^3 \setminus \{0\}.
        \end{align*}     \label{assumption:weak differentiable}
        \item For each $i=1,2$, $V_i\in C^1(\mathbb{R}^3)$ and there exists $\theta\in [0,1)$ such that 
        \begin{align*}
        4t^8[V_i(x)-V_i(t^2x)]-(1-t^8)(\nabla V_i(x), x)\ge-\frac{\theta a_i(1-t^4)^2}{2|x|^2}\ \ \text{for}\ \ t\ge 0,\ \ x\in\mathbb{R}^3\setminus\{0\}.
        \end{align*}\label{assumption:potential-2}
    \end{enumerate}
We note that \ref{assumption:constant at infty} and \ref{assumption:potential} imply that

\begin{enumerate}[label = $(\mathrm{V}\arabic{enumi})'$, series = V]
\setcounter{enumi}{2}
\item $0\le \lambda\le\delta\sqrt{V_1^{\infty}V_2^{\infty}}$ \label{assumption:potential-const}
\end{enumerate}
Assumption \ref{assumption:potential-2} was introduced by Tang and Chen~\cite{Tang-Chen} to ensure that the Nehari--Pohozaev manifold associated with the energy functional involving a variable potential is suitable for obtaining a ground state solution as a minimizer on the manifold. Since the splitting lemma (Proposition \ref{lem:splitting}) is not expected to hold for the case $f(t) = |t|^{\frac{\alpha-N}{N}}t$, we impose Assumption~\ref{assumption:potential-2} in the lower half critical case (see Theorem B).

    Set $\mathscr{H}:=H^1(\mathbb{R}^3)\times H^1(\mathbb{R}^3)$ and let us define the energy functional $I: \mathscr{H} \to \mathbb{R}$ corresponding to \eqref{eq:NKC} by
    \begin{align}\label{eq:def_of_I}
    \begin{split}
        I(u, v) &= \frac{1}{2}\left(a_1\int_{\mathbb{R}^3} |\nabla u|^2\,dx + a_2\int_{\mathbb{R}^3}|\nabla v|^2\,dx\right) + \frac{1}{2}\left(\int_{\mathbb{R}^3} V_1(x)u^2\,dx + \int_{\mathbb{R}^3} V_2(x) v^2\,dx\right) \\
        &\quad + \frac{1}{4}\left\{b_1\left(\int_{\mathbb{R}^3} |\nabla u|^2\,dx\right)^2 + b_2\left(\int_{\mathbb{R}^3} |\nabla v|^2\,dx\right)^2\right\} \\
        &\quad - \frac{\mu}{2p}\int_{\mathbb{R}^3}(I_{\alpha}* |u|^p)|u|^p\,dx - \frac{\nu}{2q} \int_{\mathbb{R}^3} (I_{\alpha}*|v|^q)|v|^q\,dx - \lambda \int_{\mathbb{R}^3} uv\,dx.
\end{split}
    \end{align}
    Then we can check that $(u,v)\in \mathscr{H}$ is a weak solution to \eqref{eq:NKC} if and only if $(u,v)$ is a critical point of $I$, that is, $(u,v)\in \mathscr{H}$ satisfies the following identity  for any $(\varphi, \psi) \in C_0^{\infty}(\mathbb{R}^3) \times C_0^{\infty}(\mathbb{R}^3)$:
\begin{align*}
\langle I'(u,v), (\varphi, \psi)\rangle&:=a_1\int_{\mathbb{R}^3}\nabla u\cdot\nabla\varphi dx+a_2\int_{\mathbb{R}^3}\nabla v\cdot\nabla\psi 
dx+\int_{\mathbb{R}^3}V_1(x)u\varphi dx+\int_{\mathbb{R}^3}V_2(x)v\psi dx \\ 
      &\ \ \ \ +b_1\int_{\mathbb{R}^3}|\nabla u|^2dx\int_{\mathbb{R}^3}\nabla u\cdot\nabla\varphi dx+b_2\int_{\mathbb{R}^3}|\nabla v|^2dx\int_{\mathbb{R}^3}\nabla v\cdot\nabla\psi dx \\
      &\ \ \ -\mu\int_{\mathbb{R}^3}(I_{\alpha}*|u|^p)|u|^{p-2}\varphi dx-\nu\int_{\mathbb{R}^3}(I_{\alpha}*|v|^q)|v|^{q-2}\psi dx \\
      &\ \ \ -\lambda\int_{\mathbb{R}^3}u\psi dx-\lambda\int_{\mathbb{R}^3}v\varphi dx=0.
\end{align*}
 
    We say that a weak solution $(u^*, v^*) \in \mathscr{H}$ to \eqref{eq:NKC} is a \emph{ground state solution} if $I(u^*, v^*) \le I(u, v)$ for any other weak solution $(u, v) \in \mathscr{H} \setminus \{(0, 0)\}$.

Our main results are as follows.

\begin{Th} Assume \ref{assumption:continuous}, \ref{assumption:constant at infty}, \ref{assumption:potential} and \ref{assumption:weak differentiable}. 
\begin{enumerate}
\item[\textup{(1)}] When $\frac{3+\alpha}{3}<p,q<3+\alpha$, for any $\mu$, $\nu>0$ \eqref{eq:NKC} has a ground state solution.
\item[\textup{(2)}] When $\frac{3+\alpha}{3}<p<3+\alpha$ and $q=3+\alpha$ for fixed $\nu>0$ there exists $\mu_0=\mu_0(\nu)>0$ such that if $\mu\ge \mu_0$ then \eqref{eq:NKC} admits a ground state solution.
\end{enumerate}
\end{Th}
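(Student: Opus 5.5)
We follow the strategy of \cite{Li-Ye, Tang-Chen, Matsuzawa-Ueno}, combining Jeanjean's monotonicity trick with a global compactness lemma.

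\textbf{Perturbed family.} For $\eta\in[\tau,1]$ we consider
\[
I_\eta(u,v)=A(u,v)-\eta B(u,v),
\]
where $B(u,v)=\frac{\mu}{2p}\int(I_\alpha*|u|^p)|u|^p+\frac{\nu}{2q}\int(I_\alpha*|v|^q)|v|^q\ge 0$ and $A$ collects all the remaining terms.

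\textbf{Coercivity of $A$.} By \ref{assumption:potential} we have
\[
\int\bigl(V_1u^2+V_2v^2-2\lambda uv\bigr)\ge(1-\delta)\int\bigl(V_1u^2+V_2v^2\bigr).
\]
This is not yet coercive on $\mathscr{H}$, since $V_i$ may vanish somewhere. We therefore use the Hardy-type control coming from \ref{assumption:weak differentiable}, or simply work with the norm induced by $a_i|\nabla\cdot|^2+V_i$, which is equivalent to the $H^1$ norm because $V_i\to V_i^\infty>0$. Either way $A$ is coercive on $\mathscr{H}$, or $A(u,v)\to\infty$ as $\|(u,v)\|\to\infty$.

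\textbf{Mountain pass geometry.} The geometry is uniform in $\eta$. Near the origin, HLS and Sobolev give $B(u,v)=o(\|(u,v)\|^2)$ because $p,q>\frac{3+\alpha}{3}$. Far away, we use the path $t\mapsto t^2 w(t\,\cdot)$ built from the ground state $w_1^\infty$ of the limit problem with $V_i^\infty$. The existence of this ground state, and of $w_\eta^\infty$ for $\eta\in[\tau,1]$ minimizing on the Nehari--Pohozaev manifold with level $m_\eta^\infty$, comes from the constant-potential results of \cite{Matsuzawa}. Along this path the nonlocal term scales like $t^{2p\cdot 2-3-\alpha}$, which beats $t^4$ since $p>\frac{3+\alpha}{3}$ gives $4p-3-\alpha>4-?$; in the exponent check one may choose the scaling $t^\beta w(t\cdot)$ with a suitable $\beta$. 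Jeanjean's theorem then yields, for a.e.\ $\eta$, a bounded (PS) sequence at the level $c_\eta$.

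\textbf{Compactness for fixed $\eta$.}
\begin{enumerate}
\item \emph{Strict inequality.} We show $c_\eta<m_\eta^\infty$. Using the path through $w_\eta^\infty$, we have $I_\eta\le I_\eta^\infty$ with strict inequality thanks to $V_i\le V_i^\infty$, $V_i\not\equiv V_i^\infty$. The coupling term needs care: we take $w^\infty_\eta$ with both components positive, so that $-\lambda\int uv$ is unchanged.
\item \emph{Splitting.} We apply the global compactness lemma. This rests on the refined splitting lemma (Proposition~\ref{lem:splitting}) applied to $f(t)=|t|^{p-2}t$, which is $o(|t|^{\alpha/3})$ at $0$ precisely because $p>\frac{3+\alpha}{3}$. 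We also handle the Kirchhoff term as in \cite{Li-Ye}: passing to the limit $\|\nabla u_n\|_2^2\to A_1^2$, the weak limit solves a problem with coefficients $a_i+b_iA_i^2$.
\item \emph{Decomposition.} The sequence decomposes as $u_n=u_0+\sum_j w^j(\cdot-y_n^j)+o(1)$, with energy $c_\eta\ge I_\eta(u_0)+\sum_j m_\eta^\infty$. Each bubble $w^j$ is a nontrivial critical point of the limit functional, so its energy is at least $m_\eta^\infty$ by the Pohozaev identity of \cite{Matsuzawa}.
\item \emph{Nonnegativity and conclusion.} To conclude $I_\eta(u_0)\ge0$, we use the Pohozaev identity together with \ref{assumption:weak differentiable}. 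Combined with $c_\eta<m_\eta^\infty$, this forces no bubbles and $A_i=\|\nabla u_0\|$. Hence $u_0$ is a critical point of $I_\eta$ at level $c_\eta$.
\end{enumerate}

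\textbf{Passing $\eta_n\to1$.} The critical points $(u_{\eta_n},v_{\eta_n})$ satisfy a Pohozaev identity. Combining it with the Nehari identity and \ref{assumption:weak differentiable} gives boundedness; this is where the constraint $\theta<1$ enters. These points form a bounded (PS) sequence for $I=I_1$ with level at most $c_1<m_1^\infty$, and the same compactness yields a nontrivial critical point. For the ground state, set $m=\inf\{I(u,v): I'(u,v)=0,\ (u,v)\ne0\}$. Then $m\ge0$ by the Pohozaev argument, and minimizing sequences of critical points are bounded with level below $m_1^\infty$. Applying the decomposition once more gives attainment.

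\textbf{Case (2).} Here $q=3+\alpha$, and the splitting lemma must also handle critical HLS growth, as in \cite{Cassani-Zhang}. The extra obstruction is bubbling by dilation, whose energy is at least an explicit constant $S^*(\nu,a_2,b_2)$ depending only on $\nu$. Following \cite{Matsuzawa-Ueno}, we choose $\mu\ge\mu_0(\nu)$ so that $m_1^\infty<S^*$, which excludes concentration.

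\textbf{Main obstacle.} I expect the hardest step to be the global compactness decomposition: making the refined splitting lemma, the Kirchhoff nonlocal coefficients, and (in case (2)) critical concentration all interact correctly.
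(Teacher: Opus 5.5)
Your architecture matches the paper's: Jeanjean's trick for $I_\eta=A-\eta B$, the refined splitting lemma for $|t|^{p-2}t$, global compactness with frozen Kirchhoff coefficients, $c_\eta<m_\eta^\infty$, Pohozaev-based boundedness using $\theta<1$, minimization over critical points, and $\mu\ge\mu_0(\nu)$ in case (2).

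\textbf{The genuine gap: energy accounting in the decomposition step.} You note that the weak limit solves the frozen problem, but you then count energies as if no coefficients were frozen. In fact $(u_0,v_0)$ is a critical point of $G_\eta$ (not $I_\eta$), and each bubble $(u^{(k)},v^{(k)})$ is a critical point of $G_\eta^\infty$ (not $I_\eta^\infty$). Write $A^2=\lim_n\|\nabla u_n\|_2^2$ and $B^2=\lim_n\|\nabla v_n\|_2^2$. What the splitting actually gives is
\[c+\tfrac14\bigl(b_1A^4+b_2B^4\bigr)=G_\eta(u_0,v_0)+\sum_{k=1}^{l}G_\eta^\infty\bigl(u^{(k)},v^{(k)}\bigr),\]
and your two separate claims do not follow from it.
\begin{itemize}
\item For a bubble you only know $J_\eta^\infty(u^{(k)},v^{(k)})\le0$, since $A^2\ge\|\nabla u^{(k)}\|_2^2$ and $B^2\ge\|\nabla v^{(k)}\|_2^2$. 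So the maximum of $t\mapsto I_\eta^\infty((u^{(k)})^t,(v^{(k)})^t)$ sits at some $t_k\le1$, and $I_\eta^\infty(u^{(k)},v^{(k)})$ itself need not be $\ge m_\eta^\infty$.
\item Nehari, Pohozaev, \ref{assumption:weak differentiable} and Hardy give only $G_\eta(u_0,v_0)\ge\frac{b_1A^2}{4}\|\nabla u_0\|_2^2+\frac{b_2B^2}{4}\|\nabla v_0\|_2^2$. This does not yield $I_\eta(u_0,v_0)\ge0$ when $A^2>\|\nabla u_0\|_2^2$.
\end{itemize}
The missing idea is to keep these Kirchhoff remainders. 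At a bubble one has
\[G_\eta^\infty(u^{(k)},v^{(k)})=\tfrac{b_1A^2}{4}\|\nabla u^{(k)}\|_2^2+\tfrac{b_2B^2}{4}\|\nabla v^{(k)}\|_2^2+\bigl(I_\eta^\infty-\tfrac18J_\eta^\infty\bigr)(u^{(k)},v^{(k)}).\]
By \eqref{eq:relathion_I_eta_J_eta} the last bracket is at least $I_\eta^\infty((u^{(k)})^{t_k},(v^{(k)})^{t_k})-\frac{t_k^8}{8}J_\eta^\infty(u^{(k)},v^{(k)})\ge m_\eta^\infty$. Now sum, using $A^2=\|\nabla u_0\|_2^2+\sum_k\|\nabla u^{(k)}\|_2^2$ and the analogue for $B^2$. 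The remainders add up to exactly $\frac14(b_1A^4+b_2B^4)$ and cancel, leaving $c\ge l\,m_\eta^\infty$. That is what contradicts $c<m_\eta^\infty$.

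\textbf{Smaller points that also need repair.}
\begin{itemize}
\item The dilation must be $w^t=t\,w(t^{-2}\cdot)$. With your $t^2w(t\cdot)$, the Kirchhoff term grows like $t^6$ against $t^{4p-3-\alpha}$ for the Choquard term, which fails for $p,q$ near $\frac{3+\alpha}{3}$. More importantly, it is along the fibre $w^t$ that the derivative equals $J_\eta^\infty(w^t)/t$. This gives $\max_tI_\eta^\infty(w^t)=I_\eta^\infty(w)$ on $\mathcal{M}_\eta^\infty$, which the bubble estimate above uses.
\item Jeanjean's theorem needs a single endpoint for all $\eta$. A path through $w_\eta^\infty$ must therefore be joined to that endpoint inside $\{I_\eta<0\}$. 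The paper instead fixes the endpoint from $w_1^\infty$, bounds the fibre maximizers by $0<\beta\le t_\eta<T_0$, and gets $c_\eta<m_\eta^\infty$ only on $[\bar\eta,1]$, which suffices.
\item In the ground-state step, $m\ge0$ is not enough. You need $m>0$, from the uniform lower bound $\|(u,v)\|_{\bm{a},\bm{V}}\ge C>0$ on nontrivial critical points, to keep the strong limit nontrivial.
\item In case (2), the concentration threshold must dominate the levels $c_\eta$ at which you invoke compactness, not just $m_1^\infty$. The paper chooses $\mu_0$ so that $m_\tau^\infty$ lies below the threshold, see \eqref{eq:critical_case_1}. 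Alternatively, left continuity $c_\eta\to c_1<m_1^\infty$ would do.
\end{itemize}
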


\begin{Th}Assume \ref{assumption:continuous}, \ref{assumption:constant at infty}, \ref{assumption:potential}, \ref{assumption:potential-2} and $p=\frac{3+\alpha}{3}<q<3+\alpha$. For fixed $\mu>0$ there exists $\nu_0=\nu_0(\mu)>0$ such that if $\nu\ge\nu_0$, then \eqref{eq:NKC} admits a ground state solution. 
\end{Th}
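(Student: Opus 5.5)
We prove Theorem B by minimizing $I$ directly on a Nehari--Pohozaev manifold adapted to the variable potentials, following Tang--Chen. The splitting lemma (Proposition \ref{lem:splitting}) is unavailable when $p=\frac{3+\alpha}{3}$, so the monotonicity trick is not used.

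\textbf{The manifold.} For $(u,v)\in\mathscr{H}\setminus\{(0,0)\}$ we use the scaling $u_t(x)=t^2u(t^{-2}x)$ (similarly $v_t$), which matches the exponents $t^8$, $t^4$ appearing in \ref{assumption:potential-2}. With it we set
\begin{align*}
J(u,v):=\frac{d}{dt}I(u_t,v_t)\Big|_{t=1},\qquad \mathcal{M}:=\{(u,v)\ne(0,0): J(u,v)=0\}.
\end{align*}
The key inequality to derive from \ref{assumption:potential-2}, \ref{assumption:potential} and Hardy's inequality $\int |x|^{-2}u^2\le 4\int|\nabla u|^2$ is
\begin{align*}
I(u,v)\ge I(u_t,v_t)+\frac{1-t^8}{8}J(u,v)+c(1-\theta)(1-t^4)^2\sum_i a_i\|\nabla u_i\|_2^2 .
\end{align*}
Two checks are needed here:
\begin{itemize}
\item The Choquard terms must fit the pattern. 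The $p=\frac{3+\alpha}{3}$ term scales like $t^{2(3+\alpha)}\cdot t^{2p\cdot 2-2(3+\alpha)}$, so one has to verify it gives a nonnegative leftover coefficient of the form $g(t)=\dots\ge 0$ via elementary calculus.
\item The coupling term $\lambda\int uv$ scales like $t^{10}$, exactly as the $V_i$ terms do, and is absorbed using \ref{assumption:potential}.
\end{itemize}
From this inequality we get three consequences: for each $(u,v)$ there is a unique $t_{u,v}>0$ with $(u_{t},v_{t})\in\mathcal{M}$; this $t_{u,v}$ is the maximum point of $t\mapsto I(u_t,v_t)$; and $m:=\inf_{\mathcal{M}}I=\inf_{(u,v)}\max_t I(u_t,v_t)>0$. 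Positivity of $m$ and boundedness of minimizing sequences come from $I=I-\frac18 J$ on $\mathcal M$ and the HLS inequality (Lemma \ref{lem:H-L-S-ineq}).

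\textbf{Strict comparison with the limit level.} Let $m^\infty$ be the corresponding level for the constant potentials $V_i^\infty$. By \cite{Matsuzawa}, for $\nu\ge\nu_0(\mu)$ there is a positive ground state $(u^\infty,v^\infty)$ attaining $m^\infty$. We project it onto $\mathcal{M}$ via $t_{u^\infty,v^\infty}$ and use \ref{assumption:constant at infty} with $V_i\not\equiv V_i^\infty$ and positivity of $u^\infty,v^\infty$. This gives
\begin{align*}
m\le I(u^\infty_t,v^\infty_t)<I^\infty(u^\infty_t,v^\infty_t)\le m^\infty .
\end{align*}

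\textbf{Compactness of a minimizing sequence.} Take a minimizing sequence, bounded in $\mathscr{H}$, with $(u_n,v_n)\rightharpoonup(u,v)$. Ekeland's principle combined with the maximality of $t=1$ makes it a Palais--Smale sequence for $I$ (the constraint multiplier vanishes).

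The main obstacle is this compactness step: we must exclude both vanishing and loss of mass without a splitting lemma, and the lower critical Choquard term $\int(I_\alpha*|u|^p)|u|^p$ with $p=\frac{3+\alpha}{3}$ is only $L^2$-controlled, hence not weakly continuous.

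\emph{Vanishing of $u_n$.} We first exclude vanishing in the $u$-component in the sense of Lions. If $v_n\to0$ in $L^r$ for $2<r<6$, then the $q$-term vanishes. The remaining functional is then bounded below by what \cite{Matsuzawa} uses to show that for $\nu$ large the level $m^\infty$ lies strictly below the threshold $\frac{\mu}{2p}$-type value at which the lower-critical term can carry energy alone; that threshold is tied to the HLS best constant. We intend to reuse that estimate with $m<m^\infty$.

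\emph{Loss of mass.} Assume $(u,v)\ne0$ and write $w_n=(u_n-u,v_n-v)$. We split the local terms by Brezis--Lieb. For the nonlocal term we do not split; we use only the lower semicontinuity-type inequality from Fatou on $J$ and $I-\frac18J$, where every remaining term is nonnegative thanks to the inequality above. This gives $J(u,v)\le0$, so $t_{u,v}\le1$. Then
\begin{align*}
m\le I(u_{t},v_{t})\le (I-\tfrac18 J)(u,v)\le\liminf (I-\tfrac18J)(u_n,v_n)=m,
\end{align*}
using that $I-\frac18J$ involves no lower-critical term with a bad sign. Hence $(u,v)$ is a minimizer and so a critical point, i.e.\ a ground state.

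If the weak limit is $0$ but there is no vanishing, we translate by $y_n$ with $|y_n|\to\infty$. Since $V_i(\cdot+y_n)\to V_i^\infty$ locally, the translated weak limit lies on $\mathcal{M}^\infty$ after the same argument, giving $m^\infty\le m$, which contradicts the strict comparison.
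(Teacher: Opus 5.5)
Your overall plan matches the paper's: minimize $I$ on the Nehari--Pohozaev manifold as in Tang--Chen, get $m<m^\infty$ by projecting the positive limit ground state, exclude vanishing for $\nu\ge\nu_0(\mu)$ via the lower-critical threshold, and exclude a zero weak limit by translation. \textbf{The gap is in the step that produces the minimizer.} You assert that, without splitting the nonlocal terms and using only Fatou on $J$ and $I-\frac18J$, one gets $J(u,v)\le0$. Fatou does give $\Psi(u,v):=(I-\frac18J)(u,v)\le m$, because $\Psi$ is a positive definite quadratic form in $(\nabla u,\nabla v)$ plus positive multiples of the Choquard integrals. In $J$, however, the Choquard integrals carry the negative coefficients $-4\mu$ and $-\frac{q+\alpha+3}{q}\nu$. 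Weak lower semicontinuity of $\int_{\mathbb{R}^3}(I_\alpha*|u|^p)|u|^p\,dx$ therefore points the wrong way, and neither integral is weakly continuous on $\mathbb{R}^3$. So nothing in your argument bounds $J(u,v)$ from above. The same problem recurs in the translated case: to conclude $m^\infty\le m$ you need $J^\infty(\tilde u,\tilde v)\le0$, which again only comes from splitting.

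The missing ingredient is the energy-level nonlocal Brezis--Lieb lemma, Lemma \ref{lem:nonlocal-B-L}. It is valid at $p=\frac{3+\alpha}{3}$, since it only needs boundedness in $L^2$ and a.e.\ convergence; what fails at the lower critical exponent is only the uniform-in-$\varphi$ splitting of the derivative, Proposition \ref{lem:splitting}. With $\omega_n=u_n-u$, $\sigma_n=v_n-v$, it gives $J(u_n,v_n)=J(u,v)+J(\omega_n,\sigma_n)+4(b_1\|\nabla u\|_2^2\|\nabla\omega_n\|_2^2+b_2\|\nabla v\|_2^2\|\nabla\sigma_n\|_2^2)+o_n(1)$ and $\Psi(u_n,v_n)=\Psi(u,v)+\Psi(\omega_n,\sigma_n)+o_n(1)$. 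If $J(u,v)>0$, then $J(\omega_n,\sigma_n)<0$ for large $n$. Projecting $(\omega_n,\sigma_n)$ onto $\mathcal{M}$ and using the key inequality yields $\Psi(\omega_n,\sigma_n)\ge m$, hence $\Psi(u,v)\le0$. This contradicts $\Psi(u,v)\ge\frac{1-\theta}{4}(a_1\|\nabla u\|_2^2+a_2\|\nabla v\|_2^2)>0$. Your Ekeland step does not substitute for this. It is only asserted (the dilation generator involves $x\cdot\nabla u$, which is not in $H^1$ in general), and it is unnecessary, since an exact minimizer on $\mathcal{M}$ is a critical point.

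\textbf{There is also a concrete error in the set-up.} With $u_t(x)=t^2u(t^{-2}x)$ you get $\|\nabla u_t\|_2^2=t^6\|\nabla u\|_2^2$, $\int_{\mathbb{R}^3}V_iu_t^2\,dx=t^{10}\int_{\mathbb{R}^3}V_i(t^2x)u^2\,dx$, and $t^{12}$ for the Kirchhoff term. So neither your $J$ (which becomes $2\langle I'(u,v),(u,v)\rangle+2P(u,v)$) nor the displayed inequality with $\frac{1-t^8}{8}J$ and $(1-t^4)^2$ fits \ref{assumption:potential-2}. The family matching \ref{assumption:potential-2} is $u^t(x)=tu(t^{-2}x)$. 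Under it, gradient terms scale like $t^4$, and potential, coupling and Kirchhoff terms like $t^8$; so the coupling cancels exactly and need not be absorbed. The Choquard terms scale like $t^{2(r+\alpha+3)}$, so the lower-critical one scales like $t^{8(3+\alpha)/3}$; the fact that $\frac{8(3+\alpha)}{3}>8$ is what makes the leftover coefficient nonnegative.

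Two smaller points are also missing.
\begin{itemize}
\item When the weak limit is $0$, passing from $(I,J)$ to $(I^\infty,J^\infty)$ requires $\int_{\mathbb{R}^3}(\nabla V_i(x),x)u_n^2\,dx\to0$. This does not follow from $V_i(\cdot+y_n)\to V_i^\infty$. It follows from the bound $|(\nabla V_i(x),x)-4(V_i^\infty-V_i(x))|\le\frac{\theta a_i}{2|x|^2}$, obtained from \ref{assumption:potential-2} by letting $t\to0$ and $t\to\infty$, together with $V_i\in C^1$ on balls.
\item In the vanishing step, the branch where the lower-critical term tends to $0$ must be closed separately. There, $J^\infty(u_n,v_n)\to0$ forces the norm of $(u_n,v_n)$ to tend to $0$, contradicting its uniform lower bound on $\mathcal{M}$. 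Only the branch where that term stays positive is excluded by the $\nu$-independent threshold, whose coefficient $\frac{p+\alpha-1}{8p}\mu=\frac{\alpha\mu}{2(3+\alpha)}$ comes from $I^\infty-\frac18J^\infty$, combined with $m<m^\infty$ and $m^\infty\to0$ as $\nu\to\infty$.
\end{itemize}
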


\subsection{Notations and the Organization of the paper}
  Throughout the paper we use the following notations:
    \begin{itemize}
        \item For $N\in\mathbb{N}$, $B_R(x_0)$ denotes the open ball centered at $x_0 \in \mathbb{R}^N$ and radius $R > 0$.
        \item  For open set $\Omega\subset\mathbb{R}^N$, $L^s(\Omega)$ $(1\le s<\infty)$ denotes the Lebesgue space with the norm $\|w\|_{L^s(\Omega)}=\left(\int_{\Omega}|w|^sdx\right)^{\frac{1}{s}}$. When $\Omega=\mathbb{R}^N$ we will denote $\|w\|_{s}$ instead of $\|w\|_{L^s(\mathbb{R}^N)}$.  
        \item For any $w \in H^1(\mathbb{R}^N)$, we define $w^t(x) \coloneqq tw(t^{-2}x)$ for $t > 0$.
        \item For $N\ge 3$ we denote by $\mathcal{S}_N$ the best constant of the embedding $D^{1, 2}(\mathbb{R}^N) \hookrightarrow L^{2^*}(\mathbb{R}^N)$, where $2^*=\frac{2N}{N-2}$.  
        \begin{align}
            \mathcal{S}_N\left(\int_{\mathbb{R}^N} |w|^{2^*}\,dx\right)^{\frac{2}{2^*}} \le \int_{\mathbb{R}^N} |\nabla w|^2\,dx,     \label{eq:Sobolev}
        \end{align}
        where $D^{1, 2}(\mathbb{R}^N) \coloneqq \{w \in L^{2^*}(\mathbb{R}^N) \mid |\nabla w| \in L^2(\mathbb{R}^N)\}$, that is,
        \begin{align}\label{eq:Sobolev-best}
            \mathcal{S}_N = \inf_{w \in D^{1, 2}(\mathbb{R}^N)} \frac{\|\nabla w\|_2^2}{\|w\|_{2^*}^2}.
        \end{align}
        \item When we treat the critical case the following two inequalities, which are the special case of the Hardy--Littlewood--Sobolev inequality(see Lemma \ref{lem:H-L-S-ineq} below), play a crucial role. We denote by $\mathcal{S}^*$ the best constant of the inequality 
        \begin{align}\label{eq:upper_critical_constant} 
            \mathcal{S}^*\left(\int_{\mathbb{R}^3} (I_{\alpha}*|w|^{3+\alpha})|w|^{3+\alpha}\,dx\right)^{\frac{1}{3+\alpha}} \le \int_{\mathbb{R}^3} |\nabla w|^2\,dx,     \ \ \ \mbox{for}\ \ \ w\in D^{1,2}(\mathbb{R}^3), 
        \end{align}
        that is,
        \begin{align*}
            \mathcal{S}^* = \inf_{w \in D^{1, 2}(\mathbb{R}^3)} \frac{\|\nabla w\|_2^2}{\displaystyle\left(\int_{\mathbb{R}^3}(I_{\alpha}*|w|^{3+\alpha})|w|^{3+\alpha}dx\right)^{\frac{1}{3+\alpha}}}.
        \end{align*}
        We denote by $\mathcal{S}_*$ the best constant of the inequality 
        \begin{align}\label{eq:lower_critical_constant} 
            \mathcal{S}_*\left(\int_{\mathbb{R}^3} (I_{\alpha}*|w|^{\frac{3+\alpha}{3}})|w|^{\frac{3+\alpha}{3}}\,dx\right)^{\frac{3}{3+\alpha}} \le \int_{\mathbb{R}^3} |w|^2\,dx,     \ \ \ \mbox{for}\ \ \ w\in L^2(\mathbb{R}^3), 
        \end{align}
        that is,
        \begin{align*}
            \mathcal{S}_* = \inf_{w \in L^{2}(\mathbb{R}^3)} \frac{\|w\|_2^2}{\displaystyle\left(\int_{\mathbb{R}^3}(I_{\alpha}*|w|^{\frac{3+\alpha}{3}})|w|^{\frac{3+\alpha}{3}}dx\right)^{\frac{3}{3+\alpha}}}.
        \end{align*}
(see Moroz and Shaftingen \cite{Moroz-Schaftingen-3} and  Seok \cite{Seok}).
\item We introduce the following norm on $H^1(\mathbb{R}^3)$, which is equivalent to the usual norm on $H^1(\mathbb{R}^3)$(see Lemma \ref{lem:equivalence_of_norm}):
    \begin{align*}
         \|w\|_{a_i, V_i} = \left(a_i\int_{\mathbb{R}^3} |\nabla w|^2\,dx + \int_{\mathbb{R}^3} V_i(x)w^2\,dx\right)^{\frac{1}{2}} \quad \mbox{for}\quad w \in H^1(\mathbb{R}^3)
    \end{align*}
    and we also introduce the $H^1(\mathbb{R}^3)$ norm denoted by 
    \begin{align*}
        \|w\|_{a_i, V_i^{\infty}} = \left(a_i\int_{\mathbb{R}^3} |\nabla w|^2\,dx + V_i^{\infty}\int_{\mathbb{R}^3} |w|^2\,dx\right)^{\frac{1}{2}} \quad\mbox{for}\quad w \in H^1(\mathbb{R}^3).
    \end{align*}
\item The space $\mathscr{H}=H^1(\mathbb{R}^3) \times H^1(\mathbb{R}^3)$ is a Hilbert space and the norm of $\mathscr{H}$ is given by
\begin{align*}
\|(u,v)\|_2^2=\|u\|_{H^1(\mathbb{R}^3)}^2+\|v\|_{H^1(\mathbb{R}^3)}^2.
\end{align*}
We also define for $(u,v)\in \mathscr{H}$
    \begin{align*}
        \|(u, v)\|_{\bm{a},\bm{V}}^2:= \|u\|_{a_1, V_1}^2 + \|v\|_{a_2, V_2}^2\ \ \mbox{and}\ \ \|(u, v)\|_{\bm{a}, \bm{V}^{\infty}}^2 \coloneqq \|u\|_{a_1, V_1^{\infty}}^2 + \|v\|_{a_2, V_2^{\infty}}^2.
    \end{align*}
    \end{itemize}

The organization of the paper is as follows.  
In Section 2, we present some preliminary results.  

In Section 3, we establish a nonlocal splitting lemma, which plays a crucial role in the proof of Theorem A and may be of independent interest. Our argument is inspired by the proof of \cite[Lemma 2.4]{Cassani-Zhang}; however, a more refined analysis enables us to identify the natural lower growth threshold under which the nonlocal splitting structure remains valid. More precisely, we show that the splitting lemma continues to hold under the optimal condition $f(t) = o(|t|^{\alpha/N}) \quad \text{as } t \to 0$ which is dictated by the Hardy--Littlewood--Sobolev framework. This significantly enlarges the admissible class of nonlinearities and provides a unified characterization of the superlinear regime in the nonlocal setting. Although the main existence results are presented for power-type nonlinearities, this generalized splitting lemma serves as a robust analytical foundation for a broader class of nonlocal problems. For completeness, we provide a detailed and self-contained proof.

In Section 4, we prove Theorem A. In Section 4.1 we recall the monotonicity trick due to Jeanjean~\cite{Jeanjean} and verify that the energy functional $I$ satisfies the required conditions. In Section 4.2 we review the results of the ground state solutions to the corresponding limit problem obtained by Matsuzawa \cite{Matsuzawa}. In Section~4.3, we prove that the mountain pass energy is less than the ground state energy of the limit problem. Section~4.4 is devoted to proving the global compactness lemma. In particular, we show that the lemma remains valid even in the upper half critical case, provided $\mu > 0$ is sufficiently large for fixed $\nu > 0$. In Section 4.5, we complete the proof of Theorem~A.  

In Section 5, under the slightly stronger assumption \ref{assumption:potential-2} compared to \ref{assumption:weak differentiable}, we prove Theorem~B by adapting the approach developed by Tang and Chen~\cite{Tang-Chen}.

In Appendix we provide the proof of some lemmas which are given in \cite{Cassani-Zhang} without proof.

\section{Preliminary Results}

In order to obtain results for general space dimensions in Section 3, we write the Riesz potential $I_{\alpha}$ in the following form:
\begin{align*}
I_{\alpha}(x)=\frac{A_{\alpha}}{|x|^{N-\alpha}}\ \ \mbox{where}\ \ A_{\alpha}=\frac{\Gamma\left(\frac{N-\alpha}{2}\right)}{\Gamma\left(\frac{\alpha}{2}\right)\pi^{\frac{\alpha}{2}}2^{\alpha}}.
\end{align*}

To deal with the Choquard term, the following Hardy--Littlewood--Sobolev inequality will be frequently used.
\begin{Lem}[Hardy--Littlewood--Sobolev inequality {\cite[Theorem 4.3]{Lieb-Loss}}]\label{lem:H-L-S-ineq}
Let $r$, $s>1$, $N\in\mathbb{N}$ and $\alpha\in (0,N)$ be numbers which satisfy
\begin{align*}
\frac{1}{r}+\frac{1}{s}=1+\frac{\alpha}{N}.
\end{align*}
There exists a positive constant $C_{\mathrm{HLS}}(N,\alpha, r)$ such that
\begin{align*}
\left|\int_{\mathbb{R}^N}\int_{\mathbb{R}^N}\frac{f(x)h(y)}{|x-y|^{N-\alpha}}dxdy\right|\le C_{\mathrm{HLS}}(N,\alpha, r)\|f\|_r\|h\|_s
\end{align*}
for all $f\in L^r(\mathbb{R}^N)$ and $h\in L^s(\mathbb{R}^N)$.
\end{Lem}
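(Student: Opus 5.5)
The statement is the classical Hardy--Littlewood--Sobolev inequality, cited from \cite[Theorem 4.3]{Lieb-Loss}. I would prove it with the standard route: a weak-type (Marcinkiewicz) bound for the fractional integral, followed by interpolation. Define the Riesz-type operator
\[
T h(x)=\int_{\mathbb{R}^N}\frac{h(y)}{|x-y|^{N-\alpha}}\,dy .
\]
By H\"older's inequality it suffices to show $\|Th\|_{r'}\le C\|h\|_s$, where $\frac1{r'}=1-\frac1r=\frac1s-\frac{\alpha}{N}$. The hypotheses $r,s>1$ and $\frac1r+\frac1s=1+\frac{\alpha}{N}$ force $1<s<\frac{N}{\alpha}$, so $r'\in(s,\infty)$.

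The key step is Hedberg's pointwise estimate. Fix $x$ and $R>0$ and split the kernel into $|x-y|<R$ and $|x-y|\ge R$.
\begin{itemize}
\item For the near part, decompose the ball dyadically. This gives a bound $C R^{\alpha} M h(x)$, with $M$ the Hardy--Littlewood maximal function.
\item For the far part, apply H\"older with exponent $s$. Since $(N-\alpha)s'>N$ (equivalent to $s<N/\alpha$), the kernel is $L^{s'}$-integrable outside $B_R$, giving the bound $C R^{\alpha-N/s}\|h\|_s$.
\end{itemize}
Optimizing in $R$ (take $R=(\|h\|_s/Mh(x))^{s/N}$) yields
\[
|Th(x)|\le C\,(Mh(x))^{1-\alpha s/N}\,\|h\|_s^{\alpha s/N}.
\]
Raising to the power $r'$ and noting $r'(1-\alpha s/N)=s$, we get $\|Th\|_{r'}^{r'}\le C\|h\|_s^{r'-s}\|Mh\|_s^{s}$. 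The maximal theorem ($s>1$) then gives $\|Th\|_{r'}\le C\|h\|_s$.

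The main obstacle is the maximal function bound $\|Mh\|_s\le C\|h\|_s$ for $s>1$. I would take it as standard: it follows from the Vitali covering lemma (weak $(1,1)$) together with the trivial $L^\infty$ bound and Marcinkiewicz interpolation. Apart from this, the argument is an exponent bookkeeping exercise, and the constant depends only on $N,\alpha,r$. This matches the $C_{\mathrm{HLS}}(N,\alpha,r)$ in the statement, since $s$ is determined by $r$.

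Finally, the bilinear form is bounded by $\int |f|\,T|h| \le \|f\|_r\|T|h|\|_{r'}$, which proves the lemma. In the paper itself one simply invokes \cite{Lieb-Loss}; the sketch above records why the exponent relation $\frac1r+\frac1s=1+\frac{\alpha}{N}$ is exactly what scaling forces.
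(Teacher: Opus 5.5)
Your argument is correct, but it is not the route taken by the source the paper relies on. The paper gives no proof of its own and simply cites \cite[Theorem 4.3]{Lieb-Loss}. In your version, the Hedberg splitting, the choice $R=(\|h\|_s/Mh(x))^{s/N}$, and the exponent identities $r'(1-\alpha s/N)=s$ and $r'\alpha s/N=r'-s$ all check out. The degenerate points cause no trouble: $Mh(x)=0$ at a single point forces $h=0$ a.e., and $Mh(x)=\infty$ happens only on a null set, where the pointwise bound is trivial.

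In Lieb--Loss, the non-sharp inequality is obtained by a layer-cake argument:
\begin{itemize}
\item reduce to $f,h\ge 0$ with unit norms;
\item write $f$, $h$ and the kernel via the layer-cake representation;
\item bound each resulting integral $\iint\chi_A(x)\chi_C(x-y)\chi_B(y)\,dx\,dy$, where $A,B$ are level sets of $f,h$ and $C$ is a ball (a level set of the kernel), by $\min\{|A||B|,|A||C|,|B||C|\}$;
\item integrate out the level parameters using $\int_0^\infty p\,a^{p-1}|\{f>a\}|\,da=\|f\|_p^p$.
\end{itemize}

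The two routes trade off as follows.
\begin{itemize}
\item The layer-cake route works directly on the bilinear form and is symmetric in $f$ and $h$. It needs no covering lemma or maximal theorem, and it yields an explicit constant.
\item Your route first proves the operator bound $\|I_\alpha*h\|_{r'}\le C\|h\|_s$, which is exactly the paper's Lemma \ref{lem:Bdd-Riesz}, and then gets the bilinear estimate by H\"older. It is shorter and gives a useful pointwise estimate, but the real analytic work is outsourced to the Hardy--Littlewood maximal theorem.
\end{itemize}
Neither argument gives the sharp constant in the diagonal case $r=s$; that part of Lieb--Loss's theorem rests on rearrangement methods.

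One small point of presentation: your opening sentence announces a weak-type bound for $T$ followed by Marcinkiewicz interpolation. What you actually carry out is Hedberg's pointwise argument, in which interpolation appears only inside the proof of the maximal theorem.
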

From the Hardy--Littlewood--Sobolev inequality we obtain the following lemma.
\begin{Lem}[Boundedness of the Riesz potential]\label{lem:Bdd-Riesz}
Let $1<r<s<\infty$ and suppose that 
\begin{align*}
\frac{1}{r}=\frac{\alpha}{N}+\frac{1}{s}.
\end{align*}
Then there exists a positive constant $C_{\rm{HLS}}(N,\alpha, r)$ such that
\begin{align*}
\|I_{\alpha}*f\|_s\le C_{\rm{HLS}}(N,\alpha, r)\|f\|_r
\end{align*}
for all $f\in L^r(\mathbb{R}^N)$. 
\end{Lem}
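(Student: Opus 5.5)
The estimate follows from Lemma \ref{lem:H-L-S-ineq} by duality. Fix $f\in L^r(\mathbb{R}^N)$ and let $s'$ be the conjugate exponent of $s$, so that $\frac{1}{s}+\frac{1}{s'}=1$. The relation $\frac{1}{r}=\frac{\alpha}{N}+\frac{1}{s}$ is equivalent to
\begin{align*}
\frac{1}{r}+\frac{1}{s'}=1+\frac{\alpha}{N}.
\end{align*}
Since $1<r<s<\infty$, we have $1<s'<\infty$, so the pair $(r,s')$ satisfies the hypotheses of Lemma \ref{lem:H-L-S-ineq}. The constant $C_{\rm HLS}(N,\alpha,r)$ will be the HLS constant for $(r,s')$, multiplied by $A_\alpha$. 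Since $s'$ is determined by $r$, $N$ and $\alpha$, this constant depends only on $(N,\alpha,r)$.

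The plan has three steps.

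\emph{Step 1: a priori estimate for test functions.} For any $h\in L^{s'}(\mathbb{R}^N)$ we apply Lemma \ref{lem:H-L-S-ineq} to $|f|$ and $|h|$, which gives
\begin{align*}
\int_{\mathbb{R}^N}\int_{\mathbb{R}^N}\frac{|f(x)||h(y)|}{|x-y|^{N-\alpha}}\,dx\,dy\le C\|f\|_r\|h\|_{s'}<\infty.
\end{align*}
By Tonelli's theorem, $\int_{\mathbb{R}^N}(I_\alpha*|f|)(y)|h(y)|\,dy$ is finite. Choosing $h$ positive everywhere (for instance a Gaussian) shows that $(I_\alpha*|f|)(y)<\infty$ for a.e. $y$. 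Hence $I_\alpha*f$ is well defined a.e. and satisfies $|I_\alpha*f|\le I_\alpha*|f|$. Fubini's theorem then gives
\begin{align*}
\left|\int_{\mathbb{R}^N}(I_\alpha*f)\,h\,dy\right|\le A_\alpha C\|f\|_r\|h\|_{s'}.
\end{align*}

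\emph{Step 2: duality.} We need $\|g\|_s=\sup\{|\int gh|:\|h\|_{s'}\le1\}$ for $g=I_\alpha*f$, a function not yet known to lie in $L^s$. To avoid assuming $g\in L^s$ a priori, we work with the truncations $g_k=\min(|g|,k)\,\chi_{B_k}$, which belong to $L^s$. We test with
\begin{align*}
h=\operatorname{sgn}(g)\,g_k^{s-1}/\|g_k\|_s^{s-1}.
\end{align*}
Then $\|h\|_{s'}=1$ and $\int gh\ge \|g_k\|_s$. By Step 1, $\|g_k\|_s\le A_\alpha C\|f\|_r$ uniformly in $k$. Monotone convergence gives $\|I_\alpha*f\|_s\le A_\alpha C\|f\|_r$, which is the claim.

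\emph{Main obstacle.} The only subtle point is the justification in Steps 1 and 2: the convolution must be finite a.e., and we must not presuppose $I_\alpha*f\in L^s$. The positivity/Tonelli argument together with the truncation handles both. The rest is exponent bookkeeping.
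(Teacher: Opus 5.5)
Your duality argument is correct. The exponent check $\frac1r+\frac1{s'}=1+\frac{\alpha}{N}$, the Tonelli argument showing $I_\alpha*|f|<\infty$ a.e., and the truncations $g_k$ (which avoid assuming $I_\alpha*f\in L^s$ in advance) all hold up.

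The paper writes no separate proof; it only says the lemma follows from the Hardy--Littlewood--Sobolev inequality (Lemma \ref{lem:H-L-S-ineq}). Your argument is the standard way to make that deduction, and you correctly identify the constant as $A_\alpha$ times the HLS constant for the pair $(r,s')$.
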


We note that since the embedding $H^1(\mathbb{R}^N)\hookrightarrow L^s(\mathbb{R}^N)$ ($s\in [2,2^*]$) is continuous, 
if $u\in H^1(\mathbb{R}^N)$ and $\frac{N+\alpha}{N}\le p\le \frac{N+\alpha}{N-2}$ then by using the Hardy--Littlewood--Sobolev inequality with $r=s=\frac{2N}{N+\alpha}$ we have
\begin{align}\label{eq:convolution-term-est}
\begin{split}
\int_{\mathbb{R}^3}(I_{\alpha}*|u|^p)|u|^pdx&\le C_{\mathrm{HLS}}\left(N,\alpha, {\textstyle\frac{2N}{N+\alpha}}\right)\||u|^p\|_{\frac{2N}{N+\alpha}}\||u|^p\|_{\frac{2N}{N+\alpha}} \\
&=C_{\mathrm{HLS}}\left(N,\alpha, {\textstyle\frac{2N}{N+\alpha}}\right)\|u\|_{\frac{2Np}{N+\alpha}}^{2p} \\
&\le C_{\mathrm{HLS}}\left(N,\alpha, {\textstyle\frac{2N}{N+\alpha}}\right)C_{\mathrm{S}}\left(N,\textstyle\frac{2Np}{N+\alpha}\right)^{2p}\|u\|_{H^1(\mathbb{R}^N)}^{2p},
\end{split}
\end{align}
where $C_{\mathrm{S}}(N,r)$ is a constant determined by the embedding $H^1(\mathbb{R}^N)\hookrightarrow L^{r}(\mathbb{R}^N)$ $(r\in [2,2^*])$. 

Let us recall the nonlocal version of the Brezis--Lieb lemma. 
\begin{Lem}[Nonlocal Brezis--Lieb Lemma {\cite[Lemma 3.4]{Moroz-Schaftingen}}]\label{lem:nonlocal-B-L}
Let $N\in\mathbb{N}$, $\alpha\in (0,N)$, $r\in (\frac{N+\alpha}{2N}, \infty)$ and $\{w_n\}$ be a bounded sequence in $L^{\frac{2Nr}{N+\alpha}}(\mathbb{R}^N)$. If $w_n\to w$ almost everywhere on $\mathbb{R}^N$, then
\begin{align*}
\lim_{n\to\infty}\left\{\int_{\mathbb{R}^N}(I_{\alpha}*|w_n|^r)|w_n|^rdx-\int_{\mathbb{R}^N}(I_{\alpha}*|w_n-w|^r)|w_n-w|^rdx\right\}=\int_{\mathbb{R}^N}(I_{\alpha}*|w|^r)|w|^rdx. 
\end{align*}
\end{Lem}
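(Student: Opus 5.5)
The plan is to reduce the nonlocal statement to the classical (local) Brezis--Lieb lemma in the Lebesgue space $L^{\frac{2N}{N+\alpha}}(\mathbb{R}^N)$, and then to use the Hardy--Littlewood--Sobolev inequality (Lemma \ref{lem:H-L-S-ineq}) together with the boundedness of the Riesz potential (Lemma \ref{lem:Bdd-Riesz}).

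Set $s:=\frac{2N}{N+\alpha}>1$. Since $\{w_n\}$ is bounded in $L^{rs}(\mathbb{R}^N)$ and $w_n\to w$ a.e., Fatou's lemma gives $w\in L^{rs}(\mathbb{R}^N)$.

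\textbf{Step 1: local Brezis--Lieb for $|\cdot|^r$.} First I show that
\begin{align*}
|w_n|^r-|w_n-w|^r\to |w|^r\quad\text{strongly in } L^{s}(\mathbb{R}^N).
\end{align*}
I use the elementary inequality
\begin{align*}
\bigl||a+b|^r-|a|^r\bigr|\le \varepsilon|a|^r+C_\varepsilon|b|^r,
\end{align*}
valid for every $\varepsilon>0$ when $r\ge1$. When $r<1$ the simpler inequality $\bigl||a+b|^r-|a|^r\bigr|\le |b|^r$ holds. Apply it with $a=w_n-w$ and $b=w$ to the function
\begin{align*}
g_n^\varepsilon:=\Bigl(\bigl||w_n|^r-|w_n-w|^r-|w|^r\bigr|-\varepsilon|w_n-w|^r\Bigr)^+ .
\end{align*}
Then $g_n^\varepsilon\le (C_\varepsilon+1)|w|^r\in L^s$, and $g_n^\varepsilon\to 0$ a.e. By dominated convergence, $\|g_n^\varepsilon\|_s\to0$. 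Since $\||w_n-w|^r\|_s=\|w_n-w\|_{rs}^r$ is uniformly bounded, we get
\begin{align*}
\limsup_n\bigl\||w_n|^r-|w_n-w|^r-|w|^r\bigr\|_s\le C\varepsilon .
\end{align*}
Letting $\varepsilon\to0$ gives the claim. This is the step I expect to need the most care: one must handle both ranges $r\ge1$ and $\frac{N+\alpha}{2N}<r<1$, and one must check that $rs>1$ so that all spaces involved are genuine Lebesgue spaces.

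\textbf{Step 2: algebraic splitting.} Write $A_n:=|w_n|^r-|w_n-w|^r$ and $B_n:=|w_n-w|^r$. By the symmetry of the bilinear form $(f,h)\mapsto\int (I_\alpha*f)h\,dx$,
\begin{align*}
\int(I_\alpha*|w_n|^r)|w_n|^r-\int(I_\alpha*B_n)B_n=\int(I_\alpha*A_n)A_n+2\int(I_\alpha*A_n)B_n .
\end{align*}
Lemma \ref{lem:H-L-S-ineq} with exponents $r=s=\frac{2N}{N+\alpha}$ shows that this bilinear form is continuous on $L^s\times L^s$. Combined with Step 1, this gives
\begin{align*}
\int(I_\alpha*A_n)A_n\to\int(I_\alpha*|w|^r)|w|^r .
\end{align*}

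\textbf{Step 3: the cross term.} By Lemma \ref{lem:Bdd-Riesz} with $\frac1s-\frac{\alpha}{N}=\frac{N-\alpha}{2N}$, we have $I_\alpha*A_n\to I_\alpha*|w|^r$ strongly in $L^{\frac{2N}{N-\alpha}}$, which is the dual space of $L^s$. On the other hand, $B_n$ is bounded in $L^s$ and $B_n\to0$ a.e. Since $L^s$ is reflexive, every subsequence of $B_n$ has a weakly convergent subsequence, and the a.e. limit identifies the weak limit as $0$; hence $B_n\rightharpoonup0$ in $L^s$. A strongly convergent sequence paired with a weakly null sequence gives
\begin{align*}
\int(I_\alpha*A_n)B_n\to0 .
\end{align*}
Combining Steps 2 and 3 yields the asserted identity.
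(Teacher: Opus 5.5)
Your proposal is correct and follows the standard Moroz--Van Schaftingen argument, which the paper cites here without giving a proof: local Brezis--Lieb convergence $|w_n|^r-|w_n-w|^r\to|w|^r$ in $L^{\frac{2N}{N+\alpha}}(\mathbb{R}^N)$, the algebraic splitting of the Riesz energy, and a strong--weak pairing in $L^{\frac{2N}{N-\alpha}}\times L^{\frac{2N}{N+\alpha}}$ via Hardy--Littlewood--Sobolev. The only differences are minor: you prove the local step directly by the $\varepsilon$-truncation argument (treating $r<1$ via subadditivity) instead of quoting Brezis--Lieb, and your identification of the weak limit of $|w_n-w|^r$ is precisely Lemma \ref{lemma:ae_to_weak}.
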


We next recall Lions' vanishing lemma.
    \begin{Lem}[{\cite[Lemma I.1]{P.L.Lions}, \cite[Lemma 1.21]{Willem}}]\label{Lions-th}
    Suppose that $\{w_n\}$ is a bounded sequence in $H^1(\mathbb{R}^3)$. If $\{w_n\}$ satisfies
    \begin{align*}
\lim_{n\to\infty}\sup_{y\in\mathbb{R}^3}\int_{B_R(y)}|w_n|^2dx=0
    \end{align*}
    for some $R>0$, then $w_n\to 0$ as $n\to\infty$ in $L^r(\mathbb{R}^3)$ for any $r\in (2,6)$.
    \end{Lem}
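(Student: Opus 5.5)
This is the classical vanishing lemma of Lions, and I would prove it by combining a local interpolation estimate on balls with a covering of $\mathbb{R}^3$ by balls of bounded overlap. Set $\varepsilon_n:=\sup_{y\in\mathbb{R}^3}\int_{B_R(y)}|w_n|^2dx\to 0$ and $M:=\sup_n\|w_n\|_{H^1(\mathbb{R}^3)}<\infty$. First I treat the exponent $r=\frac{10}{3}$, where the interpolation exponents make the covering argument work directly. Then I obtain every other $r\in(2,6)$ by interpolation.

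\textbf{Local estimate.} For a fixed ball $B=B_R(y)$, Hölder interpolation between $L^2(B)$ and $L^6(B)$ with $\frac{3}{10}=\frac{\theta}{2}+\frac{1-\theta}{6}$ gives $\theta=\frac25$. The Sobolev embedding $H^1(B)\hookrightarrow L^6(B)$ has a constant $C$ depending only on $R$, hence uniform in $y$ by translation invariance. Together these yield
\begin{align*}
\int_{B}|w_n|^{10/3}dx\le \|w_n\|_{L^2(B)}^{\frac{4}{3}}\|w_n\|_{L^6(B)}^{2}\le C\,\|w_n\|_{L^2(B)}^{\frac43}\|w_n\|_{H^1(B)}^{2}\le C\,\varepsilon_n^{\frac23}\|w_n\|_{H^1(B)}^2 .
\end{align*}
The key point is that the power of the $H^1(B)$ norm is exactly $2$. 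This makes the right-hand side additive over a covering.

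\textbf{Covering.} I cover $\mathbb{R}^3$ by balls $B_R(y_k)$ with $y_k$ ranging over a lattice $(R/\sqrt3)\mathbb{Z}^3$ (or any suitable spacing), so that each point of $\mathbb{R}^3$ lies in at most $K$ balls, where $K$ is a universal constant. Summing the local estimate over $k$ gives
\[
\int_{\mathbb{R}^3}|w_n|^{10/3}dx\le CK\varepsilon_n^{2/3}\|w_n\|_{H^1(\mathbb{R}^3)}^2\le CKM^2\varepsilon_n^{2/3}\to0 .
\]

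\textbf{Other exponents.} For $r\in(2,\frac{10}{3})$, I interpolate $\|w_n\|_r\le\|w_n\|_2^{1-t}\|w_n\|_{10/3}^{t}$ with $t\in(0,1]$. The $L^2$ norm is bounded by $M$, so the right-hand side tends to $0$. For $r\in(\frac{10}{3},6)$, I interpolate between $L^{10/3}$ and $L^6$ in the same way, using that $\|w_n\|_6\le C_{\mathrm{S}}M$ by the Sobolev embedding.

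The only delicate step is the choice of the exponent in the local estimate so that $\|w_n\|_{H^1(B)}$ appears squared. With any other exponent, the sum over balls would not be controlled by $\|w_n\|_{H^1(\mathbb{R}^3)}^2$. This is why I work at $r=\frac{10}{3}$ first and reach the general case by interpolation.
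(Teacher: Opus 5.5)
Your argument is correct. It is the standard proof of \cite[Lemma 1.21]{Willem}, which the paper quotes without proof: local H\"older--Sobolev interpolation on balls at the exponent $r=\frac{10}{3}$, a bounded-overlap covering, then interpolation to the other exponents. One small correction to your closing remark: for $r\in(\frac{10}{3},6)$ the local $H^1$-power $s$ exceeds $2$, and the sum is still controlled because $\sum_k\|w_n\|_{H^1(B_k)}^{s}\le\bigl(\sum_k\|w_n\|_{H^1(B_k)}^{2}\bigr)^{s/2}$, so $\frac{10}{3}$ is the smallest exponent for which the covering step works directly, not the only one.
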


    To deal with the nonconstant potential term, we need the following lemmas. We first recall the Hardy inequality.
    \begin{Lem}[The Hardy inequality]\label{Lem: Hardy_ineq}
    For every $w\in H^1(\mathbb{R}^3)$, $w$ satisfies $w(\,\cdot\,)/|\,\!\cdot\!\,|\in L^2(\mathbb{R}^3)$ and
    \begin{align*}
   \frac{1}{4}\int_{\mathbb{R}^3}\frac{|w(x)|^2}{|x|^2}dx\le \int_{\mathbb{R}^3}|\nabla w(x)|^2dx.
    \end{align*}
    \end{Lem}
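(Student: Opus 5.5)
The statement to prove is the Hardy inequality in $\mathbb{R}^3$: $\frac14\int_{\mathbb{R}^3}|w|^2/|x|^2\,dx\le\int_{\mathbb{R}^3}|\nabla w|^2\,dx$ for $w\in H^1(\mathbb{R}^3)$, including the fact that $w/|x|\in L^2$. I will prove it first for $w\in C_0^\infty(\mathbb{R}^3)$ by an integration-by-parts identity, then extend it to $H^1$ by density and Fatou's lemma.

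\textbf{Smooth case.} Fix $w\in C_0^\infty(\mathbb{R}^3)$ and use the vector field $x/|x|^2$. Its divergence in $\mathbb{R}^3$ is $\operatorname{div}(x/|x|^2)=(3-2)/|x|^2=1/|x|^2$, and it is locally integrable, so no boundary term arises at the origin. To justify the integration by parts I would excise a ball $B_\varepsilon(0)$. The boundary contribution there is bounded by $\int_{\partial B_\varepsilon}|w|^2/\varepsilon\,dS=O(\varepsilon)\to0$. This gives
\begin{align*}
\int_{\mathbb{R}^3}\frac{|w|^2}{|x|^2}\,dx=-\int_{\mathbb{R}^3}2w\,\nabla w\cdot\frac{x}{|x|^2}\,dx .
\end{align*}
By Cauchy--Schwarz, the right-hand side is at most $2\left(\int|w|^2/|x|^2\,dx\right)^{1/2}\|\nabla w\|_2$. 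The left-hand side is finite because $w$ is bounded and $1/|x|^2$ is locally integrable in $\mathbb{R}^3$. Dividing through, we get $\left(\int|w|^2/|x|^2\,dx\right)^{1/2}\le 2\|\nabla w\|_2$; the case where the integral is zero is trivial. Squaring yields the inequality with constant $\frac14$.

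\textbf{Extension to $H^1(\mathbb{R}^3)$.} For general $w\in H^1(\mathbb{R}^3)$, take $w_n\in C_0^\infty$ with $w_n\to w$ in $H^1$. Passing to a subsequence, we may assume $w_n\to w$ a.e. Fatou's lemma then gives
\begin{align*}
\int\frac{|w|^2}{|x|^2}\,dx\le\liminf_{n\to\infty}\int\frac{|w_n|^2}{|x|^2}\,dx\le 4\lim_{n\to\infty}\|\nabla w_n\|_2^2=4\|\nabla w\|_2^2 .
\end{align*}
This shows at once that $w/|x|\in L^2(\mathbb{R}^3)$ and that the inequality holds.

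\textbf{Main obstacle.} The only delicate point is the singularity at the origin in the integration by parts, and it is handled by the $\varepsilon$-ball excision above, which works because the dimension is $3>2$. An alternative is to write $w=|x|^{-1/2}\phi$ and expand $|\nabla w|^2$, but the divergence argument is shorter.
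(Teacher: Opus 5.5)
Your proof is correct and complete. The integration by parts against $x/|x|^2$ (with $\operatorname{div}(x/|x|^2)=1/|x|^2$ in $\mathbb{R}^3$ and the $O(\varepsilon)$ boundary term), followed by Cauchy--Schwarz and then density plus Fatou's lemma, is the standard textbook argument. The paper gives no proof of its own and simply cites \cite[Theorem 6.4.10]{Willem-functional-analysis}, so there is no different route to compare.
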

    For the proof of the Hardy inequality, see \cite[Theorem 6.4.10]{Willem-functional-analysis} for example.

We next see that $\|(u,v)\|_{\bm{a}, \bm{V}}$ define a norm on $\mathscr{H}$ which is equivalent to $\|(u,v)\|$. 
\begin{Lem}\label{lem:equivalence_of_norm} Assume that \ref{assumption:continuous}, \ref{assumption:constant at infty} and \ref{assumption:potential} holds. Then there exists constants $\beta_1$, $\beta_2>0$ such that
\begin{align*}
\beta_1\|(u,v)\|^2\le \|(u,v)\|_{\bm{a}, \bm{V}}^2\le \beta_2\|(u,v)\|^2. 
\end{align*}
\end{Lem}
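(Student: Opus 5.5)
Write $\|(u,v)\|^2=\|u\|_{H^1}^2+\|v\|_{H^1}^2$ (the $\mathscr{H}$ norm). The claim is
\[
\beta_1\|(u,v)\|^2\le \|(u,v)\|_{\bm{a},\bm{V}}^2\le \beta_2\|(u,v)\|^2 ,
\]
and we prove the two bounds separately.

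\textbf{Upper bound.} This is immediate. By \ref{assumption:continuous} and \ref{assumption:constant at infty} we have $0\le V_i(x)\le V_i^\infty$ for all $x$. Hence
\[
\|u\|_{a_1,V_1}^2\le \max\{a_1,V_1^\infty\}\,\|u\|_{H^1}^2,
\]
and similarly for $v$. So we may take $\beta_2=\max\{a_1,a_2,V_1^\infty,V_2^\infty\}$.

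\textbf{Lower bound: the obstacle.} The difficulty is that $V_i$ may vanish somewhere. Note that \ref{assumption:potential} with $\lambda>0$ forces $V_1V_2>0$ everywhere, but this gives no uniform lower bound. We therefore cannot bound $\int V_iu^2$ below by $c\|u\|_2^2$ pointwise, and must use compactness of the region where $V_i$ is small.

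\textbf{Splitting the domain.} By \ref{assumption:constant at infty}, $V_i(x)\to V_i^\infty$ as $|x|\to\infty$. We implicitly need $V_i^\infty>0$; this follows from \ref{assumption:potential-const} when $\lambda>0$, and otherwise is the standing positivity assumption. Choose $R>0$ with $V_i(x)\ge V_i^\infty/2$ for $|x|\ge R$. Then
\[
\int_{|x|\ge R}u^2\,dx\le \frac{2}{V_1^\infty}\int V_1u^2\,dx .
\]

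\textbf{The ball $B_R$.} Here we use the Hardy inequality, Lemma \ref{Lem: Hardy_ineq}. On $B_R$ we have $|x|^{-2}\ge R^{-2}$, so
\[
\int_{B_R}u^2\,dx\le R^2\int_{\mathbb{R}^3}\frac{u^2}{|x|^2}\,dx\le 4R^2\|\nabla u\|_2^2 .
\]
(If one prefers to avoid Hardy, the same bound follows from Hölder and Sobolev: $\|u\|_{L^2(B_R)}\le |B_R|^{1/3}\|u\|_6\le C R\|\nabla u\|_2$.) Combining the two regions,
\[
\|u\|_2^2\le 4R^2\|\nabla u\|_2^2+\frac{2}{V_1^\infty}\int V_1u^2\,dx\le C\|u\|_{a_1,V_1}^2,
\qquad C=\max\Bigl\{\frac{4R^2}{a_1},\frac{2}{V_1^\infty}\Bigr\}.
\]
Adding $\|\nabla u\|_2^2\le a_1^{-1}\|u\|_{a_1,V_1}^2$ gives $\|u\|_{H^1}^2\le C'\|u\|_{a_1,V_1}^2$.

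\textbf{Conclusion.} The same argument applies to $v$ with $a_2,V_2$. Summing the two estimates yields the lower bound with $\beta_1=1/\max(C_1',C_2')$.
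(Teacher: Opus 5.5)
Your proof is correct and is essentially the paper's argument: the same $\beta_2$, and the same radius $R$ with $V_i\ge V_i^\infty/2$ outside $B_R(0)$. The only difference is on the ball, where you use Hardy, $\int_{B_R(0)}u^2\,dx\le 4R^2\|\nabla u\|_2^2$, while the paper uses your parenthetical Hölder--Sobolev alternative, $|B_R(0)|^{2/3}\mathcal{S}_3^{-1}\|\nabla u\|_2^2$; either works. Incidentally, $V_i^\infty>0$ needs no appeal to $\lambda$, since $0\le V_i\le V_i^\infty$ together with $V_i\not\equiv V_i^\infty$ in \ref{assumption:constant at infty} already forces it.
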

\begin{proof}
We first note that setting $\beta_2=\max\{a_1, a_2, V_1^{\infty}, V_2^{\infty}\}$ it can be proved easily that
\begin{align*}
\|(u,v)\|_{\bm{a}, \bm{V}}^2\le \beta_2\|(u,v)\|_2^2.
\end{align*}

By \ref{assumption:constant at infty} there exists $R>0$ such that 
\begin{align*}
\frac{V_1^{\infty}}{2}\le V_1(x),\ \frac{V_2^{\infty}}{2}\le V_2(x)\ \ \text{for}\ \ x\in\mathbb{R}^3\setminus B_R(0).
\end{align*}
Hence by the H\"{o}lder inequality and the Sobolev inequality we obtain
\begin{align*}
\int_{\mathbb{R}^3}|u|^2dx&=\int_{B_R(0)}|u|^2dx+\int_{\mathbb{R}^3\setminus B_R(0)}|u|^2dx \\
                     &\le |B_R(0)|^{\frac{2}{3}}\|u\|_6^2+\frac{2}{V_1^{\infty}}\int_{\mathbb{R}^3}V_1(x)u^2dx \\
                     &\le\mathcal{S}_3^{-1}|B_R(0)|^{\frac{2}{3}}\|\nabla u\|^2+\frac{2}{V_1^{\infty}}\int_{\mathbb{R}^3}V_1(x)u^2dx
\end{align*}
and 
\begin{align*}
\|u\|_{H^1(\mathbb{R}^3)}^2\le \max\left\{\frac{\mathcal{S}_3^{-1}|B_R(0)|^{\frac{2}{3}}}{a_1}+1, \frac{2}{V_1^{\infty}}\right\}\|u\|_{a_1, V_1}^2. 
\end{align*}
Similarly it can be shown that
\begin{align*}
\|v\|_{H^1(\mathbb{R}^3)}^2\le \max\left\{\frac{\mathcal{S}_3^{-1}|B_R(0)|^{\frac{2}{3}}}{a_2}+1, \frac{2}{V_2^{\infty}}\right\}\|v\|_{a_2, V_2}^2. 
\end{align*}
Setting 
\begin{align*}
\beta_1=\left(\max\left\{\frac{\mathcal{S}_3^{-1}|B_R(0)|^{\frac{2}{3}}}{a_1}+1, \frac{\mathcal{S}_3^{-1}|B_R(0)|^{\frac{2}{3}}}{a_2}+1, \frac{2}{V_1^{\infty}}, \frac{2}{V_2^{\infty}} \right\}\right)^{-1}
\end{align*}
we obtain
\begin{align*}
\beta_1\|(u,v)\|^2\le \|(u,v)\|_{\bm{a}, \bm{V}}^2.
\end{align*}
The proof is now complete.
\end{proof}

We close this section by giving the following fundamental inequalities, which will be frequently used.
\begin{Lem}\label{lem:potential_term_est}
Assume that \ref{assumption:continuous}, \ref{assumption:constant at infty} and \ref{assumption:potential}, then for any $(u,v)\in \mathscr{H}$ there holds that
\begin{align*}
&\|u\|_{a_1, V_1}^2+\|v\|_{a_2,V_2}^2-2\lambda\int_{\mathbb{R}^3}uvdx\ge (1-\delta)\|(u,v)\|_{\bm{a},\bm{V}}^2,\\
&\|u\|_{a_1, V_1^{\infty}}^2+\|v\|_{a_2, V_2^{\infty}}^2-2\lambda\int_{\mathbb{R}^3}uvdx\ge(1-\delta)\|(u,v)\|_{\bm{a},\bm{V}^{\infty}}^2.
\end{align*}
\end{Lem}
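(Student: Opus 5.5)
The plan is to bound the cross term $2\lambda\int_{\mathbb{R}^3}uv\,dx$ pointwise, using the weighted form of the arithmetic–geometric mean inequality, and then absorb the result into the potential parts of the norms. The gradient parts are never touched by the cross term; since $\delta<1$, they only help.

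For the first inequality I use \ref{assumption:potential}. At every $x$ we have $0\le\lambda\le\delta\sqrt{V_1(x)V_2(x)}$, so
\begin{align*}
2\lambda|u(x)v(x)|\le 2\delta\sqrt{V_1(x)}|u(x)|\sqrt{V_2(x)}|v(x)|\le \delta\left(V_1(x)u(x)^2+V_2(x)v(x)^2\right).
\end{align*}
Integrating over $\mathbb{R}^3$ gives
\begin{align*}
2\lambda\int_{\mathbb{R}^3}uv\,dx\le\delta\left(\int_{\mathbb{R}^3}V_1u^2\,dx+\int_{\mathbb{R}^3}V_2v^2\,dx\right).
\end{align*}
Both integrals are finite for $u,v\in H^1(\mathbb{R}^3)$ because $V_i\le V_i^\infty$ by \ref{assumption:constant at infty}. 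It follows that
\begin{align*}
\|u\|_{a_1,V_1}^2+\|v\|_{a_2,V_2}^2-2\lambda\int_{\mathbb{R}^3}uv\,dx\ge a_1\|\nabla u\|_2^2+a_2\|\nabla v\|_2^2+(1-\delta)\int_{\mathbb{R}^3}\left(V_1u^2+V_2v^2\right)dx.
\end{align*}
Since $1\ge 1-\delta$, the right-hand side is at least $(1-\delta)\|(u,v)\|_{\bm{a},\bm{V}}^2$, which is the first inequality.

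For the second inequality I argue the same way with constants in place of functions. By \ref{assumption:potential-const} we have $\lambda\le\delta\sqrt{V_1^\infty V_2^\infty}$, which I obtain by letting $|x|\to\infty$ in \ref{assumption:potential} and using \ref{assumption:constant at infty}. Hence
\begin{align*}
2\lambda|uv|\le\delta\left(V_1^\infty u^2+V_2^\infty v^2\right).
\end{align*}
The same absorption argument then gives $(1-\delta)\|(u,v)\|_{\bm{a},\bm{V}^\infty}^2$.

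None of these steps is a real obstacle. The one point that needs checking is the derivation of \ref{assumption:potential-const}, namely that the pointwise bound in \ref{assumption:potential} passes to the limit. This holds because the $V_i$ are continuous and have limits at infinity.
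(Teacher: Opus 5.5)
Your proof is correct and matches the paper's argument: you use the pointwise bound $2\lambda|uv|\le\delta(V_1u^2+V_2v^2)$ from \ref{assumption:potential}, integrate it, absorb it into the potential part while keeping the gradient terms at coefficient $1\ge1-\delta$, and repeat the argument with constants via \ref{assumption:potential-const} for the second inequality. As a small simplification, \ref{assumption:potential-const} follows directly from $\lambda\le\delta\sqrt{V_1(x)V_2(x)}\le\delta\sqrt{V_1^{\infty}V_2^{\infty}}$, using $V_i\le V_i^{\infty}$ from \ref{assumption:constant at infty}, so you need neither a limiting argument nor continuity.
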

\begin{proof}
We note that by \ref{assumption:continuous} and \ref{assumption:constant at infty}, $\|u\|_{a_1, V_1}$ and $\|v\|_{a_2, V_2}$ are well-defined. Let us  prove only the first inequality, since the second one can be proved by the same way. 

By \ref{assumption:potential} we have
\begin{align*}
-2\lambda uv\ge -2\delta\sqrt{V_1(x)V_2(x)}|u||v| \ge -\delta(V_1(x)u^2+V_2(x)v^2).
\end{align*}
Hence we have
\begin{align*}
 &\ \|u\|_{a_1, V_1}^2+\|v\|_{a_2,V_2}^2-2\lambda\int_{\mathbb{R}^3}uvdx \\
 \ge&\ \int_{\mathbb{R}^3}(a_1|\nabla u|^2+a_2|\nabla v|^2)dx+(1-\delta)\int_{\mathbb{R}^3}\{V_1(x)u^2+V_2(x)v^2\}dx \\ 
 \ge&\ (1-\delta)\|(u,v)\|_{\bm{a}, \bm{V}}^2.
\end{align*}
This complete the proof.
\end{proof}

\section{Splitting Lemma}

In this section we prove the following splitting lemma.

\begin{Prop}\label{lem:splitting}
Assume $N\in\mathbb{N}$, $N\ge 3$, $(N-4)_+<\alpha<N$. Let $f\in C(\mathbb{R}, \mathbb{R})$ satisfy
\begin{align*}
\lim_{t\to 0}\frac{f(t)}{|t|^{\frac{\alpha}{N}}}=0,\ \ \lim_{|t|\to\infty}\frac{f(t)}{|t|^{\frac{4+\alpha-N}{N-2}}t}\in [0,\infty)
\end{align*}
and let $\{w_n\}$ be a sequence in $H^1(\mathbb{R}^N)$ which satisfies $w_n\rightharpoonup w$ as $n\to\infty$ for some $w\in H^1(\mathbb{R}^N)$ and $w_n(x)\to w(x)$ as $n\to\infty$ almost everywhere in $\mathbb{R}^N$. Then, there exists a subsequence of $\{w_n\}$ (still denoted $\{w_n\}$), such that 
\begin{align*}
&\int_{\mathbb{R}^N}\{(I_{\alpha}*F(w_n))f(w_n)-(I_{\alpha}*F(w_n-w))f(w_n-w)-(I_{\alpha}*F(w))f(w)\}\varphi dx \\ 
&\ \ \ \ \ \ \ \ \ \ \ \ \ \ \ \ \ \ \ \ 
 \ \ \ \ \ \ \ \  =o_n(1)\|\varphi\|_{H^1(\mathbb{R}^N)},
\end{align*}
where $F(t)=\int_0^t f(s)ds$ and $o_n(1)\to 0$ as $n\to\infty$ uniformly in $\varphi\in H^1(\mathbb{R}^N)$. 
\end{Prop}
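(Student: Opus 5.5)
Set $v_n := w_n - w$, so that $v_n \rightharpoonup 0$ in $H^1(\mathbb{R}^N)$ and $v_n \to 0$ almost everywhere. Write $2_\alpha := \frac{2N}{N+\alpha}$. The growth hypotheses give, for every $\varepsilon>0$, a bound
\begin{align*}
|f(t)| \le \varepsilon |t|^{\frac{\alpha}{N}} + C_\varepsilon |t|^{\frac{\alpha+2}{N-2}},
\qquad
|F(t)| \le \varepsilon |t|^{\frac{N+\alpha}{N}} + C_\varepsilon |t|^{\frac{N+\alpha}{N-2}} .
\end{align*}
By Sobolev, $F(u)$ is bounded in $L^{2_\alpha}$ uniformly on bounded subsets of $H^1$. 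By Lemma \ref{lem:Bdd-Riesz}, $I_\alpha * F(u)$ is then bounded in $L^{2N/(N-\alpha)}$. Also $f(u)\varphi \in L^{2_\alpha}$ with
\begin{align*}
\|f(u)\varphi\|_{2_\alpha} \le C\,\|\varphi\|_{H^1}.
\end{align*}
To see this, apply H\"older with the pairs $\bigl(\frac{2N}{\alpha}, 2\bigr)$ for the small-$t$ part and $\bigl(\frac{2N}{\alpha+2}, 2^*\bigr)$ for the large-$t$ part. Everything is therefore controlled by the Hardy--Littlewood--Sobolev inequality (Lemma \ref{lem:H-L-S-ineq}) with exponents $r=s=2_\alpha$.

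\textbf{Decomposition.} Expand the trilinear difference into two pieces:
\begin{align*}
A_n &= \int \bigl(I_\alpha * [F(w_n)-F(v_n)-F(w)]\bigr) f(w_n)\varphi, \\
B_n &= \int (I_\alpha * F(v_n))\,[f(w_n)-f(v_n)-f(w)]\,\varphi + \int (I_\alpha * F(w))\,[f(w_n)-f(w)]\,\varphi \\
&\quad - \int \bigl(I_\alpha * F(v_n)\bigr) f(w)\varphi .
\end{align*}
\textbf{Step 1.} Show $F(w_n)-F(v_n)-F(w)\to 0$ in $L^{2_\alpha}$. This is a Brezis--Lieb type argument, using $|F(a+b)-F(a)|\le \varepsilon |a|^{\frac{N+\alpha}{N}}+\varepsilon|b|^{\frac{N+\alpha}{N}}+C_\varepsilon(\cdots)$ and dominated convergence on the "$w$-part" (this is standard, as in \cite{Moroz-Schaftingen}). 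Combined with the uniform bound on $\|f(w_n)\varphi\|_{2_\alpha}$, it gives $A_n = o_n(1)\|\varphi\|_{H^1}$.

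\textbf{Step 2.} The term $\int (I_\alpha * F(w))[f(w_n)-f(w)]\varphi$ and the cross term $\int (I_\alpha * F(v_n))f(w)\varphi$ go to zero uniformly in $\varphi$. For the cross term, use $I_\alpha*F(v_n)\rightharpoonup 0$ weakly in $L^{2N/(N-\alpha)}$ (since $F(v_n)\rightharpoonup 0$ in $L^{2_\alpha}$). The uniformity in $\varphi$ needs more: one shows $f(w)\cdot(I_\alpha*F(v_n)) \to 0$ in the dual norm by truncating $w$ to a compact set where it is bounded, and then using local compactness of $v_n$ in $L^s_{\mathrm{loc}}$. The other term is handled the same way.

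\textbf{Step 3.} The main obstacle is the term with $f(w_n)-f(v_n)-f(w)$ tested against the non-small factor $I_\alpha*F(v_n)$. Here uniformity in $\varphi$ must be obtained from the local smallness of $f(w_n)-f(v_n)-f(w)$. My plan follows \cite[Lemma 2.4]{Cassani-Zhang}.

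First, fix $\varepsilon>0$ and a large ball $B_R$ outside of which $w$ is small in $H^1$.

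On $B_R$, truncate further to $\{|w|\le M\}$. There $f(w_n)-f(v_n)-f(w)\to 0$ in $L^{2N/(\alpha+2)}(B_R)$ by Vitali's theorem, and the remaining set has small measure.

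Outside $B_R$, use the elementary estimate
\begin{align*}
|f(a+b)-f(a)| \le \varepsilon\bigl(|a|^{\frac{\alpha}{N}}+|b|^{\frac{\alpha}{N}}\bigr) + C_\varepsilon |b|\cdots ,
\end{align*}
applied with $a=v_n$, $b=w$, together with the smallness of $\|w\|_{H^1(\mathbb{R}^N\setminus B_R)}$.

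The delicate point is the sublinear exponent $\alpha/N<1$ near $0$. Since $f$ is not locally Lipschitz near $0$, the usual estimate $|f(a+b)-f(a)|\le \varepsilon|b|+\dots$ fails. Instead I would exploit uniform continuity of $g(t):=f(t)/|t|^{\alpha/N}$ on compact sets, with $g(0)=0$, splitting according to whether $|v_n|\le \delta|w|$ or $|v_n|>\delta|w|$. In the region $|w|$ small compared with $|v_n|$, the difference is bounded by a modulus of continuity times $|v_n|^{\alpha/N}$, whose $L^{2N/\alpha}$ norm is bounded. In the complementary region, $|v_n|\lesssim |w|$ and we are back to the $w$-localized estimates.

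Collecting the pieces gives a bound $C\varepsilon\|\varphi\|_{H^1}+o_n(1)\|\varphi\|_{H^1}$, and a diagonal subsequence finishes the proof.
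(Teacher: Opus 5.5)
Your architecture matches the paper's. You use the Hardy--Littlewood--Sobolev inequality, the Brezis--Lieb property of $F$ in $L^{\frac{2N}{N+\alpha}}$, and cross terms handled by local compactness. You then reduce the main term to $\sup_{\|\varphi\|_{H^1}\le 1}\|[f(w_n)-f(w_n-w)-f(w)]\varphi\|_{\frac{2N}{N+\alpha}}\to 0$, which is the content of Lemma \ref{lem:B-L-4}.

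The gap is in Step 3, and it sits at infinity rather than at $0$. Put $\gamma=\frac{\alpha+2}{N-2}$, so that $|t|^{\frac{4+\alpha-N}{N-2}}t=|t|^{\gamma-1}t$ and $\gamma>1$. You never use that $f(t)/(|t|^{\gamma-1}t)$ has a limit as $|t|\to\infty$; you only use $|f(t)|\le\varepsilon|t|^{\alpha/N}+C_\varepsilon|t|^{\gamma}$. With that alone, $|f(w_n)|^{\frac{2N}{\alpha+2}}$ is controlled only by $|w_n|^{2^*}$, which is not uniformly integrable when $w_n-w$ concentrates. Consequently:
\begin{itemize}
\item Vitali's theorem on $B_R\cap\{|w|\le M\}$ has no uniform integrability to rest on.
\item The smallness of $|B_R\cap\{|w|>M\}|$ gives no control of the critical part uniformly in $n$.
\item Uniform continuity of $f/|t|^{\alpha/N}$ on compact sets says nothing where $|v_n|$ is large.
\item The estimate you display outside $B_R$ lacks the $\varepsilon|a|^{\gamma}$ term that large values of $|a|=|v_n|$ require.
\end{itemize}

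These claims are in fact false for $f(t)=|t|^{\gamma-1}t\,(1+\tfrac12\sin t)$. This $f$ satisfies every bound you invoke, including $f(t)=o(|t|^{\alpha/N})$ at $0$. Take $w\in C_c^\infty(\mathbb{R}^N)$ with $w\equiv c\in(0,\pi)$ near $x_0$, and a Lipschitz $U$ with compact support and $U\equiv 1$ on $B_1(0)$. Set $v_n(x)=T_nU((x-x_0)/\epsilon_n)$ with $T_n=\epsilon_n^{-(N-2)/2}\in 2\pi\mathbb{N}$, and $w_n=w+v_n$. Then $\{v_n\}$ is bounded in $H^1(\mathbb{R}^N)$ and $v_n\to 0$ a.e. On $B_{\epsilon_n}(x_0)$ one has $f(w_n)-f(v_n)-f(w)=\tfrac12T_n^{\gamma}\sin c+O(T_n^{\gamma-1})$. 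Since $(N-2)\gamma=\alpha+2$, its norm in $L^{\frac{2N}{\alpha+2}}(B_{\epsilon_n}(x_0))$ stays bounded away from $0$. So does $\|[f(w_n)-f(v_n)-f(w)]v_n\|_{\frac{2N}{N+\alpha}}$, even though $\|v_n\|_{H^1}$ is bounded.

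What is missing is the asymptotic power form of $f$. The paper uses it by splitting $f=\beta|t|^{\gamma-1}t+f_1$ with $f_1(t)=o(|t|^{\gamma})$ as $|t|\to\infty$. For the pure power $P(t)=|t|^{\gamma-1}t$, the difference structure $|P(a+b)-P(a)|\le C|b|(|a|^{\gamma-1}+|b|^{\gamma-1})$ gives $P(w_n)-P(w_n-w)-P(w)\to 0$ in $L^{\frac{2N}{\alpha+2}}$ (Lemma \ref{lem:B-L-3}). This yields the uniform bound after pairing with $\|\varphi\|_{2^*}$, and the corresponding cross terms go as in your Step 2. For $f_1$ your localisation scheme works, because large values now carry a factor $\varepsilon$ (Lemma \ref{lem:B-L-4}).

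Alternatively, prove $|f(a+b)-f(a)|\le\varepsilon(|a|^{\alpha/N}+|a|^{\gamma})+C_\varepsilon(|b|^{\alpha/N}+|b|^{\gamma})$; the regime $|a|\to\infty$, $|b|\le\delta|a|$ is exactly where $\beta$ enters. Then apply dominated convergence to $(|f(w_n)-f(v_n)-f(w)|-\varepsilon(|v_n|^{\alpha/N}+|v_n|^{\gamma}))_+\le C_\varepsilon(|w|^{\alpha/N}+|w|^{\gamma})$. Do this in $L^{\frac{2N}{\alpha}}$ on $\{|w|\le 1\}$, paired with $\|\varphi\|_2$, and in $L^{\frac{2N}{\alpha+2}}$ on $\{|w|>1\}$, paired with $\|\varphi\|_{2^*}$. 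This gives the uniform estimate in one stroke, without the region-by-region analysis.

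Two smaller points. The last term of $B_n$ should carry a plus sign. In Step 2, local compactness of $v_n$ must be combined with a tail estimate for the Riesz kernel, as in Lemma \ref{lem:Riesz-ae}, to obtain local strong convergence of $I_\alpha*F(v_n)$.
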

Proposition \ref{lem:splitting} extends the corresponding result in \cite[Lemma 2.4]{Cassani-Zhang}, where the condition ($f(t)=o(|t|)$ as $t\to 0$) is assumed. In the present work, we relax this assumption to the threshold condition ($f(t)=o(|t|^{\alpha/N})$), which reflects the natural scaling of the Hardy–Littlewood–Sobolev inequality.

Although the overall structure of the proof remains similar, this weaker condition fundamentally alters the behavior of the lower-order term, requiring a substantially more delicate argument based on refined decompositions, interpolation, and small-measure estimates.

Let us prepare some lemmas for the proof of Proposition \ref{lem:splitting}. The first, which will be frequently used, states that any bounded sequence in 
$L^r(\Omega)$ that converges almost everywhere also converges weakly.

\begin{Lem}[{\cite[Proposition 5.4.7]{Willem-functional-analysis}}]\label{lemma:ae_to_weak}
Let $\Omega\subset\mathbb{R}^N$ be a domain and $1<r<\infty$. Suppose that $\{w_n\}$ is a bounded sequence in $L^r(\Omega)$. If $w_n(x)$ converges $w(x)$ almost every $x\in\Omega$, then $w_n\rightharpoonup w$ weakly in $L^r(\Omega)$.
\end{Lem}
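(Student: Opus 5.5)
The plan is to combine reflexivity of $L^r(\Omega)$ for $1<r<\infty$ with an identification of every weak subsequential limit via Egorov's theorem. Uniqueness of that limit will then give convergence of the whole sequence.

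First, by Fatou's lemma, $\int_\Omega |w|^r\,dx\le \liminf_{n\to\infty}\int_\Omega|w_n|^r\,dx\le M^r$, where $M:=\sup_n\|w_n\|_{L^r(\Omega)}$. Hence $w\in L^r(\Omega)$. Since $L^r(\Omega)$ is reflexive, every subsequence of $\{w_n\}$ has a further subsequence $\{w_{n_k}\}$ converging weakly to some $g\in L^r(\Omega)$. It therefore suffices to show that $g=w$ a.e. Then every subsequence has a sub-subsequence converging weakly to the same $w$, and the standard subsequence argument yields $w_n\rightharpoonup w$ for the full sequence.

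To identify $g$, fix a measurable set $A\subset\Omega$ of finite measure and a function $h\in L^\infty(A)$, extended by zero outside $A$, so that $h\in L^{r'}(\Omega)$. Given $\varepsilon>0$, Egorov's theorem provides $E\subset A$ with $|A\setminus E|<\varepsilon$ such that $w_{n_k}\to w$ uniformly on $E$. Then $\int_E (w_{n_k}-w)h\,dx\to0$. On the small set we use H\"older's inequality:
\begin{align*}
\left|\int_{A\setminus E}(w_{n_k}-w)h\,dx\right|\le 2M\|h\|_\infty\,\varepsilon^{1/r'}.
\end{align*}
Hence $\int_\Omega w_{n_k}h\,dx\to\int_\Omega wh\,dx$. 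Weak convergence also gives $\int_\Omega w_{n_k}h\,dx\to\int_\Omega gh\,dx$, so $\int_A (g-w)h\,dx=0$ for all such $A$ and $h$. Taking $A=\Omega\cap B_R(0)$ and $h=\overline{\operatorname{sgn}(g-w)}\chi_A$ gives $g=w$ a.e. on each ball, hence a.e. on $\Omega$.

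The only delicate step is controlling the exceptional set in Egorov's theorem. This is exactly where the uniform $L^r$ bound and the assumption $r>1$ (so that $r'<\infty$ and $\varepsilon^{1/r'}\to0$) are used. This is also why the statement fails for $r=1$, where concentration can occur.
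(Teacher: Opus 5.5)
Your proof is correct. Fatou puts $w$ in $L^r(\Omega)$, and reflexivity supplies weakly convergent subsequences. Your Egorov--H\"older estimate, where $r'<\infty$ is essential, identifies every weak subsequential limit with $w$, and the sub-subsequence principle then finishes the argument. The paper does not prove this lemma itself but only quotes it from Willem's book, so there is no in-text argument to compare yours against.

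There is one simplification available. Your Egorov step never uses the subsequence. It already gives $\int_\Omega w_n h\,dx\to\int_\Omega wh\,dx$ for the full sequence and for every bounded $h$ vanishing outside a set of finite measure. Such $h$ are dense in $L^{r'}(\Omega)$ because $r'<\infty$, and $\|w_n-w\|_{L^r(\Omega)}\le 2M$. So approximating an arbitrary $\phi\in L^{r'}(\Omega)$ by such an $h$ gives $\limsup_n\bigl|\int_\Omega(w_n-w)\phi\,dx\bigr|\le 2M\|\phi-h\|_{L^{r'}(\Omega)}$. Hence $w_n\rightharpoonup w$ follows directly, without appealing to reflexivity or to the sub-subsequence argument.
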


The following two lemmas are essentially based on \cite[Lemma 2.5]{Cassani-Zhang}. 
However, a careful observation shows that the growth condition $h(t)=o(|t|)$ as $t \to 0$ in\cite[Lemma 3.5 (ii)]{Cassani-Zhang} can be relaxed to $h(t) = o(|t|^{\alpha/N})$. 
For the reader's convenience, we provide the proof in Appendix. 

\begin{Lem}[{\cite[Lemma 2.5]{Moroz-Schaftingen}}] \label{lem:B-L-2}
Assume that $N\in\mathbb{N}$, $N\ge 3$ and $(N-4)_+<\alpha<N$ hold.
Let $\Omega\subset\mathbb{R}^N$ be a domain, $h\in C(\mathbb{R}, \mathbb{R})$ satisfy $h(t)=o(|t|^{\frac{\alpha}{N}})$ as $t\to 0$ and there exists $q\in (\frac{\alpha}{N}, \frac{2+\alpha}{N+2}]$ and $C>0$ such that
\begin{align*}
|h(t)|\le C(1+|t|^q)\ \ (t\in\mathbb{R})
\end{align*}
holds and let $\{w_n\}$ be a sequence in $H^1(\Omega)$ which satisfies $w_n\rightharpoonup w$ as $n\to\infty$ for some $w\in H^1(\Omega)$ and $w_n(x)\to w(x)$ as $n\to\infty$ almost everywhere in $\Omega$. Then it holds that
\begin{align*}
\lim_{n\to\infty}\int_{\Omega}\left|H(w_n)-H(w_n-w)-H(w)\right|^{\frac{2N}{N+\alpha}}dx=0,
\end{align*}
where $H(t)=\int_0^t h(s)ds$.
\end{Lem}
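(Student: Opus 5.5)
We will prove Lemma \ref{lem:B-L-2} by a Brezis--Lieb type argument in $L^{\frac{2N}{N+\alpha}}(\Omega)$, in the spirit of \cite[Lemma 2.5]{Moroz-Schaftingen}. Set $r:=\frac{2N}{N+\alpha}$ and $v_n:=w_n-w$, so that $v_n\rightharpoonup 0$ in $H^1(\Omega)$, $v_n\to 0$ a.e., and $\{v_n\}$ is bounded in $L^s(\Omega)$ for $s\in[2,2^*]$. The growth assumptions give the following. Near $0$ we have $|h(t)|\le\varepsilon|t|^{\alpha/N}$ for $|t|\le\delta_\varepsilon$. Globally we have $|h(t)|\le C(|t|^{\alpha/N}+|t|^{q})$. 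Hence $|H(t)|\le C(|t|^{\frac{N+\alpha}{N}}+|t|^{q+1})$. Since $r\frac{N+\alpha}{N}=2$ and $r(q+1)\le r\frac{2N+2+\alpha}{N+2}$, and this last exponent is at most $2^*$ exactly when $q\le\frac{2+\alpha}{N+2}$ (a short computation: $r(q+1)\le\frac{2N}{N-2}$ iff $q+1\le\frac{N+\alpha}{N-2}$, which holds), we get $H(w_n),H(w),H(v_n)$ bounded in $L^r$. The lower bound $q>\alpha/N$ keeps the two growth regimes distinct.

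\textbf{Key pointwise estimate.} For every $\varepsilon>0$ we want
\begin{align*}
|H(a+b)-H(a)|\le \varepsilon\bigl(|a|^{\frac{N+\alpha}{N}}+|a|^{q+1}\bigr)+C_\varepsilon\bigl(|b|^{\frac{N+\alpha}{N}}+|b|^{q+1}\bigr).
\end{align*}
We derive it from the mean value theorem, $|H(a+b)-H(a)|\le|b|\sup_{|\tau|\le1}|h(a+\tau b)|$, followed by Young's inequality applied separately in the two growth regimes. The exponent $\frac{\alpha}{N}$ pairs with $\frac{N+\alpha}{N}$, since $|b||a|^{\alpha/N}\le\varepsilon|a|^{\frac{N+\alpha}{N}}+C_\varepsilon|b|^{\frac{N+\alpha}{N}}$. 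The exponent $q$ pairs with $q+1$ in the same way. Note that $\frac{N+\alpha}{N}>1$ because $\alpha>0$, so Young's inequality applies. This is precisely where $o(|t|)$ can be weakened to $o(|t|^{\alpha/N})$: we only need the dual exponent to be above $1$. We then apply the estimate with $a=v_n$, $b=w$. Define
\begin{align*}
g_{n,\varepsilon}:=\Bigl(|H(w_n)-H(v_n)-H(w)|-\varepsilon\bigl(|v_n|^{\frac{N+\alpha}{N}}+|v_n|^{q+1}\bigr)\Bigr)_+ .
\end{align*}
Then $g_{n,\varepsilon}\le C_\varepsilon(|w|^{\frac{N+\alpha}{N}}+|w|^{q+1})+|H(w)|$. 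The right-hand side lies in $L^r(\Omega)$ and is independent of $n$. Moreover $g_{n,\varepsilon}\to0$ a.e., since $H(v_n)\to H(0)=0$ and $H(w_n)\to H(w)$.

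\textbf{Conclusion.} By dominated convergence, $\int_\Omega g_{n,\varepsilon}^r\to0$. Next we use
\begin{align*}
|H(w_n)-H(v_n)-H(w)|\le g_{n,\varepsilon}+\varepsilon\bigl(|v_n|^{\frac{N+\alpha}{N}}+|v_n|^{q+1}\bigr).
\end{align*}
Taking $L^r$ norms and using the uniform bound $\sup_n\bigl\||v_n|^{\frac{N+\alpha}{N}}+|v_n|^{q+1}\bigr\|_r<\infty$ gives
\begin{align*}
\limsup_n\bigl\|H(w_n)-H(v_n)-H(w)\bigr\|_r\le C\varepsilon .
\end{align*}
Letting $\varepsilon\to0$ finishes the proof.

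\textbf{Expected obstacle.} We expect the main difficulty to be making the pointwise inequality precise when $|a|$ is large and $|b|$ is small, or vice versa, where different growth regimes of $h$ are mixed. The way to handle it is to estimate $\sup|h(a+\tau b)|\le C(|a|^{\alpha/N}+|b|^{\alpha/N}+|a|^q+|b|^q)$ and then apply Young's inequality to each of the four products. The resulting terms $|b|\,|b|^{\alpha/N}$ and $|b|\,|b|^{q}$ are harmless and are absorbed into $C_\varepsilon$. Where $\varepsilon$-smallness is genuinely needed, it comes only from the Young constants. The hypothesis $o(|t|^{\alpha/N})$ is not essential for this $L^r$ statement; only the bound $|h(t)|\le C|t|^{\alpha/N}$ near $0$ matters. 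Finally, we should check that the sets where $w_n$ fails to converge have measure zero, so that the a.e. convergence used above is legitimate.
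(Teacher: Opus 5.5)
Your proof is correct and is essentially the paper's argument: the Appendix runs the same Brezis--Lieb $\varepsilon$-trick, applying Fatou's lemma to $W_n=\varepsilon|w_n-w|^2+C_{H,\varepsilon}^{\frac{2N}{N+\alpha}}|w|^{2^*}-|H(w_n)-H(w_n-w)-H(w)|^{\frac{2N}{N+\alpha}}\ge 0$, which plays the same role as your dominated convergence for $g_{n,\varepsilon}$, and since you only use $r(q+1)\le 2^*$, your version also covers $q$ up to $\frac{2+\alpha}{N-2}$, the range the application to $h=f$ in Proposition~\ref{lem:splitting} actually needs. Your two-regime pointwise bound is in fact the one the argument requires: the paper's bound $|H(a+b)-H(a)-H(b)|\le\frac12\bigl(\varepsilon^{\frac{N+\alpha}{2N}}|a|^{\frac{N+\alpha}{N}}+C_{H,\varepsilon}|b|^{\frac{N+\alpha}{N-2}}\bigr)$ has no $|b|^{\frac{N+\alpha}{N}}$ term and fails for $0<a\ll b\ll 1$ (take $h(t)=|t|^{\beta}$ near $0$ with $\frac{\alpha}{N}<\beta<\frac{\gamma\alpha}{N}$, $b=t$, $a=t^{\gamma}$, $\gamma>1$ close to $1$; the left side is of order $t^{\gamma+\beta}$, the right side is $O\bigl(t^{\gamma\frac{N+\alpha}{N}}+t^{\frac{N+\alpha}{N-2}}\bigr)$, and both of these exponents exceed $\gamma+\beta$), so keeping $C_\varepsilon|b|^{\frac{N+\alpha}{N}}$ as you do is essential; the only slip in your write-up is cosmetic, namely $q+1\le\frac{N+4+\alpha}{N+2}$ rather than $\frac{2N+2+\alpha}{N+2}$, and $r(q+1)\le 2^*$ is equivalent to $q\le\frac{2+\alpha}{N-2}$, as your parenthetical correctly states.
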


\begin{Lem}[{\cite[Lemma 2.5]{Cassani-Zhang}}]\label{lem:B-L-3}
Let $\Omega\subset\mathbb{R}^N$ be a domain and let $\{w_n\}$ be a sequence in $H^1(\Omega)$ which satisfies $w_n\rightharpoonup w$ as $n\to\infty$ for some $w\in H^1(\Omega)$ and $w_n(x)\to w(x)$ as $n\to\infty$ almost everywhere in $\Omega$. For any $1<r-1\le s\le\frac{2N}{N-2}$ and $s>2$ we have
\begin{align*}
\lim_{n\to\infty}\int_{\Omega}\left||w_n|^{r-2}w_n-|w_n-w|^{r-2}(w_n-w)-|w|^{r-2}w\right|^{\frac{s}{r-1}}dx=0.
\end{align*}
\end{Lem}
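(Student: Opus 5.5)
The plan is the classical Brezis--Lieb argument of \cite{Willem} and \cite{Cassani-Zhang}, applied to the function $g(t):=|t|^{r-2}t$. Set $v_n:=w_n-w$, so that $w_n=v_n+w$, $v_n\to 0$ almost everywhere in $\Omega$, and $\{v_n\}$ is bounded in $H^1(\Omega)$ because weakly convergent sequences are bounded. I will use that $\{v_n\}$ is then bounded in $L^s(\Omega)$ for $s\in(2,\frac{2N}{N-2}]$ and that $w\in L^s(\Omega)$. This is the Sobolev embedding $H^1(\Omega)\hookrightarrow L^s(\Omega)$, which holds for the domains actually used later ($\Omega=\mathbb{R}^N$, balls, complements of balls); for a completely general domain this is the one point that must be assumed. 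Write $m:=\frac{s}{r-1}\ge 1$, which holds since $r-1\le s$.

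\textbf{Step 1 (an $\varepsilon$-inequality).} Since $r>2$, $g\in C^1(\mathbb{R})$ with $|g'(t)|=(r-1)|t|^{r-2}$. The mean value theorem gives
\begin{align*}
|g(a+b)-g(a)|\le (r-1)|b|\,(|a|+|b|)^{r-2}\le C\left(|a|^{r-2}|b|+|b|^{r-1}\right).
\end{align*}
Apply Young's inequality with exponents $\frac{r-1}{r-2}$ and $r-1$ to the mixed term. This yields, for every $\varepsilon>0$, a constant $C_\varepsilon>0$ with
\begin{align*}
|g(a+b)-g(a)|\le \varepsilon|a|^{r-1}+C_\varepsilon|b|^{r-1}\quad\text{for all }a,b\in\mathbb{R}.
\end{align*}
Use this with $a=v_n$ and $b=w$.

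\textbf{Step 2 (truncation and dominated convergence).} Define
\begin{align*}
f_n^\varepsilon:=\left(|g(w_n)-g(v_n)-g(w)|-\varepsilon|v_n|^{r-1}\right)_+.
\end{align*}
By Step 1, $0\le f_n^\varepsilon\le |g(w)|+C_\varepsilon|w|^{r-1}=(1+C_\varepsilon)|w|^{r-1}$. Hence $(f_n^\varepsilon)^m\le (1+C_\varepsilon)^m|w|^s\in L^1(\Omega)$. Moreover, $g$ is continuous, $v_n\to0$ and $w_n\to w$ almost everywhere, so the expression inside the absolute value tends to $g(w)-g(0)-g(w)=0$ almost everywhere, and therefore $f_n^\varepsilon\to0$ almost everywhere. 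By dominated convergence, $\int_\Omega (f_n^\varepsilon)^m\,dx\to0$ for each fixed $\varepsilon$.

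\textbf{Step 3 (conclusion).} Pointwise we have $|g(w_n)-g(v_n)-g(w)|\le f_n^\varepsilon+\varepsilon|v_n|^{r-1}$. Using convexity, $(x+y)^m\le 2^{m-1}(x^m+y^m)$, we obtain
\begin{align*}
\int_\Omega|g(w_n)-g(v_n)-g(w)|^{m}dx\le 2^{m-1}\int_\Omega (f_n^\varepsilon)^m dx+2^{m-1}\varepsilon^m\sup_k\|v_k\|_{L^s(\Omega)}^s.
\end{align*}
Taking $\limsup_{n\to\infty}$ and then letting $\varepsilon\to0$ gives the claim. The only step needing care is Step 1, where $r>2$ (i.e.\ $r-1>1$) is used so that both Young exponents are finite. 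The $L^s$-boundedness of $\{v_n\}$ is what makes the $\varepsilon$-term uniformly small, and this is where the restriction $s\le\frac{2N}{N-2}$ enters.
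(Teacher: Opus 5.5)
Your argument is correct, and it takes a genuinely different route from the paper's Appendix proof.

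\textbf{The paper's route.} The paper first derives, for $W(t)=|t|^{r-2}t$, the pointwise bound $|W(w_n)-W(w_n-w)-W(w)|^{\frac{s}{r-1}}\le C\bigl(|w|^{\frac{s}{r-1}}|w_n-w|^{\frac{(r-2)s}{r-1}}+|w|^s\bigr)$, together with its symmetric version in which $w$ and $w_n-w$ are swapped. It then splits $\Omega$ into three pieces:
\begin{itemize}
\item on $\Omega\setminus B_R(0)$, H\"older and the smallness of the tail $\int_{\Omega\setminus B_R(0)}|w|^s\,dx$ make the integral small;
\item on $\Omega\cap B_R(0)\cap\{|w_n-w|\le\rho\}$, the symmetric bound with small $\rho$ and $|B_R(0)|<\infty$ does the job;
\item on $\Omega\cap B_R(0)\cap\{|w_n-w|>\rho\}$, the measure tends to zero by the strong convergence $w_n\to w$ in $L^2(\Omega\cap B_R(0))$, so $\int|w|^s$ over this set vanishes.
\end{itemize}

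\textbf{Your route.} The Brezis--Lieb device (the $\varepsilon$-Young inequality $|g(a+b)-g(a)|\le\varepsilon|a|^{r-1}+C_\varepsilon|b|^{r-1}$, the truncation $f_n^\varepsilon$ dominated by $(1+C_\varepsilon)|w|^{r-1}$, and dominated convergence) replaces this entire spatial decomposition. It uses only three facts: a.e.\ convergence, $w\in L^s(\Omega)$, and $\sup_n\|w_n-w\|_{L^s(\Omega)}<\infty$. You use $m=\frac{s}{r-1}\ge1$ correctly for the convexity step.

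\textbf{Comparison.} Your proof is shorter and holds verbatim for any sequence bounded in $L^s$ and converging a.e.\ on an arbitrary measure space. In particular it needs no local compactness on $\Omega\cap B_R(0)$, which on an irregular domain is an extra hypothesis beyond the Sobolev embedding that both proofs assume anyway. The paper's version is more hands-on and produces explicit pointwise estimates, but it leans on Rellich-type local strong convergence; convergence in measure on bounded sets would in fact suffice there.
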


The following lemma plays very important role in the proof of Proposition \ref{lem:splitting}. 
\begin{Lem}\label{lem:B-L-4}
Assume that $N\ge 3$, $\alpha\in ((N-4)_+, N)$ and $h\in C(\mathbb{R}, \mathbb{R})$ satisfies 
\begin{align}\label{eq:condition-on-h}
\lim_{t\to 0}h(t)|t|^{-\frac{\alpha}{N}}=0\ \ \text{and}\ \ \lim_{|t|\to\infty}h(t)|t|^{-\frac{\alpha+2}{N-2}}=0.
\end{align}
Then, there exists a subsequence of $\{w_n\}$(still denoted $\{w_n\}$) such that
\begin{align*}
\int_{\mathbb{R}^N}|h(w_n)-h(w_n-w)-h(w)|^{\frac{2N}{N+\alpha}}|\varphi|^{\frac{2N}{N+\alpha}}dx=o_n(1)\|\varphi\|_{H^1(\mathbb{R}^N)}^{\frac{2N}{N+\alpha}},
\end{align*}
where $o_n(1)\to 0$ as $n\to\infty$ uniformly in $\varphi\in H^1(\mathbb{R}^N)$. 
\end{Lem}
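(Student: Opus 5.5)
Throughout, write $\kappa:=\frac{2N}{N+\alpha}\in(1,2)$. We read the statement with the standing hypotheses of Proposition \ref{lem:splitting}: $w_n\rightharpoonup w$ in $H^1(\mathbb{R}^N)$ and $w_n\to w$ a.e. In particular $\{w_n\}$, $\{w_n-w\}$ and $w$ are bounded in $L^2\cap L^{2^*}$ by a constant $K$.

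The plan is a three-piece truncation of $h$, with each piece estimated uniformly in $\varphi$ by H\"older's inequality. Fix $\varepsilon>0$. By \eqref{eq:condition-on-h} there are $0<\delta<M$ such that
\begin{align*}
|h(t)|\le\varepsilon|t|^{\frac{\alpha}{N}} \ \text{ for } |t|\le 2\delta,\qquad |h(t)|\le\varepsilon|t|^{\frac{\alpha+2}{N-2}} \ \text{ for } |t|\ge M/2 .
\end{align*}
Using continuous cut-offs, write $h=h_1+h_2+h_3$. Here $h_1$ is supported in $[-2\delta,2\delta]$, $h_3$ is supported in $\{|t|\ge M/2\}$, and $h_2$ is continuous, bounded, and vanishes on $[-\delta,\delta]$ and outside $[-M,M]$. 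Then $|h_1(t)|\le\varepsilon|t|^{\alpha/N}$ and $|h_3(t)|\le\varepsilon|t|^{(\alpha+2)/(N-2)}$ for all $t$. Moreover $h_2$ is uniformly continuous and satisfies $|h_2(t)|\le C_\varepsilon|t|^{\alpha/N}$. Since $|a+b+c|^\kappa\le 3^{\kappa-1}(|a|^\kappa+|b|^\kappa+|c|^\kappa)$, it suffices to treat each $h_j$ separately.

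For $h_1$, take $z\in\{w_n,w_n-w,w\}$. H\"older with exponents $\frac{N+\alpha}{\alpha}$ and $\frac{N+\alpha}{N}$ gives
\begin{align*}
\int|h_1(z)|^\kappa|\varphi|^\kappa\le\varepsilon^\kappa\int|z|^{\frac{2\alpha}{N+\alpha}}|\varphi|^\kappa\le\varepsilon^\kappa\|z\|_2^{\frac{2\alpha}{N+\alpha}}\|\varphi\|_2^\kappa .
\end{align*}
For $h_3$ the key observation is $\kappa\frac{\alpha+2}{N-2}+\kappa=2^*$. H\"older in $L^{2^*}$ therefore gives
\begin{align*}
\int|h_3(z)|^\kappa|\varphi|^\kappa\le\varepsilon^\kappa\|z\|_{2^*}^{2^*-\kappa}\|\varphi\|_{2^*}^\kappa .
\end{align*}
Both bounds are $\le C K\varepsilon^\kappa\|\varphi\|_{H^1}^\kappa$, uniformly in $n$ and $\varphi$.

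The main obstacle is the middle piece $h_2$, because $\varphi$ is only in $L^2$ and $\kappa<2$ on an unbounded domain. The saving fact is that $h_2(w_n)-h_2(w_n-w)$ vanishes outside
\begin{align*}
A_n:=\{|w_n|\ge\delta\}\cup\{|w_n-w|\ge\delta\},\qquad |A_n|\le 2K^2\delta^{-2}.
\end{align*}
Thus, on any set $E\subset A_n$, H\"older gives $\int_E|\varphi|^\kappa\le|E|^{1-\kappa/2}\|\varphi\|_2^\kappa$.

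Choose $\eta>0$ and $\rho>0$ with $|h_2(s+b)-h_2(s)|\le\eta$ whenever $|b|\le\rho$. Then choose $R$ large so that $\|w\|_{L^2(|x|>R)}$ and $|\{|x|>R,\ |w|>\rho\}|$ are small. Outside $B_R$ we bound $|h_2(w_n)-h_2(w_n-w)|$ by $\eta$ on $A_n\cap\{|w|\le\rho\}$ and by $2\|h_2\|_\infty$ on $\{|w|>\rho\}$. The term $h_2(w)$ is bounded by $C_\varepsilon|w|^{\alpha/N}$, handled by the $h_1$-type H\"older estimate on $\{|x|>R\}$. Each contribution is then (small)$\cdot\|\varphi\|_2^\kappa$.

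Inside $B_R$, the integrand $g_n:=|h_2(w_n)-h_2(w_n-w)-h_2(w)|^\kappa$ is uniformly bounded and tends to $0$ a.e., since $h_2(0)=0$ and $h_2$ is continuous. By Egorov there is $E\subset B_R$ of small measure off which $g_n\to0$ uniformly. The integral over $E$ is at most $C|E|^{1-\kappa/2}\|\varphi\|_2^\kappa$, and the integral over $B_R\setminus E$ is at most $\sup_{B_R\setminus E}g_n\cdot|B_R|^{1-\kappa/2}\|\varphi\|_2^\kappa\to0$.

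Collecting the three pieces, $\limsup_n\sup_{\varphi\ne0}\|\varphi\|_{H^1}^{-\kappa}\int|h(w_n)-h(w_n-w)-h(w)|^\kappa|\varphi|^\kappa\le C\varepsilon^\kappa$ + (terms made arbitrarily small by the choice of $\eta$, $R$ and $E$). Letting $\varepsilon\to0$ proves the claim. Passing to a subsequence is not even needed beyond the one already giving a.e. convergence.
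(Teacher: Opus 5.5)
\textbf{Assessment.} Your proof is correct, but it follows a genuinely different route from the paper's.

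\textbf{How the two arguments differ.} The paper splits the \emph{domain}. It uses the single global bound $|h(t)|\le c(\varepsilon)|t|^{\alpha/N}+\varepsilon^{\frac{N+\alpha}{2N}}|t|^{\frac{\alpha+2}{N-2}}$ throughout.
\begin{itemize}
\item Outside $B_R$, it separates the sets $\{|w_n|\le s_0\}$, $\{|w_n|\ge s_1\}$ and $D_n=\{s_0\le|w_n|\le s_1\}$, each intersected with $\{|w|\le\delta\}$, plus the small-measure set $\{|x|>R,\ |w|\ge\delta\}$. On $D_n$ it uses uniform continuity of $h$ on a compact interval together with $\|\varphi\|_{4N/(N+\alpha)}$. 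This embedding is exactly where $\alpha>(N-4)_+$ enters.
\item Inside $B_R$, it uses strong $L^2(B_R)$ convergence along a subsequence and an $L^2$ dominating function. It then splits off the sets $\{|w_n-w|\ge1\}$ and $\{|w|\ge L\}$ and applies dominated convergence on the rest.
\end{itemize}

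You instead split the \emph{nonlinearity} as $h=h_1+h_2+h_3$.
\begin{itemize}
\item The two extreme pieces are $\varepsilon$-small against the endpoint powers for every $t$. They need no cancellation at all, and one H\"older inequality each disposes of them.
\item All the delicate work is concentrated on the bounded, uniformly continuous $h_2$, which vanishes near $0$. Chebyshev gives $|A_n|\lesssim\delta^{-2}$, and H\"older against $\|\varphi\|_2$ replaces the $L^{4N/(N+\alpha)}$ embedding.
\item Inside the ball, Egorov's theorem, which needs only the a.e.\ convergence, replaces the local strong convergence.
\end{itemize}

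\textbf{What each route buys.} Yours has lighter bookkeeping: fewer regions, no dominating function and no further subsequence. It also never uses $\alpha>(N-4)_+$, so Lemma~\ref{lem:B-L-4} in fact holds for every $\alpha\in(0,N)$. The restriction is still needed elsewhere in Section~3, for instance in Lemma~\ref{lem:B-L-3} with $r=\frac{N+\alpha}{N-2}$. The paper's route stays closer to the Cassani--Zhang argument it adapts.

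\textbf{One cosmetic fix.} Choose $\delta<M/2$ rather than merely $\delta<M$. Only then do your cut-offs guarantee that $h_2$ vanishes on $[-\delta,\delta]$ and outside $[-M,M]$.
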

\begin{Rem}
It should be noted that in \cite{Cassani-Zhang}, the authors assume the condition ($h(t)=o(|t|)$ as $t\to 0$). In contrast, our result is established under the weaker threshold condition ($h(t)=o(|t|^{\alpha/N})$).

This relaxation is substantial, as it corresponds to the natural lower growth dictated by the Hardy–Littlewood–Sobolev framework. In fact, the exponent ($\alpha/N$) represents the minimal rate ensuring the integrability required for the nonlocal term, and thus characterizes the threshold between admissible and non-admissible nonlinearities.

Consequently, our result significantly enlarges the class of nonlinearities for which the splitting lemma remains valid.
\end{Rem}

\begin{proof}[Proof of Lemma \ref{lem:B-L-4}] 
Take any $\varepsilon > 0$ and fix it. By \eqref{eq:condition-on-h}, there exist $s_0 = s_0(\varepsilon) > 0$ and $s_1 = s_1(\varepsilon) > 1$ such that
\begin{align}\label{eq:h-asym-1}
\begin{split}
&|h(t)| \le \varepsilon^{\frac{N+\alpha}{2N}} |t|^{\frac{\alpha}{N}} \quad \text{for } |t| \le 2s_0, \\
&|h(t)| \le \varepsilon^{\frac{N+\alpha}{2N}} |t|^{\frac{\alpha+2}{N-2}} \quad \text{for } |t| \ge s_1 - 1.
\end{split}
\end{align}
Since $h$ is locally uniformly continuous, there exists $\delta = \delta(\varepsilon) \in (0, s_0)$ such that
\begin{align}\label{eq:h-asym-2}
|h(t_1) - h(t_2)| < s_0 \varepsilon^{\frac{N+\alpha}{2N}} \quad \text{for } |t_1 - t_2| < \delta,\ |t_1|, |t_2| \le s_1 + 1.
\end{align}
Moreover, there exists $c(\varepsilon) > 0$ such that
\begin{align}\label{eq:h-asym-3}
|h(t)| \le c(\varepsilon) |t|^{\frac{\alpha}{N}} + 2^{-1} \varepsilon^{\frac{N+\alpha}{2N}} |t|^{\frac{\alpha+2}{N-2}} \quad \text{for all } t \in \mathbb{R}.
\end{align}
Using well-know inequality
\begin{align}\label{eq:well-known}
(a + b)^{\frac{2N}{N+\alpha}} \le 2^{\frac{2N}{N+\alpha}-1} \left( a^{\frac{2N}{N+\alpha}} + b^{\frac{2N}{N+\alpha}} \right)\le 2\left(a^{\frac{2N}{N+\alpha}}+b^{\frac{2N}{N+\alpha}}\right) \quad \text{for } a, b \ge 0,
\end{align}
and applying it to \eqref{eq:h-asym-3}, we obtain
\begin{align}\label{eq:h-asym-4}
\begin{split}
|h(t)|^{\frac{2N}{N+\alpha}} 
&\le 2c(\varepsilon)^{\frac{2N}{N+\alpha}} |t|^{\frac{2\alpha}{N+\alpha}} + \varepsilon |t|^{\frac{2N}{N+\alpha} \cdot \frac{2+\alpha}{N-2}} \\
&=:\widetilde{c}(\varepsilon)|t|^{\frac{2\alpha}{N+\alpha}}+\varepsilon|t|^{\frac{2N}{N+\alpha} \cdot \frac{2+\alpha}{N-2}}\ \ \text{for}\ \  t \in \mathbb{R}.
\end{split}
\end{align}

\noindent
\textbf{Step 1:} We obtain an estimate
\begin{align*}
\int_{\mathbb{R}^N\setminus B_R(0)}|h(w_n)-h(w_n-w)-h(w)|^{\frac{2\alpha}{N+\alpha}}|\varphi|^{\frac{2N}{N+\alpha}}dx
\end{align*}
for some chosen large $R>0$ according to $\varepsilon>0$.

By \eqref{eq:h-asym-4}, there exists $R = R(\varepsilon) > 0$ such that
\begin{align}\label{eq:proof-1}
\begin{split}
\int_{\mathbb{R}^N \setminus B_R(0)} |h(w)\varphi|^{\frac{2N}{N+\alpha}} dx 
\le &\ \widetilde{c}(\varepsilon) \int_{\mathbb{R}^N \setminus B_R(0)} |w|^{\frac{2\alpha}{N+\alpha}} |\varphi|^{\frac{2N}{N+\alpha}} dx \\
&+ \varepsilon \int_{\mathbb{R}^N \setminus B_R(0)} |w|^{\frac{2N}{N+\alpha} \cdot \frac{2+\alpha}{N-2}} |\varphi|^{\frac{2N}{N+\alpha}} dx \\
\le &\ \widetilde{c}(\varepsilon) \left( \int_{\mathbb{R}^N \setminus B_R(0)} |w|^2 dx \right)^{\frac{\alpha}{N+\alpha}} \left( \int_{\mathbb{R}^N \setminus B_R(0)} |\varphi|^2 dx \right)^{\frac{N}{N+\alpha}} \\
&+ \varepsilon \left( \int_{\mathbb{R}^N \setminus B_R(0)} |w|^{\frac{2N}{N-2}} dx \right)^{\frac{\alpha+2}{N+\alpha}} \left( \int_{\mathbb{R}^N \setminus B_R(0)} |\varphi|^{\frac{2N}{N-2}} dx \right)^{\frac{N-2}{N+\alpha}} \\
\le &\ \varepsilon \|\varphi\|_{H^1(\mathbb{R}^N)}^{\frac{2N}{N+\alpha}}.
\end{split}
\end{align}

We next estimate
\begin{align}\label{eq:proof-0}
\int_{\mathbb{R}^N \setminus B_R(0)} |h(w_n) - h(w_n - w)|^{\frac{2N}{N+\alpha}} |\varphi|^{\frac{2N}{N+\alpha}} dx.
\end{align}

Set $A_n := \{x \in \mathbb{R}^N \setminus B_R(0) \mid |w_n(x)| \le s_0\}$. For $x \in A_n$ with $|w(x)| \le \delta$, we have $|w_n(x)| \le s_0$ and $|w_n(x) - w(x)| \le 2s_0$. Then by \eqref{eq:h-asym-1} and \eqref{eq:well-known},
\begin{align*}
|h(w_n) - h(w_n - w)|^{\frac{2N}{N+\alpha}} \le 2\varepsilon \left( |w_n|^{\frac{2\alpha}{N+\alpha}} + |w_n - w|^{\frac{2\alpha}{N+\alpha}} \right).
\end{align*}
Therefore, by the H\"{o}lder inequality we obtain
\begin{align}\label{eq:proof-2}
\begin{split}
\int_{A_n \cap \{|w| \le \delta\}} |h(w_n) - h(w_n - w)|^{\frac{2N}{N+\alpha}} |\varphi|^{\frac{2N}{N+\alpha}} dx 
\le C_1 \varepsilon \|\varphi\|_{H^1(\mathbb{R}^N)}^{\frac{2N}{N+\alpha}},
\end{split}
\end{align}
where $C_1=2\sup_n\left(\|w_n\|_2^{\frac{2\alpha}{N+\alpha}}+\|w_n-w\|_2^{\frac{2\alpha}{N+\alpha}}\right)$.

Next, set $B_n := \{x \in \mathbb{R}^N \setminus B_R(0) \mid |w_n(x)| \ge s_1\}$. For $x \in B_n$ with $|w(x)| \le \delta$, we have $|w_n(x)| \ge s_1$ and $|w_n(x) - w(x)| \ge s_1 - 1$. Then from \eqref{eq:h-asym-1} and \eqref{eq:well-known}, it follows that
\begin{align*}
|h(w_n) - h(w_n - w)|^{\frac{2N}{N+\alpha}} 
\le 2\varepsilon \left( |w_n|^{\frac{2N}{N+\alpha} \cdot \frac{\alpha+2}{N-2}} + |w_n - w|^{\frac{2N}{N+\alpha} \cdot \frac{\alpha+2}{N-2}} \right).
\end{align*}
Thus,
\begin{align}\label{eq:proof-3}
\begin{split}
\int_{B_n \cap \{|w| \le \delta\}} |h(w_n) - h(w_n - w)|^{\frac{2N}{N+\alpha}} |\varphi|^{\frac{2N}{N+\alpha}} dx 
\le C_2 \varepsilon \|\varphi\|_{H^1(\mathbb{R}^N)}^{\frac{2N}{N+\alpha}},
\end{split}
\end{align}
where $C_2=2\mathcal{S}_N^{-1}\sup_n\left(\|w_n\|_{2^*}^{\frac{\alpha+2}{N+\alpha}}+\|w_n-w\|_{2^*}^{\frac{\alpha+2}{N+\alpha}}\right)$.  

Define $D_n := \{x \in \mathbb{R}^N \setminus B_R(0) \mid s_0 \le |w_n(x)| \le s_1\}$. Since $|w(x)| < \delta$ implies $|w_n(x) - (w_n(x) - w(x))| = |w(x)| < \delta$, we have from \eqref{eq:h-asym-2}, 
\begin{align*}
|h(w_n) - h(w_n - w)|^{\frac{2N}{N+\alpha}} \le s_0^{\frac{2N}{N+\alpha}} \varepsilon.
\end{align*}
Noting $|D_n|<\infty$ we obtain
\begin{align}\label{eq:proof-4}
\begin{split}
  &\ \int_{D_n \cap \{|w| \le \delta\}} |h(w_n) - h(w_n - w)|^{\frac{2N}{N+\alpha}} |\varphi|^{\frac{2N}{N+\alpha}} dx  \\
 \le&\ s_0^{\frac{2N}{N+\alpha}}\varepsilon\int_{D_n\cap \{|w|\le \delta\}}|\varphi|^{\frac{2N}{N+\alpha}}dx \\
 \le &\ s_0^{\frac{2N}{N+\alpha}}\varepsilon |D_n|^{\frac{1}{2}}\left(\int_{\mathbb{R}^N}|\varphi|^{\frac{4N}{N+\alpha}}dx\right)^{\frac{1}{2}} \\
 \le &\ \varepsilon\left(\int_{D_n}|w_n|^{\frac{4N}{N+\alpha}}dx\right)^{\frac{1}{2}}\left(\int_{\mathbb{R}^N}|\varphi|^{\frac{4N}{N+\alpha}}dx\right)^{\frac{1}{2}}
\le C_3 \varepsilon \|\varphi\|_{H^1(\mathbb{R}^N)}^{\frac{2N}{N+\alpha}},
\end{split}
\end{align}
where $C_3=\sup_{n}\|w_n\|_{\frac{4N}{N+\alpha}}^{\frac{N+\alpha}{2N}}C_{\mathrm{S}}\left(N, \frac{4N}{N+\alpha}\right)^{\frac{N+\alpha}{2N}}$ and $C_{\mathrm{S}}\left(N,\frac{4N}{N+\alpha}\right)$ is a constant determined by the embedding constant from $H^1(\mathbb{R}^N) \hookrightarrow L^{\frac{4N}{N+\alpha}}(\mathbb{R}^N)$.

Combining \eqref{eq:proof-2}, \eqref{eq:proof-3}, and \eqref{eq:proof-4}, we obtain
\begin{align}\label{eq:proof-5}
\int_{(\mathbb{R}^N \setminus B_R(0)) \cap \{|w| \le \delta\}} |h(w_n) - h(w_n - w)|^{\frac{2N}{N+\alpha}} |\varphi|^{\frac{2N}{N+\alpha}} dx 
\le (C_1 + C_2 + C_3) \varepsilon \|\varphi\|_{H^1(\mathbb{R}^N)}^{\frac{2N}{N+\alpha}}.
\end{align}

To estimate the remaining part, consider the region $(\mathbb{R}^N \setminus B_R(0)) \cap \{|w| \ge \delta\}$. From \eqref{eq:well-known} and \eqref{eq:h-asym-4}, we have
\begin{align}\label{eq:triangle}
\begin{split}
|h(w_n) - h(w_n - w)|^{\frac{2N}{N+\alpha}} 
&\le (|h(w_n)| + |h(w_n - w)|)^{\frac{2N}{N+\alpha}} \\
&\le 2 \left( |h(w_n)|^{\frac{2N}{N+\alpha}} + |h(w_n - w)|^{\frac{2N}{N+\alpha}} \right) \\
&\le 2\widetilde{c}(\varepsilon) \left( |w_n|^{\frac{2\alpha}{N+\alpha}} + |w_n - w|^{\frac{2\alpha}{N+\alpha}} \right) \\
&\quad + 2 \varepsilon \left( |w_n|^{\frac{2N}{N+\alpha} \cdot \frac{2+\alpha}{N-2}} + |w_n - w|^{\frac{2N}{N+\alpha} \cdot \frac{2+\alpha}{N-2}} \right).
\end{split}
\end{align}
Therefore,
\begin{align}\label{eq:proof-6}
\begin{split}
&\int_{(\mathbb{R}^N \setminus B_R(0)) \cap \{|w| \ge \delta\}} |h(w_n) - h(w_n - w)|^{\frac{2N}{N+\alpha}} |\varphi|^{\frac{2N}{N+\alpha}} dx \\
&\le 2\widetilde{c}(\varepsilon) \int_{(\mathbb{R}^N \setminus B_R(0)) \cap \{|w| \ge \delta\}} \left( |w_n|^{\frac{2\alpha}{N+\alpha}} + |w_n - u|^{\frac{2\alpha}{N+\alpha}} \right) |\varphi|^{\frac{2N}{N+\alpha}} dx \\
&\quad + 2\varepsilon \int_{\mathbb{R}^N} \left( |w_n|^{\frac{2N}{N+\alpha} \cdot \frac{2+\alpha}{N-2}} + |w_n - w|^{\frac{2N}{N+\alpha} \cdot \frac{2+\alpha}{N-2}} \right) |\varphi|^{\frac{2N}{N+\alpha}} dx.
\end{split}
\end{align}

The second term is estimated similarly to \eqref{eq:proof-3} and yields
\begin{align}\label{eq:proof-6-1}
2\varepsilon \int_{\mathbb{R}^N} (|w_n|^{\frac{2N}{N+\alpha}\frac{\alpha+2}{N-2}}+|w_n-w|^{\frac{2N}{N+\alpha}\frac{\alpha+2}{N-2}})|\varphi|^{\frac{2N}{N+\alpha}} dx \le C_2 \varepsilon \|\varphi\|_{H^1(\mathbb{R}^N)}^{\frac{2N}{N+\alpha}}.
\end{align}

To handle the first term, define $K_R := (\mathbb{R}^N \setminus B_R(0)) \cap \{|w| \ge \delta\}$. Then
\begin{align*}
\delta^2 |K_R| \le \int_{K_R} |w|^2 dx \le \int_{\mathbb{R}^N \setminus B_R(0)} |w|^2 dx \to 0 \quad \text{as } R \to \infty.
\end{align*}
By the generalized H\"{o}lder inequality we can choose $R = R(\varepsilon)$ sufficiently large so that
\begin{align}\label{eq:proof-6-2}
\begin{split}
&2\widetilde{c}(\varepsilon)\int_{K_R} \left( |w_n|^{\frac{2\alpha}{N+\alpha}} + |w_n - w|^{\frac{2\alpha}{N+\alpha}} \right) |\varphi|^{\frac{2N}{N+\alpha}} dx \\
&\le 2\widetilde{c}(\varepsilon)\left\{ \left( \int_{\mathbb{R}^N} |w_n|^{\frac{2N}{N-2}} dx \right)^{\frac{\alpha}{N} \cdot \frac{N-2}{N+\alpha}} + \left( \int_{\mathbb{R}^N} |w_n - w|^{\frac{2N}{N-2}} dx \right)^{\frac{\alpha}{N} \cdot \frac{N-2}{N+\alpha}} \right\} \\
&\quad \times \left( \int_{\mathbb{R}^N} |\varphi|^{\frac{2N}{N-2}} dx \right)^{\frac{N-2}{N+\alpha}} |K_R|^{\frac{2}{N}} \\
&\le \varepsilon \|\varphi\|_{H^1(\mathbb{R}^N)}^{\frac{2N}{N+\alpha}}.
\end{split}
\end{align}
holds.

Combining \eqref{eq:proof-6-1} and \eqref{eq:proof-6-2}, we obtain
\begin{align}\label{eq:proof-7}
\int_{K_R} |h(w_n) - h(w_n - w)|^{\frac{2N}{N+\alpha}} |\varphi|^{\frac{2N}{N+\alpha}} dx \le (C_2 + 1) \varepsilon \|\varphi\|_{H^1(\mathbb{R}^N)}^{\frac{2N}{N+\alpha}}.
\end{align}

From \eqref{eq:proof-5} and \eqref{eq:proof-7}, we conclude
\begin{align}\label{eq:proof-8}
\int_{\mathbb{R}^N \setminus B_R(0)} |h(w_n) - h(w_n - w)|^{\frac{2N}{N+\alpha}} |\varphi|^{\frac{2N}{N+\alpha}} dx 
\le (C_1 + 2C_2 + C_3 + 1) \varepsilon \|\varphi\|_{H^1(\mathbb{R}^N)}^{\frac{2N}{N+\alpha}}.
\end{align}

Finally, using the triangle inequality and \eqref{eq:well-known}, we obtain
\begin{align*}
|h(w_n - w) - h(w_n) - h(w)|^{\frac{2N}{N+\alpha}} 
&\le 2\left( |h(w_n - w) - h(w_n)|^{\frac{2N}{N+\alpha}} + |h(w)|^{\frac{2N}{N+\alpha}} \right).
\end{align*}
Therefore, combining with \eqref{eq:proof-1} and \eqref{eq:proof-8}, we conclude
\begin{align*}
\int_{\mathbb{R}^N \setminus B_R(0)} |h(w_n - u) - h(w_n) - h(w)|^{\frac{2N}{N+\alpha}} |\varphi|^{\frac{2N}{N+\alpha}} dx 
\le 2(C_1 + 2C_2 + C_3 + 2) \varepsilon \|\varphi\|_{H^1(\mathbb{R}^N)}^{\frac{2N}{N+\alpha}}.
\end{align*}

\noindent
\textbf{Step 2:} We now derive an estimate for
\begin{align*}
\int_{B_R(0)} |h(w_n) - h(w_n - w) - h(w)|^{\frac{2N}{N+\alpha}} |\varphi|^{\frac{2N}{N+\alpha}} \, dx.
\end{align*}
Since $w_n \rightharpoonup w$ in $H^1(\mathbb{R}^N)$, along a subsequence $w_n \to w$ strongly in $L^2(B_R(0))$, and there exists a function $\omega \in L^2(B_R(0))$ such that
\begin{align*}
|w_n(x)|,\, |w(x)| \le |\omega(x)| \quad \text{for almost every } x \in B_R(0).
\end{align*}
Using \eqref{eq:h-asym-4}, we obtain
\begin{align}\label{eq:proof-9}
\begin{split}
  &\ \int_{B_R(0)} |h(w_n - w)|^{\frac{2N}{N+\alpha}} |\varphi|^{\frac{2N}{N+\alpha}} \, dx\\
\le &\ \widetilde{c}(\varepsilon) \left( \int_{B_R(0)} |w_n - w|^2 dx \right)^{\frac{\alpha}{N+\alpha}} \left( \int_{B_R(0)} |\varphi|^2 dx \right)^{\frac{N}{N+\alpha}} \\
& + \varepsilon \left( \int_{B_R(0)} |w_n - w|^{\frac{2N}{N-2}} dx \right)^{\frac{\alpha+2}{N+\alpha}} \left( \int_{B_R(0)} |\varphi|^{\frac{2N}{N-2}} dx \right)^{\frac{N-2}{N+\alpha}} \\
\le &\ (1 + C_4)\, \varepsilon \|\varphi\|_{H^1(\mathbb{R}^N)}^{\frac{2N}{N+\alpha}},
\end{split}
\end{align}
for all sufficiently large $n$, where $C_4 = \sup_{n \in \mathbb{N}} \|w_n - w\|_{2^*}^{2^* \cdot \frac{\alpha+2}{N+\alpha}}$.

By \eqref{eq:well-known} and \eqref{eq:h-asym-4} we have
\begin{align}\label{eq:proof-10}
\begin{split}
 &\ |h(w_n)-h(w)|^{\frac{2N}{N+\alpha}} \\
\le&\ 2(|h(w_n)|^{\frac{2N}{N+\alpha}}+|h(w)|^{\frac{2N}{N+\alpha}}) \\
\le&\ 2\widetilde{c}(\varepsilon)\left(|w_n|^{\frac{2\alpha}{N+\alpha}}+|w|^{\frac{2\alpha}{N+\alpha}}\right)+2\varepsilon\left(|w_n|^{\frac{2N}{N+\alpha}\frac{\alpha+2}{N-2}}+|w|^{\frac{2 N}{N+\alpha}\frac{\alpha+2}{N-2}}\right)
\end{split}
\end{align}
Let $E_n:=\{x\in B_R(0)\mid |w_n(x)-w(x)|\ge 1\}$. Then
\begin{align*}
|E_n|\le\int_{E_n}|w_n-w|^2dx\le \int_{B_R(0)}|w_n-w|^2dx\to 0\ \ \text{as}\ \ n\to\infty.
\end{align*}
Hence by \eqref{eq:proof-10} we obtain
\begin{align}\label{eq:proof-11}
\begin{split}
   &\ \int_{E_n}|h(w_n)-h(w)|^{\frac{2N}{N+\alpha}}|\varphi|^{\frac{2N}{N+\alpha}}dx \\
 \le&\ 2\widetilde{c}(\varepsilon)\int_{E_n}\left(|w_n|^{\frac{2\alpha}{N+\alpha}}+|w|^{\frac{2\alpha}{N+\alpha}}\right)|\varphi|^{\frac{2N}{N+\alpha}}dx \\
   &\ \ \ \ \ \ \ \ \ \ \ \ +2\varepsilon\int_{E_n}\left(|w_n|^{\frac{2N}{N+\alpha}\frac{\alpha+2}{N-2}}+|w|^{\frac{2 N}{N+\alpha}\frac{\alpha+2}{N-2}}\right)|\varphi|^{\frac{2N}{N+\alpha}}dx   \\
  \le &\ 4\widetilde{c}(\varepsilon)\left(\int_{E_n}|\omega|^2dx\right)^{\frac{\alpha}{N+\alpha}}\left(\int_{\mathbb{R}^N}|\varphi|^2dx\right)^{\frac{N}{N+\alpha}} \\
   &\ +2\varepsilon
\left\{\left(\int_{\mathbb{R}^N}|w_n|^{\frac{2N}{N-2}}dx\right)^{\frac{\alpha+2}{N+\alpha}}+\left(\int_{\mathbb{R}^N}|w|^{\frac{2N}{N-2}}dx\right)^{\frac{\alpha+2}{N+\alpha}}\right\}\left(\int_{\mathbb{R}^N}|\varphi|^{\frac{2N}{N-2}}dx\right)^{\frac{N-2}{N+\alpha}}\\
\le&\ (1+C_5)\varepsilon \|\varphi\|_{H^1(\mathbb{R}^N)}^{\frac{2N}{N+\alpha}}
\end{split}
\end{align}
for sufficiently large $n$, where $C_5=2\mathcal{S}_N^{-1/2}\sup_{n\in\mathbb{N}}(\|w_n\|_{2^*}^{2^*\frac{\alpha+2}{N+\alpha}}+\|w\|_{2^*}^{2^*\frac{\alpha+2}{N+\alpha}})$. 

On the other hand as the proof of \eqref{eq:triangle} we have
\begin{align*}
|h(w_n) - h(w)|^{\frac{2N}{N+\alpha}} 
\le 2\widetilde{c}(\varepsilon) \left( |w_n|^{\frac{2\alpha}{N+\alpha}} + |w|^{\frac{2\alpha}{N+\alpha}} \right)+ 2 \varepsilon \left( |w_n|^{\frac{2N}{N+\alpha} \cdot \frac{2+\alpha}{N-2}} + |w|^{\frac{2N}{N+\alpha} \cdot \frac{2+\alpha}{N-2}} \right).
\end{align*}
Here we note for $\widetilde{K}_L:=\{x\in\mathbb{R}^N\mid|w(x)|\ge L\}$, $|\widetilde{K}_L|\to 0$ as $L\to \infty$. In fact, it holds that
\begin{align*}
|\widetilde{K}_L|^{\frac{1}{2}}\le\frac{1}{L}\left(\int_{\mathbb{R}^N}|w|^2dx\right)^{\frac{1}{2}}\to 0\ \ \text{as}\ \ L\to\infty.
\end{align*}
Thus we obtain
\begin{align}\label{eq:proof-12}
\begin{split}
&\int_{(B_R(0) \setminus E_n) \cap \widetilde{K}_L} |h(w_n) - h(w)|^{\frac{2N}{N+\alpha}} |\varphi|^{\frac{2N}{N+\alpha}} dx \\
&\le 2\widetilde{c}(\varepsilon) \int_{(B_R(0)\setminus E_n) \cap \widetilde{K}_L} \left( |w_n|^{\frac{2\alpha}{N+\alpha}} + |w|^{\frac{2\alpha}{N+\alpha}} \right) |\varphi|^{\frac{2N}{N+\alpha}} dx \\
&\quad + 2 \varepsilon \int_{\mathbb{R}^N} \left( |w_n|^{\frac{2N}{N+\alpha} \cdot \frac{2+\alpha}{N-2}} + |w_n - w|^{\frac{2N}{N+\alpha} \cdot \frac{2+\alpha}{N-2}} \right) |\varphi|^{\frac{2N}{N+\alpha}} dx.
\end{split}
\end{align}

The second term of the above inequality is estimated similarly to \eqref{eq:proof-11} and yields
\begin{align}\label{eq:proof-12-1}
2\varepsilon \int_{\mathbb{R}^N} (|w_n|^{\frac{2N}{N+\alpha}\frac{\alpha+2}{N-2}}+|w|^{\frac{2N}{N+\alpha}\frac{\alpha+2}{N-2}})|\varphi|^{\frac{2N}{N+\alpha}} dx \le C_5 \varepsilon \|\varphi\|_{H^1(\mathbb{R}^N)}^{\frac{2N}{N+\alpha}}.
\end{align}

For the first term, using the generalized H\"{o}lder iequality and choosing $L = L(\varepsilon)$ sufficiently large, we obtain
\begin{align}\label{eq:proof-12-2}
\begin{split}
&2\widetilde{c}(\varepsilon)\int_{(B_R(0)\setminus E_n)\cap\widetilde{K}_L} \left( |w_n|^{\frac{2\alpha}{N+\alpha}} + |w|^{\frac{2\alpha}{N+\alpha}} \right) |\varphi|^{\frac{2N}{N+\alpha}} dx \\
&\le 2\widetilde{c}(\varepsilon)\left\{ \left( \int_{\mathbb{R}^N} |w_n|^{\frac{2N}{N-2}} dx \right)^{\frac{\alpha}{N} \cdot \frac{N-2}{N+\alpha}} + \left( \int_{\mathbb{R}^N} |w|^{\frac{2N}{N-2}} dx \right)^{\frac{\alpha}{N} \cdot \frac{N-2}{N+\alpha}} \right\} \\
&\quad \times \left( \int_{\mathbb{R}^N} |\varphi|^{\frac{2N}{N-2}} dx \right)^{\frac{N-2}{N+\alpha}} |\widetilde{K}_L|^{\frac{2}{N}} \\
&\le \varepsilon \|\varphi\|_{H^1(\mathbb{R}^N)}^{\frac{2N}{N+\alpha}}.
\end{split}
\end{align}
Combining \eqref{eq:proof-12-1} and \eqref{eq:proof-12-2} we obtain
\begin{align}\label{eq:proof-12-3}
\int_{(B_R(0) \setminus E_n) \cap \widetilde{K}_L} |h(w_n) - h(w)|^{\frac{2N}{N+\alpha}} |\varphi|^{\frac{2N}{N+\alpha}} dx \le (C_5+1)\varepsilon \|\varphi\|_{H^1(\mathbb{R}^N)}^{\frac{2N}{N+\alpha}}.
\end{align}
On $(B_R(0)\setminus E_n)\cap \{x\in\mathbb{R}^N\mid |w(x)|\le L\}$ we have
\begin{align*}
|w(x)|\le L\ \ \text{and}\ \ |w_n(x)|\le |w(x)|+1\le L+1 
\end{align*}
and $|h(w_n(x))-h(w(x))|\to 0$ a.e. $x\in B_R(0)$. Thus we can use the H\"{o}lder inequality and the Lebesgue dominated convergence theorem to obtain
\begin{align}\label{eq:proof-13}
\int_{(B_R(0)\setminus E_n)\cap \{|w|\le L\}}|h(w_n)-h(w)|^{\frac{2N}{N+\alpha}}|\varphi|^{\frac{2N}{N+\alpha}}dx\le \varepsilon \|\varphi\|_{H^1(\mathbb{R}^N)}^{\frac{2N}{N+\alpha}}
\end{align}
for sufficiently large $n$. Combining \eqref{eq:proof-12-3} and \eqref{eq:proof-13} we obtain
\begin{align}\label{eq:proof-14}
\int_{B_R(0)\setminus E_n}|h(w_n)-h(w)|^{\frac{2N}{N+\alpha}}|\varphi|^{\frac{2N}{N+\alpha}}dx\le (C_5+2)\varepsilon \|\varphi\|_{H^1(\mathbb{R}^N)}^{\frac{2N}{N+\alpha}}
\end{align}
By \eqref{eq:proof-11} and \eqref{eq:proof-14} we obtain
\begin{align}\label{eq:proof-15}
\int_{B_R(0)}|h(w_n)-h(w)|^{\frac{2N}{N+\alpha}}|\varphi|^{\frac{2N}{N+\alpha}}dx\le (2C_5+3)\varepsilon \|\varphi\|_{H^1(\mathbb{R}^N)}^{\frac{2N}{N+\alpha}}
\end{align}
for sufficiently large $n$.

Finally, by the triangle inequality and \eqref{eq:well-known}, we obtain
\begin{align*}
|h(w_n) - h(w) - h(w_n-w)|^{\frac{2N}{N+\alpha}} 
&\le 2 \left( |h(w_n) - h(w)|^{\frac{2N}{N+\alpha}} + |h(w_n-w)|^{\frac{2N}{N+\alpha}} \right).
\end{align*}
Using the above inequality, \eqref{eq:proof-9} and \eqref{eq:proof-15} we obtain
\begin{align}\label{eq:proof-16}
\int_{B_R(0)}|h(w_n) - h(w) - h(w_n-w)|^{\frac{2N}{N+\alpha}}|\varphi|^{\frac{2N}{N+\alpha}}dx\le (2C_4+2C_5+4)\|\varphi\|_{H^1(\mathbb{R}^N)}^{\frac{2N}{N+\alpha}}
\end{align}
Finally combining \eqref{eq:proof-8} and \eqref{eq:proof-16} we conclude that
\begin{align*}
\int_{\mathbb{R}^N}|h(w_n)-h(w_n-w)-h(w)|^{\frac{2N}{N+\alpha}}|\varphi|^{\frac{2N}{N+\alpha}}dx\le (C_1 + 2C_2 + C_3 +2C_4+4C_5+8)\varepsilon \|\varphi\|_{H^1(\mathbb{R}^N)}^{\frac{2N}{N+\alpha}}
\end{align*}
for sufficiently large $n$. The proof has been completed. 
\end{proof}

\begin{Lem}\label{lem:Riesz-ae} Let $\alpha\in (0,N)$.
\begin{enumerate}
\item[\textup{(1)}] Let $s\in (1, \frac{N}{\alpha})$ and let $\{g_n\}$ be a sequence in $L^1(\mathbb{R}^N)\cap L^s(\mathbb{R}^N)$ and $\{\|g_n\|_1+\|g_n\|_s\}$ is bounded. Assume that for any bounded domain $\Omega\subset\mathbb{R}^N$ $g_n\to 0$ strongly in $L^{\frac{2N}{N+\alpha}}(\Omega)$ along a subsequence. Then along a subsequence $(I_{\alpha}*g_{n})(x)\to 0$ as $n\to\infty$ almost everywhere in $\mathbb{R}^N$. 
\item[\textup{(2)}] Suppose that $\{g_n\}$ is a bounded sequence in $L^{\frac{2N}{N+\alpha}}(\mathbb{R}^N)$ and for any bounded domain $\Omega\subset\mathbb{R}^N$, $g_{n}\to 0$ strongly in $L^{\frac{2N}{N+\alpha}}(\Omega)$ along a subsequence. Then  $(I_{\alpha}*g_{n})(x)\to 0$ as $n\to\infty$ almost everywhere in $\mathbb{R}^N$ along a subsequence.
\end{enumerate}
\end{Lem}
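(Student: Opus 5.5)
The plan is to split the Riesz kernel into a local singular part and a bounded tail, $I_\alpha=I_\alpha\chi_{B_1(0)}+I_\alpha\chi_{\mathbb{R}^N\setminus B_1(0)}=:K_1+K_2$, and to show that $(I_\alpha*g_n)(x)\to0$ for a.e.\ $x$ by treating the two pieces separately. After passing to a subsequence, it suffices to prove convergence a.e.\ on each ball $B_M(0)$, $M\in\mathbb{N}$; a diagonal argument over $M$ then gives a single subsequence working on all of $\mathbb{R}^N$.

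\emph{Local part.} Fix $M$ and set $\Omega=B_{M+1}(0)$. For $x\in B_M(0)$ we have $(K_1*g_n)(x)=(K_1*(g_n\chi_\Omega))(x)$. Since $K_1\le I_\alpha$, Lemma \ref{lem:Bdd-Riesz} with $r=\frac{2N}{N+\alpha}$ and $s=\frac{2N}{N-\alpha}$ gives
\begin{align*}
\|K_1*(g_n\chi_\Omega)\|_{\frac{2N}{N-\alpha}}\le C\|g_n\|_{L^{\frac{2N}{N+\alpha}}(\Omega)}\to0
\end{align*}
along a subsequence. We then extract a further subsequence converging a.e.\ on $B_M(0)$. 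This step is common to (1) and (2).

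\emph{Tail part, case (1).} The kernel $K_2$ is bounded by $A_\alpha$. It also lies in $L^{s'}(\mathbb{R}^N)$ whenever $s'(N-\alpha)>N$, which holds for $s'=s/(s-1)$ because $s<N/\alpha$. For a fixed $x\in B_M(0)$ and $R>M+1$, I split
\begin{align*}
|(K_2*g_n)(x)|\le \int_{B_R(0)}K_2(x-y)|g_n(y)|\,dy+\int_{\mathbb{R}^N\setminus B_R(0)}K_2(x-y)|g_n(y)|\,dy .
\end{align*}
The first integral is at most $A_\alpha\|g_n\|_{L^1(B_R)}\le A_\alpha|B_R|^{\frac{N-\alpha}{2N}}\|g_n\|_{L^{\frac{2N}{N+\alpha}}(B_R)}\to0$ along a subsequence. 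The second is at most $\|K_2\chi_{\{|z|\ge R-M\}}\|_{s'}\sup_n\|g_n\|_s$, which tends to $0$ as $R\to\infty$ uniformly in $n$ and in $x\in B_M(0)$. Choosing $R$ first and then $n$ large gives $\sup_{x\in B_M}|(K_2*g_n)(x)|\to0$, so in fact we get uniform convergence there. (The $L^1$ bound is not really needed once $s$ is used in this way, but it makes the local estimate clean.)

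\emph{Tail part, case (2).} This is the main obstacle. Now $g_n$ is only bounded in $L^{\frac{2N}{N+\alpha}}$, and the dual exponent $\frac{2N}{N-\alpha}$ satisfies $\frac{2N}{N-\alpha}(N-\alpha)=2N>N$, so $K_2\in L^{\frac{2N}{N-\alpha}}$. Hence the same argument as in case (1) works with $s=\frac{2N}{N+\alpha}$. The far-field integral is bounded by $\|K_2\chi_{\{|z|\ge R-M\}}\|_{\frac{2N}{N-\alpha}}\sup_n\|g_n\|_{\frac{2N}{N+\alpha}}$, which is small for $R$ large. The near-field integral over $B_R(0)$ is bounded by $\|K_2\|_{\frac{2N}{N-\alpha}}\|g_n\|_{L^{\frac{2N}{N+\alpha}}(B_R)}\to0$, using local strong convergence. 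The point to check carefully is that the extracted subsequences can be chosen independently of $R$. I would handle this by taking $R\in\mathbb{N}$ and diagonalizing over $R$ and $M$ simultaneously, so that for every $R$ we have $\|g_n\|_{L^{\frac{2N}{N+\alpha}}(B_R)}\to0$ along the final subsequence. Combining the local and tail parts yields $(I_\alpha*g_n)(x)\to0$ for a.e.\ $x$.
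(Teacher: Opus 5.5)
Your argument is correct and uses the same two ingredients as the paper's proof of (2). For the part of the convolution coming from a bounded region, you use the Hardy--Littlewood--Sobolev bound (Lemma \ref{lem:Bdd-Riesz}) and then extract an a.e.\ convergent subsequence; for the far field, you use H\"older's inequality with the kernel tail in $L^{\frac{2N}{N-\alpha}}$ (resp.\ $L^{s'}$). The differences are minor. Because you truncate the singular part of the kernel at the fixed radius $1$, your a.e.\ extraction does not depend on $\varepsilon$, whereas the paper's radius $K=K(\varepsilon)$ tacitly requires a diagonal argument over $\varepsilon=1/j$. You also prove (1) directly, which the paper instead takes from Cassani--Zhang.
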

\begin{proof}
Since the proof of (1) is given in \cite[Lemma 2.6]{Cassani-Zhang} we prove only (2). 

It suffices to prove that for any $k\in\mathbb{N}$, $(I_{\alpha}*g_n)(x)\to 0$ almost everywhere in $B_k(0)$. Take any $k\in\mathbb{N}$ and $\varepsilon>0$. By the H\"{o}lder inequality, we can find $K=
K(\varepsilon)>k$ such that
\begin{align*}
 &A_{\alpha}\int_{\mathbb{R}^N\setminus B_K(x)}\frac{|g_n(y)|}{|x-y|^{N-\alpha}}dx \\
 \le&\  A_{\alpha}\left(\int_{\mathbb{R}^N\setminus B_K(x)}\frac{dy}{|x-y|^{2N}}\right)^{\frac{N-\alpha}{2N}}\left(\int_{\mathbb{R}^N\setminus B_K(x)}|g_n(y)|^{\frac{2N}{N+\alpha}}dy\right)^{\frac{N+\alpha}{2N}} \\
 \le&\ A_{\alpha}\left(\int_{\mathbb{R}^N\setminus B_K(0)}\frac{dy}{|y|^{2N}}\right)^{\frac{N-\alpha}{2N}}\sup_{n\in\mathbb{N}}\|g_n\|_{\frac{2N}{N+\alpha}} \\
 \le &\ \varepsilon.
\end{align*}
It is clear $B_K(x)\subset B_{2K}(0)$ for $x\in B_K(0)$. By Lemma \ref{lem:Bdd-Riesz} and the assumption on $\{g_n\}$ we have
\begin{align*}
\|I_{\alpha}*(|g_n|\chi_{B_{2K}(0)})\|_{L^\frac{2N}{N-\alpha}(\mathbb{R}^N)}\le C\|g_n\|_{L^{\frac{2N}{N+\alpha}}(B_{2K}(0))}\to 0\ \ \text{as}\ \ n\to\infty.
\end{align*}
It follows that along a subsequence
\begin{align*}
I_{\alpha}*(|g_n|\chi_{B_{2K}(0)})(x)\to 0\ \ \text{as}\ \ n\to\infty\ \ \text{a.e.}\ \ x\in B_k(0).
\end{align*}
Hence for almost all $x\in B_k(0)$ we have
\begin{align*}
|(I_{\alpha}*g_n)(x)|&\le A_{\alpha}\int_{\mathbb{R}^N}\frac{|g_n(y)|}{|x-y|^{N-\alpha}}(\chi_{B_k(x)}(y)+\chi_{\mathbb{R}^N\setminus B_k(x)}(y))dx \\
&\le A_{\alpha}\int_{B_k(x)}\frac{|g_n(y)|}{|x-y|^{N-\alpha}}dy+A_{\alpha}\int_{\mathbb{R}^N\setminus B_{k}(x)}\frac{|g_n(y)|}{|x-y|^{N-\alpha}}dy \\
&\le A_{\alpha}\int_{B_{2K}(0)}\frac{|g_n(y)|}{|x-y|^{N-\alpha}}dy+\varepsilon \\
&= A_{\alpha}\int_{\mathbb{R}^N}\frac{|g_n(y)|\chi_{B_{2K}(0)}(y)}{|x-y|^{N-\alpha}}dy+\varepsilon \\
\end{align*}
and then
\begin{align*}
\limsup_{n\to\infty}|(I_{\alpha}*g_n)(x)|\le\varepsilon.
\end{align*}
Since $\varepsilon > 0$ was arbitrary, we conclude that
\begin{align*}
\lim_{n\to\infty}(I_{\alpha}*g_n)(x)=0\ \ \text{as}\ \ n\to\infty\ \ \text{a.e.}\ x\in B_k(0). 
\end{align*}
The proof has been completed.
\end{proof}
We are now in a position to prove Lemma \ref{lem:splitting}.
\begin{proof}[Proof of Proposition \ref{lem:splitting}]

Let $\lim_{|t|\to\infty}\frac{f(t)}{|t|^{\frac{4+\alpha-N}{N-2}}t}=\beta\in [0,\infty)$ and set
\begin{align*}
f_1(t)=f(t)-\beta|t|^{\frac{4+\alpha-N}{N-2}}t,\ \ F_1(t)=\int_0^t f_1(s)ds.
\end{align*}
Then we have
\begin{align*}
\int_{\mathbb{R}^N}(I_{\alpha}*F(w_n))f(w_n)\varphi dx=\int_{\mathbb{R}^N}(I_{\alpha}*F(w_n))f_1(w_n)\varphi dx+\beta\int_{\mathbb{R}^N}(I_{\alpha}*F(w_n))|w_n|^{\frac{4+\alpha-N}{N-2}}w_n\varphi dx.
\end{align*}
We first note that there exist $C_{f_1}>0$ and $C_F>0$ such that
\begin{align}
&|f_1(t)|\le C_{f_1}(|t|^{\frac{\alpha}{N}}+|t|^{\frac{2+\alpha}{N-2}})\ \ (t\in\mathbb{R}),  \label{eq:growth-f_1} \\
&|F(t)|\le C_{F}(|t|^{\frac{N+\alpha}{N}}+|t|^{\frac{N+\alpha}{N-2}})\ \ (t\in\mathbb{R}),  \label{eq:growth-F}.
\end{align}
Moreover for any $\varepsilon>0$ there exists $C_{f_1, \varepsilon}>0$ and $C_{F,\varepsilon}>0$ such that
\begin{align}
&|f_1(t)|\le \varepsilon|t|^{\frac{\alpha}{N}}+C_{f_1,\varepsilon}|t|^{\frac{2+\alpha}{N-2}}\ \ (t\in\mathbb{R}), \label{eq:growth-f_1-2} \\
&|f_1(t)|\le C_{f_1,\varepsilon}|t|^{\frac{\alpha}{N}}+\varepsilon|t|^{\frac{2+\alpha}{N-2}}\ \ (t\in\mathbb{R}), \label{eq:growth-f_1-3} \\
&|F(t)|\le \varepsilon |t|^{\frac{N+\alpha}{N}}+C_{F,\varepsilon}|t|^{\frac{N+\alpha}{N-2}}\ \ (t\in\mathbb{R})\ \label{eq:growth-F-2}
\end{align}

We divide the proof into two steps. The proof can be carried out by a slight but very careful modification of the arguments in \cite{Cassani-Zhang}.

\noindent
\textbf{Step 1:} We want to claim that
\begin{align}\label{eq:Step-1-claim}
\begin{split}
   &\int_{\mathbb{R}^N}\{I_{\alpha}*F(w_n)\}|w_n|^{\frac{4+\alpha-N}{N-2}}w_n\varphi dx \\
=&\ \int_{\mathbb{R}^N}\{I_{\alpha}*F(w_n-w)\}|w_n-w|^{\frac{4+\alpha-N}{N-2}}(w_n-w)\varphi dx \\
        &\ \ \ \ \ \ \ \ \ \ \ +\int_{\mathbb{R}^N}\{I_{\alpha}*F(w)\}|w|^{\frac{4+\alpha-N}{N-2}}w\varphi dx+o_n(1)\|\varphi\|_{H^1(\mathbb{R}^N)},
        \end{split}
\end{align}

By Lemma \ref{lem:B-L-2} with $h(t)=f(t)$ we have
\begin{align}\label{eq:proof-sp-1}
\lim_{n\to\infty}\int_{\mathbb{R}^N}\left|F(w_n)-F(w_n-w)-F(w)\right|^{\frac{2N}{N+\alpha}}dx=0.
\end{align}
We next note that $\alpha>(N-4)_+$ and for $v_n=|w_n|^{\frac{4+\alpha-N}{N-2}}w_n$, $|w_n-w|^{\frac{4+\alpha-N}{N-2}}(w_n-w)$ and $|w|^{\frac{4+\alpha-N}{N-2}}w$ it holds that
\begin{align}\label{eq:proof-sp-2}
\begin{split}
\int_{\mathbb{R}^N}|v_n\varphi|^{\frac{2N}{N+\alpha}}dx&\le\left(\int_{\mathbb{R}^N}|v_n|^{\frac{2N}{2+\alpha}}dx\right)^{\frac{2+\alpha}{N+\alpha}}\left(\int_{\mathbb{R}^N}|\varphi|^{\frac{2N}{N-2}}dx\right)^{\frac{N-2}{N+\alpha}} \\
&\le C_1\left(\int_{\mathbb{R}^N}|\varphi|^{\frac{2N}{N-2}}dx\right)^{\frac{N-2}{N+\alpha}},
\end{split}
\end{align}
where 
\begin{align*}
C_1:=\max\left\{\sup_{n}\|w_n\|_{2^*}^{2^*\frac{2+\alpha}{N+\alpha}}, \sup_n\|w_n-w\|_{2^*}^{2^*\frac{2+\alpha}{N+\alpha}}, \|w\|_{2^*}^{2^*\frac{2+\alpha}{N+\alpha}}\right\}.
\end{align*}
Hence by the Hardy--Littlewood--Sobolev inequality(Lemma \ref{lem:H-L-S-ineq}), \eqref{eq:proof-sp-1} and \eqref{eq:proof-sp-2} we obtain
\begin{align}\label{eq:proof-spl-03}
\begin{split}
  &\ \left|\int_{\mathbb{R}^N}\{I_{\alpha}*(F(w_n)-F(w_n-w)-F(w))\}v_n\varphi dx\right| \\
\le &\ C_{\mathrm{HLS}}\left(N,\alpha,\textstyle\frac{2N}{N+\alpha}\right)\left(\int_{\mathbb{R}^N}\left|F(w_n)-F(w_n-w)-F(w)\right|^{\frac{2N}{N+\alpha}}dx\right)^{\frac{N+\alpha}{2N}} \\ 
    &\ \ \ \ \ \ \ \times\left(\int_{\mathbb{R}^N}|v_n\varphi|^{\frac{2N}{N+\alpha}}dx\right)^{\frac{N+\alpha}{2N}} \\
= &\ o_n(1)\|\varphi\|_{H^1(\mathbb{R}^N)},
\end{split}
\end{align}
where $o_n(1)\to 0$ as $n\to\infty$ uniformly in $\varphi\in H^1(\mathbb{R}^N)$.

On the other hand by Lemma \ref{lem:B-L-3} with $r=\frac{N+\alpha}{N-2}$, $s=\frac{2N}{N-2}$ we obtain
\begin{align}\label{eq:proof-spl-04}
\lim_{n\to\infty}\int_{\mathbb{R}^N}
\left||w_n|^{\frac{4+\alpha-N}{N-2}}w_n-|w_n-w|^{\frac{4+\alpha-N}{N-2}}(w_n-w)-|w|^{\frac{4+\alpha-N}{N-2}}w\right|^{\frac{2N}{2+\alpha}}dx=0.
\end{align}

Since $F(w)\in L^{\frac{2N}{N+\alpha}}(\mathbb{R}^N)$ and $\{F(w_n)\}$ and $\{F(w_n-w)\}$ are bounded in $L^{\frac{2N}{N+\alpha}}(\mathbb{R}^N)$, by using the Hardy--Littlewood--Sobolev 
inequality, the H\"{o}lder inequality and \eqref{eq:proof-spl-04} we obtain for $\widetilde{v}_n=F(w_n)$, $F(w_n-w)$ and $F(w)$ we obtain
\begin{align}\label{eq:proof-spl-05}
\begin{split}
  &\ \left|\int_{\mathbb{R}^N}(I_{\alpha}*\widetilde{v}_n)\{|w_n|^{\frac{4+\alpha-N}{N-2}}w_n-|w_n-w|^{\frac{4+\alpha-N}{N-2}}(w_n-w)-|w|^{\frac{4+\alpha-N}{N-2}}w\}\varphi dx\right| \\
\le&\ C_{\rm HLS}\left(N,\alpha, {\textstyle\frac{2N}{N+\alpha}}\right)\left(\int_{\mathbb{R}^N}|\widetilde{v}_n|^{\frac{2N}{N+\alpha}}dx\right)^{\frac{N+\alpha}{2N}} \\ 
&\ \ \ \ \ \  \times\left(\int_{\mathbb{R}^N}\left||w_n|^{\frac{4+\alpha-N}{N-2}}w_n-|w_n-w|^{\frac{4+\alpha-N}{N-2}}(w_n-w)-|w|^{\frac{4+\alpha-N}{N-2}}w\right|^{\frac{2N}{2+\alpha}}|\varphi|^{\frac{2N}{N+\alpha}}dx\right)^{\frac{N+\alpha}{2N}} \\
\le&\ C_2\left(\int_{\mathbb{R}^N}
\left||w_n|^{\frac{4+\alpha-N}{N-2}}w_n-|w_n-w|^{\frac{4+\alpha-N}{N-2}}(w_n-w)-|w|^{\frac{4+\alpha-N}{N-2}}w\right|^{\frac{2N}{2+\alpha}}dx\right)^{\frac{2+\alpha}{2N}}\\ 
&\ \ \ \ \ \ \ \ \ \ \times\left(\int_{\mathbb{R}^N}|\varphi|^{\frac{2N}{N-2}}\right)^{\frac{N-2}{2N}} \\
=&\ o_n(1)\|\varphi\|_{H^1(\mathbb{R}^N)}, 
\end{split}
\end{align}
where $o_n(1)\to 0$ as $n\to\infty$ uniformly in $\varphi\in H^1(\mathbb{R}^N)$ and 
\begin{align*}
C_2:=C_{\rm HLS}\left(N,\alpha, {\textstyle\frac{2N}{N+\alpha}}\right)\max\left\{\sup_{n}\|F(w_n)\|_{\frac{2N}{N+\alpha}}, \sup_n\|F(w_n-w)\|_{\frac{2N}{N+\alpha}}, \|F(w)\|_{\frac{2N}{N+\alpha}}\right\}.
\end{align*}
Combining \eqref{eq:proof-spl-03} and \eqref{eq:proof-spl-05} we obtain
\begin{align}\label{eq:proof-spl-06}
\begin{split}
   &\int_{\mathbb{R}^N}\{I_{\alpha}*F(w_n)\}|w_n|^{\frac{4+\alpha-N}{N-2}}w_n\varphi dx \\
=&\ \int_{\mathbb{R}^N}\{I_{\alpha}*F(w_n-w)\}|w_n-w|^{\frac{4+\alpha-N}{N-2}}(w_n-w)\varphi dx \\
        &\ \ \ \ \ \ \ \ \ \ \ +\int_{\mathbb{R}^N}\{I_{\alpha}*F(w)\}|w|^{\frac{4+\alpha-N}{N-2}}w\varphi dx \\ 
        &\ \ \ \ \ \ \ \ \ \ \ +\int_{\mathbb{R}^N}\{I_{\alpha}*F(w_n-w)\}|w|^{\frac{4+\alpha-N}{N-2}}w\varphi dx \\
        &\ \ \ \ \ \ \ \ \ \ \  +\int_{\mathbb{R}^N}\{I_{\alpha}*F(w)\}|w_n-w|^{\frac{4+\alpha-N}{N-2}}(w_n-w)\varphi dx+o_n(1)\|\varphi\|_{H^1(\mathbb{R}^N)},
        \end{split}
\end{align}
where $o_n(1)\to 0$ as $n\to\infty$ uniformly in $\varphi\in H^1(\mathbb{R}^N)$. 

Let us estimate
\begin{align*}
\int_{\mathbb{R}^N}\{I_{\alpha}*F(w)\}|w_n-w|^{\frac{4+\alpha-N}{N-2}}(w_n-w)\varphi dx.
\end{align*}
Note that $F(w)\in L^{\frac{2N}{N+\alpha}}(\mathbb{R}^N)$, $|(I_{\alpha}*F(w)|^{\frac{2N}{N+2}}\in L^{\frac{N+2}{N-\alpha}}(\mathbb{R}^N)$ by Lemma \ref{lem:Bdd-Riesz}. Since $\{|w_n-w|^{\frac{2N(2+\alpha)}{(N-2)(N+2)}}\}$ is bounded in $L^{\frac{N+2}{2+\alpha}}(\mathbb{R}^N)$ and $w_n\to w$ as $n\to\infty$ a.e. $x\in\mathbb{R}^N$, by Lemma \ref{lemma:ae_to_weak} we have $|w_n-w|^{\frac{2N(2+\alpha)}{(N-2)(N+2)}}\rightharpoonup 0$ in $L^{\frac{N+2}{2+\alpha}}(\mathbb{R}^N)$. Therefore we obtain 
\begin{align}\label{eq:proof-spl-06-01}
\lim_{n\to\infty}\int_{\mathbb{R}^N}\left|I_{\alpha}*F(w)\right|^{\frac{2N}{N+2}}|w_n-w|^{\frac{2N(2+\alpha)}{(N-2)(N+2)}}dx=0
\end{align}
and then by the H\"{o}lder inequality
\begin{align}\label{eq:proof-spl-07}
\begin{split}
   &\left|\int_{\mathbb{R}^N}\{I_{\alpha}*F(w)\}|w_n-w|^{\frac{4+\alpha-N}{N-2}}(w_n-w)\varphi dx\right| \\
\le&\ \left|\int_{\mathbb{R}^N}\{I_{\alpha}*F(w)\}|w_n-w|^{\frac{\alpha+2}{N-2}}|\varphi| dx\right| \\
\le&\ \left(\int_{\mathbb{R}^N}\left|I_{\alpha}*F(w)\right|^{\frac{2N}{N+2}}|w_n-w|^{\frac{2N(2+\alpha)}{(N-2)(N+2)}}dx\right)^{\frac{N+2}{2N}}\left(\int_{\mathbb{R}^N}|\varphi|^{\frac{2N}{N-2}}dx\right)^{\frac{N-2}{2N}} \\
=&\ o_n(1)\|\varphi\|_{H^1(\mathbb{R}^N)},
\end{split}
\end{align}
where $o_n(1)\to 0$ as $n\to\infty$ uniformly in $\varphi\in H^1(\mathbb{R}^N)$.

To obtain an estimate of
\begin{align*}
\int_{\mathbb{R}^N}\{I_{\alpha}*F(w_n-w)\}|w|^{\frac{4+\alpha-N}{N-2}}w\varphi dx
\end{align*}
we first prove
\begin{align*}
\int_{\mathbb{R}^N}\{I_{\alpha}*|w_n-w|^{\frac{N+\alpha}{N-2}}\}|w|^{\frac{4+\alpha-N}{N-2}}w\varphi dx=o_n(1)\|\varphi\|_{H^1(\mathbb{R}^N)},
\end{align*}
where $o_n(1)\to 0$ as $n\to\infty$ uniformly in $\varphi\in H^1(\mathbb{R}^N)$. Noting $\alpha\in ((N-4)_+, N)$ we see that $\{|w_n-w|^{\frac{N+\alpha}{N-2}}\}$ is a bounded sequence in $L^1(\mathbb{R}^N)$. Moreover by the Rellich theorem, for $s\in (1, \frac{2N}{N+\alpha})\subset(1,\frac{N}{\alpha})$ and any bounded domain $\Omega\subset\mathbb{R}^N$, it holds that $|w_n-w|^{\frac{N+\alpha}{N-2}}\to 0$ in $L^s(\Omega)$. Hence by (1) of Lemma \ref{lem:Riesz-ae}, $I_{\alpha}*|w_n-w|^{\frac{N+\alpha}{N-2}}\to 0$ a.e. $x\in\mathbb{R}^N$ along a subsequence. Moreover by Lemma \ref{lem:Bdd-Riesz}
\begin{align*}
\sup_{n\in\mathbb{N}}
\left\||I_{\alpha}*|w_n-w|^{\frac{N+\alpha}{N-2}}|^{\frac{2N}{N+2}}\right\|_{\frac{N+2}{N-\alpha}}\le C_{\rm HLS}\left(N,\alpha,{\textstyle\frac{2N}{N+\alpha}}\right)^{\frac{2N}{N+2}}\sup_{n\in\mathbb{N}}\|w_n-w\|_{2^*}^{2^*\frac{N+\alpha}{N+2}}<\infty.
\end{align*}
Thus by Lemma \ref{lemma:ae_to_weak}, $|I_{\alpha}*|w_n-w|^{\frac{N+\alpha}{N-2}}|^{\frac{2N}{N+2}}\rightharpoonup 0$ in $L^{\frac{N+2}{N-\alpha}}(\mathbb{R}^N)$ along a subsequence. Since $|w|^{\frac{2+\alpha}{N-2}\frac{2N}{N+2}}\in L^{\frac{N+2}{2+\alpha}}(\mathbb{R}^N)$ we obtain
\begin{align*}
\lim_{n\to\infty}\int_{\mathbb{R}^N}\left|I_{\alpha}*|w_n-w|^{\frac{N+\alpha}{N-2}}\right|^{\frac{2N}{N+2}}|w|^{\frac{2+\alpha}{N-2}\frac{2N}{N+2}}dx=0
\end{align*}
and by the H\"{o}lder inequality we obtain
\begin{align}\label{eq:proof-spl-08}
 \begin{split}
 &\ \left|\int_{\mathbb{R}^N}(I_{\alpha}*|w_n-w|^{\frac{N+\alpha}{N-2}})|w|^{\frac{4+\alpha-N}{N-2}}u\varphi dx\right| \\
 \le&\ \left|\int_{\mathbb{R}^N}(I_{\alpha}*|w_n-w|^{\frac{N+\alpha}{N-2}})|w|^{\frac{2+\alpha}{N-2}}|\varphi| dx\right| \\
 \le&\ \left(\int_{\mathbb{R}^N}\left|I_{\alpha}*|w_n-w|^{\frac{N+\alpha}{N-2}}\right|^{\frac{2N}{N+2}}|w|^{\frac{2+\alpha}{N-2}\frac{2N}{N+2}}dx\right)^{\frac{N+2}{2N}}\left(\int_{\mathbb{R}^N}|\varphi|^{\frac{2N}{N-2}}dx\right)^{\frac{N-2}{2N}}\\
 =&\ o_n(1)\|\varphi\|_{H^1(\mathbb{R}^N)},
 \end{split}
\end{align}
where $o_n(1)\to 0$ as $n\to\infty$ uniformly in $\varphi\in H^1(\mathbb{R}^N)$. 

We next obtain an estimate of
\begin{align*}
\int_{\mathbb{R}^N}\{I_{\alpha}*|w_n-w|^{\frac{N+\alpha}{N}}\}|w|^{\frac{4+\alpha-N}{N-2}}w\varphi dx.
\end{align*}
By the H\"{o}lder inequality and Lemma \ref{lem:Bdd-Riesz} we see
\begin{align}\label{eq:proof-spl-step2-12-0}
\begin{split}
   &\ \int_{\mathbb{R}^N}(I_{\alpha}*|w_n-w|^{\frac{N+\alpha}{N}})|w|^{\frac{2+\alpha}{N-2}}|\varphi| dx \\
    \le&\ \|I_{\alpha}*|w_n-w|^{\frac{N+\alpha}{N}}\|_{\frac{2N}{N-\alpha}}\||w|^{\frac{2+\alpha}{N-2}}\varphi\|_{L^{\frac{2N}{N+\alpha}}} \\
 \le&\  C_{\mathrm{HLS}}\left(N,\alpha,{\textstyle\frac{2N}{N+\alpha}}\right)\mathcal{S}_N^{-\frac{1}{2}}\|w_n-w\|_{2}^{\frac{N+\alpha}{N}}\|w\|_{2^*}^{\frac{2+\alpha}{N-2}}\|\varphi\|_{H^1(\mathbb{R}^N)}\\
=&\ C_3\|\varphi\|_{H^1(\mathbb{R}^N)},
\end{split}
\end{align}
where 
\begin{align*}
C_3= C_{\mathrm{HLS}}\left(N,\alpha,{\textstyle\frac{2N}{N+\alpha}}\right)\mathcal{S}_N^{-\frac{1}{2}}\|w\|_{2^*}^{\frac{2+\alpha}{N-2}}\sup_n\|w_n-w\|_{2}^{\frac{N+\alpha}{N}}.
\end{align*}

Combining \eqref{eq:growth-F-2}, \eqref{eq:proof-spl-06},  \eqref{eq:proof-spl-08} and  \eqref{eq:proof-spl-step2-12-0} we obtain
\begin{align*}
   &\ \left|\int_{\mathbb{R}^N}\{I_{\alpha}*F(w_n-w)\}|w|^{\frac{4+\alpha-N}{N-2}}w\varphi dx\right| \\
\le&\ \varepsilon\int_{\mathbb{R}^N}\{I_{\alpha}*|w_n-w|^{\frac{N+\alpha}{N}}\}|w|^{\frac{2+\alpha}{N-2}}|\varphi| dx+C_{F,\varepsilon}\int_{\mathbb{R}^N}\{I_{\alpha}*|w_n-w|^{\frac{N+\alpha}{N-2}}\}|w|^{\frac{4+\alpha-N}{N-2}}w\varphi dx \\ 
\le&\ (\varepsilon C_3+o_n(1))\|\varphi\|_{H^1(\mathbb{R}^N)}
\end{align*}
Since $\varepsilon>0$ is arbitrary we obtain
\begin{align}\label{eq:Step-1-B}
\int_{\mathbb{R}^N}\{I_{\alpha}*F(w_n-w)\}|w|^{\frac{4+\alpha-N}{N-2}}w\varphi dx=o_n(1)\|\varphi\|_{H^1(\mathbb{R}^N)}.
\end{align}

From \eqref{eq:proof-spl-06}, \eqref{eq:proof-spl-07}, \eqref{eq:Step-1-B} we conclude \eqref{eq:Step-1-claim}.

\noindent
\textbf{Step 2: } We next claim:
\begin{align}\label{eq:Step-2-claim}
\begin{split}
   &\int_{\mathbb{R}^N}\{I_{\alpha}*F(w_n)\}f_1(w_n)\varphi dx \\
=&\ \int_{\mathbb{R}^N}\{I_{\alpha}*F(w_n-w)\}f_1(w_n-w)\varphi dx \\
        &\ \ \ \ \ \ \ \ \ \ \ +\int_{\mathbb{R}^N}\{I_{\alpha}*F(w)\}f_1(w)\varphi dx+o_n(1)\|\varphi\|_{H^1(\mathbb{R}^N)}.
        \end{split}
\end{align}

We first prove
\begin{align}
&\int_{\mathbb{R}^N}\{I_{\alpha}*(F(w_n)-F(w_n-u)-F(w))\}f_1(w_n)\varphi dx=o_n(1)\|\varphi\|_{H^1(\mathbb{R}^N)},  \label{eq:proof-spl-step2-1} \\
&\int_{\mathbb{R}^N}\{I_{\alpha}*(F(w_n)-F(w_n-w)-F(w))\}f_1(w_n-w)\varphi dx=o_n(1)\|\varphi\|_{H^1(\mathbb{R}^N)},  \label{eq:proof-spl-step2-2} \\
&\int_{\mathbb{R}^N}\{I_{\alpha}*(F(w_n)-F(w_n-w)-F(w))\}f_1(w)\varphi dx=o_n(1)\|\varphi\|_{H^1(\mathbb{R}^N)},  \label{eq:proof-spl-step2-3}
\end{align}
where $o_n(1)\to 0$ as $n\to\infty$ uniformly in $\varphi\in H^1(\mathbb{R}^N)$. We only prove \eqref{eq:proof-spl-step2-1} since the proofs of \eqref{eq:proof-spl-step2-2} and \eqref{eq:proof-spl-step2-3} are similar. 

By Lemma \ref{lem:B-L-2} with $h(s)=f(s)$ we have
\begin{align}\label{eq:proof-spl-step2-4}
\lim_{n\to\infty}\int_{\mathbb{R}^N}\left|F(w_n)-F(w_n)-F(w)\right|^{\frac{2N}{N+\alpha}}dx=0.
\end{align}

By \eqref{eq:growth-f_1}  we have
\begin{align*}
  &\int_{\mathbb{R}^N}|f_1(w_n)\varphi|^{\frac{2N}{N+\alpha}}dx \\
\le&\ C_{f_1}\left\{\int_{\mathbb{R}^N}|w_n|^{\frac{2\alpha}{N+\alpha}}|\varphi|^{\frac{2N}{N+\alpha}}dx+\int_{\mathbb{R}^N}|w_n|^{\frac{2N(2+\alpha)}{(N-2)(N+\alpha)}}|\varphi|^{\frac{2N}{N+\alpha}}dx\right\} \\
\le&\ C_{f_1}\left\{\left(\int_{\mathbb{R}^N}|w_n|^2dx\right)^{\frac{\alpha}{N+\alpha}}\left(\int_{\mathbb{R}^N}|\varphi|^2dx\right)^{\frac{N}{N+\alpha}}+\left(\int_{\mathbb{R}^N}|w_n|^{\frac{2N}{N-2}}dx\right)^{\frac{2+\alpha}{N+\alpha}}\left(\int_{\mathbb{R}^N}|\varphi|^{\frac{2N}{N-2}}dx\right)^{\frac{N-2}{N+\alpha}}\right\} \\
\le&\ C_3\|\varphi\|_{H^1(\mathbb{R}^N)}^{\frac{2N}{N+\alpha}}
\end{align*}
and then
\begin{align}\label{eq:proof-spl-step2-5}
\left(\int_{\mathbb{R}^N}|f_1(w_n)\varphi|^{\frac{2N}{N+\alpha}}dx\right)^{\frac{N+\alpha}{2N}}\le C_3^{\frac{N+\alpha}{2N}}\|\varphi\|_{H^1(\mathbb{R}^N)},
\end{align}
where 
\begin{align*}
C_3=C_{f_1}\max\left\{\sup_n\|w_n\|_2^{\frac{2\alpha}{N+\alpha}}, \mathcal{S}_N^{-\frac{N}{N+\alpha}}\sup_n\|w_n\|_{2^*}^{2^*\frac{2+\alpha}{N+\alpha}}\right\}.
\end{align*}

By the Hardy--Littlewood--Sobolev inequality, \eqref{eq:proof-spl-step2-4} and \eqref{eq:proof-spl-step2-5} we obtain
\begin{align*}
 &\ \left|\int_{\mathbb{R}^N}\{I_{\alpha}*(F(w_n)-F(w_n-w)-F(w))\}f_1(w_n)\varphi dx\right| \\
\le&\  C_{\rm HLS}\left(N,\alpha, {\textstyle\frac{2N}{N+\alpha}}\right)\left(\int_{\mathbb{R}^N}\left|F(w_n)-F(w_n-w)-F(w)\right|^{\frac{2N}{N+\alpha}}dx\right)^{\frac{N+\alpha}{2N}}\left(\int_{\mathbb{R}^N}|f_1(w_n)\varphi|^{\frac{2N}{N+\alpha}}dx\right)^{\frac{N+\alpha}{2N}} \\
=&\ o_n(1)\|\varphi\|_{H^1(\mathbb{R}^N)},
\end{align*}
where $o_n(1)\to 0$ as $n\to\infty$ uniformly for $\varphi\in H^1(\mathbb{R}^N)$. So \eqref{eq:proof-spl-step2-1} is proved.

We next prove 
\begin{align}
\begin{split}\label{eq:proof-spl-step2-6} 
\int_{\mathbb{R}^N}(I_{\alpha}*F(w_n))(f(w_n)-f(w_n-w)-f(w))\varphi dx=o_n(1)\|\varphi\|_{H^1(\mathbb{R}^N)}, 
\end{split}\\
\begin{split}\label{eq:proof-spl-step2-7}
\int_{\mathbb{R}^N}(I_{\alpha}*F(w_n-w))(f(w_n)-f(w_n-w)-f(w)\varphi dx=o_n(1)\|\varphi\|_{H^1(\mathbb{R}^N)},  
\end{split}\\
\begin{split}\label{eq:proof-spl-step2-8}
\int_{\mathbb{R}^N}(I_{\alpha}*F(w))(f(w_n)-f(w_n-w)-f(w))\varphi dx=o_n(1)\|\varphi\|_{H^1(\mathbb{R}^N)}, 
\end{split}
\end{align}
where $o_n(1)\to 0$ as $n\to\infty$ uniformly for $\varphi\in H^1(\mathbb{R}^N)$. Since $\{F(w_n)\}$ is bounded in $L^{\frac{2N}{N+\alpha}}(\mathbb{R}^N)$ the Hardy--Littlewood--Sobolev inequality and Lemma \ref{lem:B-L-4} with $h=f_1$ imply that
\begin{align*}
  &\ \left|\int_{\mathbb{R}^N}(I_{\alpha}*F(w_n))(f_1(w_n)-f_1(w_n-w)-f_1(w))\varphi dx\right| \\
\le&\ C_{\rm HLS}\left(N,\alpha, {\textstyle\frac{2N}{N+\alpha}}\right)\|F(w_n)\|_{\frac{2N}{N+\alpha}}\left(\int_{\mathbb{R}^N}|f_1(w_n)-f_1(w_n-w)-f_1(w)|^{\frac{2N}{N+\alpha}}|\varphi|^{\frac{2N}{N+\alpha}}dx\right)^{\frac{N+\alpha}{2N}} \\
=&\ o_n(1)\|\varphi\|_{H^1(\mathbb{R}^N)},
\end{align*}
where $o_n(1)\to 0$ as $n\to\infty$ uniformly for $\varphi\in H^1(\mathbb{R}^N)$. So \eqref{eq:proof-spl-step2-6} holds and  \eqref{eq:proof-spl-step2-7} and \eqref{eq:proof-spl-step2-8} can be proved in a similar manner. 
From \eqref{eq:proof-spl-step2-1}, \eqref{eq:proof-spl-step2-2}, \eqref{eq:proof-spl-step2-3}, \eqref{eq:proof-spl-step2-6}, \eqref{eq:proof-spl-step2-7} and     \eqref{eq:proof-spl-step2-8} we obtain
\begin{align*}
 &\int_{\mathbb{R}^N}(I_{\alpha}*F(w_n))f_1(w_n)\varphi dx \\
=&\ \int_{\mathbb{R}^N}(I_{\alpha}*F(w_n-w))f_1(w_n-w)\varphi dx+\int_{\mathbb{R}^N}(I_{\alpha}*F(w))f_1(w)\varphi dx \\
&\ \ \ \ \ \ +\int_{\mathbb{R}^N}(I_{\alpha}*F(w_n-w))f_1(w)\varphi dx \\
&\ \ \ \ \ \ +\int_{\mathbb{R}^N}(I_{\alpha}*F(w))f_1(w_n-w)\varphi dx+o_n(1)\|\varphi\|_{H^1(\mathbb{R}^N)},
\end{align*}
where $o_n(1)\to 0$ as $n\to\infty$ uniformly for $\varphi\in H^1(\mathbb{R}^N)$. So to finish Step 2 and therefore the proof of the proposition, it remains to prove
\begin{align}
&\int_{\mathbb{R}^N}(I_{\alpha}*F(w_n-w))f_1(w)\varphi dx=o_n(1)\|\varphi\|_{H^1(\mathbb{R}^N)}, \label{eq:proof-spl-step2-9} \\
&\int_{\mathbb{R}^N}(I_{\alpha}*F(w))f_1(w_n-w)\varphi dx=o_n(1)\|\varphi\|_{H^1(\mathbb{R}^N)}, \label{eq:proof-spl-step2-10}
\end{align}
where $o_n(1)\to 0$ as $n\to\infty$ uniformly for $\varphi\in H^1(\mathbb{R}^N)$. 

Let us first show 
\begin{align}\label{eq:proof-spl-step2-11}
\lim_{n\to\infty}\int_{\mathbb{R}^N}(I_{\alpha}*|w_n-w|^{\frac{N+\alpha}{N}})|w|^{\frac{\alpha}{N}}|\varphi|dx=o_n(1)\|\varphi\|_{H^1(\mathbb{R}^N)}.
\end{align}
where $o_n(1)\to 0$ uniformly for $\varphi\in H^1(\mathbb{R}^N)$.
Note that $\{|w_n-w|^{\frac{N+\alpha}{N}}\}$ is a bounded sequence in $L^{\frac{2N}{N+\alpha}}(\mathbb{R}^N)$. Moreover by the Rellich theorem, for any bounded domain $\Omega\subset\mathbb{R}^N$, it holds that $|w_n-w|^{\frac{N+\alpha}{N}}\to 0$ in $L^{\frac{2N}{N+\alpha}}(\Omega)$. Hence by (2) of Lemma \ref{lem:Riesz-ae}, $I_{\alpha}*|w_n-w|^{\frac{N+\alpha}{N}}\to 0$ a.e. $x\in\mathbb{R}^N$ along a subsequence. Moreover by Lemma \ref{lem:Bdd-Riesz}
\begin{align*}
\sup_{n\in\mathbb{N}}\left\|\left|I_{\alpha}*|w_n-w|^{\frac{N+\alpha}{N}}\right|^2\right\|_{\frac{N}{N-\alpha}}\le C_{\rm HLS}\left(N,\alpha, {\textstyle\frac{2N}{N+\alpha}}\right)^2\sup_{n\in\mathbb{N}}\|w_n-w\|_{2}^{2\frac{N+\alpha}{N}}<\infty.
\end{align*}
Thus by Lemma \ref{lemma:ae_to_weak}, $|I_{\alpha}*|w_n-w|^{\frac{N+\alpha}{N}}|^{2}\rightharpoonup 0$ in $L^{\frac{N}{N-\alpha}}(\mathbb{R}^N)$ along a subsequence. Since $|w|^{2\frac{\alpha}{N}}\in L^{\frac{N}{\alpha}}(\mathbb{R}^N)$ we obtain
\begin{align*}
\lim_{n\to\infty}\int_{\mathbb{R}^N}\left|I_{\alpha}*|w_n-w|^{\frac{N+\alpha}{N}}\right|^2|w|^{2\frac{N}{\alpha}}dx=0
\end{align*}
and by the Schwartz inequality we obtain
\begin{align}\label{eq:proof-spl-step2-12}
 \begin{split}
 &\ \left|\int_{\mathbb{R}^N}(I_{\alpha}*|w_n-w|^{\frac{N+\alpha}{N}})|w|^{\frac{\alpha}{N}}|\varphi| dx\right| \\
 \le&\ \left(\int_{\mathbb{R}^N}\left|I_{\alpha}*|w_n-w|^{\frac{N+\alpha}{N}}\right|^{2}|w|^{2\frac{\alpha}{N}}dx\right)^{\frac{1}{2}}\left(\int_{\mathbb{R}^N}|\varphi|^2dx\right)^{\frac{1}{2}}=o_n(1)\|\varphi\|_{H^1(\mathbb{R}^N)},
 \end{split}
\end{align}
where $o_n(1)\to 0$ as $n\to\infty$ uniformly for $\varphi\in H^1(\mathbb{R}^N)$.

Then by \eqref{eq:growth-F}, \eqref{eq:growth-f_1-2} and \eqref{eq:growth-f_1-3} we have
\begin{align}\label{eq:proof-spl-step2-12-000}
\begin{split}
  &\left|\int_{\mathbb{R}^N}(I_{\alpha}*F(w_n-w))f_1(w)\varphi dx\right| \\
\le&\ \int_{\mathbb{R}^N}(I_{\alpha}*|F(w_n-w)|)|f_1(w)\varphi| dx \\
\le&\ C_F\int_{\mathbb{R}^N}\{I_{\alpha}*|w_n-w|^{\frac{N+\alpha}{N}}\}|f_1(w)\varphi|dx \\ 
   &\ \ \ \ \ \ \ \ \ \ \ \ +C_F\int_{\mathbb{R}^N}\{I_{\alpha}*|w_n-w|^{\frac{N+\alpha}{N-2}}\}|f_1(w)\varphi|dx \\
\le &\ C_F C_{f_1,\varepsilon}\int_{\mathbb{R}^N}(I_{\alpha}*|w_n-w|^{\frac{N+\alpha}{N}})|w|^{\frac{\alpha}{N}}|\varphi| dx \\ 
   &\ \ \ \ \ \ \ \ \ \ \ \ +\varepsilon C_F \int_{\mathbb{R}^N}(I_{\alpha}*|w_n-w|^{\frac{N+\alpha}{N}})|w|^{\frac{2+\alpha}{N-2}}|\varphi| dx \\
    &+\varepsilon C_{F} \int_{\mathbb{R}^N}(I_{\alpha}*|w_n-w|^{\frac{N+\alpha}{N-2}})|w|^{\frac{\alpha}{N}}|\varphi|dx \\
    &\ \ \ \ \ \ \ \ \ \ \ \ \ \ \ +C_{F}C_{f_1,\varepsilon}\int_{\mathbb{R}^N}(I_{\alpha}*|w_n-w|^{\frac{N+\alpha}{N-2}})|w|^{\frac{\alpha+2}{N-2}}|\varphi|dx.
\end{split}
\end{align}
We note that by Step 1 we have
\begin{align*}
\int_{\mathbb{R}^N}(I_{\alpha}*|w_n-w|^{\frac{N+\alpha}{N-2}})|w|^{\frac{\alpha+2}{N-2}}|\varphi|dx=o_n(1)\|\varphi\|_{H^1(\mathbb{R}^N)},
\end{align*}
where $o_n(1)\to 0$ uniformly in $\varphi\in H^1(\mathbb{R}^N)$. We next see that by the H\"{o}lder inequality and Lemma \ref{lem:Bdd-Riesz} 
and 
\begin{align}\label{eq:proof-spl-step2-12-00}
\begin{split}
 &\ \int_{\mathbb{R}^N}(I_{\alpha}*|w_n-w|^{\frac{N+\alpha}{N-2}})|w|^{\frac{\alpha}{N}}|\varphi|dx \\
 \le&\ \|I_{\alpha}*|w_n-w|^{\frac{N+\alpha}{N-2}}\|_{\frac{2N}{N-\alpha}}\||w|^{\frac{\alpha}{N}}\varphi\|_{L^{\frac{2N}{N+\alpha}}} \\
 \le&\ C_{\mathrm{HLS}}\left(N,\alpha,{\textstyle\frac{2N}{N+\alpha}}\right)\|w_n-w\|_{2^*}^{\frac{N+\alpha}{N-2}}\|w\|_2^{\frac{\alpha}{N}}\|\varphi\|_{H^1(\mathbb{R}^N)}.
 \end{split}
\end{align}
Hence by using \eqref{eq:proof-spl-08} , \eqref{eq:proof-spl-step2-12-0}, \eqref{eq:proof-spl-step2-11} and \eqref{eq:proof-spl-step2-12-00} for \eqref{eq:proof-spl-step2-12-000} we conclude that
\begin{align}\label{eq:proof-spl-step2-12-0000}
  \left|\int_{\mathbb{R}^N}(I_{\alpha}*F(w_n-w))f_1(w)\varphi dx\right|\le C_4\varepsilon \|\varphi\|_{H^1(\mathbb{R}^N)}+o_n(1)\|\varphi\|_{H^1(\mathbb{R}^N)} 
\end{align}
where $o_n(1)\to 0$ as $n\to\infty$ uniformly for $\varphi\in H^1(\mathbb{R}^N)$ and 
\begin{align*}
C_4=C_FC_{\mathrm{HLS}}\left(N,\alpha,{\textstyle\frac{2N}{N+\alpha}}\right)\left(\mathcal{S}_N^{-\frac{1}{2}}\|w\|_{2^*}^{\frac{2+\alpha}{N-2}}\sup_{n\in\mathbb{N}}\|w_n-w\|_2^{\frac{N+\alpha}{N}}+\|w\|_2^{\frac{\alpha}{N}}\sup_{n\in\mathbb{N}}\|w_n-w\|_{2^*}^{\frac{N+\alpha}{N-2}}\right).
\end{align*}
Since $\varepsilon>0$ is arbitrarily we see that \eqref{eq:proof-spl-step2-9} holds.

Finally we prove \eqref{eq:proof-spl-step2-10}. Note that by Lemma \ref{lem:Bdd-Riesz} 
\begin{align*}
\left\|\left|I_{\alpha}*|w|^{\frac{N+\alpha}{N}}\right|^2\right\|_{\frac{N}{N-\alpha}}\le C_{\rm HLS}\left(N,\alpha,{\textstyle\frac{2N}{N+\alpha}}\right)^2\|w\|_{2}^{2\frac{N+\alpha}{N}}<\infty,
\end{align*}
that is, $|(I_{\alpha}*|w|^{\frac{N+\alpha}{N}})|^{2}\in L^{\frac{N}{N-\alpha}}(\mathbb{R}^N)$. Since $\{|w_n-w|^{\frac{2\alpha}{N}}\}$ is bounded in $L^{\frac{N}{\alpha}}(\mathbb{R}^N)$ and $w_n\to w$ as $n\to\infty$ a.e. $x\in\mathbb{R}^N$, by Lemma \ref{lemma:ae_to_weak} we have $|w_n-w|^{\frac{2\alpha}{N}}\rightharpoonup 0$ in $L^{\frac{N}{\alpha}}(\mathbb{R}^N)$. Therefore we obtain 
\begin{align*}
\lim_{n\to\infty}\int_{\mathbb{R}^N}\left|I_{\alpha}*|w|^{\frac{N+\alpha}{N}}\right|^2|w_n-w|^{\frac{2\alpha}{N}}dx=0
\end{align*}
and then by the H\"{o}lder inequality
\begin{align}\label{eq:proof-spl-step2-13}
\begin{split}
&\ \left|\int_{\mathbb{R}^N}(I_{\alpha}*|w|^{\frac{N+\alpha}{N}})|w_n-w|^{\frac{\alpha}{N}}|\varphi| dx\right| \\
\le&\ \left(\int_{\mathbb{R}^N}|I_{\alpha}*|w|^{\frac{N+\alpha}{N}}|^{2}|w_n-w|^{\frac{2\alpha}{N}}dx\right)^{\frac{N+2}{2N}}\left(\int_{\mathbb{R}^N}|\varphi|^{\frac{2N}{N-2}}dx\right)^{\frac{N-2}{2N}} \\
=&\ o_n(1)\|\varphi\|_{H^1(\mathbb{R}^N)}. 
\end{split}
\end{align}
Then by \eqref{eq:proof-spl-06-01}, \eqref{eq:proof-spl-step2-13} and similar estimates as in \eqref{eq:proof-spl-step2-12-0} and \eqref{eq:proof-spl-step2-12-00} we obtain
\begin{align*}
  &\left|\int_{\mathbb{R}^N}(I_{\alpha}*F(w))f_1(w_n-w)\varphi dx\right| \\
\le&\ \int_{\mathbb{R}^N}(I_{\alpha}*|F(w)|)|f_1(w_n-w)||\varphi| dx \\
\le &\ C_FC_{f_1,\varepsilon}\int_{\mathbb{R}^N}(I_{\alpha}*|w|^{\frac{N+\alpha}{N}})|w_n-w|^{\frac{\alpha}{N}}|\varphi| dx+\varepsilon C_F \int_{\mathbb{R}^N}(I_{\alpha}*|w|^{\frac{N+\alpha}{N}})|w_n-w|^{\frac{2+\alpha}{N-2}}|\varphi| dx \\
    &+\varepsilon C_F\int_{\mathbb{R}^N}(I_{\alpha}*|w|^{\frac{N+\alpha}{N-2}})|w_n-w|^{\frac{\alpha}{N}}|\varphi|dx+C_FC_{f_1, \varepsilon}\int_{\mathbb{R}^N}(I_{\alpha}*|w|^{\frac{N+\alpha}{N-2}})|w_n-w|^{\frac{\alpha+2}{N-2}}|\varphi|dx \\
   \le &\ 
       (\varepsilon C_5+o_n(1))\|\varphi\|_{H^1(\mathbb{R}^N)},
\end{align*}
where $o_n(1)\to 0$ as $n\to\infty$ uniformly for $\varphi\in H^1(\mathbb{R}^N)$ and
\begin{align*}
C_5=C_FC_{\mathrm{HLS}}\left(N,\alpha,{\textstyle\frac{2N}{N+\alpha}}\right)\left(\mathcal{S}_N^{-\frac{1}{2}}\|w\|_2^{\frac{N+\alpha}{N}}\sup_{n\in\mathbb{N}}\|w_n-w\|_{2^*}^{\frac{2+\alpha}{N-2}}+\|w\|_{2^*}^{\frac{N+\alpha}{N-2}}\sup_{n\in\mathbb{N}}\|w_n-w\|_2^{\frac{\alpha}{N}}\right)
\end{align*}
So \eqref{eq:proof-spl-step2-10} holds.
The proof of Propisition \ref{lem:splitting} has been completed.
\end{proof}
From Proposition \ref{lem:splitting} we obtain the following corollary.

\begin{Cor}\label{cor:cor_of_splitting}
Assume that $N\in\mathbb{N}$, $N\ge 3$, $(N-4)_+<\alpha<N$ and $f\in C(\mathbb{R}, \mathbb{R})$ satisfies same condition of Lemma \ref{lem:splitting}. Let $\{w_n\}$ be a sequence in $H^1(\mathbb{R}^N)$ which satisfies $w_n\rightharpoonup w$ for some $u\in H^1(\mathbb{R}^N)$ and $w_n(x)\to w(x)$ as $n\to\infty$ almost everywhere in $\mathbb{R}^N$. Then, up to a subsequence, we have
\begin{align*}
\lim_{n\to\infty}\int_{\mathbb{R}^N}(I_{\alpha}*F(w_n))f(w_n)\varphi\,dx=\int_{\mathbb{R}^N}(I_{\alpha}*F(w))f(w)\varphi\,dx
\end{align*}
for any $\varphi\in H^1(\mathbb{R}^N)$.     
\end{Cor}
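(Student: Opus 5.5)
The plan is to deduce the corollary from Proposition \ref{lem:splitting}. The difference between the two sides is split into the remainder term controlled by the proposition, plus a term involving only $w_n-w$, which I will show tends to $0$ for each fixed $\varphi$. Passing to the subsequence given by Proposition \ref{lem:splitting}, we get
\begin{align*}
\int_{\mathbb{R}^N}(I_{\alpha}*F(w_n))f(w_n)\varphi\,dx-\int_{\mathbb{R}^N}(I_{\alpha}*F(w))f(w)\varphi\,dx
=\int_{\mathbb{R}^N}(I_{\alpha}*F(w_n-w))f(w_n-w)\varphi\,dx+o_n(1)\|\varphi\|_{H^1(\mathbb{R}^N)}.
\end{align*}
The subsequence does not depend on $\varphi$. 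It therefore suffices to show that, for each fixed $\varphi\in H^1(\mathbb{R}^N)$,
\begin{align*}
\int_{\mathbb{R}^N}(I_{\alpha}*F(z_n))f(z_n)\varphi\,dx\to 0, \qquad z_n:=w_n-w .
\end{align*}
Here $z_n\rightharpoonup 0$ in $H^1(\mathbb{R}^N)$ and $z_n\to 0$ a.e.

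For this I use the Hardy--Littlewood--Sobolev inequality in the form of Lemma \ref{lem:Bdd-Riesz}. Since $\{F(z_n)\}$ is bounded in $L^{\frac{2N}{N+\alpha}}(\mathbb{R}^N)$ by \eqref{eq:growth-F}, the sequence $\{I_\alpha*F(z_n)\}$ is bounded in $L^{\frac{2N}{N-\alpha}}(\mathbb{R}^N)$. It also converges to $0$ a.e.: the argument of \eqref{eq:proof-spl-step2-11} and Step 1 of the proof of Proposition \ref{lem:splitting} applies Lemma \ref{lem:Riesz-ae} to $|z_n|^{\frac{N+\alpha}{N}}$ and $|z_n|^{\frac{N+\alpha}{N-2}}$, and $|F(z_n)|$ is dominated by these two functions, using local strong convergence from Rellich. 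By Lemma \ref{lemma:ae_to_weak}, it follows that $I_\alpha*F(z_n)\rightharpoonup 0$ weakly in $L^{\frac{2N}{N-\alpha}}$.

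To finish, it would suffice to have $f(z_n)\varphi\to 0$ strongly in $L^{\frac{2N}{N+\alpha}}$, but this fails in general because $z_n$ only converges weakly. So instead I split $f(z_n)\varphi$ into a good part and a small part, in the spirit of the argument for \eqref{eq:proof-spl-step2-9}, using \eqref{eq:growth-f_1-2}, \eqref{eq:growth-f_1-3} and the decomposition $f=f_1+\beta|t|^{\frac{4+\alpha-N}{N-2}}t$.

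First I cut off $\varphi$: choose $\varphi_R=\varphi\chi_{B_R(0)}$ so that the contribution of $\varphi-\varphi_R$ is below $\varepsilon$ uniformly in $n$. This follows from H\"older's inequality exactly as in \eqref{eq:proof-spl-step2-5}, since $\|\varphi\|_{L^2(\mathbb{R}^N\setminus B_R)}$ and $\|\varphi\|_{L^{2^*}(\mathbb{R}^N\setminus B_R)}$ are small for large $R$. On $B_R(0)$, Rellich gives $z_n\to 0$ in $L^s(B_R)$ for $s<2^*$. Hence the subcritical part of $f(z_n)\varphi_R$ tends to $0$ strongly in $L^{\frac{2N}{N+\alpha}}(B_R)$, and its pairing with the bounded sequence $I_\alpha*F(z_n)$ vanishes. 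The $\varepsilon$-part of $f_1$ at the critical exponent is bounded by $\varepsilon C\|\varphi\|_{H^1}$.

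The main obstacle is the critical piece $\beta(I_\alpha*F(z_n))|z_n|^{\frac{4+\alpha-N}{N-2}}z_n\varphi$ when $\beta>0$. Here neither factor converges strongly, because bubbling can occur. My first attempt would be to write $F(z_n)$ as its critical part $\frac{\beta(N-2)}{N+\alpha}|z_n|^{\frac{N+\alpha}{N-2}}$ plus a part that tends to $0$ in $L^{\frac{2N}{N+\alpha}}(B_R)$. For the critical part I would use the pointwise a.e. convergence and a Vitali/uniform-integrability argument against the fixed function $\varphi_R$, which has absolutely continuous $L^{2^*}$ integrals; concentration of $z_n$ at points meets $\varphi_R$ only on sets of small measure. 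If uniform integrability cannot be secured, the fallback is to argue as in \eqref{eq:proof-spl-08}: show that $|I_\alpha*|z_n|^{\frac{N+\alpha}{N-2}}|^{\frac{2N}{N+2}}\rightharpoonup0$ and combine this with a truncation of $z_n$ at large heights.
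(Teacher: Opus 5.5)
Your reduction is correct and matches the paper's: Proposition~\ref{lem:splitting} gives a $\varphi$-independent subsequence, and it remains to show $\int_{\mathbb{R}^N}(I_\alpha*F(z_n))f(z_n)\varphi\,dx\to 0$ for each fixed $\varphi$, with $z_n=w_n-w$.

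The gap is the step you call the main obstacle. You assert that $f(z_n)\varphi\to 0$ strongly in $L^{\frac{2N}{N+\alpha}}(\mathbb{R}^N)$ ``fails in general''. As a result the critical piece gets only a tentative Vitali argument with a fallback, so it is not actually proved. The assertion is false for fixed $\varphi$, and this is the missing idea. The growth bound $|f(t)|\le C_f(|t|^{\frac{\alpha}{N}}+|t|^{\frac{\alpha+2}{N-2}})$ gives
\[
\int_{\mathbb{R}^N}|f(z_n)\varphi|^{\frac{2N}{N+\alpha}}dx\le C\int_{\mathbb{R}^N}|z_n|^{\frac{2\alpha}{N+\alpha}}|\varphi|^{\frac{2N}{N+\alpha}}dx+C\int_{\mathbb{R}^N}|z_n|^{\frac{2N(2+\alpha)}{(N-2)(N+\alpha)}}|\varphi|^{\frac{2N}{N+\alpha}}dx .
\]
\begin{itemize}
\item By the $L^2$ and $L^{2^*}$ bounds, $|z_n|^{\frac{2\alpha}{N+\alpha}}$ is bounded in $L^{\frac{N+\alpha}{\alpha}}$ and $|z_n|^{\frac{2N(2+\alpha)}{(N-2)(N+\alpha)}}$ is bounded in $L^{\frac{N+\alpha}{2+\alpha}}$.
\item Both exponents exceed $1$ and both sequences tend to $0$ a.e., so by Lemma~\ref{lemma:ae_to_weak} they converge weakly to $0$.
\item The fixed function $|\varphi|^{\frac{2N}{N+\alpha}}$ lies in the dual spaces $L^{\frac{N+\alpha}{N}}$ and $L^{\frac{N+\alpha}{N-2}}$ respectively.
\end{itemize}
Hence both integrals tend to $0$, and $f(z_n)\varphi\to 0$ strongly, including the critical part when $\beta>0$. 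Bubbling of $z_n$ is harmless once $\varphi$ is fixed. What fails is only uniformity in $\varphi$; that is why the Proposition is needed for the first step but not for this one.

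With this, the Hardy--Littlewood--Sobolev inequality finishes the proof in one line, using only boundedness of $\{F(z_n)\}$ in $L^{\frac{2N}{N+\alpha}}$; this is the paper's argument. The following parts of your plan are then unnecessary:
\begin{itemize}
\item the cut-off $\varphi_R$;
\item the decomposition $f=f_1+\beta|t|^{\frac{4+\alpha-N}{N-2}}t$;
\item the weak or a.e.\ convergence of $I_\alpha*F(z_n)$;
\item the splitting of $F(z_n)$ into a critical part plus a remainder small in $L^{\frac{2N}{N+\alpha}}(B_R(0))$.
\end{itemize}
The last of these would also need extra care: local smallness of the remainder does not by itself control the nonlocal quantity $I_\alpha*F_1(z_n)$ on $B_R(0)$, since the convolution sees all of $\mathbb{R}^N$. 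Your Vitali intuition, that the fixed $\varphi$ has absolutely continuous $L^{2^*}$-integrals, is the right mechanism. It should be applied after HLS has removed the convolution factor, to integrals of the form $\int|z_n|^{a}|\varphi|^{b}\,dx$; there it is exactly the weak-convergence pairing above.
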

\begin{proof}
We first note that as \eqref{eq:growth-f_1} there exists $C_f>0$ such that
\begin{align}\label{eq:growth-f}
|f(t)|\le C_f(|t|^{\frac{\alpha}{N}}+|t|^{\frac{\alpha+2}{N-2}})\ \ \ (t\in\mathbb{R}).
\end{align}
Let $\{w_n\}\subset H^1(\mathbb{R}^N)$ and $w\in H^1(\mathbb{R}^N)$ which satisfies $w_n\rightharpoonup w$ in $H^1(\mathbb{R}^N)$ and $w_n(x)\to w(x)$ a.e. in $\mathbb{R}^N$. By Propisition \ref{lem:splitting}, up to the subsequence, we have
\begin{align*}
\int_{\mathbb{R}^N}(I_{\alpha}*F(w_n))f(w_n)\varphi\,dx=&\int_{\mathbb{R}^N}(I_{\alpha}*F(w))f(w)\varphi\, dx \\
=&\int_{\mathbb{R}^N}(I_{\alpha}*F(w_n-w))f(w_n-w)\varphi\,dx+o_n(1)\|\varphi\|_{H^1(\mathbb{R}^N)}.
\end{align*}
We fix any $\varphi\in H^1(\mathbb{R}^N)$ and will show
\begin{align*}
\lim_{n\to\infty}\int_{\mathbb{R}^N}(I_{\alpha}*F(w_n-w))f(w_n-w)\varphi\,dx=0.
\end{align*}
By the Hardy--Littlewood--Sobolev inequality and  we have
\begin{align}\label{eq:cor_spl_proof_1}
\begin{split}
   &\ \left|\int_{\mathbb{R}^N}(I_{\alpha}*F(w_n-w))f(w_n-w)\varphi\,dx\right| \\
\le&\ C_{\mathrm{HLS}}\left(N,\alpha, {\textstyle\frac{2N}{N+\alpha}}\right)\|F(w_n-w)\|_{\frac{2N}{N+\alpha}}\left(\int_{\mathbb{R}^N}|f(w_n-w)|^{\frac{2N}{N+\alpha}}|\varphi|^{\frac{2N}{N+\alpha}}dx\right)^{\frac{N+\alpha}{2N}}. 
\end{split}
\end{align}
We note that $\{F(w_n-w)\}$ is bounded in $L^{\frac{2N}{N+\alpha}}(\mathbb{R}^N)$. On the other hand,by \eqref{eq:growth-f} we have
\begin{align*}
&\ \int_{\mathbb{R}^N}|f(w_n-w)|^{\frac{2N}{N+\alpha}}|\varphi|^{\frac{2N}{N+\alpha}}dx \\
\le&\ C_{f}\int_{\mathbb{R}^N}|w_n-w|^{\frac{2\alpha}{N+\alpha}}|\varphi|^{\frac{2N}{N+\alpha}}dx+C_{f}\int_{\mathbb{R}^N}|w_n-w|^{\frac{(2+\alpha)2N}{(N+\alpha)(N-2)}}|\varphi|^{\frac{2N}{N+\alpha}}dx=:I_1+I_2.
\end{align*}
Since $w_n\to w$ a.e. in $\mathbb{R}^N$ and $\{|w_n-w|^{\frac{2\alpha}{N+\alpha}}\}$ is bounded in $L^{\frac{N+\alpha}{\alpha}}(\mathbb{R}^N)$ by Lemma \ref{lemma:ae_to_weak}, it holds that $|w_n-w|^{\frac{2\alpha}{N+\alpha}}\rightharpoonup 0$ in $L^{\frac{N+\alpha}{\alpha}}(\mathbb{R}^N)$. The fact $|\varphi|^{\frac{2N}{N+\alpha}}\in L^{\frac{N+\alpha}{N}}(\mathbb{R}^N)$ implies that $I_1\to 0$ as $n\to\infty$.

Similarly, boundedness of $\{|w_n-w|^{\frac{2N(2+\alpha)}{(N+\alpha)(N-2)}}\}$ in $L^{\frac{N+\alpha}{2+\alpha}}(\mathbb{R}^N)$ implies $|w_n-w|^{\frac{2N(\alpha+2)}{(N-2)(N+\alpha)}}\rightharpoonup 0$ in $L^{\frac{N+\alpha}{2+\alpha}}(\mathbb{R}^N)$(note $N\ge 3$). Since $|\varphi|^{\frac{2N}{N+\alpha}}\in L^{\frac{N+\alpha}{N-2}}(\mathbb{R}^N)$, we obtain $I_2\to 0$ as $n\to\infty$. 

The proof has been completed.

\end{proof}

\section{Proof of Theorem A}

\subsection{Monotonicity Trick}
Due to the condition on exponents, it is hard to obtainthe boundedness of the Palais–Smale sequences for functional. Hence we use the monotonicity trick obtained by Jeanjean \cite{Jeanjean}. 
    \begin{Prop}[\cite{Jeanjean}]  \label{Prop:monotonicityu trick}
        Let $(X, \|\cdot\|)$ be a Banach space and $T \subset \mathbb{R}^+$ be an interval. Consider a family of $C^1$-functionals $\{\Phi_{\eta}\}_{\eta \in T}$ on $X$ of the form
        \begin{align*}
            \Phi_{\eta}(w) = A(w) - \eta B(w)\ \ \text{for all}\ \  \eta \in T
        \end{align*}
        with $B(w) \ge 0$ and either $A(w) \to +\infty$ or $B(w) \to +\infty$ as $\|w\| \to +\infty$. Assume that there exist $w_1, w_2 \in X$ such that
        \begin{align*}
            c_{\eta} = \inf_{\gamma \in \Gamma} \max_{t \in [0, 1]} \Phi_{\eta}(\gamma(t)) > \max \{\Phi_{\eta}(w_1), \Phi_{\eta}(w_2)\} \text{ for any } \eta \in T,
        \end{align*}
        where
        \begin{align*}
            \Gamma = \{\gamma \in C([0, 1], X) \mid \gamma(0) = w_1, \gamma(1) = w_2\}.
        \end{align*}
        Then for almost all $\eta \in T$, there exists a bounded $\mathrm{(PS)}_{c_{\eta}}$ sequence in $X$.
    \end{Prop}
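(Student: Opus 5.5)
The plan is to exploit the monotonicity of $\eta\mapsto c_\eta$ and to derive a priori bounds on "almost optimal" paths at points where this map is differentiable. Since $B\ge 0$, for $\eta<\eta'$ in $T$ we have $\Phi_{\eta'}\le\Phi_\eta$ pointwise, and hence $c_{\eta'}\le c_\eta$. A monotone function is differentiable almost everywhere (Lebesgue), so it suffices to fix $\eta$ in the interior of $T$ at which $c'_\eta:=\frac{d}{d\eta}c_\eta$ exists. For such $\eta$ we will produce a bounded $\mathrm{(PS)}_{c_\eta}$ sequence.

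\textbf{Step 1 (localization of almost optimal paths).} Take $\eta_n\uparrow\eta$ and paths $\gamma_n\in\Gamma$ with
\begin{align*}
\max_{t\in[0,1]}\Phi_{\eta_n}(\gamma_n(t))\le c_{\eta_n}+(\eta-\eta_n).
\end{align*}
Let $w=\gamma_n(t)$ be any point with $\Phi_\eta(w)\ge c_\eta-(\eta-\eta_n)$. Since $\Phi_{\eta_n}(w)-\Phi_\eta(w)=(\eta-\eta_n)B(w)$, we get
\begin{align*}
B(w)\le \frac{c_{\eta_n}-c_\eta}{\eta-\eta_n}+2\le -c'_\eta+3
\end{align*}
for $n$ large. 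Consequently $A(w)=\Phi_{\eta_n}(w)+\eta_nB(w)\le c_\eta+1+\eta(3-c'_\eta)$ is also bounded. Here $A$ is bounded from above only, so we use the coercivity hypothesis in the form: whichever of $A$ or $B$ tends to $+\infty$ is bounded above on these points. This yields a constant $K=K(\eta)$, independent of $n$, such that every such $w$ satisfies $\|w\|\le K$. We also note that $\max_t\Phi_\eta(\gamma_n(t))\le\max_t\Phi_{\eta_n}(\gamma_n(t))\le c_\eta+o(1)$.

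\textbf{Step 2 (deformation).} We claim that for every $\varepsilon>0$ there is $w$ with $\|w\|\le K+1$, $|\Phi_\eta(w)-c_\eta|\le\varepsilon$ and $\|\Phi'_\eta(w)\|\le\varepsilon$; a diagonal choice then gives the bounded $\mathrm{(PS)}_{c_\eta}$ sequence. Suppose instead that $\|\Phi'_\eta\|\ge\varepsilon_0$ on the set
\begin{align*}
\{\|w\|\le K+1,\ |\Phi_\eta(w)-c_\eta|\le\varepsilon_0\}.
\end{align*}
We apply the quantitative deformation lemma (Willem's Lemma 2.3) with $S=B_K(0)$ and $\delta=1$. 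This gives a continuous map $\sigma$, equal to the identity outside $\Phi_\eta^{-1}([c_\eta-2\epsilon,c_\eta+2\epsilon])$, with $\Phi_\eta\circ\sigma\le\Phi_\eta$ and $\sigma(\Phi_\eta^{c_\eta+\epsilon}\cap S)\subset\Phi_\eta^{c_\eta-\epsilon}$, where $\epsilon$ is a small multiple of $\varepsilon_0$.

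Because $c_\eta>\max\{\Phi_\eta(w_1),\Phi_\eta(w_2)\}$, for small $\epsilon$ the endpoints are fixed, so $\sigma\circ\gamma_n\in\Gamma$. For $n$ large we have $\max_t\Phi_\eta\circ\gamma_n\le c_\eta+\epsilon$. Each point of $\gamma_n$ either satisfies $\Phi_\eta<c_\eta-(\eta-\eta_n)$, where $\sigma$ does not increase the value, or lies in $S$ by Step 1 and is pushed below $c_\eta-\epsilon$. Hence $\max_t\Phi_\eta(\sigma(\gamma_n(t)))<c_\eta$, contradicting the definition of $c_\eta$.

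The main obstacle is Step 2. The deformation must move points only inside the bounded region where we have control, and the threshold $c_\eta-(\eta-\eta_n)$ from Step 1 must be matched with the deformation width $\epsilon$. Concretely, we will choose $\epsilon$ first and then take $n$ so large that $\eta-\eta_n<\epsilon$. The extra radius $\delta=1$ around $S$ keeps the deformed points within the ball of radius $K+1$, which is exactly where the contradiction hypothesis on $\Phi'_\eta$ is assumed.
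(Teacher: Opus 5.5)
Your argument is correct and is essentially Jeanjean's original proof, which the paper cites without reproducing: a.e.\ differentiability of the non-increasing map $\eta\mapsto c_\eta$, a uniform bound $K$ (for $\eta_n\uparrow\eta$) on the points of the almost-optimal paths $\gamma_n$ that lie above the level $c_\eta-(\eta-\eta_n)$, and a quantitative deformation that contradicts the definition of $c_\eta$. One bookkeeping slip: in Willem's Lemma 2.3 the lower bound on $\|\Phi_\eta'\|$ is required on $\Phi_\eta^{-1}([c_\eta-2\epsilon,c_\eta+2\epsilon])$ intersected with the closed $2\delta$-neighbourhood $S_{2\delta}$ of $S$, which is the ball of radius $K+2$ when $\delta=1$, not $K+1$; either take $\delta=\tfrac12$ or impose the contradiction hypothesis on $\|w\|\le K+2$, and nothing else changes.
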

    \begin{Lem}[\mbox{\cite[Lemma 2.3]{Jeanjean}}]    \label{Lem:left conti.}
        Under the assumption of Proposition \ref{Prop:monotonicityu trick}, the map $\eta \mapsto c_{\eta}$ is non-increasing and left continuous.
    \end{Lem}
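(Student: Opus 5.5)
The plan is a direct argument from the definition of the minimax level $c_{\eta}$. It uses only the structure $\Phi_{\eta}=A-\eta B$ with $B\ge 0$, together with the fact that the path class $\Gamma$ does not depend on $\eta$.

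\emph{Monotonicity.} Let $\eta_1<\eta_2$ in $T$. Since $B(w)\ge 0$, we have $\Phi_{\eta_2}(w)=\Phi_{\eta_1}(w)-(\eta_2-\eta_1)B(w)\le \Phi_{\eta_1}(w)$ for every $w\in X$. Hence $\max_{t\in[0,1]}\Phi_{\eta_2}(\gamma(t))\le \max_{t\in[0,1]}\Phi_{\eta_1}(\gamma(t))$ for every $\gamma\in\Gamma$. Taking the infimum over $\Gamma$ gives $c_{\eta_2}\le c_{\eta_1}$.

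I also note that each $c_{\eta}$ is a finite real number. It is bounded above by $\max_t\Phi_{\eta}(\gamma(t))<\infty$ for any fixed $\gamma\in\Gamma$, such as the segment joining $w_1$ and $w_2$. It is bounded below by $\max\{\Phi_{\eta}(w_1),\Phi_{\eta}(w_2)\}$ by assumption.

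\emph{Left continuity.} Fix $\eta_0\in T$ and a sequence $\eta_k\uparrow\eta_0$ in $T$. By monotonicity, $c_{\eta_k}\ge c_{\eta_0}$ and $\{c_{\eta_k}\}$ is non-increasing, so $L:=\lim_k c_{\eta_k}\ge c_{\eta_0}$ exists. It remains to show $L\le c_{\eta_0}$.

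Given $\varepsilon>0$, I choose $\gamma\in\Gamma$ with $\max_{t}\Phi_{\eta_0}(\gamma(t))\le c_{\eta_0}+\varepsilon$. Then for every $t\in[0,1]$,
\begin{align*}
\Phi_{\eta_k}(\gamma(t))=\Phi_{\eta_0}(\gamma(t))+(\eta_0-\eta_k)B(\gamma(t))\le c_{\eta_0}+\varepsilon+(\eta_0-\eta_k)M_{\gamma},
\end{align*}
where $M_{\gamma}:=\max_{t\in[0,1]}B(\gamma(t))$. Therefore $c_{\eta_k}\le c_{\eta_0}+\varepsilon+(\eta_0-\eta_k)M_{\gamma}$. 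Letting $k\to\infty$ gives $L\le c_{\eta_0}+\varepsilon$, and since $\varepsilon$ is arbitrary, $L=c_{\eta_0}$.

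The only point needing care is that $M_{\gamma}<\infty$. The function $B$ is continuous, because for two distinct $\eta,\eta'\in T$ we have $B=(\Phi_{\eta}-\Phi_{\eta'})/(\eta'-\eta)$, a difference of $C^1$ functionals. The set $\gamma([0,1])$ is compact, so $B$ attains a finite maximum on it. If $T$ is degenerate, the statement is trivial.

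Right continuity is not claimed, and this argument would not give it. For $\eta_k\downarrow\eta_0$ the correction term $(\eta_0-\eta_k)B$ has the wrong sign, and no uniform control of $B$ over near-optimal paths is available.
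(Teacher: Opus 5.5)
Your proof is correct, and it is essentially the standard argument of Jeanjean's Lemma 2.3, which the paper cites without proof. Monotonicity follows from $B\ge 0$ together with the $\eta$-independence of $\Gamma$, and left continuity follows from fixing a near-optimal path for $c_{\eta_0}$ and using that $B$ is bounded on its compact image; your remark that $B=(\Phi_{\eta}-\Phi_{\eta'})/(\eta'-\eta)$ is continuous correctly justifies that bound.
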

Take $\tau\in (0,1)$ and fix it and set $T=[\tau, 1]$. For $\eta\in [\tau, 1]$ consider the following problem:
    \begin{align}\label{eq:NKC-aux}
        \begin{cases}
             \displaystyle -\left(a_1 + b_1\int_{\mathbb{R}^3} |\nabla u|^2\,dx\right)\Delta u + V_1(x)u = \eta\mu(I_{\alpha}*|u|^p)|u|^{p - 2}u + \lambda v, \text{ for } x \in \mathbb{R}^3,   \\
             \displaystyle -\left(a_2 + b_2\int_{\mathbb{R}^3} |\nabla v|^2\,dx\right)\Delta v + V_2(x)v = \eta\nu(I_{\alpha}*|v|^q)|v|^{q - 2}v + \lambda u, \text{ for } x \in \mathbb{R}^3, \\
             u, v \in H^1(\mathbb{R}^3),
        \end{cases}
        \end{align}
and the energy functional corresponding to \eqref{eq:NKC-aux}:
\begin{align}\label{eq:energy-aux}
\begin{split}
I_{\eta}(u,v)&=\frac{1}{2}\left(a_1\int_{\mathbb{R}^3}|\nabla u|^2dx+a_2\int_{\mathbb{R}^3}|\nabla v|^2dx\right)+\frac{1}{2}\left(\int_{\mathbb{R}^3}V_1(x)u^2dx+\int_{\mathbb{R}^3}V_2(x)v^2dx\right) \\ 
      &\ \ \ \ +\frac{1}{4}\left\{b_1\left(\int_{\mathbb{R}^3}|\nabla u|^2dx\right)^2+b_2\left(\int_{\mathbb{R}^3}|\nabla v|^2dx\right)^2\right\} \\
      &\ \ \ -\eta\dfrac{\mu}{2p}\int_{\mathbb{R}^3}(I_{\alpha}*|u|^p)|u|^pdx-\eta\frac{\nu}{2q}\int_{\mathbb{R}^3}(I_{\alpha}*|v|^q)|v|^qdx-\lambda\int_{\mathbb{R}^3}uvdx.
\end{split}
\end{align}
If we define $A(u,v)$ and $B(u,v)$ by 
    \begin{align*}
        A(u, v) &= \frac{1}{2}(a_1\|\nabla u\|_2^2 + a_2\|\nabla v\|_2^2) + \frac{1}{2}\left(\int_{\mathbb{R}^3} V_1(x)u^2\,dx + \int_{\mathbb{R}^3} V_2(x) v^2\,dx -2\lambda \int_{\mathbb{R}^3} uv\,dx\right) \\
        &\quad + \frac{1}{4}(b_1\|\nabla u\|_2^4 + b_2\|\nabla v\|_2^4), \\
        B(u, v) &= \frac{\mu}{2p}\int_{\mathbb{R}^3}(I_{\alpha}*|u|^p)|u|^pdx + \frac{\nu}{2q} \int_{\mathbb{R}^3}(I_{\alpha}*|v|^q)|v|^qdx,
    \end{align*}
    then $I_{\eta}(u,v)$ is expressed by $I_{\eta}(u, v) = A(u, v) - \eta B(u, v)$. Now we check that $A(u,v)$ and $B(u,v)$ satisfy the condition of Proposition \ref{Prop:monotonicityu trick}. By Lemma 
\ref{lem:potential_term_est} it holds that
    \begin{align}\label{eq:est_Auv}
    \begin{split}
        A(u, v)&\ge \frac{1}{2}(1 - \delta)\|(u, v)\|_{\bm{a}, \bm{V}}^2 \to \infty\  \ \text{as}\ \ \|(u, v)\|_{\bm{a}, \bm{V}} \to \infty.
\end{split}
    \end{align}
    By the definition of $B(u,v)$ it is clear that $B(u, v) \ge 0$ holds for $(u,v)\in \mathscr{H}$. 

    To find the elements $w_1$, $w_2\in X$ in Proposition \ref{Prop:monotonicityu trick}, consider the following limit problem of \eqref{eq:NKC-aux}:
    \begin{align}\label{eq:NKC-aux-limit}
        \begin{cases}
             \displaystyle -\left(a_1 + b_1\int_{\mathbb{R}^3} |\nabla u|^2\,dx\right)\Delta u + V_1^{\infty}u = \eta\mu(I_{\alpha}*|u|^p)|u|^{p - 2}u + \lambda v, \text{ for } x \in \mathbb{R}^3,   \\
             \displaystyle -\left(a_2 + b_2\int_{\mathbb{R}^3} |\nabla v|^2\,dx\right)\Delta v + V_2^{\infty}v = \eta\nu(I_{\alpha}*|v|^q)|v|^{q - 2}v + \lambda u, \text{ for } x \in \mathbb{R}^3, \\
             u, v \in H^1(\mathbb{R}^3),
        \end{cases}
        \end{align}
    for $\eta\in [\tau, 1]$ and the corresponding energy functional 
\begin{align}\label{eq:energy-aux-limit}
\begin{split}
I^{\infty}_{\eta}(u,v)&=\frac{1}{2}\left(a_1\int_{\mathbb{R}^3}|\nabla u|^2dx+a_2\int_{\mathbb{R}^3}|\nabla v|^2dx\right)+\frac{1}{2}\left(\int_{\mathbb{R}^3}V_1^{\infty}u^2dx+\int_{\mathbb{R}^3}V_2^{\infty}v^2dx\right) \\ 
      &\ \ \ \ +\frac{1}{4}\left\{b_1\left(\int_{\mathbb{R}^3}|\nabla u|^2dx\right)^2+b_2\left(\int_{\mathbb{R}^3}|\nabla v|^2dx\right)^2\right\} \\
      &\ \ \ -\eta\dfrac{\mu}{2p}\int_{\mathbb{R}^3}(I_{\alpha}*|u|^p)|u|^pdx-\eta\frac{\nu}{2q}\int_{\mathbb{R}^3}(I_{\alpha}*|v|^q)|v|^qdx-\lambda\int_{\mathbb{R}^3}uvdx.
\end{split}
\end{align}
    \begin{Lem} \label{Lem:satisfy with assumption of monotonicity trick}
        Assume that \ref{assumption:continuous}, \ref{assumption:constant at infty} and \ref{assumption:potential} hold and $1+\frac{\alpha}{3} \le p \le q\le  3+\alpha$. Then there exists $(\tilde{u}, \tilde{v}) \in \mathscr{H} \setminus \{(0, 0)\}$ such that the following \textup{(i)} and \textup{(ii)} hold:
        \begin{enumerate}[label = $(\mathrm{\roman{enumi}})$]
            \item  $I_{\eta}(\tilde{u}, \tilde{v}) \le 0$ for any $\eta \in [\tau, 1]$.
            \item the following inequality holds:
            \begin{align*}
                c_{\eta} \coloneqq \inf_{\gamma \in \Gamma} \max_{t \in [0, 1]} I_{\eta}(\gamma(t)) > \max \{I_{\eta}(0, 0), I_{\eta}(\tilde{u}, \tilde{v})\}\ \   \text{for any}\ \ \eta \in [\tau, 1],
            \end{align*}
            where
            \begin{align*}
                \Gamma = \{\gamma \in C([0, 1], \mathscr{H}) \mid \gamma(0) = (0, 0), \gamma(1) = (\tilde{u}, \tilde{v})\}.
            \end{align*}
        \end{enumerate}
    \end{Lem}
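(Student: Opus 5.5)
The plan is to verify the mountain pass geometry of $I_\eta$ uniformly in $\eta\in[\tau,1]$. We construct $(\tilde u,\tilde v)$ by a scaling argument applied to $I_\tau$, and obtain the lower bound from Lemma \ref{lem:potential_term_est} and the Hardy--Littlewood--Sobolev estimate \eqref{eq:convolution-term-est}. Since $B(u,v)\ge 0$, we have $I_\eta\le I_\tau$ for all $\eta\in[\tau,1]$, so for (i) it suffices to produce $(\tilde u,\tilde v)$ with $I_\tau(\tilde u,\tilde v)<0$.

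For (i) we take $\tilde u=0$ and a fixed nonzero $\varphi\in C_0^\infty(\mathbb{R}^3)$ in the $v$-component, since $q\ge p$ gives the strongest growth there. We consider a path along which the $q$-Choquard term dominates every positive term. We would use the dilation $v_t(x)=t^{\beta}\varphi(x/t)$ and compute each term:
\begin{align*}
\|\nabla v_t\|_2^2&=t^{2\beta+1}\|\nabla\varphi\|_2^2, & \|v_t\|_2^2&=t^{2\beta+3}\|\varphi\|_2^2,\\
\int_{\mathbb{R}^3}(I_\alpha*|v_t|^q)|v_t|^q\,dx&=t^{2q\beta+3+\alpha}D(\varphi),
\end{align*}
where $D(\varphi)$ denotes the Choquard integral of $\varphi$. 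The Kirchhoff term then scales like $t^{4\beta+2}$. We need
\begin{align*}
2q\beta+3+\alpha>\max\{4\beta+2,\ 2\beta+3\}
\end{align*}
for a suitable $\beta$. Because $q>1+\alpha/3$ (or $q\ge 1+\alpha/3$ with $q>p$ in the lower critical case), we can choose $\beta$ large. Concretely, $2q\beta>4\beta$ requires $q>2$, and $2q\beta+3+\alpha>2\beta+3$ holds for $\beta>0$. If $q\le 2$, we instead take $\beta$ negative and large in absolute value, or use a pure amplitude scaling $s\varphi$ combined with a dilation. Letting $t\to\infty$ (respectively $t\to0$) then gives $I_\tau(0,v_t)\to-\infty$. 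The cross term vanishes because $\tilde u=0$.

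For (ii), Lemma \ref{lem:potential_term_est} gives the bound
\begin{align*}
I_\eta(u,v)&\ge \tfrac{1-\delta}{2}\|(u,v)\|_{\bm a,\bm V}^2-C\bigl(\|(u,v)\|_{\bm a,\bm V}^{2p}+\|(u,v)\|_{\bm a,\bm V}^{2q}\bigr).
\end{align*}
This uses \eqref{eq:convolution-term-est}, Lemma \ref{lem:equivalence_of_norm}, and $\eta\le1$. When $p>1$, which holds since $p\ge1+\alpha/3>1$, we get $2p>2$. So there is $\rho>0$ with $I_\eta\ge \kappa>0$ on the sphere $\|(u,v)\|_{\bm a,\bm V}=\rho$, uniformly in $\eta$. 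We also choose $(\tilde u,\tilde v)$ with norm larger than $\rho$, which is possible since the scaling sends the norm to infinity along the chosen path. Every $\gamma\in\Gamma$ crosses the sphere, so $c_\eta\ge\kappa>0\ge\max\{I_\eta(0,0),I_\eta(\tilde u,\tilde v)\}$.

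The main obstacle is ensuring that the norm of $(0,v_t)$ exceeds $\rho$ while the energy is negative, for all admissible exponents. This is most delicate when $p=1+\alpha/3$. There the HLS exponent forces the use of the $L^2$ constant $\mathcal S_*$, but the estimate with $2p>2$ still holds. We would handle it by selecting the scaling direction case-by-case in $q$ and verifying that the $H^1$ norm of $v_t$ diverges along it.
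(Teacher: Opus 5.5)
Your part (ii) is correct and is essentially the paper's argument. You are right that $2p>2$ persists at $p=1+\frac{\alpha}{3}$. The requirement $\|(\tilde u,\tilde v)\|_{\bm a,\bm V}>\rho$ is also automatic, because $I_\eta>0$ on the punctured ball of radius $\rho$ while $I_\eta(\tilde u,\tilde v)\le 0$. Your branch of (i) for $q>2$ also works.

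The genuine gap is in part (i) when $q<2$. This case does occur here: take $\alpha$ small and $q$ close to $1+\frac{\alpha}{3}$, which falls in the range of both theorems. Your fallback for $q\le 2$ fails in every variant you offer:
\begin{itemize}
\item With $\beta=-M$, $M$ large, and $t\to 0$, the Kirchhoff term is $\frac{b_2}{4}t^{-4M+2}\|\nabla\varphi\|_2^4$, while the Choquard term is of order $t^{-2qM+3+\alpha}$. Since $2q\le 4$, the former blows up faster, so $I_\tau(0,v_t)\to+\infty$.
\item With $\beta=-M$ and $t\to\infty$, all terms tend to $0$, and the positive quadratic terms decay more slowly than the Choquard term, so the energy stays positive.
\item Pure amplitude scaling $s\varphi$ fails for the same reason: $s^4$ beats $s^{2q}$.
\item ``Amplitude combined with dilation'' is just your $\beta$-family again.
\end{itemize}
So for $q<2$ no working path is exhibited. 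This regime, where the quartic Kirchhoff term dominates the nonlocal term under amplitude scaling, is the real difficulty of the lemma. It is not the case $p=1+\frac{\alpha}{3}$; $\mathcal S_*$ plays no role here.

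The missing observation is that $\beta$ need not be large. The dilation by itself gives the nonlocal term the extra factor $t^{3+\alpha}$, against $t$, $t^{2}$, $t^{3}$ for the gradient, Kirchhoff and $L^2$ terms. Your own inequalities hold, with $t\to\infty$, for every $\beta\ge 0$ with $(4-2q)\beta<1+\alpha$, and this covers every $q\ge 1+\frac{\alpha}{3}$. Already $\beta=0$, that is $v_t=\varphi(\cdot/t)$, works.

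The paper uses $u^t(x)=tu(t^{-2}x)$, i.e.\ $\beta=\frac12$ in the dilation variable $t^2$. Then the positive terms scale like $t^4$ and $t^8$, and the Choquard terms like $t^{2(p+\alpha+3)}$ and $t^{2(q+\alpha+3)}$, with $2(p+\alpha+3)>8$. The paper applies this to an arbitrary fixed $(u,v)\ne(0,0)$, after bounding $I_\eta\le I_\tau^\infty$ via \ref{assumption:constant at infty}. That flexibility is used later: in the proof of $c_\eta<m_\eta^\infty$ the endpoint is taken to be $((u_1^\infty)^{\tilde t},(v_1^\infty)^{\tilde t})$. Your specific endpoint $(0,v_t)$ would prove the lemma as stated but would not supply that endpoint.
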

    \begin{proof}
    (i) Let $(u, v) \in \mathscr{H} \setminus \{(0, 0)\}$ be fixed and take any $\eta \in [\tau, 1]$. By \ref{assumption:constant at infty} we have
            \begin{align*}
                I_{\eta}(u, v) &= \frac{1}{2}(a_1\|\nabla u\|_2^2 + a_2|\nabla v|_2^2) + \frac{1}{2}\left(\int_{\mathbb{R}^3} (V_1(x)u^2 + V_2(x) v^2 - 2\lambda uv)\,dx\right) \\
                &\quad + \frac{1}{4}(b_1\|\nabla u\|_2^4 + b_2\|\nabla v\|_2^4) - \eta \left(\frac{\mu}{2p}\int_{\mathbb{R}^3}(I_{\alpha}*|u|^p)|u|^pdx + \frac{\nu}{2q}\int_{\mathbb{R}^3}(I_{\alpha}*|v|^q)|v|^q\right). \\
                &\le \frac{1}{2}(a_1\|\nabla u\|_2^2 + a_2\|\nabla v\|_2^2) + \frac{1}{2}\left(V_1^{\infty}\|u\|_2^2 + V_2^{\infty}\|v\|_2^2 - 2\lambda\int_{\mathbb{R}^3} uv\,dx\right) \\
                &\quad + \frac{1}{4}(b_1\|\nabla u\|_2^4 + b_2\|\nabla v\|_2^4) - \tau \left(\frac{\mu}{2p}\int_{\mathbb{R}^3}(I_{\alpha}*|u|^p)|u|^p\,dx +\frac{\nu}{2q}\int_{\mathbb{R}^3}(I_{\alpha}*|v|^q)|v|^q\,dx\right) \\
                &= I_{\tau}^{\infty}(u, v).
            \end{align*}
            By the proof of Lemma 4.1 in \cite{Matsuzawa} we have $I_{\tau}^{\infty}(u^t, v^t) \to -\infty$ as $t \to + \infty$. Hence we can take $\widetilde{u} \coloneqq u^{\widetilde{t}}$ and $\widetilde{v} \coloneqq v^{\widetilde{t}}$ for large $\widetilde{t} > 0$ such that $I_{\eta}(\widetilde{u}, \widetilde{v}) \le I_{\tau}^{\infty}(\widetilde{u}, \widetilde{v}) < 0$ for $\eta\in[\tau, 1]$. 

 (ii) From (i) and $I_{\eta}(0, 0) = 0$, it suffices to show that $c_{\eta} > 0$. Similarly as \eqref{eq:est_Auv} we see that 
            \begin{align*}
                I_{\eta}(u, v) &\ge \frac{1}{2}\|(u, v)\|_{\bm{a}, \bm{V}}^2 - \frac{\delta}{2}\int_{\mathbb{R}^3} (V_1(x)u^2 + V_2(x)v^2)\,dx \\
                 &\quad - \eta \left(\frac{\mu}{2p}\int_{\mathbb{R}^3}(I_{\alpha}*|u|^p)|u|^p\,dx + \frac{\nu}{2q} \int_{\mathbb{R}^3}(I_{\alpha}*|v|^q)|v|^q\,dx\right) \\
                &\ge \frac{1}{2}(1 - \delta)\|(u, v)\|_{\bm{a},\bm{V}}^2 - \frac{C_1\mu}{2p}\|(u, v)\|_{\bm{a},\bm{V}}^{2p} - \frac{C_2\nu}{2q}\|(u, v)\|_{\bm{a},\bm{V}}^{2q}
            \end{align*}
            holds for some positive constants $C_1$ and $C_2$ which depends only on $a_1$, $a_2$, $V_1$, $V_2$ $p$, $q$ and $\alpha$. Since $1+\frac{\alpha}{3} < p \le q \le 3+\alpha$, we see that $I_{\eta}$ has a strict local minimum at $(0, 0)$ and hence $c_{\eta} > 0$.
 
        The proof of Lemma \ref{Lem:satisfy with assumption of monotonicity trick} has been completed.
    \end{proof}

    By Lemma \ref{Lem:satisfy with assumption of monotonicity trick}, $I_{\eta}(u, v)$ satisfies the assumptions of Proposition \ref{Prop:monotonicityu trick} with $X = \mathscr{H}$ and $\Phi_{\eta} = I_{\eta}$. Hence for almost all $\eta \in [\tau, 1]$, there exists a bounded sequence $\{(u_n^{(\eta)}, v_n^{(\eta)})\} \subset \mathscr{H}$ (for simplicity, we will denote $\{(u_n, v_n)\}$ instead of $\{u_n^{(\eta)}, v_n^{(\eta)}\}$ unless there is confusion) such that
    \begin{align*}
        I_{\eta}(u_n, v_n) \to c_{\eta}, \quad I_{\eta}^{\prime}(u_n, v_n) \to 0.
    \end{align*}

\subsection{Limit problem}
   Let us recall the result about limit problem \eqref{eq:NKC-aux-limit}. For $\eta \in [\tau, 1]$, we define
    \begin{align*}
        m_{\eta}^{\infty} \coloneqq \inf_{(u, v) \in \mathcal{M}_{\eta}^{\infty}} I_{\eta}^{\infty}(u, v),
    \end{align*}
    where $\mathcal{M}_{\eta}^{\infty}$ is the Nehari--Pohozaev manifold defined by 
    \begin{align}
 \mathcal{M}_{\eta}^{\infty} &\coloneqq \{(u, v) \in \mathscr{H} \setminus \{(0, 0)\} \mid J_{\eta}^{\infty}(u, v) = 0\}, \notag \\
            \begin{split}
            J_{\eta}^{\infty}(u, v) &= 2(a_1\|\nabla u\|_2^2 + a_2\|\nabla v\|_2^2) + 4(V_1^{\infty}\|u\|_2^2 + V_2^{\infty}\|v\|_2^2) + 2(b_1\|\nabla u\|_2^4 + b_2\|\nabla v\|_2^4) \\
            &\quad - \frac{p + \alpha+3}{p}\eta\mu\int_{\mathbb{R}^3}(I_{\alpha}*|u|)^p|u|^pdx - \frac{q +\alpha +3}{q}\eta\nu\int_{\mathbb{R}^3}(I_{\alpha}*|v|^q)|v|^qdx 
 \\
 &\quad-8\lambda \int_{\mathbb{R}^3} uv\,dx.    \label{eq:identity of NP}
        \end{split}
    \end{align}
    The next two lemmas are direct consequences of  \cite{Matsuzawa}.
    \begin{Lem}[\mbox{\cite[Lemma 4.6]{Matsuzawa}}]     \label{Lem:element of M}
        Assume that \ref{assumption:potential-const} holds. Then for any $(u, v) \in \mathscr{H} \setminus \{(0, 0)\}$, there exists a unique $t=t(u,v) > 0$ such that $(u^{t(u,v)}, v^{t(u,v)}) \in \mathcal{M}_{\eta}^{\infty}$ and $t=t(u,v)$ is characterized by
        \begin{align*}
        I^{\infty}_{\eta}(u^{t(u,v)}, v^{t(u,v)})=\max_{t>0}I^{\infty}_{\eta}(u^t, v^t).
        \end{align*}
    \end{Lem}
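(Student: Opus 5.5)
The plan is to reduce everything to the one–variable fiber map $h(t):=I_{\eta}^{\infty}(u^t,v^t)$, $t>0$, and to show that $J_{\eta}^{\infty}(u^t,v^t)=t\,h'(t)$. Membership of $(u^t,v^t)$ in $\mathcal{M}_{\eta}^{\infty}$ then becomes equivalent to $t$ being a critical point of $h$, and the lemma reduces to a monotonicity statement for a scalar function.

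\textbf{Step 1: scaling.} Recall $w^t(x)=tw(t^{-2}x)$ in $\mathbb{R}^3$. The change of variables $x=t^2x'$ gives
\begin{align*}
\|\nabla u^t\|_2^2=t^4\|\nabla u\|_2^2,\quad \|u^t\|_2^2=t^8\|u\|_2^2,\quad \int_{\mathbb{R}^3}u^tv^t\,dx=t^8\int_{\mathbb{R}^3}uv\,dx .
\end{align*}
For the Choquard term the same substitution in both variables, together with $|x-y|^{-(3-\alpha)}=t^{-2(3-\alpha)}|x'-y'|^{-(3-\alpha)}$, gives
\begin{align*}
\int_{\mathbb{R}^3}(I_\alpha*|u^t|^p)|u^t|^p\,dx=t^{2p+6+2\alpha}\int_{\mathbb{R}^3}(I_\alpha*|u|^p)|u|^p\,dx .
\end{align*}
Write $A=a_1\|\nabla u\|_2^2+a_2\|\nabla v\|_2^2$, $D=b_1\|\nabla u\|_2^4+b_2\|\nabla v\|_2^4$, $E=V_1^\infty\|u\|_2^2+V_2^\infty\|v\|_2^2-2\lambda\int_{\mathbb{R}^3} uv\,dx$, and let $P,Q$ be the two Choquard integrals of $u$ and $v$. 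Then
\begin{align*}
h(t)=\frac{A}{2}t^4+\Big(\frac{E}{2}+\frac{D}{4}\Big)t^8-\frac{\eta\mu}{2p}P\,t^{2p+6+2\alpha}-\frac{\eta\nu}{2q}Q\,t^{2q+6+2\alpha}.
\end{align*}
Differentiating and comparing with \eqref{eq:identity of NP} shows $t\,h'(t)=J_{\eta}^{\infty}(u^t,v^t)$. Hence $(u^t,v^t)\in\mathcal{M}_\eta^\infty$ if and only if $h'(t)=0$; note that $(u^t,v^t)\neq(0,0)$ automatically.

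\textbf{Step 2: strict monotonicity of a normalized derivative.} Consider
\begin{align*}
g(t):=t^{-8}J_\eta^\infty(u^t,v^t)=2At^{-4}+(4E+2D)-\eta\mu\tfrac{p+3+\alpha}{p}P\,t^{2p-2+2\alpha}-\eta\nu\tfrac{q+3+\alpha}{q}Q\,t^{2q-2+2\alpha}.
\end{align*}
The signs of the coefficients are as follows.
\begin{itemize}
\item Since $(u,v)\neq(0,0)$ lies in $H^1$, at least one gradient is nonzero, so $A>0$.
\item By \ref{assumption:potential-const}, $|2\lambda uv|\le\delta(V_1^\infty u^2+V_2^\infty v^2)$, as in Lemma \ref{lem:potential_term_est}. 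This gives $E\ge(1-\delta)(V_1^\infty\|u\|_2^2+V_2^\infty\|v\|_2^2)\ge0$.
\item $P,Q\ge0$, with $P+Q>0$ because $I_\alpha>0$ and $u$ or $v$ is nontrivial.
\item Since $p,q\ge\frac{3+\alpha}{3}>1$, the exponents $2p-2+2\alpha$ and $2q-2+2\alpha$ are positive.
\end{itemize}
Therefore $g$ is continuous and strictly decreasing on $(0,\infty)$, with $g(t)\to+\infty$ as $t\to0^+$ (from the $t^{-4}$ term) and $g(t)\to-\infty$ as $t\to\infty$ (since $P+Q>0$). So $g$ has exactly one zero $t(u,v)$, which gives existence and uniqueness.

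\textbf{Step 3: maximum characterization.} Since $h'(t)=t^{7}g(t)$, $h$ is strictly increasing on $(0,t(u,v))$ and strictly decreasing on $(t(u,v),\infty)$. Hence $h(t(u,v))=\max_{t>0}h(t)$, which is the stated characterization.

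The only step needing care is Step 2, specifically the sign of the $t^8$ coefficient when $\lambda>0$. This is exactly where \ref{assumption:potential-const} enters: it makes that coefficient nonnegative, so it is a harmless constant after dividing by $t^8$ rather than a term competing with the others. The endpoint case $p=\frac{3+\alpha}{3}$ causes no difficulty, since the exponent $2p-2+2\alpha=\frac{8\alpha}{3}$ is still positive.
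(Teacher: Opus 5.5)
Your proof is correct, and it takes a more direct route than the one the paper relies on.

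\textbf{What the paper does.} The paper does not reprove this lemma; it quotes it from \cite{Matsuzawa}. Its proof of the analogous Lemma \ref{lem:tuv_in_M} for variable potentials shows the intended strategy. That proof shares your Step 1, namely $\zeta'(t)=J(u^t,v^t)/t$. It then gets existence from $\zeta>0$ near $0$ together with $\zeta\to-\infty$. Uniqueness comes from applying the fibering inequality \eqref{eq:relationship between I_eta and J_eta_general} twice, at $t_1$ and at $t_2$, which forces $t_1=t_2$. The constant-potential version of that inequality is \eqref{eq:relathion_I_eta_J_eta}. The maximum characterization follows from the same inequality (Corollary \ref{Cor:maximum}).

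\textbf{What your route does.} You use the fact that, with constant potentials, every term of $J_\eta^\infty(u^t,v^t)$ is an exact power of $t$. So $t^{-8}J_\eta^\infty(u^t,v^t)$ is visibly strictly decreasing, and one elementary step gives existence, uniqueness, and the sign pattern of $h'$, hence the strict maximum.

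\textbf{What each buys.} Your argument depends on exact homogeneity. For nonconstant $V_i$, the quantity $t^{-8}J(u^t,v^t)$ contains $\int_{\mathbb{R}^3}\{4V_i(t^2x)+(\nabla V_i(t^2x),t^2x)\}u^2\,dx$, which has no evident monotonicity in $t$. That is why the paper works with the fibering inequality under \ref{assumption:potential-2}. The fibering inequality also supplies the quantitative lower bounds used later, for example in \eqref{eq:G_eta infty >= m_eta infty + something}.

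\textbf{A correction to your closing remark.} Hypothesis \ref{assumption:potential-const} is not used anywhere in your argument. After dividing by $t^8$, the coefficient $4E+2D$ is a constant. Whatever its sign, it affects neither the strict monotonicity of $g$ nor the limits $g\to+\infty$ as $t\to0^+$ and $g\to-\infty$ as $t\to\infty$. It cannot compete with the other terms. Only $A>0$, $P+Q>0$, and the positivity of the exponents $2p-2+2\alpha$ and $2q-2+2\alpha$ matter. So your proof establishes the lemma without the coupling restriction on $\lambda$. The identity \eqref{eq:relationship between I_eta and J_eta} is likewise purely algebraic and does not need that restriction either.
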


    \begin{Lem}[\mbox{\cite[Lemma 4.4]{Matsuzawa}}]    \label{Lem:relationship between I_eta and J_eta}
        Assume that \ref{assumption:potential-const} holds. Then, for any $(u, v) \in \mathscr{H} \setminus \{(0, 0)\}, t > 0$ and $\eta \in [\tau, 1]$, the following inequality holds.
        \begin{align}\label{eq:relationship between I_eta and J_eta}
          \begin{split}
            I_{\eta}^{\infty}(u,v)=&I_{\eta}^{\infty}(u^t,v^t)+\frac{1-t^8}{8}J_{\eta}^{\infty}(u,v)+\frac{(1-t^4)^2}{4}(a_1\|\nabla u\|_2^2+a_2\|\nabla v\|_2^2) \\
 &\ \ \ +\eta\left\{\frac{p+\alpha+3}{8p}(1-t^8)-\frac{1-t^{2(p+\alpha+3)}}{2p}\right\}\mu\int_{\mathbb{R}^3}(I_{\alpha}*|u|^p)|u|^pdx \\
 &\ \ \ +\eta\left\{\frac{q+\alpha+3}{8q}(1-t^8)-\frac{1-t^{2(p+\alpha+3)}}{2q}\right\}\nu\int_{\mathbb{R}^3}(I_{\alpha}*|v|^q)|v|^qdx
        \end{split}
        \end{align}
In particular, it holds that
 \begin{align}\label{eq:relathion_I_eta_J_eta}
          \begin{split}
            I_{\eta}^{\infty}(u,v)\ge&I_{\eta}^{\infty}(u^t,v^t)+\frac{1-t^8}{8}J_{\eta}^{\infty}(u,v)+\frac{(1-t^4)^2}{4}(a_1\|\nabla u\|_2^2+a_2\|\nabla v\|_2^2).
        \end{split}
        \end{align}
    \end{Lem}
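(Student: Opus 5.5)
The identity \eqref{eq:relationship between I_eta and J_eta} is a direct computation based on how each term of $I_\eta^\infty$ scales under $w^t(x)=tw(t^{-2}x)$. The inequality \eqref{eq:relathion_I_eta_J_eta} then follows from an elementary one-variable inequality. The hypothesis \ref{assumption:potential-const} is not needed for the algebra itself; it only ensures that $I_\eta^\infty$ and $J_\eta^\infty$ are the functionals of the limit problem.

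\textbf{Scaling.} Since $\nabla w^t(x)=t^{-1}(\nabla w)(t^{-2}x)$ and $dx=t^{6}dy$ in $\mathbb{R}^3$, we get
\begin{align*}
\|\nabla w^t\|_2^2=t^4\|\nabla w\|_2^2,\qquad \|w^t\|_2^2=t^8\|w\|_2^2,\qquad \int_{\mathbb{R}^3}u^tv^t\,dx=t^8\int_{\mathbb{R}^3}uv\,dx .
\end{align*}
For the Choquard term we write it as a double integral with kernel $|x-y|^{-(3-\alpha)}$. Changing variables $x=t^2x'$, $y=t^2y'$ produces the factor $t^{12}$ from the measures, $t^{-2(3-\alpha)}$ from the kernel, and $t^{2p}$ from $|u^t|^p|u^t|^p$. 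Hence
\begin{align*}
\int_{\mathbb{R}^3}(I_\alpha*|u^t|^p)|u^t|^p\,dx=t^{2(p+\alpha+3)}\int_{\mathbb{R}^3}(I_\alpha*|u|^p)|u|^p\,dx,
\end{align*}
and likewise for $v$ with exponent $2(q+\alpha+3)$. The Kirchhoff terms $b_i\|\nabla w\|_2^4$ scale by $t^8$. Substituting these into \eqref{eq:energy-aux-limit} gives $I_\eta^\infty(u^t,v^t)$ explicitly.

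\textbf{Matching coefficients.} Next we form $I_\eta^\infty(u,v)-I_\eta^\infty(u^t,v^t)-\frac{1-t^8}{8}J_\eta^\infty(u,v)$, using \eqref{eq:identity of NP}, and collect coefficients term by term.

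The gradient term gives $\frac{1-t^4}{2}-\frac{1-t^8}{4}=\frac{(1-t^4)^2}{4}$. The potential term gives $\frac{1-t^8}{2}-\frac{4(1-t^8)}{8}=0$. The coupling term gives $-\lambda(1-t^8)+\frac{8\lambda(1-t^8)}{8}=0$. The Kirchhoff term gives $\frac{1-t^8}{4}-\frac{2(1-t^8)}{8}=0$.

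The Choquard term in $u$ gives
\begin{align*}
\eta\mu\left\{\frac{p+\alpha+3}{8p}(1-t^8)-\frac{1-t^{2(p+\alpha+3)}}{2p}\right\}.
\end{align*}
The term in $v$ is analogous with $q$ in place of $p$; the exponent $2(p+\alpha+3)$ printed in the $v$-line of the statement should read $2(q+\alpha+3)$. This proves \eqref{eq:relationship between I_eta and J_eta}.

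\textbf{The inequality.} For \eqref{eq:relathion_I_eta_J_eta} it suffices to show that the brackets are nonnegative for every $t>0$. Put $k=p+\alpha+3$ (respectively $q+\alpha+3$). Since $p,q\ge\frac{3+\alpha}{3}>1$ and $\alpha>0$, we have $k>4$. Set $s=t^2$ and
\begin{align*}
h(s)=8p\cdot(\text{bracket})=k(1-s^4)-4(1-s^k).
\end{align*}
Then $h(1)=0$ and $h'(s)=4ks^3(s^{k-4}-1)$. So $h'<0$ on $(0,1)$ and $h'>0$ on $(1,\infty)$, which means $h$ attains its minimum at $s=1$ and $h\ge 0$ there and everywhere. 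Since $\eta,\mu,\nu>0$ and the Choquard integrals are nonnegative, dropping these terms gives \eqref{eq:relathion_I_eta_J_eta}.

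The only step needing care is the Choquard scaling exponent: the powers $t^{12}$, $t^{-2(3-\alpha)}$ and $t^{2p}$ must be combined correctly. Everything else is routine.
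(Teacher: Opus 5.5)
Your proposal is correct and takes the same route as the paper, which does not reprove this cited lemma but carries out the identical argument for its variable-potential analogue in Section~5: a direct scaling computation, followed by nonnegativity of $g(t)=\frac{r+\alpha+3}{8r}(1-t^8)-\frac{1-t^{2(r+\alpha+3)}}{2r}$. The paper only asserts that $g\ge 0$, whereas your derivative computation $h'(s)=4ks^3(s^{k-4}-1)$ with $k>4$ actually justifies it, and you are right that the exponent in the $v$-line should be $2(q+\alpha+3)$.
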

       Combining Lemma \ref{Lem:element of M} and Lemma \ref{Lem:relationship between I_eta and J_eta} we obtain the following lemma. 
    \begin{Lem}\label{lem:car_m} Assume that \ref{assumption:potential-const} holds. Then it holds that
    \begin{align*}
m_{\eta}^{\infty}=\inf_{(u,v)\in\mathcal{M}_{\eta}^{\infty}}
I_{\eta}^{\infty}(u,v)=\inf_{(u,v)\in \mathscr{H}\setminus \{(0,0)\}}\max_{t>0}I_{\eta}^{\infty}(tu, tv).
    \end{align*}
    \end{Lem}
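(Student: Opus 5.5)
The identity follows by comparing both infima along the fibers $t\mapsto (u^t,v^t)$, using Lemma \ref{Lem:element of M} and Lemma \ref{Lem:relationship between I_eta and J_eta}. I read the right-hand side as $\inf_{(u,v)\neq(0,0)}\max_{t>0}I_\eta^\infty(u^t,v^t)$, with the scaling $w^t(x)=tw(t^{-2}x)$ fixed in the notation section. This is the only reading consistent with Lemma \ref{Lem:element of M} and with how $\mathcal{M}_\eta^\infty$ is defined via $J_\eta^\infty$; the literal product $tu$ would correspond to a Nehari-type fiber instead. I will show two inequalities.

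\emph{Step 1: $m_\eta^\infty\ge\inf\max$.} Let $(u,v)\in\mathcal{M}_\eta^\infty$, so $J_\eta^\infty(u,v)=0$. Inequality \eqref{eq:relathion_I_eta_J_eta} then gives, for every $t>0$,
\begin{align*}
I_\eta^\infty(u,v)\ge I_\eta^\infty(u^t,v^t)+\frac{(1-t^4)^2}{4}\bigl(a_1\|\nabla u\|_2^2+a_2\|\nabla v\|_2^2\bigr)\ge I_\eta^\infty(u^t,v^t).
\end{align*}
Hence $I_\eta^\infty(u,v)=\max_{t>0}I_\eta^\infty(u^t,v^t)$, since the value at $t=1$ is attained because $(u^1,v^1)=(u,v)$. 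Alternatively, uniqueness in Lemma \ref{Lem:element of M} forces $t(u,v)=1$ and gives the same conclusion. Therefore $I_\eta^\infty(u,v)\ge\inf_{(\phi,\psi)\ne(0,0)}\max_{t>0}I_\eta^\infty(\phi^t,\psi^t)$. Taking the infimum over $\mathcal{M}_\eta^\infty$ proves this step.

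\emph{Step 2: $m_\eta^\infty\le\inf\max$.} Let $(u,v)\in\mathscr{H}\setminus\{(0,0)\}$. Lemma \ref{Lem:element of M} gives $t(u,v)>0$ with $(u^{t(u,v)},v^{t(u,v)})\in\mathcal{M}_\eta^\infty$. The same lemma gives
\begin{align*}
\max_{t>0}I_\eta^\infty(u^t,v^t)=I_\eta^\infty\bigl(u^{t(u,v)},v^{t(u,v)}\bigr)\ge m_\eta^\infty.
\end{align*}
Taking the infimum over $(u,v)$ proves this step, and combining the two steps yields the lemma.

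There is no real obstacle; the only point needing care is that $(u^t,v^t)\neq(0,0)$ for every $t>0$, which is immediate because the map $w\mapsto w^t$ is injective on $H^1(\mathbb{R}^3)$.
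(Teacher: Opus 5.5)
Your proposal is correct and follows the paper's own argument. The paper obtains this lemma simply by combining Lemma \ref{Lem:element of M} (the unique $t(u,v)$ and its max-characterization) with inequality \eqref{eq:relathion_I_eta_J_eta}, and your two steps spell that out. Your reading of the right-hand side as $\inf_{(u,v)\neq(0,0)}\max_{t>0}I_\eta^\infty(u^t,v^t)$ with $w^t(x)=tw(t^{-2}x)$ is the intended one; the $(tu,tv)$ is a typo, as the analogous identity \eqref{eq:Energy level} in Section 5 confirms.
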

    From Lemma \ref{lem:car_m} and the fact that $I_{\eta_1}(u,v)\le I_{\eta_2}(u,v)$ for $\eta_1\ge \eta_2$ and $(u,v)\in H$ we easily see that the following lemma holds true.
    \begin{Lem}\label{lem:m_eta_non_increasing} Assume that \ref{assumption:potential-const} holds. Then the 
    map $\eta\mapsto m_{\eta}^{\infty}$ is non-increasing.
    \end{Lem}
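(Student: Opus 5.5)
The plan is to compare the two infima through the characterization in Lemma \ref{lem:car_m}. Fix $\tau\le \eta_2\le \eta_1\le 1$. For every fixed $(u,v)\in\mathscr{H}$ we have the pointwise inequality $I_{\eta_1}^{\infty}(u,v)\le I_{\eta_2}^{\infty}(u,v)$, because the two functionals differ only by
\begin{align*}
I_{\eta_2}^{\infty}(u,v)-I_{\eta_1}^{\infty}(u,v)=(\eta_1-\eta_2)\left(\frac{\mu}{2p}\int_{\mathbb{R}^3}(I_{\alpha}*|u|^p)|u|^p\,dx+\frac{\nu}{2q}\int_{\mathbb{R}^3}(I_{\alpha}*|v|^q)|v|^q\,dx\right)\ge 0,
\end{align*}
and the Riesz kernel is positive.

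Next I would apply this inequality along the fibers used in Lemma \ref{lem:car_m}. Fix $(u,v)\ne(0,0)$. The inequality holds for each fiber element, so taking the maximum over $t>0$ preserves it:
\begin{align*}
\max_{t>0}I_{\eta_1}^{\infty}(u^t,v^t)\le \max_{t>0}I_{\eta_2}^{\infty}(u^t,v^t).
\end{align*}
These maxima exist and are finite by Lemma \ref{Lem:element of M}, which is where (V3)$'$ enters. Taking the infimum over $(u,v)\in\mathscr{H}\setminus\{(0,0)\}$ and using Lemma \ref{lem:car_m} gives $m_{\eta_1}^{\infty}\le m_{\eta_2}^{\infty}$.

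The only point needing care is which fibering Lemma \ref{lem:car_m} uses. It is written with $tu$, but the map actually provided by Lemma \ref{Lem:element of M} is the dilation $u^t(x)=tu(t^{-2}x)$. The correct version is the one with $u^t$. It follows from Lemma \ref{Lem:element of M} together with \eqref{eq:relathion_I_eta_J_eta}: for $(u,v)\in\mathcal{M}^\infty_\eta$ we have $J^\infty_\eta(u,v)=0$, hence $I_\eta^\infty(u,v)\ge I_\eta^\infty(u^t,v^t)$ for all $t>0$. So the argument above should be run with the dilation fibers $t\mapsto(u^t,v^t)$. This is a fixed family independent of $\eta$, and it is exactly this independence that lets the pointwise monotonicity in $\eta$ pass through both the max and the inf.

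With that clarification, the proof involves no real obstacle.
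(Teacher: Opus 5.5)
Your proof is correct and is essentially the paper's argument: the paper also obtains the monotonicity directly from the minimax characterization in Lemma \ref{lem:car_m} together with the pointwise inequality $I^{\infty}_{\eta_1}\le I^{\infty}_{\eta_2}$ for $\eta_1\ge\eta_2$, passed through the maximum over the fiber and then the infimum. You are also right that Lemma \ref{lem:car_m} must be read with the dilation fibers $t\mapsto(u^t,v^t)$ rather than $t\mapsto(tu,tv)$; for instance, when $q<2$ the maximum along $t\mapsto(tu,tv)$ is $+\infty$. Your derivation of that corrected version from Lemma \ref{Lem:element of M} and \eqref{eq:relathion_I_eta_J_eta} is sound.
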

    By Lemma 4.7 of \cite{Matsuzawa} and Lemma \ref{lem:m_eta_non_increasing} we obtain the following lemma.
    \begin{Lem}[{\cite[Lemma 4.7]{Matsuzawa}}] Assume that \ref{assumption:potential-const} holds. Then $m_{\eta}^{\infty}>0$ for any $\eta\in [\tau ,1]$.
  \end{Lem}

Before we recall the existence of the ground state solution to limit problem \eqref{eq:NKC-aux-limit}, we give the following lemma. In the following lemma we denote $m_{\eta}^{\infty}(\mu,\nu)$ for $m_{\eta}^{\infty}$ to stress the dependence of it on $\mu$ and $\nu$.
\begin{Lem}\label{lem:critical_case} 
Assume that \ref{assumption:potential-const} holds and $\frac{3+\alpha}{3}\le p\le q\le 3+\alpha$. Then the following hold:
\begin{enumerate}
\item[\textup{(1)}] For each fixed $\nu$, we have $\lim_{\mu \to \infty} m_{\eta}^{\infty}(\mu, \nu)= 0$.
\item[\textup{(2)}] For each fixed $\mu$, we have $\lim_{\nu \to \infty} m_{\eta}^{\infty}(\mu, \nu)= 0$.
\end{enumerate}
\end{Lem}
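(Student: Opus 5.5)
The plan is to use the minimax characterization of Lemma \ref{lem:car_m} and test the infimum with a single fixed pair. For (1), fix any $u_0\in H^1(\mathbb{R}^3)\setminus\{0\}$ and take $(u_0,0)$. Then $m_\eta^\infty(\mu,\nu)\le \max_{t>0} I_\eta^\infty(u_0^t,0)$ for every $\mu$, where $u_0^t(x)=tu_0(t^{-2}x)$ is the rescaling used for the manifold (Lemma \ref{Lem:element of M}). The coupling term $-\lambda\int uv$ vanishes for this pair, and the $\nu$-term drops out. Part (2) is symmetric, using a test pair $(0,v_0)$.

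Compute the scaling of each term. With $u^t(x)=tu(t^{-2}x)$ in $\mathbb{R}^3$ we have
\begin{align*}
\|\nabla u^t\|_2^2=t^4\|\nabla u\|_2^2,\qquad \|u^t\|_2^2=t^8\|u\|_2^2,\qquad \int(I_\alpha*|u^t|^p)|u^t|^p\,dx=t^{2p+2\alpha+6}\int(I_\alpha*|u|^p)|u|^p\,dx.
\end{align*}
Set $A=a_1\|\nabla u_0\|_2^2$, $B=V_1^\infty\|u_0\|_2^2$, $C=b_1\|\nabla u_0\|_2^4$ and $D=\frac{\eta}{2p}\int(I_\alpha*|u_0|^p)|u_0|^p\,dx>0$, which satisfies $D\ge \frac{\tau}{2p}\int(\cdots)$. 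Then
\begin{align*}
g_\mu(t):=I^\infty_\eta(u_0^t,0)=\tfrac12 At^4+\tfrac12 Bt^8+\tfrac14 Ct^8-\mu D\,t^{2(p+\alpha+3)}.
\end{align*}
Since $p\ge\frac{3+\alpha}{3}$, the exponent satisfies $2(p+\alpha+3)\ge \frac{8(3+\alpha)}{3}>8$. So $g_\mu$ is positive for small $t$, tends to $-\infty$, and has a finite maximum.

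The key estimate is to bound $\max_t g_\mu$ explicitly. Write $k=2(p+\alpha+3)>8$ and $P(t)=\frac12At^4+(\frac12B+\frac14C)t^8$. For $t\le1$ we have $P(t)\le (A+B+C)t^4$, so
\begin{align*}
\max_{t>0}g_\mu(t)\le \max\Bigl\{\max_{0<t\le T_\mu}P(t),\ \sup_{t\ge T_\mu}\bigl(P(t)-\mu Dt^k\bigr)\Bigr\}.
\end{align*}
Choose $T_\mu=\mu^{-\gamma}$ with $0<\gamma<\frac{1}{k-8}$. Then for $t\ge T_\mu$,
\begin{align*}
P(t)-\mu Dt^k\le t^4\bigl[(A+B+C)\max(1,t^4)-\mu D t^{k-4}\bigr].
\end{align*}
This bracket is negative for $t\ge T_\mu$ once $\mu$ is large, because $\mu D T_\mu^{k-8}\to\infty$; for $t\ge1$ this is immediate. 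Hence $\max g_\mu\le (A+B+C)\mu^{-4\gamma}\to0$.

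Alternatively, one can argue directly: at the maximizer $t_\mu$, $g'_\mu(t_\mu)=0$ gives $\mu D k\,t_\mu^{k}=2At_\mu^4+(4B+2C)t_\mu^8$. If $t_\mu\le1$, this forces $t_\mu^{k-4}\le c/\mu$, so $t_\mu\to0$ and $g_\mu(t_\mu)\le P(t_\mu)\to0$. The case $t_\mu>1$ is excluded for large $\mu$, since then $\mu Dk\le (2A+4B+2C)t_\mu^{8-k}\le 2A+4B+2C$. Together with $m_\eta^\infty>0$, this gives the limit $0$.

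The main obstacle is justifying that $(u_0,0)$ is an admissible competitor. Lemma \ref{lem:car_m} takes the infimum over $\mathscr{H}\setminus\{(0,0)\}$ of the maximum along the fibre. Although it is written with $tu,tv$, it must be read with the rescaling $u^t,v^t$, since Lemmas \ref{Lem:element of M}--\ref{Lem:relationship between I_eta and J_eta} concern that fibre. Pairs with one zero component are allowed. The remaining task is to check that the fibre maximum of $(u_0,0)$ bounds $I^\infty_\eta$ at its projection onto $\mathcal{M}^\infty_\eta$. This holds because that projection $(u_0^{t(u_0,0)},0)$ lies in $\mathcal{M}_\eta^\infty$ and realizes the maximum, so $m_\eta^\infty\le\max_t I^\infty_\eta(u_0^t,0)$, which is exactly what is needed.
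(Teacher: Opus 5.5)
\textbf{Verdict.} Your argument is correct. The paper gives no proof of this lemma: it is stated and then used to derive \eqref{eq:critical_case_1} and \eqref{eq:critical_case_2}, implicitly as a consequence of \cite{Matsuzawa}. So there is no in-text argument to compare yours against, and yours serves as a valid self-contained proof, apart from one slip in your first route noted below.

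\textbf{Why the approach works.} Testing with $(u_0,0)$, resp.\ $(0,v_0)$, is a clean choice. The coupling term and the other Choquard term vanish, so the only fact about the limit manifold you need is the one-sided bound $m_\eta^\infty\le\max_{t>0}I_\eta^\infty(u_0^t,0)$. You can even check this directly: $J_\eta^\infty(u_0^t,0)=t\,g_\mu'(t)$ vanishes at the maximizer $t_\mu$, and $(u_0^{t_\mu},0)\neq(0,0)$ is admissible in $\mathcal{M}_\eta^\infty$. Your reading of Lemma \ref{lem:car_m} with the fibre $(u^t,v^t)$ rather than $(tu,tv)$ is the right one. Your bound is also uniform in $\eta\in[\tau,1]$, which is all the later use ($m_\eta^\infty\le m_\tau^\infty$) requires.

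\textbf{The slip in your first route.} On $[T_\mu,1]$ the bracket is $(A+B+C)-\mu D t^{k-4}$. To make it negative there you need $\mu T_\mu^{k-4}\to\infty$, i.e.\ $0<\gamma<\frac{1}{k-4}$, not $\gamma<\frac{1}{k-8}$. If $\frac{1}{k-4}<\gamma<\frac{1}{k-8}$, then
\[
g_\mu(T_\mu)\ge T_\mu^4\bigl(\tfrac12A-D\mu^{1-\gamma(k-4)}\bigr)>0
\]
for large $\mu$, so the claimed negativity on $[T_\mu,\infty)$ fails at $t=T_\mu$. With the corrected range, the estimate $\max g_\mu\le (A+B+C)\mu^{-4\gamma}$ goes through as you wrote it.

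Your second argument, via $g_\mu'(t_\mu)=0$ with the cases $t_\mu\le 1$ and $t_\mu>1$, is complete as written. It suffices on its own, without the first route.
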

By Lemma \ref{lem:critical_case} it holds that
\begin{itemize}
\item When $1+\frac{\alpha}{3}<p<q=3+\alpha$, for fixed $\nu>0$, there exists $\mu_0(\nu)>0$ such that for any $\eta\in [\tau,1]$
\begin{align}\label{eq:critical_case_1}
\begin{split}
m_{\eta}^{\infty}(\mu,\nu)\le m_{\tau}^{\infty}(u,v)&<\frac{\alpha+1}{4(3+\alpha)}\nu\left(\frac{a_2} {\nu}\mathcal{S}^*\right)^{\frac{3+\alpha}{2+\alpha}} \\
&\le \frac{\alpha+1}{4(3+\alpha)}\eta\nu\left(\frac{a_2}{\eta\nu}\mathcal{S}^*\right)^{\frac{3+\alpha}{2+\alpha}}\ \ \text{for}\ \ \mu\ge\mu_0(\nu)
\end{split}
\end{align}
\item When $1+\frac{\alpha}{3}=p<q<3+\alpha$, for fixed $\mu>0$, there exists $\nu_0(\mu)>0$ such that for any $\eta\in [\tau,1]$
\begin{align}\label{eq:critical_case_2}
\begin{split}
m_{\eta}^{\infty}(\mu,\nu)\le m_{\tau}^{\infty}(u,v)&<\frac{\alpha}{2(3+\alpha)}\mu\left(\frac{4(3+\alpha)}{\mu(12+\alpha)}V_1(1-\delta)\mathcal{S}_*\right)^{\frac{3+\alpha}{3}} \\&\le 
\frac{\alpha}{2(3+\alpha)}\eta\mu\left(\frac{4(3+\alpha)}{\eta\mu(12+\alpha)}V_1(1-\delta)\mathcal{S}_*\right)^{\frac{3+\alpha}{3}}\ \ \text{for}\ \ \nu\ge \nu_0(\mu).
\end{split}
\end{align}
\end{itemize}

The existence of the ground state solution to \eqref{eq:NKC-aux-limit} is summarized in the following proposition. 
\begin{Prop}[{\cite[Theorems A and B]{Matsuzawa}}]\label{prop:limit_problem_has_GS} Suppose that \ref{assumption:potential-const} and one of the following conditions hold:
\begin{itemize}
\item[\textup{(1)}] $1+\frac{\alpha}{3}<p\le q<3+\alpha$ and $\mu,\nu>0$;
\item[\textup{(2)}] $1+\frac{\alpha}{3}<p<q=3+\alpha$ and $\mu$ satisfies $\mu\ge \mu_0(\nu)$ for fixed $\nu$, where $\mu_0$ is given in \eqref{eq:critical_case_1}.
\item[\textup{(3)}] $1+\frac{\alpha}{3}=p<q<3+\alpha$ and $\nu$ satisfies $\nu\ge\nu_0(\mu)$ for fixed $\mu$, where $\nu_0$ is given in \eqref{eq:critical_case_2}.
\end{itemize}
Then for any $\eta\in[\tau, 1]$, problem \eqref{eq:NKC-aux-limit} has a positive ground state solution $(u_{\eta}^{\infty}, v_{\eta}^{\infty})$ as a minimizer of $I_{\eta}^{\infty}$ on $\mathcal{M}_{\eta}^{\infty}$, that is, 
\begin{align*}
u_{\eta}^{\infty}>0,\ v_{\eta}^{\infty}>0\ \ \mbox{in}\ \ \mathbb{R}^3,\ \ (u_{\eta}^{\infty}, v_{\eta}^{\infty})\in\mathcal{M}_{\eta}^{\infty},\ \ (I_{\eta}^{\infty})'(u_{\eta}^{\infty}, v_{\eta}^{\infty})=0,\ \ m_{\eta}^{\infty}(\mu,\nu)=I_{\eta}^{\infty}(u_{\eta}^{\infty}, v_{\eta}^{\infty}).
\end{align*}

\end{Prop}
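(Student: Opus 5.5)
The plan is to reduce the statement to the constant-potential results of \cite{Matsuzawa} by absorbing $\eta$ into the coupling constants. For fixed $\eta\in[\tau,1]$, problem \eqref{eq:NKC-aux-limit} is exactly system \eqref{eq:NKC} with $V_i\equiv V_i^{\infty}$ and with $(\mu,\nu)$ replaced by $(\eta\mu,\eta\nu)$. Likewise $I_{\eta}^{\infty}$, $J_{\eta}^{\infty}$ and $\mathcal{M}_{\eta}^{\infty}$ are the energy functional, the Nehari--Pohozaev functional and the Nehari--Pohozaev manifold of that constant-coefficient system. Assumption \ref{assumption:potential-const}, namely $\lambda\le\delta\sqrt{V_1^{\infty}V_2^{\infty}}$, is precisely the coupling hypothesis under which \cite[Theorems A and B]{Matsuzawa} are proved. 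Lemmas \ref{Lem:element of M}, \ref{Lem:relationship between I_eta and J_eta} and \ref{lem:car_m} then hold verbatim with these parameters.

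In case (1) there is nothing further to check. Theorem A of \cite{Matsuzawa} gives, for every pair of positive coefficients, a minimizer of $I_{\eta}^{\infty}$ on $\mathcal{M}_{\eta}^{\infty}$ that is a critical point. Applying it with the coefficients $\eta\mu,\eta\nu>0$ yields $(u_\eta^\infty,v_\eta^\infty)$.

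The only genuine point is the critical cases (2) and (3). There the result of \cite{Matsuzawa} is not stated as ``for $\mu$ large'' in the abstract sense. It rests on a strict energy comparison: the ground level must lie below the first noncompactness threshold, expressed through $\mathcal{S}^*$ or $\mathcal{S}_*$ with the actual coefficient ($\eta\nu$, respectively $\eta\mu$) in place of $\nu$ or $\mu$. I would therefore not invoke the threshold $\mu_0$ (or $\nu_0$) of \cite{Matsuzawa} for each $\eta$ separately, because that could depend on $\eta$. Instead I would verify the energy inequality directly from \eqref{eq:critical_case_1} and \eqref{eq:critical_case_2}:
\begin{itemize}
\item By Lemma \ref{lem:m_eta_non_increasing}, $m_\eta^\infty\le m_\tau^\infty$.
\item By Lemma \ref{lem:critical_case}, $m_\tau^\infty$ lies below the threshold computed with coefficient $\nu$ (respectively $\mu$) once $\mu\ge\mu_0(\nu)$ (respectively $\nu\ge\nu_0(\mu)$).
\item Since $\eta\le1$, the threshold with coefficient $\eta\nu$ (respectively $\eta\mu$) is at least as large. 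The functions $s\mapsto s\cdot s^{-(3+\alpha)/(2+\alpha)}$ and $s\mapsto s\cdot s^{-(3+\alpha)/3}$ are decreasing, and these are the last inequalities in those displays.
\end{itemize}
Hence the strict inequality required in the proof of \cite[Theorems A, B]{Matsuzawa} holds for every $\eta\in[\tau,1]$ with a single $\mu_0(\nu)$ or $\nu_0(\mu)$. From there I would rerun that argument unchanged:
\begin{enumerate}
\item Take a minimizing sequence on $\mathcal{M}_\eta^\infty$.
\item Show it is bounded using \eqref{eq:relathion_I_eta_J_eta}.
\item Rule out vanishing via Lemma \ref{Lions-th}, with the threshold inequality excluding loss of mass at the critical exponent.
\item Translate and pass to the weak limit.
\item Project back to $\mathcal{M}_\eta^\infty$ with Lemma \ref{Lem:element of M}.
\item Obtain $m_\eta^\infty$ as the minimum; the Lagrange multiplier argument of \cite{Matsuzawa} then makes the minimizer a free critical point.
\end{enumerate}
I expect this uniformity-in-$\eta$ check to be the only step needing care.

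Finally, for positivity I would replace a minimizer $(u,v)$ by $(|u|,|v|)$. This leaves every term unchanged except $-\lambda\int uv$, which does not increase since $\lambda>0$. So $(|u|,|v|)$ still lies on $\mathcal{M}_\eta^\infty$ (possibly after rescaling by Lemma \ref{Lem:element of M}, which cannot raise the energy) and is again a minimizer, hence a critical point. Each component then satisfies $-(a_i+b_i\|\nabla\cdot\|_2^2)\Delta w+V_i^\infty w\ge0$ with a nonnegative right-hand side. Because the coupling $\lambda v$ (respectively $\lambda u$) is nonnegative and the system is not trivial, both components are nonzero. The strong maximum principle, together with the regularity established in \cite{Matsuzawa}, then gives $u_\eta^\infty>0$ and $v_\eta^\infty>0$.
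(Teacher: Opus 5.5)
Your proposal is correct and follows the paper's route. The paper gives no separate proof: it applies \cite[Theorems A and B]{Matsuzawa} to the constant-potential system with coefficients $(\eta\mu,\eta\nu)$, and the chains \eqref{eq:critical_case_1} and \eqref{eq:critical_case_2} stated just before the proposition are exactly your uniform-in-$\eta$ threshold check via Lemmas \ref{lem:m_eta_non_increasing} and \ref{lem:critical_case}. One small precision in your rerun, which is not a gap: \eqref{eq:relathion_I_eta_J_eta} with $t\to0$ only bounds the gradients on $\mathcal{M}_\eta^\infty$. So keep the nonlocal terms by using the identity \eqref{eq:relationship between I_eta and J_eta}, which gives $I_\eta^\infty-\frac18J_\eta^\infty=\frac14(a_1\|\nabla u\|_2^2+a_2\|\nabla v\|_2^2)$ plus nonnegative multiples of the two Choquard terms, and then read off the $L^2$ bound from $J_\eta^\infty=0$.
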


\subsection{An Estimate of $\bm{c_{\eta}}$}

    To prove the existence of a nontrivial critical point of $I_{\eta}$ it is important to show that mountain pass value $c_{\eta}$ of $I_{\eta}$ is less than the ground state energy $m_{\eta}^{\infty}$ of the corresponding limit problem. 
    \begin{Lem}     \label{Lem:MP level is less than limit energy level}
        Assume that \ref{assumption:continuous}--\ref{assumption:weak differentiable} hold and $1+\frac{\alpha}{3}< p \le q\le 3+\alpha$. If 
        \begin{itemize}
            \item $1+\frac{\alpha}{3}<p\le q<3+\alpha$ and $\mu,\nu>0$; or
            \item $1+\frac{\alpha}{3}<p<q=3+\alpha$, $\mu$ satisfies $\mu\ge\mu_0(\nu)$ for fixed $\nu>0$, where $\mu_0(\nu)$ is given in \eqref{eq:critical_case_1},
        \end{itemize}then there exists $\bar{\eta} \in [\tau, 1]$ such that $c_{\eta} < m_{\eta}^{\infty}$ for all $\eta \in [\bar{\eta}, 1]$.
    \end{Lem}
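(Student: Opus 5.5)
The plan follows Tang--Chen and Matsuzawa--Ueno: test the mountain pass level $c_\eta$ against the dilation path of the limit ground state, and use (V2) to make the potential energy strictly smaller than at infinity.

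\textbf{Step 1: the test path.} Fix $\eta\in[\tau,1]$. By Proposition \ref{prop:limit_problem_has_GS}, the limit problem \eqref{eq:NKC-aux-limit} has a positive ground state $(u_\eta^\infty,v_\eta^\infty)\in\mathcal{M}_\eta^\infty$ with $I_\eta^\infty(u_\eta^\infty,v_\eta^\infty)=m_\eta^\infty$. By Lemma \ref{Lem:element of M}, the map $t\mapsto I^\infty_\eta((u_\eta^\infty)^t,(v_\eta^\infty)^t)$ attains its maximum over $t>0$ at $t=1$. I would consider the path $t\mapsto ((u_\eta^\infty)^t,(v_\eta^\infty)^t)$, reparametrized on $[0,T]$ with $T$ large. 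Two facts are needed:
\begin{itemize}
\item As $t\to0$ we have $\|(u^t,v^t)\|\to0$. Indeed, $\|\nabla u^t\|_2^2=t^4\|\nabla u\|_2^2$ and $\|u^t\|_2^2=t^8\|u\|_2^2$, since $u^t(x)=tu(t^{-2}x)$.
\item For large $t$, $I_\eta\le I_\tau^\infty<0$, exactly as in the proof of Lemma \ref{Lem:satisfy with assumption of monotonicity trick}.
\end{itemize}
Some care is needed to fit the fixed endpoint $(\tilde u,\tilde v)$ of $\Gamma$. The standard remedy is to note that $c_\eta$ does not depend on the choice of a negative endpoint, or to concatenate with a segment inside the region $\{I_\eta<0\}$.

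\textbf{Step 2: comparison with the limit energy.} These facts give
\[
c_\eta\le \max_{t>0} I_\eta(u^t,v^t).
\]
Since $V_i\le V_i^\infty$ by (V2), and $(u,v)$ are positive,
\[
I_\eta(u^t,v^t)=I^\infty_\eta(u^t,v^t)-\tfrac12\int\big[(V_1^\infty-V_1)(u^t)^2+(V_2^\infty-V_2)(v^t)^2\big]dx\le m_\eta^\infty-\tfrac{t^8}{2}\int\big[(V_1^\infty-V_1(t^2y))u^2+(V_2^\infty-V_2(t^2y))v^2\big]dy .
\]
The subtracted integral is strictly positive for every $t>0$. This uses $V_i\not\equiv V_i^\infty$, continuity of $V_i$, and $u,v>0$ everywhere. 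Hence $I_\eta(u^t,v^t)<m_\eta^\infty$ for every $t>0$.

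\textbf{Step 3: the maximum is attained.} To turn this into a strict inequality for $c_\eta$, I need the supremum over $t$ to be attained on a compact interval $[t_0,t_1]\subset(0,\infty)$. Near $t=0$, $I_\eta(u^t,v^t)\to0<m_\eta^\infty$. For large $t$, $I_\eta(u^t,v^t)<0$. So the maximum is attained at some $t_*$, and $c_\eta\le I_\eta(u^{t_*},v^{t_*})<m_\eta^\infty$. In the upper critical case the same argument works because $(u_\eta^\infty,v_\eta^\infty)$ exists for $\mu\ge\mu_0(\nu)$.

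\textbf{Step 4: uniformity in $\eta$.} Steps 1--3 give $c_\eta<m_\eta^\infty$ for every $\eta\in[\tau,1]$ for which the limit ground state exists, which by Proposition \ref{prop:limit_problem_has_GS} is all of them. So we can take $\bar\eta=\tau$. If uniformity were needed instead, I would argue at $\eta=1$ and use left continuity of $c_\eta$ (Lemma \ref{Lem:left conti.}) together with monotonicity of $m_\eta^\infty$. The main obstacle I expect is rigor in Step 1: checking that the rescaled path actually belongs to $\Gamma$, i.e.\ connecting its large-$t$ end to $(\tilde u,\tilde v)$ without raising the maximum. I would handle this by choosing $(\tilde u,\tilde v)$ itself as a dilation of a fixed pair, with $T$ large uniformly in $\eta$ via the bound $I_\eta\le I^\infty_\tau$. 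Assumption (V4) is not needed here; it enters only later, in the global compactness and Pohozaev arguments.
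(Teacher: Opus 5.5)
Your proof is correct, but it takes a different route from the paper.

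\textbf{The paper's route.} The paper works only with the $\eta=1$ ground state $(u_1^\infty,v_1^\infty)$. It chooses the endpoint $(\tilde u,\tilde v)$ as a dilation of this pair, so the dilation path lies in $\Gamma$ automatically. It then shows that the maximizing dilation parameters satisfy $0<\beta\le t_\eta<T_0$ uniformly in $\eta$, using $c_\eta\ge c_1>0$. Finally it compares $I_\eta$ with $I_1^\infty$ through $m_\eta^\infty\ge m_1^\infty$. The mismatch between $\eta$ and $1$ produces an error of size $(1-\eta)$ times the Choquard terms. The potential gap $\frac{t^8}{2}\int(V_i^\infty-V_i(t^2x))(\cdot)^2\,dx$ absorbs this error only when $\eta$ is close to $1$, hence $\bar\eta<1$.

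\textbf{Your route.} You test with $(u_\eta^\infty,v_\eta^\infty)$ at the same $\eta$. Then $I_\eta(u^t,v^t)=I_\eta^\infty(u^t,v^t)-(\text{potential gap})<m_\eta^\infty$ holds exactly. There is no $(1-\eta)$ error and no need for bounds on $t_\eta$, and you get the stronger conclusion $\bar\eta=\tau$. The price is twofold. First, you need a positive ground state for every $\eta\in[\tau,1]$; Proposition~\ref{prop:limit_problem_has_GS} supplies this, in the critical case because \eqref{eq:critical_case_1} is uniform in $\eta$. Second, your path depends on $\eta$ while $\Gamma$ does not.

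\textbf{The endpoint.} Independence of $c_\eta$ from the choice of negative endpoint is not automatic, but your concatenation remedy can be made concrete.
\begin{enumerate}
\item Fix $(\tilde u,\tilde v)=((u_1^\infty)^{\tilde t},(v_1^\infty)^{\tilde t})$ with $\tilde t$ so large that $I_\tau^\infty((u_1^\infty)^t,(v_1^\infty)^t)<0$ for all $t\ge\tilde t$.
\item Since $w\mapsto w^T$ is linear, the straight segment from $((u_\eta^\infty)^T,(v_\eta^\infty)^T)$ to $((u_1^\infty)^T,(v_1^\infty)^T)$ consists of dilations of the convex combinations $w_s$, which are positive.
\item The quadratic and quartic terms of $w_s$ are bounded uniformly in $s$, and the coupling term is nonpositive. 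The Choquard terms are bounded below by those of $\min\{u_\eta^\infty,u_1^\infty\}$ and $\min\{v_\eta^\infty,v_1^\infty\}$.
\item Since $2(p+\alpha+3)>8$, this gives $I_\eta(w_s^T)\le I_\tau^\infty(w_s^T)<0$ for all $s\in[0,1]$ once $T\ge\tilde t$ is large.
\item Return to $(\tilde u,\tilde v)$ along the dilation of $(u_1^\infty,v_1^\infty)$ over $[\tilde t,T]$.
\end{enumerate}
The maximum over this whole path equals the maximum over the first leg, which is $<m_\eta^\infty$.
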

    \begin{proof}In this proof we use the following notation:
        \begin{align*}
        D_r(u):=\int_{\mathbb{R}^3}(I_{\alpha}*|u|^r)|u|^rdx.
        \end{align*}
        
        Let $(u_1^{\infty}, v_1^{\infty})$ be a ground state solution to \eqref{eq:NKC-aux-limit} with $\eta=1$. We note that $I_{\eta}((u_1^{\infty})^t, (v_1^{\infty})^t)$ is continuous function of $t \in [0, \infty)$ and positive for small $t>0$, ant  $I_{\eta}((u_1^{\infty})^{\widetilde{t}}, (v_1^{\infty})^{\widetilde{t}})\le 0$ for $\widetilde{t}>0$ is given in the proof of Lemma \ref{Lem:satisfy with assumption of monotonicity trick}. Hence for any $\eta \in [\tau, 1]$, there exists $t_{\eta} \in (0, \tilde{t})$ such that
        \begin{align*}
            I_{\eta}((u_1^{\infty})^{t_{\eta}}, (v_1^{\infty})^{t_{\eta}}) = \max_{t \in [0, \tilde{t}]} I_{\eta}((u_1^{\infty})^t, (v_1^{\infty})^t),
        \end{align*}
        Here we note that
        \begin{align*}
        \gamma(t):=\begin{cases}
        0 & t=0, \\
        ((u_1^{\infty})^{t\widetilde{t}}, (v_1^{\infty})^{t\widetilde{t}}) & t>0,
        \end{cases}
        \end{align*}
        satisfies $\gamma\in \Gamma$, where $\Gamma$ is given in Lemma \ref{Lem:satisfy with assumption of monotonicity trick}, and 
        \begin{align}\label{eq:c_eta_I_eta}
        c_{\eta}\le \max_{t\in [0,1]}I_{\eta}(\gamma(t))=\max_{t\in [0,\widetilde{t}]}I_{\eta}((u_1^{\infty})^t, (v_1^{\infty})^t)=I_{\eta}((u_1^{\infty})^{t_{\eta}}, (v_1^{\infty})^{t_{\eta}})\ \ \text{for}\ \ \eta\in [\tau, 1].
        \end{align}
        
         Since $I_{\tau}((u_1^{\infty})^t, (v_1^{\infty})^t) \to -\infty$ as $t \to +\infty$. Therefore, there exists $T_0 > 0$, which is independent of $\eta$, such that
        \begin{align}
            I_{\eta}((u_1^{\infty})^t, (v_1^{\infty})^t) \le  I_{\tau}((u_1^{\infty})^t, (v_1^{\infty})^t)\le I_1(u_1^{\infty}, v_1^{\infty}) - 1 \ \text{ for all }\ \eta\in [\tau, 1]\ \text{and}\ t \ge T_0. \label{eq:I_eta is bounded from above}
        \end{align}
        The fact $I_1(u,v)\le I_{\eta}(u,v)\le I_{\tau}(u,v)$ for $\eta\in [\tau, 1]$ and $(u,v)\in \mathscr{H}$, and the definition of $t_{\eta}$ imply that , for any $\eta \in [\tau, 1]$
        \begin{align}
            \begin{split}
                I_1(u_1^{\infty}, v_1^{\infty}) \le I_{\eta}(u_1^{\infty}, v_1^{\infty})\le I_{\eta}((u_1^{\infty})^{t_{\eta}}, (v_1^{\infty})^{t_{\eta}})\le I_{\tau}((u_1^{\infty})^{t_{\eta}}, (v_1^{\infty})^{t_{\eta}}). \label{eq:I_1 is bounded}
            \end{split}
        \end{align}
        From \eqref{eq:I_eta is bounded from above} and \eqref{eq:I_1 is bounded}, we see $t_{\eta} < T_0$ for any $\eta \in [\tau, 1]$. Let $\beta \coloneqq \inf_{\eta \in [\tau, 1]} t_{\eta}$. We show $\beta>0$. Suppose that $\beta = 0$, then there exists a sequence $\{\eta_n\}_n \subset [\tau, 1]$ such that
        \begin{align*}
            \eta_n \to \eta_0 \in [\tau, 1] \text{ and } t_{\eta_n} \to 0.
        \end{align*}
        By Lemma \ref{Lem:left conti.} and $t_{\eta_n} \to 0$, we see that
        \begin{align*}
            0 < c_1 \le c_{\eta_n} \le I_{\eta_n}((u_1^{\infty})^{t_{\eta_n}}, (v_1^{\infty})^{t_{\eta_n}}) = I_{\tau}((u_1^{\infty})^{t_{\eta_n}}, (v_1^{\infty})^{t_{\eta_n}})=o_n(1),
        \end{align*}
        which leads to contradiction. Thus
        \begin{align}
            0 < \beta \le t_{\eta} < T_0 \text{ for all } \eta \in [\tau, 1]. \label{eq:0 < beta < T}
        \end{align}
        From Lemma \ref{lem:car_m} and Lemma \ref{lem:m_eta_non_increasing} we have
  \begin{align}\label{eq:est_m_eta_1}
m_{\eta}^{\infty}\ge m_1^{\infty}=I_1^{\infty}(u_1^{\infty}, v_1^{\infty})\ge I_1^{\infty}((u_1^{\infty})^{t_{\eta}}, (v_1^{\infty})^{t_{\eta}}).
  \end{align}
By \eqref{eq:energy-aux} and \eqref{eq:energy-aux-limit} we obtain
\begin{align}\label{eq:est_m_eta_2}
\begin{split}
 &\ I_1^{\infty}((u_1^{\infty})^{t_{\eta}}, (v_1^{\infty})^{t_{\eta}})\\
            =&\ I_{\eta}((u_1^{\infty})^{t_{\eta}}, (v_1^{\infty})^{t_{\eta}}) \\
            &\quad + \frac{t_{\eta}^8}{2}\int_{\mathbb{R}^3} [V_1^{\infty} - V_1(t_{\eta}x)]|u_1^{\infty}|^2\,dx + \frac{t_{\eta}^8}{2}\int_{\mathbb{R}^3} [V_2^{\infty} - V_2(t_{\eta}x)]|v_1^{\infty}|^2\,dx.\\
            &\quad- (1 - \eta)\frac{\mu}{2p}t_{\eta}^{2(p+\alpha+3)}D_{p}(u_1^{\infty})-(1-\eta)\frac{\nu}{q}t_{\eta}^{2(q +\alpha+ 3)}D_q(v_1^{\infty}).
  \end{split}
        \end{align}
  
        Set 
        \begin{align}\label{eq:bar{eta}}
        \begin{split}
                \bar{\eta} \coloneqq \max &\left\{\tau, 1 - \frac{p\beta^8 \min\limits_{\beta \le s_1 \le T_0} \displaystyle\int_{\mathbb{R}^3} [V_1^{\infty} - V_1(s_1x)]|u_1^{\infty}|^2\,dx}{\mu T_0^{2(p +\alpha+ 3)}D_p(u_1^{\infty})},\right. \\
                &\quad \left. 1 - \frac{q\beta^8 \min\limits_{\beta \le s_2 \le T_0} \displaystyle\int_{\mathbb{R}^3} [V_2^{\infty} - V_2(s_2x)]|v_1^{\infty}|^2\,dx}{\nu T_0^{2(q +\alpha+ 3)}D_q(v_1^{\infty})}\right\}.   
            \end{split}
        \end{align}
        Then $\tau \le \bar{\eta}<1$. It follows from \eqref{eq:c_eta_I_eta}, \eqref{eq:0 < beta < T}, \eqref{eq:est_m_eta_1}, \eqref{eq:est_m_eta_2} and \eqref{eq:bar{eta}} that
        \begin{align*}
                m_{\eta}^{\infty} &> c_{\eta} - (1 - \eta)\frac{\mu}{2p}T_0^{2(p +\alpha+ 3)}D_p(u_1^{\infty}) + \frac{\beta^8}{2}\min\limits_{\beta \le s_1 \le T_0}\int_{\mathbb{R}^3} [V_1^{\infty} - V_1(s_1x)]|u_1^{\infty}|^2\,dx \\
                &\quad - (1 - \eta)\frac{\nu}{2q}T_0^{2(q +\alpha+ 3)}D_q(v_1^{\infty}) + \frac{\beta^8}{2}\min_{\beta \le s_2 \le T_0}\int_{\mathbb{R}^3} [V_2^{\infty} - V_2(s_2x)]|v_1^{\infty}|^2\,dx \\
                &= c_{\eta} - \frac{\mu}{2p}T_0^{2(p +\alpha+ 3)}D_p(u_1^{\infty})\left\{(1 - \eta) - \frac{p\beta^8 \min\limits_{\beta \le s_1 \le T_0}\displaystyle\int_{\mathbb{R}^3} [V_1^{\infty} - V_1(s_1x)]|u_1^{\infty}|^2\,dx}{\mu T_0^{2(p+ \alpha + 3)} D_p(u_1^{\infty})}\right\} \\
                &\quad - \frac{\nu}{2q}T_0^{2(q +\alpha+ 3)}D_q(v_1^{\infty})\left\{(1 - \eta) - \frac{q\beta^8 \min\limits_{\beta \le s_2 \le T_0}\displaystyle\int_{\mathbb{R}^3} [V_2^{\infty} - V_2(s_2x)]|v_1^{\infty}|^2\,dx}{\nu T_0^{2(q +\alpha+3)}D_q(v_1^{\infty})}\right\} \\
                &\ge c_{\eta} - \frac{\mu}{p}T_0^{2(p +\alpha+ 3)}D_p(u_1^{\infty})(\bar{\eta} - \eta) - \frac{\nu}{2q}T_0^{2(q +\alpha+ 3)}D_q(v_1^{\infty})(\bar{\eta} - \eta) \\
                &> c_{\eta}\ \ \text{for any}\ \ \eta \in [\bar{\eta}, 1].
        \end{align*}
        The proof of Lemma \ref{Lem:MP level is less than limit energy level} has been completed.
    \end{proof}
 Combining  Lemma \ref{Lem:MP level is less than limit energy level} with Lemma \ref{eq:critical_case_1} we obtain the following corollary.
\begin{Cor}\label{lem:critical_case_2} Assume that  \ref{assumption:continuous}--\ref{assumption:weak differentiable} hold and $1+\frac{\alpha}{3}<p<q=3+\alpha$. For fixed $\nu>0$ there exists $\mu_0=\mu_0(\nu)>0$ such that 
\begin{align*}
c_{\eta}<m_{\eta}^{\infty}<\frac{\alpha+1}{4(3+\alpha)}\nu\left(\frac{a_2} {\nu}\mathcal{S}^*\right)^{\frac{3+\alpha}{2+\alpha}}
\end{align*}
for all $\mu\ge\mu_0$ and $\eta\in [\bar{\eta}, 1]$. 
\end{Cor}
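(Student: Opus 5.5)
The corollary follows by combining two earlier results: Lemma \ref{Lem:MP level is less than limit energy level}, which gives the left inequality, and the estimate \eqref{eq:critical_case_1} coming from Lemma \ref{lem:critical_case}, which gives the right inequality.

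\textbf{Right inequality.} Fix $\nu>0$ and work in the upper half critical regime $1+\frac{\alpha}{3}<p<q=3+\alpha$.

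By Lemma \ref{lem:critical_case}(1), $m_{\tau}^{\infty}(\mu,\nu)\to 0$ as $\mu\to\infty$. So there is $\mu_0(\nu)$ such that
\[
m_{\tau}^{\infty}(\mu,\nu)<\frac{\alpha+1}{4(3+\alpha)}\nu\left(\frac{a_2}{\nu}\mathcal{S}^*\right)^{\frac{3+\alpha}{2+\alpha}}\quad\text{for all }\mu\ge\mu_0(\nu).
\]
This uses monotonicity of $\mu\mapsto m_\tau^\infty(\mu,\nu)$, which follows from the characterization in Lemma \ref{lem:car_m} because $I^\infty_\eta$ is nonincreasing in $\mu$. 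The threshold $\mu_0$ does not depend on $\eta$.

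By Lemma \ref{lem:m_eta_non_increasing}, $m_\eta^\infty\le m_\tau^\infty$ for every $\eta\in[\tau,1]$. Hence the displayed upper bound holds for $m_\eta^\infty$ uniformly in $\eta\in[\tau,1]$; this is exactly \eqref{eq:critical_case_1}.

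\textbf{Left inequality.} With $\mu\ge\mu_0(\nu)$ fixed, the second bullet of Lemma \ref{Lem:MP level is less than limit energy level} applies. Its hypotheses are (V1)--(V4) together with $\mu\ge\mu_0(\nu)$, which is what guarantees that the limit ground state $(u_1^\infty,v_1^\infty)$ of Proposition \ref{prop:limit_problem_has_GS}(2) exists. The lemma then produces $\bar\eta\in[\tau,1)$ with $c_\eta<m_\eta^\infty$ for all $\eta\in[\bar\eta,1]$. Chaining this with the right inequality gives the claim.

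\textbf{Main subtlety.} The only delicate point is the dependence of $\bar\eta$. In \eqref{eq:bar{eta}}, $\bar\eta$ depends on $\mu$ through $(u_1^\infty,v_1^\infty)$, $\beta$ and $T_0$. The statement therefore has to be read as: for each $\mu\ge\mu_0$ there is $\bar\eta=\bar\eta(\mu,\nu)$. This is harmless, because the subsequent argument uses the corollary for fixed $\mu$ and $\eta$ close to $1$.

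A second point to check is that the strict inequality in \eqref{eq:critical_case_1} survives the passage from $m_\tau^\infty$ to $m_\eta^\infty$. It does, since the comparison is $m_\eta^\infty\le m_\tau^\infty<\cdot$, so the strictness comes from the bound on $m_\tau^\infty$.
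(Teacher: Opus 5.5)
Your proposal is correct and follows the paper's argument exactly: the corollary is obtained by chaining the bound \eqref{eq:critical_case_1}, which comes from Lemma~\ref{lem:critical_case}(1) at $\eta=\tau$ together with $m_\eta^\infty\le m_\tau^\infty$, with the inequality $c_\eta<m_\eta^\infty$ on $[\bar\eta,1]$ from Lemma~\ref{Lem:MP level is less than limit energy level}. Your remark that $\bar\eta$ depends on $\mu$ is a fair clarification, and the appeal to monotonicity of $\mu\mapsto m_\tau^\infty(\mu,\nu)$ is harmless but unnecessary, since $\lim_{\mu\to\infty}m_\tau^\infty(\mu,\nu)=0$ already gives the bound for every $\mu\ge\mu_0$.
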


\subsection{Global Compactness Result}

In this subsection we prove a global compactness lemma, which was proved in \cite{Li-Ye} for a single Kirchhoff--Schr\"{o}dinger equation. In the proof of th global compactness lemma, we have to use the splitting lemma given in Proposition \ref{lem:splitting}. So the exponent is restricted to $1+\frac{\alpha}{3}<p\le q<3+\alpha$ or $1+\frac{\alpha}{3}<p<q=3+\alpha$. 

    \begin{Lem}     \label{Lem:global compactness lemma}
        Assume that \ref{assumption:continuous}--\ref{assumption:weak differentiable} hold, $\eta \in [\bar{\eta}, 1]$ and one of the following conditions holds
     \begin{enumerate}
     \item[\textup{(1)}] when $1+\frac{\alpha}{3}<p\le q<3+\alpha$, $\mu>0$ and $c>0$;
     \item[\textup{(2)}] when $1+\frac{\alpha}{3}<p<q=3+\alpha$, for fixed $\nu>0$, $\mu$ satisfies $\mu\ge \mu_0$ and $0<c<m_{\eta}^{\infty}$, where $\mu_0$ is given in Corollary \ref{lem:critical_case_2}.
     \end{enumerate}
    If $\{(u_n, v_n)\} \subset \mathscr{H}$ is a bounded $(\mathrm{PS})_c$ sequence for $I_{\eta}$, then there exist a subsequence of $\{(u_n, v_n)\}$ $($still denoted by $\{(u_n, v_n)\}), (u_0, v_0) \in H$ and $A_{\eta}, B_{\eta} \in \mathbb{R}$ such that $(u_n, v_n) \rightharpoonup (u_0, v_0)$ in $\mathscr{H}$ and $G_{\eta}^{\prime}(u_0, v_0) = 0$, where
        \begin{align}
            \begin{split}
                G_{\eta}(u, v) &= \frac{a_1 + b_1A_{\eta}^2}{2}\|\nabla u\|_2^2 + \frac{a_2 + b_2B_{\eta}^2}{2}\|\nabla v\|_2^2 + \frac{1}{2}\int_{\mathbb{R}^3} [V_1(x)u^2 + V_2(x)v^2]\,dx \\
                &\quad - \eta\frac{\mu}{2p}\int_{\mathbb{R}^3}(I_{\alpha}*|u|^p)|u|^pdx - \eta\frac{\nu}{2q}\int_{\mathbb{R}^3}(I_{\alpha}*|v|^q)|v|^qdx - \lambda \int_{\mathbb{R}^3} uv\,dx    \label{eq:G_eta}
            \end{split}
        \end{align}
        and either \textup{(i)} or \textup{(ii)} hold:
        \begin{enumerate}[label = $(\mathrm{\roman{enumi}})$]
            \item $(u_n, v_n) \to (u_0, v_0)$ in $\mathscr{H}$.
            \item there exist $l \in \mathbb{N}, \{y_n^{(k)}\}_{n=1}^{\infty} \subset \mathbb{R}^3$ with $|y_n^{(k)}| \to \infty$ as $n \to +\infty$ for each $k=1,\dots, l$ and nontrivial solutions $(u^{(1)}, v^{(1)}), \ldots, (u^{(l)}, v^{(l)})$ to the following system
            \begin{empheq}[left = \empheqlbrace]{align}
                \begin{split}
                    &-(a_1 + b_1A_{\eta}^2)\Delta u + V_1^{\infty}u = \eta\mu(I_{\alpha}*|u|^p)|u|^{p - 2}u + \lambda v\ \ \text{in}\ \ \mathbb{R}^3,\\
                    &-(a_2 + b_2B_{\eta}^2)\Delta v+ V_2^{\infty}v = \eta\nu(I_{\alpha}*|v|^q)|v|^{q - 2}v + \lambda u\ \ \text{in}\ \ \mathbb{R}^3.  \label{eq:limit problem}
                \end{split}
            \end{empheq}
            such that
            \begin{align*}
                c + \frac{b_1A_{\eta}^4 + b_2B_{\eta}^4}{4} = G_{\eta}(u_0, v_0) + \sum_{k = 1}^l G_{\eta}^{\infty}(u^{(k)}, v^{(k)}),
            \end{align*}
            where
            \begin{align}
                \begin{split}
                    G_{\eta}^{\infty}(u, v) &\coloneqq \frac{a_1 + b_1A_{\eta}^2}{2}\|\nabla u\|_2^2 + \frac{a_2 + b_2B_{\eta}^2}{2}\|\nabla v\|_2^2 + \frac{1}{2}(V_1^{\infty}\|u\|_2^2 + V_2^{\infty}\|v\|_2^2) \\
                    &\quad - \eta\frac{\mu}{2p}\int_{\mathbb{R}^3}(I_{\alpha}*|u|^p)|u|^pdx - \eta\frac{\nu}{2q}\int_{\mathbb{R}^3}(I_{\alpha}*|v|^q)|v|^qdx \\
                    &\quad - \lambda \int_{\mathbb{R}^3} uv\,dx\ \ \mbox{for}\ \ (u,v)\in \mathscr{H}   \label{eq:G_eta infty}
                \end{split}
            \end{align}
            and
            \begin{align*}
                & \left\|(u_n, v_n) - (u_0, v_0) - \sum_{k = 1}^l \left(u^{(k)}(\cdot - y_n^k), v^{(k)}(\cdot - y_n^k)\right)\right\|_{\bm{a}, \bm{V}} \to 0, \\
                & A_{\eta}^2 = \|\nabla u_0\|_2^2 + \sum_{k = 1}^l \|\nabla u^{(k)}\|_2^2, \ \ \ B_{\eta}^2 = \|\nabla v_0\|_2^2 + \sum_{k = 1}^l \|\nabla v^{(k)}\|_2^2.
            \end{align*}
            holds.
        \end{enumerate}
    \end{Lem}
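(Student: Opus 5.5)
Since $\{(u_n,v_n)\}$ is bounded in $\mathscr{H}$, after passing to a subsequence I take $(u_n,v_n)\rightharpoonup(u_0,v_0)$ in $\mathscr{H}$, $(u_n,v_n)\to(u_0,v_0)$ in $L^r_{\rm loc}$ for $r\in[1,6)$ and a.e. in $\mathbb{R}^3$, and $\|\nabla u_n\|_2\to A_\eta$, $\|\nabla v_n\|_2\to B_\eta$. Passing to the limit in $\langle I_\eta'(u_n,v_n),(\varphi,\psi)\rangle=o_n(1)$ with Corollary \ref{cor:cor_of_splitting} applied to $f(t)=|t|^{p-2}t$ and $f(t)=|t|^{q-2}t$ (with $F$ equal to $|t|^p/p$, $|t|^q/q$; the growth hypotheses of Proposition \ref{lem:splitting} hold because $p-1>\alpha/3$ and $q-1\le 2+\alpha$) gives $G_\eta'(u_0,v_0)=0$. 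Next I set $(u_n^1,v_n^1)=(u_n-u_0,v_n-v_0)$. Using the Brezis--Lieb lemma, Lemma \ref{lem:nonlocal-B-L}, the identity $\int V_i(x)(w_n-w)^2=\int V_i^\infty (w_n-w)^2+o_n(1)$ (valid because $V_i^\infty-V_i\to0$ at infinity and $w_n\to w$ in $L^2_{\rm loc}$), and Proposition \ref{lem:splitting} for the derivative, which is where uniformity in $\varphi$ matters, I would obtain
\begin{align*}
G_\eta^\infty(u_n^1,v_n^1)=I_\eta(u_n,v_n)+\frac{b_1A_\eta^4+b_2B_\eta^4}{4}-G_\eta(u_0,v_0)+o_n(1),\qquad (G_\eta^\infty)'(u_n^1,v_n^1)=o_n(1)\ \text{in}\ \mathscr{H}^{*}.
\end{align*}
Here the identity $b_1\|\nabla u_n\|_2^2\int\nabla u_n\cdot\nabla\varphi=b_1A_\eta^2\int\nabla u_n\cdot\nabla\varphi+o(1)\|\varphi\|$ converts the Kirchhoff term into the frozen coefficient.

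Next comes the standard iteration. If $\sup_y\int_{B_1(y)}(|u_n^1|^2+|v_n^1|^2)\to0$, then Lemma \ref{Lions-th} gives $u_n^1,v_n^1\to0$ in $L^r$ for $r\in(2,6)$. In case (1) the HLS estimate \eqref{eq:convolution-term-est} with $2p\cdot\frac{3}{3+\alpha}$ and $2q\cdot\frac3{3+\alpha}$ lying in $(2,6)$ then kills the nonlocal terms. Testing $(G_\eta^\infty)'(u_n^1,v_n^1)$ with $(u_n^1,v_n^1)$ and using Lemma \ref{lem:potential_term_est} yields $(1-\delta)\|(u_n^1,v_n^1)\|^2_{\bm a,\bm V^\infty}\to0$, which is alternative (i). Otherwise there are $y_n^{1}$ with $|y_n^1|\to\infty$ (since $u_n^1\to0$ in $L^2_{\rm loc}$) and $(u_n^1,v_n^1)(\cdot+y_n^1)\rightharpoonup(u^{(1)},v^{(1)})\neq0$. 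By translation invariance of $G_\eta^\infty$ and Corollary \ref{cor:cor_of_splitting}, $(u^{(1)},v^{(1)})$ solves \eqref{eq:limit problem}. I then repeat the construction with $(u_n^2,v_n^2)=(u_n^1,v_n^1)-(u^{(1)},v^{(1)})(\cdot-y_n^1)$, using the same splitting at each step. Each nontrivial solution of \eqref{eq:limit problem} has $\|(u,v)\|_{\bm a,\bm V^\infty}\ge\rho>0$: testing with itself and combining Lemma \ref{lem:potential_term_est} with HLS--Sobolev gives $(1-\delta)\|\cdot\|^2\le C(\|\cdot\|^{2p}+\|\cdot\|^{2q})$. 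The norms split additively, so the process stops after finitely many steps $l$. Summing the energy and gradient-norm identities gives the stated formulas for $c+\frac{b_1A_\eta^4+b_2B_\eta^4}{4}$, $A_\eta^2$ and $B_\eta^2$; the gradient splitting $\|\nabla u_n\|_2^2=\|\nabla u_0\|_2^2+\sum_k\|\nabla u^{(k)}\|_2^2+o(1)$ follows from weak convergence.

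The main obstacle is case (2), $q=3+\alpha$, where vanishing does not kill $\int(I_\alpha*|v_n^1|^{3+\alpha})|v_n^1|^{3+\alpha}$. In the vanishing situation the $u$-part and the coupling term $\lambda\int u_n^1v_n^1$ still tend to $0$; the latter holds because $\|u_n^1\|_{a_1,V_1^\infty}\to0$ follows from testing with $(u_n^1,0)$. Hence
\begin{align*}
(a_2+b_2B_\eta^2)\|\nabla v_n^1\|_2^2+V_2^\infty\|v_n^1\|_2^2=\eta\nu D_{3+\alpha}(v_n^1)+o(1).
\end{align*}
If $\|\nabla v_n^1\|_2^2\to\ell>0$, then \eqref{eq:upper_critical_constant} gives $\ell\ge(a_2\mathcal S^*/(\eta\nu))^{\frac{3+\alpha}{2+\alpha}}$. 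I would then combine the energy identity, the nonnegativity of $G_\eta(u_0,v_0)$ (obtained from $G_\eta'(u_0,v_0)=0$ together with the Pohozaev-type identity and \ref{assumption:weak differentiable}, via the Hardy inequality, Lemma \ref{Lem: Hardy_ineq}), and the inequality $b_2B_\eta^4/4\le b_2B_\eta^2\ell/4+\dots$ with the $\frac14$-bookkeeping of the Kirchhoff term. This should produce $c\ge\frac{\alpha+1}{4(3+\alpha)}\eta\nu\left(\frac{a_2}{\eta\nu}\mathcal S^*\right)^{\frac{3+\alpha}{2+\alpha}}$, contradicting $c<m_\eta^\infty$ and Corollary \ref{lem:critical_case_2}. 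Hence $\ell=0$ and vanishing again gives (i). The nonvanishing branch is handled as in case (1), but the termination argument must use $c<m^\infty_\eta$ in the same way to rule out an infinite cascade of concentrating bubbles.
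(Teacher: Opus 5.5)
Your overall route is the paper's: Corollary \ref{cor:cor_of_splitting} gives $G_\eta'(u_0,v_0)=0$, Proposition \ref{lem:splitting} gives the uniform splitting of the derivatives, and profiles are extracted with Lions' lemma. Case (1) is fine. Stopping the extraction through additivity of norms and the bound $\|(u,v)\|_{\bm a,\bm V^\infty}\ge\rho$ is even slightly cleaner than the paper's energy count.

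Case (2), however, has a genuine gap. Testing $(G_\eta^\infty)'(u_n^1,v_n^1)$ with $(u_n^1,0)$ does not give $\|u_n^1\|_{a_1,V_1^\infty}\to0$. It gives $(a_1+b_1A_\eta^2)\|\nabla u_n^1\|_2^2+V_1^\infty\|u_n^1\|_2^2=\lambda\int_{\mathbb{R}^3}u_n^1v_n^1\,dx+o_n(1)$, which is exactly the coupling term you want to discard. The argument is therefore circular: $u_n^1$ is linearly driven by $v_n^1$, and $v_n^1$ is not known to be small in $L^2$. So your displayed identity for $v_n^1$ is unproved. What works, and what the paper does, is to test with $(u_n^1,v_n^1)$ and keep the coupled form $\int_{\mathbb{R}^3}(V_1^\infty|u_n^1|^2+V_2^\infty|v_n^1|^2-2\lambda u_n^1v_n^1)\,dx$, which is nonnegative by Lemma \ref{lem:potential_term_est}. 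This gives $a_2\|\nabla v_n^1\|_2^2\le\eta\nu D_{3+\alpha}(v_n^1)+o_n(1)$, and if $D_{3+\alpha}(v_n^1)\to0$ it directly forces $(u_n^1,v_n^1)\to(0,0)$ in $\mathscr{H}$. A minor point: $(a_2\mathcal{S}^*/(\eta\nu))^{\frac{3+\alpha}{2+\alpha}}$ is the lower bound for $\lim D_{3+\alpha}(v_n^1)$, not for $\ell$, though the final contradiction is unaffected.

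The Kirchhoff bookkeeping is the second gap. Since $\frac{b_2B_\eta^4}{4}=\frac{b_2B_\eta^2}{4}(\|\nabla v_0\|_2^2+\|\nabla v_n^1\|_2^2)+o_n(1)$, and likewise for $A_\eta$, you must absorb $\frac{b_1A_\eta^2}{4}\|\nabla u_0\|_2^2+\frac{b_2B_\eta^2}{4}\|\nabla v_0\|_2^2$. Nonnegativity of $G_\eta(u_0,v_0)$ is not enough for this. You need $G_\eta(u_0,v_0)\ge\frac{b_1A_\eta^2}{4}\|\nabla u_0\|_2^2+\frac{b_2B_\eta^2}{4}\|\nabla v_0\|_2^2$, which your Pohozaev/\ref{assumption:weak differentiable}/Hardy route does deliver once carried out.

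More importantly, vanishing may first occur only at a step $k\ge2$. Then the same absorption is needed for every profile already extracted: $G_\eta^\infty(u^{(j)},v^{(j)})\ge\frac{b_1A_\eta^2}{4}\|\nabla u^{(j)}\|_2^2+\frac{b_2B_\eta^2}{4}\|\nabla v^{(j)}\|_2^2$ for all $j<k$. This requires the Pohozaev identity of the limit system: add twice it to the Nehari identity and subtract $\frac18$ of the sum from $G_\eta^\infty$, which leaves positive coefficients $\frac{p+\alpha-1}{8p},\frac{q+\alpha-1}{8q}$ on the nonlocal terms. The Nehari identity alone does not suffice, since $p<2$ is allowed here (because $1+\frac{\alpha}{3}<2$), and then $\frac14-\frac1{2p}<0$. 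Only with these bounds do you get $c\ge\frac{2+\alpha}{2(3+\alpha)}a_2\|\nabla v_n^k\|_2^2+o_n(1)$ at the terminal step.

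Relatedly, your last sentence puts the threshold in the wrong place. Finiteness of the extraction follows from the norm bound, which holds for $q=3+\alpha$ as well. The hypothesis $c<m_\eta^\infty$, together with Corollary \ref{lem:critical_case_2}, is needed instead in the vanishing branch at whichever step the iteration stops. The reason is that Lemma \ref{Lions-th} gives no control of the $L^6$ norm, which governs $D_{3+\alpha}$, so a concentrating bubble passes as vanishing.
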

\begin{proof}
            Take any $(\varphi, \psi)\in \mathscr{H}$. It is easy to check that
        \begin{align*}
            \langle G_{\eta}^{\prime}(u, v), (\varphi, \psi) \rangle &= (a_1 + b_1A_{\eta}^2)\int_{\mathbb{R}^3} \nabla u \cdot \nabla \varphi\,dx + (a_2 + b_2B_{\eta}^2)\int_{\mathbb{R}^3} \nabla v \cdot \nabla \psi\,dx \\
            &\quad + \int_{\mathbb{R}^3} (V_1(x)u\varphi + V_2(x)v\psi)\,dx - \eta \mu \int_{\mathbb{R}^3} (I_{\alpha}*|u|^p)|u|^{p - 2}u\varphi\,dx \\
            &\quad - \eta\nu \int_{\mathbb{R}^3}(I_{\alpha}*|v|^q)|v|^{q - 2}v\psi\,dx - \lambda \int_{\mathbb{R}^3} u\psi\,dx - \lambda \int_{\mathbb{R}^3} v\varphi\,dx
        \end{align*}
        holds. Since $\{(u_n, v_n)\}$ is a bounded sequence in $\mathscr{H}$, there exist $(u_0, v_0) \in H$ and $A_{\eta}$, $B_{\eta}\in \mathbb{R}$ such that along a subsequence 
        \begin{align}
            &(u_n, v_n) \rightharpoonup (u_0, v_0)\ \ \text{in}\ \ \mathscr{H}   \label{eq:weak convergence}
        \end{align}
        and
        \begin{align}
            \|\nabla u_n\|_2^2 \to A_{\eta}^2, \quad \|\nabla v_n\|_2^2 \to B_{\eta}^2  \label{eq:gradient converge to A and B}
        \end{align}
        holds.
        
        Since $I_{\eta}^{\prime}(u_n, v_n) \to 0$, Corollary \ref{cor:cor_of_splitting} implies 
        \begin{align*}
            \begin{split}
                &\int_{\mathbb{R}^3} (a_1\nabla u_0 \cdot \nabla \varphi + V_1(x)u_0\varphi)\,dx + \int_{\mathbb{R}^3} (a_2\nabla v_0 \cdot \nabla \psi + V_2(x)v_0\psi)\,dx \\
                &\quad +b_1A_{\eta}^2 \int_{\mathbb{R}^3} \nabla u_0 \cdot \nabla \varphi\,dx + b_2B_{\eta}^2 \int_{\mathbb{R}^3} \nabla v_0 \cdot \nabla \psi\,dx \\
                &\quad - \eta\mu\int_{\mathbb{R}^3}(I_{\alpha}*|u_0|^p)|u_0|^{p - 2}u_0\varphi\,dx - \eta\nu\int_{\mathbb{R}^3} (I_{\alpha}*|v_0|^q)|v_0|^{q - 2}v_0\psi\,dx \\
                &\quad - \lambda \int_{\mathbb{R}^3} u_0\psi\,dx - \lambda \int_{\mathbb{R}^3} v_0\varphi\,dx = 0
            \end{split}
        \end{align*}
        holds for $(\varphi,\psi)\in \mathscr{H}$, which means $G_{\eta}^{\prime}(u_0, v_0) = 0$. By \eqref{eq:G_eta} and \eqref{eq:gradient converge to A and B}, we have
        \begin{align*}
                G_{\eta}(u_n, v_n) &= \frac{1}{2}(a_1\|\nabla u_n\|_2^2 + a_2\|\nabla v_n\|_2^2) + \frac{1}{2}\int_{\mathbb{R}^3} (V_1(x)u_n^2 + V_2(x)v_n^2)\,dx  \\
                &\quad+ \frac{1}{4}(b_1\|\nabla u_n\|_2^4 + b_2\|\nabla v_n\|_2^4) \\
                &\quad - \eta\frac{\mu}{2p} \int_{\mathbb{R}^3}(I_{\alpha}*|u_n|^p)|u_n|^pdx -\eta\frac{\nu}{2q}\int_{\mathbb{R}^3}(I_{\alpha}*|v_n|^q)|v_n|^qdx - \lambda \int_{\mathbb{R}^3} u_nv_n\,dx \\
                &\quad+ \frac{b_1A_{\eta}^2}{4}\|\nabla u_n\|_2^2 + \frac{b_2B_{\eta}^2}{4}\|\nabla v_n\|_2^2 + o_n(1) \\
                &= I_{\eta}(u_n, v_n) + \frac{b_1A_{\eta}^4 + b_2B_{\eta}^4}{4} + o_n(1)
        \end{align*}
        and
        \begin{align*}
            &\ \langle G_{\eta}^{\prime}(u_n, v_n), (\varphi, \psi) \rangle \\
            =&\ a_1\int_{\mathbb{R}^3} \nabla u_n \cdot \nabla \varphi\,dx + a_2\int_{\mathbb{R}^3} \nabla v_n \cdot \nabla \psi\,dx + \int_{\mathbb{R}^3} (V_1(x)u_n\varphi + V_2(x)v_n\psi)\,dx \\
             &\quad + b_1\|\nabla u_n\|_2^2\int_{\mathbb{R}^3} \nabla u_n \cdot \nabla \varphi\,dx+ b_2\|\nabla v_n\|_2^2\int_{\mathbb{R}^3} \nabla v_n \cdot \nabla \psi\,dx \\
             &\quad- \eta \mu \int_{\mathbb{R}^3}(I_{\alpha}*|u_n|^p) |u_n|^{p - 2}u_n\varphi\,dx- \eta\nu \int_{\mathbb{R}^3}(I_{\alpha}*|v_n|^q) |v_n|^{q - 2}v_n\psi\,dx \\
             &\quad- \lambda \int_{\mathbb{R}^3} u_n\psi\,dx - \lambda \int_{\mathbb{R}^3} v_n\varphi\,dx + o_n(1)\|(\varphi,\psi)\| \\
            &= \langle I_{\eta}^{\prime}(u_n, v_n), (\varphi, \psi) \rangle + o_n(1)\|(\varphi,\psi)\|
        \end{align*}
        for any $(\varphi, \psi)\in \mathscr{H}$. Hence we obtain
        \begin{align}\label{eq:G_eta_un_vn}
            G_{\eta}(u_n, v_n) \to c + \frac{b_1A_{\eta}^4 + b_2B_{\eta}^4}{4}, \quad G_{\eta}^{\prime}(u_n, v_n) \to 0\ \ \text{in}\ \ \mathscr{H}^*.
        \end{align}
        We next show that either (i) or (ii) hold.

\noindent
\textbf{Step 1.}
            Set $\omega_n^{(1)} \coloneqq u_n - u_0$ and $\sigma_n^{(1)} \coloneqq v_n - v_0$. 
            We show          
            \begin{enumerate}
             \item[(1-a)] $\|\nabla \omega_n^{(1)}\|_2^2 = \|\nabla u_n\|_2^2 - \|\nabla u_0\|_2^2 + o_n(1)$ and $\|\nabla \sigma_n^{(1)}\|_2^2 = \|\nabla v_n\|_2^2 - \|\nabla v_0\|_2^2 + o_n(1)$,    
                \item[(1-b)] 
                $\begin{cases}
                \displaystyle\int_{\mathbb{R}^3}(I_{\alpha}*|\omega_n^{(1)}|^p)|\omega_n^{(1)}|^pdx = \int_{\mathbb{R}^3}(I_{\alpha}*|u_n|^p)|u_n|^pdx - \int_{\mathbb{R}^3}(I_{\alpha}*|u_0|^p)|u_0|^pdx + o_n(1) \\       
           \displaystyle\int_{\mathbb{R}^3}(I_{\alpha}*|\sigma_n^{(1)}|^q)|\sigma_n^{(1)}|^pdx = \int_{\mathbb{R}^3}(I_{\alpha}*|v_n|^q)|v_n|^qdx - \int_{\mathbb{R}^3}(I_{\alpha}*|v_0|^q)|v_0|^qdx + o_n(1)
           \end{cases}$
           
                \item[(1-c)] $\displaystyle G_{\eta}^{\infty}(\omega_n^{(1)}, \sigma_n^{(1)}) \to c + \frac{b_1A_{\eta}^4 + b_2B_{\eta}^4}{4} - G_{\eta}(u_0, v_0)$,     
                \item[(1-d)] $(G_{\eta}^{\infty})^{\prime}(\omega_n^{(1)}, \sigma_n^{(1)}) \to 0$ in $\mathscr{H}^*$.    
            \end{enumerate}
        Here $o_n(1)$ means that $o_n(1)\to 0$ as $n\to\infty$. Indeed, (1-a) follows from the definition of weak convergence, and (1-b) follows from nonlocal Brezis--Lieb lemma (see Lemma \ref{lem:nonlocal-B-L}). Thus, we show (1-c) and (1-d). By using (1-a) and (1-b), we see that
        \begin{align*}
            G_{\eta}^{\infty}(\omega_n^{(1)}, \sigma_n^{(1)}) &= \frac{a_1 + b_1A_{\eta}^2}{2}\|\nabla \omega_n^{(1)}\|_2^2 + \frac{a_2 + b_2B_{\eta}^2}{2}\|\nabla \sigma_n^{(1)}\|_2^2 + \frac{1}{2}\int_{\mathbb{R}^3} \left(V_1^{\infty}|\omega_n^{(1)}|^2 + V_2^{\infty}|\sigma_n^{(1)}|^2\right)\,dx \\
            &\quad - \eta\frac{\mu}{2p}\int_{\mathbb{R}^3}(I_{\alpha}*|\omega_n^{(1)}|^p)|\omega_n^{(1)}|^pdx - \eta\frac{\nu}{2q}\int_{\mathbb{R}^3}(I_{\alpha}*|\sigma_n^{(1)}|^q)|\sigma_n^{(1)}|^q\,dx - \lambda \int_{\mathbb{R}^3} \omega_n^{(1)}\sigma_n^{(1)}\,dx \\
            &= \frac{a_1 + b_1A_{\eta}^2}{2}(\|\nabla u_n\|_2^2 - \|\nabla u_0\|_2^2) + \frac{a_2 + b_2B_{\eta}^2}{2}(\|\nabla v_n\|_2^2 - \|\nabla v_0\|_2^2) \\
            &\quad + \frac{1}{2}\int_{\mathbb{R}^3} V_1(x)(|u_n|^2-2u_nu_0+|u_0|^2)\,dx + \frac{1}{2}\int_{\mathbb{R}^3} V_2(x)(|v_n|^2 -2v_nv_0+|v_0|^2)\,dx \\
            &\quad -\eta\frac{\mu}{2p}\left(\int_{\mathbb{R}^3}(I_{\alpha}*|u_n|^p)|u_n|^pdx-\int_{\mathbb{R}^3}(I_{\alpha}*|u_0|^p)|u_0|^pdx\right)\\
            &\quad -\eta\frac{\nu}{2q}\left(\int_{\mathbb{R}^3}(I_{\alpha}*|v_n|^q)|v_n|^qdx -\int_{\mathbb{R}^3}(I_{\alpha}*|v_0|^q)|v_0|^qdx\right)\\
            &\quad - \lambda \int_{\mathbb{R}^3} (u_nv_n - u_nv_0 - u_0v_n + u_0v_0)\,dx \\
            &\quad +\frac{1}{2}\int_{\mathbb{R}^3} (V_1^{\infty} - V_1(x))|\omega_n^{(1)}|^2\,dx + \frac{1}{2}\int_{\mathbb{R}^3} (V_2^{\infty} - V_2(x))|\sigma_n^{(1)}|^2\,dx + o_n(1)
            \end{align*}
            Since $w\mapsto \int_{\mathbb{R}^3} wv_0\,dx$, $w\mapsto \int_{\mathbb{R}^3} u_0w\,dx$, $w\mapsto\int_{\mathbb{R}^3}V_1(x)wu_0dx$ and $w\mapsto\int_{\mathbb{R}^3}V_2(x)wv_0dx$ are bounded linear functionals on $H^1(\mathbb{R}^3)$, we see $\int_{\mathbb{R}^3} u_nv_0\,dx \to \int_{\mathbb{R}^3} u_0v_0\,dx$, $\int_{\mathbb{R}^3} u_nv_0\,dx \to \int_{\mathbb{R}^3} u_0v_0\,dx$, $\int_{\mathbb{R}^3}V_1(x)u_nu_0dx\to\int_{\mathbb{R}^3}V_1(x)u_0^2dx$ and $\int_{\mathbb{R}^3}V_2(x)v_nv_0dx\to\int_{\mathbb{R}^3}V_2(x)v_0^2dx$ as $n\to\infty$. Therefore, we obtain
\begin{align}\label{eq:G_eta_infty_omega_n1_sigma_n1}
\begin{split}
            G_{\eta}^{\infty}(\omega_n^{(1)}, \sigma_n^{(1)}) &= G_{\eta}(u_n, v_n) - G_{\eta}(u_0, v_0) \\
            &\quad +\frac{1}{2}\int_{\mathbb{R}^3} (V_1^{\infty} - V_1(x))|\omega_n^{(1)}|^2\,dx + \frac{1}{2}\int_{\mathbb{R}^3} (V_2^{\infty} - V_2(x))|\sigma_n^{(1)}|^2\,dx + o_n(1).
     \end{split}
        \end{align}
        Now we claim that the following convergences hold true:
                \begin{Claim}
        As $n\to\infty$ there hold that
            \begin{align}
                \int_{\mathbb{R}^3} (V_1^{\infty} - V_1(x))|\omega_n^{(1)}|^2\,dx \to 0\ \ \mbox{and}\ \          \int_{\mathbb{R}^3} (V_2^{\infty} - V_2(x))|\sigma_n^{(1)}|^2\,dx \to 0.      \label{eq:integral term converges to 0}\\
                \int_{\mathbb{R}^3} |V_1^{\infty} - V_1(x)|^2|\omega_n^{(1)}|^2\,dx \to 0\ \ \mbox{and}\ \          \int_{\mathbb{R}^3} |V_2^{\infty} - V_2(x)|^2|\sigma_n^{(1)}|^2\,dx \to 0.      \label{eq:integral term converges to 0_2} 
            \end{align}
        \end{Claim}
 \noindent
The proof of the claim can be found the proof of (2.24) and (2.25) in \cite{Matsuzawa-Ueno}.

        By \eqref{eq:G_eta_un_vn}, \eqref{eq:G_eta_infty_omega_n1_sigma_n1} and \eqref{eq:integral term converges to 0} we have obtained
        \begin{align*}
            G_{\eta}^{\infty}(\omega_n^{(1)}, \sigma_n^{(1)}) = c + \frac{b_1A_{\eta}^4 + b_2B_{\eta}^4}{4} - G_{\eta}(u_0, v_0) + o_n(1).
        \end{align*}
        that is, we have completed the proof of  (1-c). 

        We next show
        \begin{align*}
            \|G_{\eta}^{\prime}(u_n, v_n) - G_{\eta}^{\prime}(u_0, v_0) - (G_{\eta}^{\infty})^{\prime}(\omega_n^{(1)}, \sigma_n^{(1)})\|_{\mathscr{H}^*} \to 0.
        \end{align*}
        Take any $(\varphi,\psi)\in \mathscr{H}$. Since $u_n = \omega_n^{(1)} + u_0$ and $v_n = \sigma_n^{(1)} + v_0$, a direct calculation implies
        \begin{align*}
            &\langle G_{\eta}^{\prime}(\omega_n^{(1)} + u_0, \sigma_n^{(1)} + v_0) - G_{\eta}^{\prime}(u_0, v_0) - (G_{\eta}^{\infty})^{\prime}(\omega_n^{(1)}, \sigma_n^{(1)}), (\varphi, \psi) \rangle \\
            =&\ \int_{\mathbb{R}^3} (V_1(x) - V_1^{\infty})\omega_n^{(1)}\varphi\,dx + \int_{\mathbb{R}^3} (V_2(x) - V_2^{\infty})\sigma_n^{(1)}\psi\,dx \\
            &\quad - \eta \mu \int_{\mathbb{R}^3} \left\{(I_{\alpha}*|\omega_n^{(1)}+u_0|^p)|\omega_n^{(1)} + u_0|^{p - 2}(\omega_n^{(1)} + u_0)\right. \\
            &\ \ \ \ \ \ \ \ \ \ \ \ \ \ \ \ \ \ \ \ \ \ \quad \left. - (I_{\alpha}*|u_0|^p)|u_0|^{p - 2}u - (I_{\alpha}*|\omega_n^{(1)}|^p)|\omega_n^{(1)}|^{p - 2}\omega_n^{(1)}\right\}\varphi\,dx \\
            &\quad - \eta\nu \int_{\mathbb{R}^3}\left\{(I_{\alpha}*|\sigma_n^{(1)}+v_0|^q)|\sigma_n^{(1)} + v_0|^{q - 2}(\sigma_n^{(1)} + v_0)\right. \\
            &\ \ \ \ \ \ \ \ \ \ \ \ \ \ \ \ \ \ \ \ \ \ \ \quad \left. - (I_{\alpha}*|v_0|^q)|v_0|^{q - 2}v_0 - (I_{\alpha}*|\sigma_n^{(1)}|^q)|\sigma_n^{(1)}|^{q - 2}\sigma_n^{(1)}\right\}\psi\,dx.
            \end{align*}
By the Schwartz inequality we obtain
\begin{align*}
            &\left|\left\langle G_{\eta}^{\prime}(\omega_n^{(1)} + u_0, \sigma_n^{(1)} + v_0) - G_{\eta}^{\prime}(u_0, v_0) - (G_{\eta}^{\infty})^{\prime}(\omega_n^{(1)}, \sigma_n^{(1)}), (\varphi, \psi) \right\rangle\right| \\
            &\le \|(V_1(\,\cdot\,) - V_1^{\infty})\omega_n^{(1)}\|_2\|\varphi\|_{H^1(\mathbb{R}^3)} + \|(V_2(\,\cdot\,) - V_2^{\infty})\sigma_n^{(1)}\|_2\|\psi\|_{H^1(\mathbb{R}^3)} \\
            &\quad +\eta \mu \Bigg|\int_{\mathbb{R}^3} \left\{(I_{\alpha}*|\omega_n^{(1)}+u_0|^p)|\omega_n^{(1)} + u_0|^{p - 2}(\omega_n^{(1)} + u_0)\right. \\
            &\ \ \ \ \ \ \ \ \ \ \ \ \ \ \ \ \ \ \ \ \ \ \quad \left. - (I_{\alpha}*|u_0|^p)|u_0|^{p - 2}u - (I_{\alpha}*|\omega_n^{(1)}|^p)|\omega_n^{(1)}|^{p - 2}\omega_n^{(1)}\right\}\varphi\,dx \Bigg| \\
            &\quad+\eta \nu\Bigg| \int_{\mathbb{R}^3} \left\{(I_{\alpha}*|\sigma_n^{(1)}+v_0|^q)|\sigma_n^{(1)} + v_0|^{q - 2}(\sigma_n^{(1)} + v_0)\right. \\
            &\ \ \ \ \ \ \ \ \ \ \ \ \ \ \ \ \ \ \ \ \ \ \quad \left. - (I_{\alpha}*|v_0|^p)|v_0|^{q - 2}v_0 - (I_{\alpha}*|\omega_n^{(1)}|^p)|\sigma_n^{(1)}|^{q - 2}\sigma_n^{(1)}\right\}\varphi\,dx \Bigg|
        \end{align*}
        Therefore, by \eqref{eq:integral term converges to 0_2} and Proposition \ref{lem:splitting} we obtain
        \begin{align*}
            &\|G_{\eta}^{\prime}(\omega_n^{(1)} + u_0, \sigma_n^{(1)} + v_0) - G_{\eta}^{\prime}(u_0, v_0) - (G_{\eta}^{\infty})^{\prime}(\omega_n^{(1)}, \sigma_n^{(1)})\|_{\mathscr{H}^*}\to 0\ \ \ \text{as}\ \ n\to\infty.
            \end{align*}
Since $G_{\eta}^{\prime}(u_n, v_n) \to 0$ in $\mathscr{H}^*$ and $G_{\eta}^{\prime}(u_0, v_0) = 0$, we obtain $(G_{\eta}^{\infty})^{\prime}(\omega_n^{(1)}, \sigma_n^{(1)}) \to 0$ in $\mathscr{H}^*$. The proof of (1-d) is now complete. 
 
Let
 \begin{align*}
 \alpha^{(1)} \coloneqq \limsup_{n \to \infty} \sup_{y \in \mathbb{R}^3} \int_{B_1(y)} (|\omega_n^{(1)}|^2 + |\sigma_n^{(1)}|^2)\,dx.
\end{align*}

\noindent
\textbf{Vanishing:} If $\alpha^1 = 0$, then by Lemma \ref{Lions-th} we see that $\omega_n^{(1)} \to 0$ and $\sigma_n^{(1)} \to 0$ in $L^s(\mathbb{R}^3)$ for any $s \in (2, 6)$. 

Since 
\begin{align*}
 &\left\langle (G_{\eta}^{\infty})'(\omega_n^{(1)}, \sigma_n^{(1)}), (\omega_n^{(1)}, \sigma_n^{(1)})\right\rangle \\
=&\left\{(a_1 + b_1A_{\eta}^2)\int_{\mathbb{R}^3} |\nabla \omega_n^{(1)}|^2,dx + (a_2 + b_2B_{\eta}^2)\int_{\mathbb{R}^3} |\nabla \sigma_n^{(1)}|^2,dx + \int_{\mathbb{R}^3} (V_1^{\infty}|\omega_n^{(1)}|^2 + V_2^{\infty}|\sigma_n^{(1)}|^2)\,dx \right. \\
            &\quad \left. - \eta \mu \int_{\mathbb{R}^3} (I_{\alpha}*|\omega_n^{(1)}|^p)|\omega_n^{(1)}|^p\,dx - \eta\nu \int_{\mathbb{R}^3} (I_{\alpha}*|\sigma_n^{(1)}|^q)|\sigma_n^{(1)}|^{q}\,dx - 2\lambda \int_{\mathbb{R}^3} \omega_n^{(1)}\sigma_n^{(1)}\,dx\right\}
\end{align*}
we note that $(G_{\eta}^{\infty})'(\omega_n^{(1)}, \sigma_n^{(1)})\to 0$, Lemma \ref{lem:potential_term_est} and \eqref{eq:convolution-term-est} imply 
\begin{align}\label{eq:vanishing-1}
\begin{split}
   &\ (1-\delta)\left(\int_{\mathbb{R}^3}(a_1|\nabla\omega_n^{(1)}|^2+V_1^{\infty}|\omega_n^{(1)}|^2)dx+\int_{\mathbb{R}^3}(a_2|\nabla\sigma_n^{(1)}|^2+V_2^{\infty}|\sigma_n^{(1)}|^2)dx\right) \\
\le&\ a_1\int_{\mathbb{R}^3}|\nabla\omega_n^{(1)}|^2dx+a_2\int_{\mathbb{R}^3}|\nabla\sigma_n^{(1)}|^2dx \\ 
   &\ \ \ \ +\int_{\mathbb{R}^3}(V_1^{\infty}|\omega_n^{(1)}|^2+V_2^{\infty}|\sigma_n^{(1)}|^2-2\lambda\omega_n^{(1)}\sigma_n^{(1)})dx \\
\le&\ \left\langle (G_{\eta}^{\infty})'(\omega_n^{(1)}, \sigma_n^{(1)}), (\omega_n^{(1)},\sigma_n^{(1)})\right\rangle \\
&\ \ \ \ +\eta\mu\int_{\mathbb{R}^3}(I_{\alpha}*|\omega_n^{(1)}|^p)|\omega_n^{(1)}|^pdx+\eta\nu\int_{\mathbb{R}^3}(I_{\alpha}*|\sigma_n^{(1)}|^q)|\sigma_n^{(1)}|^qdx \\
=&\ \eta\mu\int_{\mathbb{R}^3}(I_{\alpha}*|\omega_n^{(1)}|^p)|\omega_n^{(1)}|^pdx+\eta\nu\int_{\mathbb{R}^3}(I_{\alpha}*|\sigma_n^{(1)}|^q)|\sigma_n^{(1)}|^qdx+o_n(1)\\
\le&\ \eta C_{\mathrm{HLS}}\left(3,\alpha,{\textstyle\frac{6}{3+\alpha}}\right)\left(\mu \|\omega_n^{(1)}\|_{\frac{6p}{3+\alpha}}^{2p}+\|\sigma_n^{(1)}\|_{\frac{6q}{3+\alpha}}^{2q}\right)+o_n(1).
\end{split}
\end{align}

We first consider the case where $\frac{3+\alpha}{3}<p\le q<3+\alpha$. From \eqref{eq:vanishing-1} and $\omega_n^{(1)}\to 0$ in $L^{\frac{6p}{3+\alpha}}(\mathbb{R}^3)$, $\sigma_n^{(1)}\to 0$ in $L^{\frac{6q}{3+\alpha}}(\mathbb{R}^3)$ we get $(\omega_n^{(1)}, \sigma_n^{(1)}) \to 0$ in $\mathscr{H}$, that is, $(u_n, v_n)\to (u_0, v_0)$ in $\mathscr{H}$. This means that, in this case, (i) of Lemma \ref{Lem:global compactness lemma} occurs.

Let us consider the case where $\frac{3+\alpha}{3}<p<q=3+\alpha$. We note that $\mu$ and $\nu$ are fixed so that $\mu\ge \mu_0(\nu)$ holds, where $\mu_0(\nu)$ given in \eqref{eq:critical_case_2}. Also in this case we have $\omega_n^{(1)}\to 0$ in $L^{\frac{6p}{3+\alpha}}(\mathbb{R}^3)$. In the following we prove that if $\mu\ge\mu_0(\nu)$, then 
\begin{align}\label{eq:vanishing-sigma_1}
\int_{\mathbb{R}^3}(I_{\alpha}*|\sigma_n^{(1)}|^q)|\sigma_n^{(1)}|^q\,dx\to 0.
\end{align}
From \eqref{eq:vanishing-sigma_1} and \eqref{eq:vanishing-1} we can conclude that  $(\omega_n^{(1)}, \sigma_n^{(1)})\to (0,0)$ in $\mathscr{H}$. 

Since $G_{\eta}^{\prime}(u_0, v_0)=0$, $(u_0, v_0)$ satisfies the Pohozaev identity $\tilde{P}_{\eta}(u_0, v_0)=0$ corresponding to the functional $G_{\eta}$, where
\begin{align}\label{eq:Pohozaev_identity_with_G_eta_}
\begin{split}
    \tilde{P}_{\eta}(u,v):=&\frac{a_1 + b_1A_{\eta}^2}{2}\|\nabla u\|_2^2 + \frac{a_2 + b_2B_{\eta}^2}{2}\|\nabla v\|_2^2 \\
                &\quad + \frac{1}{2}\int_{\mathbb{R}^3} [3V_1(x) + (\nabla V_1(x), x)]u^2\,dx + \frac{1}{2}\int_{\mathbb{R}^3} [3V_2(x) + (\nabla V_2(x), x)]v^2\,dx \\
                &\quad - \eta\mu\frac{\alpha+3}{2p}\int_{\mathbb{R}^3}(I_{\alpha}*|u|^p)|u|^pdx - \eta\nu\frac{\alpha+3}{2q}\int_{\mathbb{R}^3}(I_{\alpha}*|v|^q)|v|^qdx\\
                &\quad - 3\lambda \int_{\mathbb{R}^3} uv\,dx.  
            \end{split}
        \end{align}
        Combining $G_{\eta}^{\prime}(u_0, v_0) = 0$ with $\tilde{P}(u_0, v_0)=0$, we obtain
        \begin{align}
            \begin{split}
                & \langle G_{\eta}^{\prime}(u_0, v_0), (u_0, v_0) \rangle + 2\tilde{P}_{\eta}(u_0, v_0) \\
                &= 2(a_1 + b_1A_{\eta}^2)\|\nabla u_0\|_2^2 + 2(a_2 + b_2B_{\eta}^2)\|\nabla v_0\|_2^2 \\
                &\quad + 4\int_{\mathbb{R}^3} (V_1(x)u_0^2 + V_2(x)v_0^2)\,dx + \int_{\mathbb{R}^3} \left[(\nabla V_1(x), x)u_0^2 + (\nabla V_2(x), x)v_0^2\right]\,dx \\
                &\quad - \eta\mu\frac{p +\alpha+ 3}{p}\int_{\mathbb{R}^3}(I_{\alpha}*|u_0|^p)|u_0|^pdx - \nu\eta\frac{q +\alpha+ 3}{q}\int_{\mathbb{R}^3}(I_{\alpha}*|v_0|^q)|v_0|^qdx \\
                &\quad - 8\lambda \int_{\mathbb{R}^3} u_0v_0\,dx=0.   \label{eq:G_eta'+P_eta_}
            \end{split}
        \end{align}
        By \eqref{eq:G_eta'+P_eta_}, we have
        \begin{align*}
            G_{\eta}(u_0, v_0) &= G_{\eta}(u_0, v_0) - \frac{1}{8}[\langle G_{\eta}^{\prime}(u_0, v_0), (u_0, v_0)\rangle + 2\tilde{P}_{\eta}(u_0, v_0)] \\
            &= \frac{a_1 + b_1A_{\eta}^2}{4}\|\nabla u_0\|_2^2 + \frac{a_2 + b_2B_{\eta}^2}{4}\|\nabla v_0\|_2^2 - \frac{1}{8}\int_{\mathbb{R}^3} \left[(\nabla V_1(x), x)u_0^2 + (\nabla V_2(x), x)v_0^2\right]\,dx \\
            &\quad + \eta\mu\frac{p+\alpha-1}{8p}\int_{\mathbb{R}^3}(I_{\alpha}*|u_0|^p)|u_0|^pdx +\eta\nu\frac{q+\alpha-1}{8q}(I_{\alpha}*|v_0|^q)|v_0|^qdx
        \end{align*}
        On the other hand, from \ref{assumption:weak differentiable} and the Hardy inequality(see Lemma \ref{Lem: Hardy_ineq}), we have
        \begin{align}\label{eq:using_Hardy_ineq_and_V5_for_u}
\begin{split}
            &a_1\|\nabla u_0\|_2^2 \ge \frac{a_1}{4}\int_{\mathbb{R}^3} \frac{u_0^2}{|x|^2}\,dx \ge \frac{1}{2}\int_{\mathbb{R}^3} (\nabla V_1(x), x)u_0^2\,dx, \\
            &a_2\|\nabla v_0\|_2^2 \ge \frac{a_2}{4}\int_{\mathbb{R}^3} \frac{v_0^2}{|x|^2}\,dx\ge \frac{1}{2}\int_{\mathbb{R}^3} (\nabla V_2(x), x)v_0^2\,dx. 
        \end{split}
        \end{align}
        It follows from \eqref{eq:G_eta'+P_eta_} and \eqref{eq:using_Hardy_ineq_and_V5_for_u} that
        \begin{align}
            \begin{split}
                G_{\eta}(u_0, v_0) &\ge \frac{a_1 + b_1A_{\eta}^2}{4}\|\nabla u_0\|_2^2 + \frac{a_2 + b_2B_{\eta}^2}{4}\|\nabla v_0\|_2^2 - \frac{a_1}{4}\|\nabla u_0\|_2^2 - \frac{a_2}{4}\|\nabla v_0\|_2^2 \\
                &=\frac{b_1A_{\eta}^2}{4}\|\nabla u_0\|_2^2 + \frac{b_2B_{\eta}^2}{4}\|\nabla v_0\|_2^2.   \label{eq:Evaluate_G_eta(u,v)_from_below}
            \end{split}
        \end{align}
    Next by (1-c) and \eqref{eq:Evaluate_G_eta(u,v)_from_below} we obtain
\begin{align}
\begin{split}
  &\ c+\frac{b_1A_{\eta}^4+b_2B_{\eta}^4}{4} \\
=&\ G_{\eta}^{\infty}(\omega_n^{(1)}, \sigma_n^{(1)})+G_{\eta}(u_0,v_0)+o_n(1) \\
=&\ \frac{a_1+b_1A_{\eta}^2}{2}\|\nabla\omega_n^{(1)}\|_2^2+\frac{a_2+b_2B_{\eta}^2}{2}\|\nabla\sigma_n^{(1)}\|_2^2+\frac{1}{2}\int_{\mathbb{R}^3}(V_1^{\infty}|\omega_n^{(1)}|^2+V_2^{\infty}|\sigma_n^{(1)}|^2)dx \\
 &\ \ \ \ -\eta\frac{\mu}{2p}\int_{\mathbb{R}^3}(I_{\alpha}*|\omega_n^{(1)}|^p)|\omega_n^{(1)}|^pdx-\eta\frac{\nu}{2q}\int_{\mathbb{R}^3}(I_{\alpha}*|\sigma_n^{(1)}|^q)|\sigma_n^{(1)}|^qdx-\lambda\int_{\mathbb{R}^3}\omega_n^{(1)}\sigma_n^{(1)}dx \\
 &\ \ \ \ \ +G_{\eta}(u_0,v_0)+o_n(1) \\
 \ge&\ \left(\frac{a_1}{2}+\frac{b_1A_{\eta}^2}{4}\right)\|\nabla\omega_n^{(1)}\|_2^2+
 \left(\frac{a_2}{2}+\frac{b_2B_{\eta}^2}{4}\right)\|\nabla\sigma_n^{(1)}\|_2^2+\frac{1}{2}\int_{\mathbb{R}^3}(V_1^{\infty}|\omega_n^{(1)}|^2+V_2^{\infty}|\sigma_n^{(1)}|^2)dx \\
 &\ \ \ \ -\eta\frac{\mu}{2p}\int_{\mathbb{R}^3}(I_{\alpha}*|\omega_n^{(1)}|^p)|\omega_n^{(1)}|^pdx-\eta\frac{\nu}{2q}\int_{\mathbb{R}^3}(I_{\alpha}*|\sigma_n^{(1)}|^q)|\sigma_n^{(1)}|^qdx-\lambda\int_{\mathbb{R}^3}\omega_n^{(1)}\sigma_n^{(1)}dx \\ 
 &\ \ \ \ +\frac{b_1A_{\eta}^2}{4}\|\nabla\omega_n^{(1)}\|_2^2+\frac{b_1B_{\eta}^2}{4}\|\nabla\sigma_n^{(1)}\|_2^2+\frac{b_1A_{\eta}^2}{4}\|\nabla u_0\|^2+\frac{b_2B_{\eta}^2}{4}\|\nabla v_0\|_2^2+o_n(1) \\
 =&\ \left(\frac{a_1}{2}+\frac{b_1A_{\eta}^2}{4}\right)\|\nabla\omega_n^{(1)}\|_2^2+
 \left(\frac{a_2}{2}+\frac{b_2B_{\eta}^2}{4}\right)\|\nabla\sigma_n^{(1)}\|_2^2+\frac{1}{2}\int_{\mathbb{R}^3}(V_1^{\infty}|\omega_n^{(1)}|^2+V_2^{\infty}|\sigma_n^{(1)}|^2)dx \\
 &\ \ \ \ -\eta\frac{\mu}{2p}\int_{\mathbb{R}^3}(I_{\alpha}*|\omega_n^{(1)}|^p)|\omega_n^{(1)}|^qdx-\eta\frac{\nu}{2q}\int_{\mathbb{R}^3}(I_{\alpha}*|\sigma_n^{(1)}|^q)|\sigma_n^{(1)}|^q-\lambda\int_{\mathbb{R}^3}\omega_n^{(1)}\sigma_n^{(1)}dx\\ &\quad +\frac{b_1A_{\eta}^4}{4}+\frac{b_1B_{\eta}^4}{4}+o_n(1)
 \end{split}
\end{align}
Here we have used (1-a) in the last identity to obtain $\|\nabla\omega_n^{(1)}\|_2^2+\|\nabla u_0\|_2^2=A_{\eta}^2+o_n(1)$ and $\|\nabla\sigma_n^{(1)}\|_2^2+\|\nabla v_0\|_2^2=B_{\eta}^2+o_n(1)$. Therefore we get
\begin{align}\label{eq:lower_est_of_c}
\begin{split}
c&\ge \left(\frac{a_1}{2}+\frac{b_1A_{\eta}^2}{4}\right)\|\nabla\omega_n^{(1)}\|_2^2+
 \left(\frac{a_2}{2}+\frac{b_2B_{\eta}^2}{4}\right)\|\nabla\sigma_n^{(1)}\|_2^2 \\
 &\ \ \ \ +\frac{1}{2}\int_{\mathbb{R}^3}\left(V_1^{\infty}|\omega_n^{(1)}|^2+V_2^{\infty}|\sigma_n^{(1)}|^2-2\lambda\omega_n^{(1)}\sigma_n^{(1)}\right)dx \\
 &\ \ \ \ -\eta\frac{\mu}{2p}\int_{\mathbb{R}^3}(I_{\alpha}*|\omega_n^{(1)}|^p)|\omega_n^{(1)}|^pdx-\eta\frac{\nu}{2q}\int_{\mathbb{R}^3}(I_{\alpha}*|\sigma_n^{(1)}|^q)|\sigma_n^{(1)}|^qdx+o_n(1). 
 \end{split}
\end{align}
Since $\langle (G_{\eta}^{\infty})'(\omega_n^{(1)}, \sigma_n^{(1)}), (\omega_n^{(1)}, \sigma_n^{(1)})\rangle=o_n(1)$ and $\|\omega_n^{(1)}\|_{\frac{6p}{3+\alpha}}\to 0$ we have
\begin{align}\label{eq:Nehari-G-eta-infty}
\begin{split}
&(a_1+b_1A_{\eta}^2)\|\nabla\omega_n^{(1)}\|^2+(a_2+b_2B_{\eta}^2)\|\nabla\sigma_n^{(1)}\|^2 \\
&\ \ \ \ +\int_{\mathbb{R}^3}(V_1^{\infty}|\omega_n^{(1)}|^2+V_2^{\infty}|\sigma_n^{(1)}|^2-2\lambda\omega_n^{(1)}\sigma_n^{(1)})dx-\eta\nu\int_{\mathbb{R}^3}(I_{\alpha}*|\sigma_n^{(1)}|^q)|\sigma_n^{(1)}|^q=o_n(1). 
\end{split}
\end{align}
Now let us recall $2q=2(3+\alpha)>4$. Combining \eqref{eq:lower_est_of_c} with \eqref{eq:Nehari-G-eta-infty} and \eqref{eq:upper_critical_constant} we obtain
\begin{align}\label{eq:lower_est_of_c_2}
\begin{split}
c&\ge\left(\frac{1}{2}-\frac{1}{2q}\right)a_1\|\nabla\omega_n^{(1)}\|_2^2+\left(\frac{1}{4}-\frac{1}{2q}\right)b_1A_{\eta}^2\|\nabla\omega_n^{(1)}\|^2 \\
 &\ \ \ \ +\left(\frac{1}{2}-\frac{1}{2q}\right)a_2\|\nabla\sigma_n^{(1)}\|_2^2+\left(\frac{1}{4}-\frac{1}{2q}\right)b_2B_{\eta}^2\|\nabla\sigma_n^{(1)}\|^2 \\
 &\ \ \ \ +\left(\frac{1}{2}-\frac{1}{2q}\right)\left\{\int_{\mathbb{R}^3}(V_1^{\infty}|\omega_n^{(1)}|^2+V_2^{\infty}|\sigma_n^{(1)}|^2-2\lambda\omega_n^{(1)}\sigma_n^{(1)})dx\right\}+o_n(1) \\
 &\ge\frac{q-1}{2q}a_2\|\nabla\sigma_n^{(1)}\|_2^2+o_n(1) \\
 &\ge \frac{\alpha+2}{2(\alpha+3)}a_2\mathcal{S}^*\left(\int_{\mathbb{R}^3}(I_{\alpha}*|\sigma_n^{(1)}|^{3+\alpha})|\sigma_n^{(1)}|^{3+\alpha}dx\right)^\frac{1}{3+\alpha}+o_n(1). \\
 \end{split}
\end{align}
By \eqref{eq:Nehari-G-eta-infty}, \eqref{eq:upper_critical_constant} and the fact that $\overline{\eta}\le \eta\le 1$ we obtain
\begin{align}\label{eq:estimate_of_L6_norm}
a_2\mathcal{S}^*\left(\int_{\mathbb{R}^3}(I_{\alpha}*|\sigma_n^{(1)}|^{3+\alpha})|\sigma_n^{(1)}|^{3+\alpha}\right)^{\frac{1}{3+\alpha}}\le \nu\int_{\mathbb{R}^3}(I_{\alpha}*|\sigma_n^{(1)}|^{3+\alpha})|\sigma_n^{(1)}|^{3+\alpha}dx+o_n(1)
\end{align}
for $\eta\in [\overline{\eta}, 1]$. Since $\{\sigma_n^{(1)}\}$ is bounded in $H^1(\mathbb{R}^3)$, by \eqref{eq:convolution-term-est} we may assume that \begin{align*}
\int_{\mathbb{R}^3}(I_{\alpha}*|\sigma_n^{(1)}|^{3+\alpha})|\sigma_n^{(1)}|^{3+\alpha}dx\to l_1\in[0,\infty)\  \text{as}\ n\to\infty. 
\end{align*}
Suppose that $l_1>0$ holds. From \eqref{eq:estimate_of_L6_norm} we have $(\nu^{-1}a_2\mathcal{S}^*)^{\frac{3+\alpha}{2+\alpha}}\le l_1$. By \eqref{eq:lower_est_of_c_2} we obtain 
\begin{align*}
c\ge
\frac{\alpha+2}{2(\alpha+3)}\nu\left(\frac{a_2\mathcal{S}^*}{\nu}\right)^{\frac{3+\alpha}{2+\alpha}}>\frac{\alpha+1}{4(3+\alpha)}\nu\left(\frac{a_2\mathcal{S}^*}{\nu}\right)^{\frac{3+\alpha}{2+\alpha}},
\end{align*}
which is a contradiction to assumption $c<m_{\eta}^{\infty}$ and Corollary  \ref{lem:critical_case_2}. Hence $l_1=0$. Since $l_1$ is chosen as any limit of convergent subsequence of $\left\{\int_{\mathbb{R}^3}(I_{\alpha}*|\sigma_n^{(1)}|^{3+\alpha})|\sigma_n^{(1)}|^{3+\alpha}dx\right\}$ we conclude that 
\begin{align*}
\int_{\mathbb{R}^3}(I_{\alpha}*|\sigma_n^{(1)}|^{3+\alpha})|\sigma_n^{(1)}|^{3+\alpha}dx\to 0.
\end{align*}
From \eqref{eq:vanishing-1} it follows that $(\omega_n^{(1)}, \sigma_n^{(1)})\to (0,0)$ in $\mathscr{H}$, that is $(u_n, v_n)\to (u_0, v_0)$  in $\mathscr{H}$. Therefore in this case (i) of Lemma \ref{Lem:global compactness lemma} occurs.

\noindent
\textbf{Non--Vanishing:} If $\alpha^{(1)} > 0$, then there exists a sequence $\{y_n^{(1)}\} \subset \mathbb{R}^3$ such that
            \begin{align*}
                \int_{B_1(y_n^{(1)})} (|\omega_n^{(1)}|^2 + |\sigma_n^{(1)}|^2)\,dx > \frac{\alpha^{(1)}}{2}.
            \end{align*}
            Set $u_n^{(1)} \coloneqq \omega_n^{(1)}(\,\cdot\,+ y_n^{(1)})$ and $v_n^{(1)} \coloneqq \sigma_n^{(1)}(\,\cdot\,+ y_n^{(1)})$. Then $\{(u_n^{(1)}, v_n^{(1)})\}$ is bounded in $\mathscr{H}$ and we may assume that $(u_n^{(1)}, v_n^{(1)}) \rightharpoonup (u^{(1)}, v^{(1)})$ weakly in $\mathscr{H}$ for some $(u^{(1)}, v^{(1)})\in\mathscr{H}$. Hence, we see 
            \begin{align*}
                (G_{\eta}^{\infty})^{\prime}(u^{(1)}, v^{(1)}) = 0.
            \end{align*}
            Since
            \begin{align*}
                \int_{B_1(0)} (|u_n^{(1)}|^2 + |v_n^{(1)}|^2)\,dx > \frac{\alpha^1}{2},
            \end{align*}
            we see that $(u^{(1)}, v^{(1)}) \neq (0, 0)$. Moreover, since $(\omega_n^{(1)}, \sigma_n^{(1)}) \rightharpoonup (0, 0)$ weakly in $\mathscr{H}$, $\{u_n^{(1)}\}$ and $\{v_n^{(1)}\}$ are bounded sequences in $H^1(\mathbb{R}^3)$. Therefore, $|y_n^{(1)}| \to \infty$ as $n \to \infty$.
  
\noindent
\textbf{Step 2.} Set $\omega_n^{(2)} \coloneqq u_n - u_0 - u^{(1)}(\,\cdot\,- y_n^{(1)})$ and $\sigma_n^{(2)} \coloneqq v_n - v_0 - v^{(1)}(\,\cdot\,- y_n^{(1)})$. Then $\{\omega_n^{(2)}\}$ and $\{\sigma_n^{(2)}\}$ satisfy 
            \begin{enumerate}
                \item[\textup{(2-a)}] $\begin{cases}
                \|\nabla \omega_n^{(2)}\|_2^2 = \|\nabla u_n\|_2^2 - \|\nabla u_0\|_2^2 - \|\nabla u^{(1)}\|_2^2+ o_n(1), \\
                \|\nabla \sigma_n^{(2)}\|_2^2 = \|\nabla v_n\|_2^2 - \|\nabla v_0\|_2^2 - \|\nabla v^{(1)}\|_2^2 + o_n(1),\end{cases}$    
                \item[\textup{(2-b)}] 
               $\begin{cases}\displaystyle\int_{\mathbb{R}^3}(I_{\alpha}*|\omega_n^{(2)}|^p)|\omega_n^{(2)}|^pdx \\
               \quad\quad= \displaystyle\int_{\mathbb{R}^3}(I_{\alpha}*|u_n|^p)|u_n|^pdx - \int_{\mathbb{R}^3}(I_{\alpha}*|u_0|^p)|u_0|^pdx- \displaystyle\int_{\mathbb{R}^3}(I_{\alpha}*|u^{(1)}|^p)|u^{(1)}|^pdx + o_n(1) \\             
               \displaystyle\int_{\mathbb{R}^3}(I_{\alpha}*|\sigma_n^{(2)}|^q)|\sigma_n^{(2)}|^qdx \\
               \quad\quad\displaystyle= \int_{\mathbb{R}^3}(I_{\alpha}*|v_n|^q)|v_n|^qdx - \int_{\mathbb{R}^3}(I_{\alpha}*|v_0|^q)|v_0|^qdx - \int_{\mathbb{R}^3}(I_{\alpha}*|v^{(1)}|^q)|v^{(1)}|^qdx + o_n(1),
               \end{cases}$
               
                \item[\textup{(2-c)}] $\displaystyle G_{\eta}^{\infty}(\omega_n^{(2)}, \sigma_n^{(2)}) \to c + \frac{b_1A_{\eta}^4 + b_2B_{\eta}^4}{4} - G_{\eta}(u_0, v_0) - G_{\eta}^{\infty}(u^{(1)}, v^{(1)})$,     
                \item[\textup{(2-d)}] $(G_{\eta}^{\infty})^{\prime}(\omega_n^{(2)}, \sigma_n^{(2)}) \to 0$ in $\mathscr{H}^*$.    
            \end{enumerate}
        The proofs of (2-a)--(2-d) are accomplished by same argument as the proofs of (1-a)--(1-d) in Step 1, so we omit it.

        Let
        \begin{align*}
            \alpha^{(2)} \coloneqq \limsup_{n \to \infty} \sup_{y \in \mathbb{R}^3} \int_{B_1(y)} (|\omega_n^{(2)}|^2 + |\sigma_n^{(2)}|^2)\,dx.
        \end{align*}

\noindent
\textbf{Vanishing:} Suppose that $\alpha^{(2)} = 0$. By Lemma \ref{Lions-th} we see that $\omega_n^{(2)}\to 0$ and $\sigma_n^{(2)}\to 0$ in $L^s(\mathbb{R}^3)$ for any $s\in (2,6)$.

We first consider the case where $1+\frac{\alpha}{3}<p\le q<3+\alpha$. Since $(G_{\eta}^{\infty})'(\omega_n^{(2)}, \sigma_n^{(2)})\to 0$ we get $(\omega_n^{(2)}, \sigma_n^{(2)})\to 0$ in $\mathscr{H}$ by \eqref{eq:vanishing-1} with $(\omega_n^{(1)}, \sigma_n^{(1)})$ replaced by $(\omega_n^{(2)}, \sigma_n^{(2)})$. Hence we obtain
\begin{align}\label{eq:global-compactness-step2-vanishing}
\|u_n - u_0 - u^{(1)}(\,\cdot\,- y_n^{(1)})\|_{H^1(\mathbb{R}^3)} \to 0\ \ \mbox{and}\ \ \|v_n - v_0 - v^{(1)}(\,\cdot\,- y_n^{(1)})\|_{H^1(\mathbb{R}^3)} \to 0. 
\end{align}

We next consider the case where $1+\frac{\alpha}{3}<p<q=3+\alpha$. We note again that $\mu$ and $\nu$ are fixed so that $\mu\ge \mu_0(\nu)$ holds where $\mu_0(\nu)$ given in \eqref{eq:critical_case_2}. 

By (2-c) we have
\begin{align}\label{eq:Step2-vanishing-1}
c+\frac{b_1A_{\eta}^4+b_2B_{\eta}^4}{4}=G_{\eta}^{\infty}(\omega_n^{(2)}, \sigma_n^{(2)})+G_{\eta}(u_0, v_0)+G_{\eta}^{\infty}(u^{(1)}, v^{(1)})+o_n(1).
\end{align}
The lower estimate for $G_{\eta}(u_0, v_0)$ has been obtained by \eqref{eq:Evaluate_G_eta(u,v)_from_below}. We now obtain a lower estimate for $G_{\eta}(u^{(1)}, v^{(1)})$. 

 Since $(G_{\eta}^{\infty})^{\prime}(u^{(1)}, v^{(1)}) = 0, (u^{(1)}, v^{(1)})$ satisfies the Pohozaev identity $\tilde{P}_{\eta}^{\infty}(u^{(1)}, v^{(1)})=0$, where
        \begin{align}
            \begin{split}
                \tilde{P}_{\eta}^{\infty}(u, v) &\coloneqq \frac{a_1 + b_1A_{\eta}^2}{2}\|\nabla u\|_2^2 + \frac{a_2 + b_2B_{\eta}^2}{2}\|\nabla v\|_2^2+ \frac{3}{2}(V_1^{\infty}\|u\|_2^2 + V_2^{\infty}\|v\|_2^2) \\
                &\quad - \eta\mu\frac{3+\alpha}{2p}\int_{\mathbb{R}^3}(I_{\alpha}*|u|^p)|u|^pdx - \eta\nu\frac{3+\alpha}{2q}\int_{\mathbb{R}^3}(I_{\alpha}*|v|^q)|v|^qdx\\
                &\quad - 3\lambda \int_{\mathbb{R}^3} uv\,dx.  \label{eq:Pohozaev identity of with G_eta infty-1}
            \end{split}
        \end{align}
        Combining $(G_{\eta}^{\infty})^{\prime}(u^{(1)}, v^{(1)}) = 0$ with $\tilde{P}_{\eta}^{\infty}(u^{(1)}, v^{(1)})=0$, we have
        \begin{align}\label{eq:G_eta_infty' + P_eta_infty_(1)}
            \begin{split}
                0 &= \langle (G_{\eta}^\infty)^{\prime}(u^{(1)}, v^{(1)}), (u^{(1)}, v^{(1)}) \rangle + 2\tilde{P}_{\eta}^{\infty}(u^{(1)}, v^{(1)}) \\
                &= 2(a_1+b_1A_{\eta}^2)\|\nabla u^{(1)}\|_2^2 + 2(a_2 + b_2B_{\eta}^2)\|\nabla v^{(1)}\|_2^2 + 4(V_1^{\infty}\|u^{(1)}\|_2^2 + V_2^{\infty}\|v^{(1)}\|_2^2) \\
                &\quad - \eta\mu\frac{p + \alpha+3}{p}\int_{\mathbb{R}^3}(I_{\alpha}*|u^{(1)}|^p|u^{(1)}|^pdx - \eta\nu\frac{q +\alpha+ 3}{q}\int_{\mathbb{R}^3}(I_{\alpha}*|v^{(1)}|^q)|v^{(1)}|^qdx \\
                &\quad - 8\lambda \int_{\mathbb{R}^3} u^{(1)}v^{(1)}\,dx. 
            \end{split}
        \end{align}
        By \eqref{eq:G_eta infty} and \eqref{eq:G_eta_infty' + P_eta_infty_(1)}, we see that
        \begin{align}\label{eq:G_eta_infty_u_1_v_1}
        \begin{split}
    &\ G_{\eta}^{\infty}(u^{(1)},  v^{(1)})\\ 
    =&\ G_{\eta}^{\infty}(u^{(1)}, v^{(1)}) - \frac{1}{8}\left[\left\langle (G_{\eta}^\infty)^{\prime}(u^{(1)}, v^{(1)}), (u^{(1)}, v^{(1)})\right\rangle + 2\tilde{P}_{\eta}^{\infty}(u^{(1)}, v^{(1)})\right] \\
     =&\ \frac{a_1 + b_1A_{\eta}^2}{4}\|\nabla u^{(1)}\|_2^2 + \frac{a_2 + b_2B_{\eta}^2}{4}\|\nabla v^{(1)}\|_2^2  \\
            &\quad + \eta\mu\frac{p+\alpha-1}{8p}\int_{\mathbb{R}^3}(I_{\alpha}*|u^{(1)}|^p)|u^{(1)}|^pdx +\eta\nu\frac{q+\alpha-1}{8q}\int_{\mathbb{R}^N}(I_{\alpha}*|v^{(1)}|^q)|v^{(1)}|^qdx \\
            \ge&\ \frac{b_1A_{\eta}^2}{4}\|\nabla u^{(1)}\|_2^2+\frac{b_2B_{\eta}^2}{4}\|\nabla v^{(1)}\|_2^2. 
        \end{split}
            \end{align}

From \eqref{eq:Evaluate_G_eta(u,v)_from_below}, \eqref{eq:Step2-vanishing-1} and \eqref{eq:G_eta_infty_u_1_v_1} we obtain
\begin{align*}
    &\ c+\frac{b_1A_{\eta}^4+b_2B_{\eta}^4}{4} \\
 =&\ \frac{a_1+b_1A_{\eta}^2}{2}\|\nabla\omega_n^{(2)}\|_2^2+\frac{a_2+b_2B_{\eta}^2}{2}\|\nabla\sigma_n^{(2)}\|_2^2+\frac{1}{2}\int_{\mathbb{R}^3}(V_1^{\infty}|\omega_n^{(2)}|^2+V_2^{\infty}|\omega_n^{(2)}|^2)dx \\
 &\ \ \ \ -\eta\frac{\mu}{2p}\int_{\mathbb{R}^3}(I_{\alpha}*|\omega_n^{(2)}|^p)|\omega_n^{(2)}|^pdx-\eta\frac{\nu}{2q}\int_{\mathbb{R}^3}(I_{\alpha}*|\sigma_n^{(2)}|^q)|\sigma_n^{(2)}|^qdx-\lambda\int_{\mathbb{R}^3}\omega_n^{(2)}\sigma_n^{(2)}dx \\
 &\quad +G_{\eta}(u_0,v_0)+G_{\eta}^{\infty}(u^{(1)}, v^{(1)})+o_n(1) \\
 \ge&\ \left(\frac{a_1}{2}+\frac{b_1A_{\eta}^2}{4}\right)\|\nabla\omega_n^{(2)}\|_2^2+
 \left(\frac{a_2}{2}+\frac{b_2B_{\eta}^2}{4}\right)\|\nabla\sigma_n^{(2)}\|_2^2+\frac{1}{2}\int_{\mathbb{R}^3}(V_1^{\infty}|\omega_n^{(2)}|^2+V_2^{\infty}|\sigma_n^{(2)}|^2)dx \\
 &\ \ \ \ -\eta\frac{\mu}{2p}\int_{\mathbb{R}^3}(I_{\alpha}*|\omega_n^{(2)}|^p)|\omega_n^{(2)}|^pdx-
\eta\frac{\nu}{2q}\int_{\mathbb{R}^3}(I_{\alpha}*|\sigma_n^{(2)}|^q)|\sigma_n^{(2)}|^qdx-\lambda\int_{\mathbb{R}^3}\omega_n^{(1)}\sigma_n^{(1)}dx  \\
 &\ \ \ +\frac{b_1A_{\eta}^2}{4}(\|\nabla\omega_n^{(2)}\|_2^2+\|\nabla u_0\|_2^2+\|\nabla u^{(1)}\|_2^2)+\frac{b_1B_{\eta}^2}{4}(\|\nabla\sigma_n^{(2)}\|_2^2+\|\nabla v_0\|_2^2+\|\nabla v^{(1)}\|_2^2)+o_n(1) \\
 =&\ \left(\frac{a_1}{2}+\frac{b_1A_{\eta}^2}{4}\right)\|\nabla\omega_n^{(2)}\|_2^2+
 \left(\frac{a_2}{2}+\frac{b_2B_{\eta}^2}{4}\right)\|\nabla\sigma_n^{(2)}\|_2^2+\frac{1}{2}\int_{\mathbb{R}^3}(V_1^{\infty}|\omega_n^{(2)}|^2+V_2^{\infty}|\sigma_n^{(2)}|^2)dx \\
 &\ \ \ \ -\eta\frac{\mu}{2p}\int_{\mathbb{R}^3}(I_{\alpha}*|\omega_n^{(2)}|^p)|\omega_n^{(2)}|^pdx-
 \eta\frac{\nu}{2q}\int_{\mathbb{R}^3}(I_{\alpha}*|\sigma_n^{(2)}|^q)|\sigma_n^{(2)}|^qdx-\lambda\int_{\mathbb{R}^3}\omega_n^{(2)}\sigma_n^{(2)}dx \\
 &\quad +\frac{b_1A_{\eta}^4}{4}+\frac{b_1B_{\eta}^4}{4}+o_n(1) 
\end{align*}
Here we have used (2-a) in the last identity to obtain $\|\nabla\omega_n^{(2)}\|_2^2+\|\nabla u_0\|_2^2+\|\nabla u^{(1)}\|_2^2=A_{\eta}^2+o_n(1)$ and $\|\nabla\sigma_n^{(2)}\|_2^2+\|\nabla v_0\|_2^2+\|\nabla v^{(1)}\|_2^2=B_{\eta}^2+o_n(1)$. Therefore we get
\begin{align}\label{eq:lower_est_of_c-Step_2}
\begin{split}
c&\ge \left(\frac{a_1}{2}+\frac{b_1A_{\eta}^2}{4}\right)\|\nabla\omega_n^{(2)}\|_2^2+
 \left(\frac{a_2}{2}+\frac{b_2B_{\eta}^2}{4}\right)\|\nabla\sigma_n^{(2)}\|_2^2+\frac{1}{2}\int_{\mathbb{R}^3}(V_1^{\infty}|\omega_n^{(2)}|^2+V_2^{\infty}|\sigma_n^{(2)}|^2)dx \\
 &\ \ \ \ -\eta\frac{\mu}{2p}\int_{\mathbb{R}^3}(I_{\alpha}*|\omega_n^{(2)}|^p)|\omega_n^{(2)}|^pdx-\eta\frac{\nu}{2q}\int_{\mathbb{R}^3}(I_{\alpha}*|\sigma_n^{(2)}|^q)|\sigma_n^{(2)}|^qdx-\lambda\int_{\mathbb{R}^3}\omega_n^{(2)}\sigma_n^{(2)}dx+o_n(1). 
 \end{split}
\end{align}
Since $\langle (G_{\eta}^{\infty})'(\omega_n^{(2)}, \sigma_n^{(2)}), (\omega_n^{(2)}, \sigma_n^{(2)})\rangle=o_n(1)$ and $\|\omega_n^{(2)}\|_{\frac{6p}{3+\alpha}}\to 0$ we have
\begin{align}\label{eq:Nehari-G-eta-infty-Step2}
\begin{split}
&(a_1+b_1A_{\eta}^2)\|\nabla\omega_n^{(2)}\|^2+(a_2+b_2B_{\eta}^2)\|\nabla\sigma_n^{(2)}\|^2 \\
&\ \ \ \ +\int_{\mathbb{R}^3}(V_1^{\infty}|\omega_n^{(2)}|^2+V_2^{\infty}|\sigma_n^{(2)}|^2-2\lambda\omega_n^{(2)}\sigma_n^{(2)})dx=\eta\nu\int_{\mathbb{R}^3}(I_{\alpha}*|\sigma_n^{(2)}|^q)|\sigma_n^{(2)}|^qdx+o_n(1). \end{split}
\end{align}

Recall $2q=2(3+\alpha)>4$. By \eqref{eq:lower_est_of_c},  \eqref{eq:Nehari-G-eta-infty-Step2}, \eqref{eq:upper_critical_constant} we obtain
\begin{align}\label{eq:lower_est_of_c_2-Step2}
\begin{split}
c&\ge\left(\frac{1}{2}-\frac{1}{2q}\right)a_1\|\nabla\omega_n^{(2)}\|_2^2+\left(\frac{1}{4}-\frac{1}{2q}\right)b_1A_{\eta}^2\|\nabla\omega_n^{(2)}\|^2 \\
 &\ \ \ \ +\left(\frac{1}{2}-\frac{1}{2q}\right)a_2\|\nabla\sigma_n^{(2)}\|_2^2+\left(\frac{1}{4}-\frac{1}{2q}\right)b_2B_{\eta}^2\|\nabla\sigma_n^{(2)}\|^2 \\
 &\ \ \ \ +\left(\frac{1}{2}-\frac{1}{2q}\right)\left\{\int_{\mathbb{R}^3}(V_1^{\infty}|\omega_n^{(2)}|^2+V_2^{\infty}|\sigma_n^{(2)}|^2-2\lambda\omega_n^{(2)}\sigma_n^{(2)})dx\right\}+o_n(1) \\
 &\ge\frac{q-1}{2q}a_2\|\nabla\sigma_n^{(2)}\|_2^2+o_n(1) \\
 &\ge \frac{\alpha+2}{2(\alpha+3)}a_2\mathcal{S}^*\left(\int_{\mathbb{R}^3}(I_{\alpha}*|\sigma_n^{(2)}|^{\alpha+3})|\sigma_n^{(2)}|^{\alpha+3}dx\right)^\frac{1}{3+\alpha}+o_n(1). \\
 \end{split}
\end{align}
On the other hand, by \eqref{eq:Nehari-G-eta-infty-Step2}, \eqref{eq:upper_critical_constant} and the fact $\eta\in[\overline{\eta},1]$ we obtain

\begin{align}\label{eq:estimate_of_L6_norm-Step2}
a_2\mathcal{S}^*\left(\int_{\mathbb{R}^3}(I_{\alpha}*|\sigma_n^{(2)}|^{3+\alpha})|\sigma_n^{(2)}|^{3+\alpha}dx\right)^{\frac{1}{3+\alpha}}\le \nu\int_{\mathbb{R}^3}(I_{\alpha}*|\sigma_n^{(2)}|^{3+\alpha})|\sigma_n^{(2)}|^{3+\alpha}dx+o_n(1).
\end{align}
for $\eta\in [\overline{\eta}, 1]$. Since $\{\sigma_n^{(2)}\}$ is bounded in $H^1(\mathbb{R}^3)$ we may assume that $\int_{\mathbb{R}^3}(I_{\alpha}*|\sigma_n^{(2)}|^{3+\alpha})|\sigma_n^{(2)}|^{3+\alpha}dx\to l_2\in[0,\infty)$ as $n\to\infty$. Suppose that $l_2>0$ holds. From \eqref{eq:estimate_of_L6_norm-Step2} we have 
$(\nu^{-1}a_2\mathcal{S}^*)^{\frac{3+\alpha}{2+\alpha}}\le l_2$. 
By \eqref{eq:lower_est_of_c_2-Step2} we obtain 
\begin{align*}
c\ge\frac{\alpha+2}{2(3+\alpha)}a_2\mathcal{S}^*l_2^{\frac{1}{2+\alpha}}\ge\frac{\alpha+2}{2(3+\alpha)}\nu\left(\frac{a_2\mathcal{S}^*}{\nu}\right)^{\frac{3+\alpha}{2+\alpha}},
\end{align*}
which is a contradiction to assumption $c<m_{\eta}^{\infty}$ and Corollary  \ref{lem:critical_case_2}. Hence $l_2=0$. Since $l_2$ is chosen as any limit of convergent subsequence of $\left\{\int_{\mathbb{R}^3}(I_{\alpha}*|\sigma_n^{(2)}|^{3+\alpha})|\sigma_n^{(2)}|^{3+\alpha}dx\right\}$ we conclude that $\int_{\mathbb{R}^3}(I_{\alpha}*|\sigma_n^{(2)}|^{3+\alpha})|\sigma_n^{(2)}|^{3+\alpha}dx\to 0$. From \eqref{eq:vanishing-1} it follows that $(\omega_n^{(2)}, \sigma_n^{(2)})\to (0,0)$ in $\mathscr{H}$, that is \eqref{eq:global-compactness-step2-vanishing} holds. 

Moreover, by \eqref{eq:gradient converge to A and B}, (2-a) and (2-c), we have
\begin{align*}
                A_{\eta}^2 = \|\nabla u_0\|_2^2 + \|\nabla u^{(1)}\|_2^2, \quad B_{\eta}^2 = \|\nabla v_0\|_2^2 + \|\nabla v^{(1)}\|_2^2
                \shortintertext{and}
                c + \frac{b_1A_{\eta}^4 + b_2B_{\eta}^4}{4} = G_{\eta}(u_0, v_0) + G_{\eta}^{\infty}(u^{(1)}, v^{(1)}).
            \end{align*}
Therefore in this case (ii) of Lemma \ref{Lem:global compactness lemma} with $l=2$ occurs.

\noindent
\textbf{Non--Vanishing:} If $\alpha^2 > 0$, by the same argument as Step 1, we see that there exists a sequence $\{y_n^{(2)}\} \subset \mathbb{R}^3$ and $(u^{(2)}, v^{(2)}) \in \mathscr{H} \setminus \{(0, 0)\}$ such that $u_n^{(2)} \coloneqq \omega_n^{(2)}(\,\cdot\,+ y_n^{(2)}) \rightharpoonup u^{(2)}$ and $v_n^{(2)} \coloneqq \sigma_n^{(2)}(\,\cdot\,+ y_n^{(2)}) \rightharpoonup v^{(2)}$ weakly in $H^1(\mathbb{R}^3)$. Then, by (2-d), we get $(G_{\eta}^{\infty})^{\prime}(u^{(2)}, v^{(2)}) = 0$. Furthermore, $(\omega_n^{(2)}, \sigma_n^{(2)}) \rightharpoonup (0, 0)$ weakly in $\mathscr{H}$ implies that $|y_n^{(2)}| \to \infty$ and $|y_n^{(2)} - y_n^{(1)}| \to \infty$.

        We repeat this argument. If the above step does not stop a finite times, then using by Lemma \ref{Lem:Evaluate from below of critical point of G_eta infty } (below the end of the proof), there exist $\xi>0$ for any $l \in \mathbb{N}$, there exist $(u^{(k)}, v^{(k)})\in \mathscr{H}\setminus \{(0,0)\}$ $(k=1,\dots, l)$ such that $(G_{\eta}^{\infty})'(u^{(k)}, v^{(k)})=0$ for $k=1,\dots, l$ and 
        \begin{align*}
            G_{\eta}(u_0, v_0) &= c + \frac{b_1A_{\eta}^4 + b_2B_{\eta}^4}{4} - \sum_{k = 1}^l G_{\eta}^{\infty}(u^{(k)}, v^{(k)}) \\
            &\le c + \frac{b_1A_{\eta}^4 + b_2B_{\eta}^4}{4} - \sum_{k = 1}^l \left(\frac{1}{2} - \frac{1}{\min\{p,q\}}\right)(1 - \delta)\xi \\
            &\le c + \frac{b_1A_{\eta}^4 + b_2B_{\eta}^4}{4} - l\left(\frac{1}{2} - \frac{1}{2\min\{p,q\}}\right)(1 - \delta)\xi,
        \end{align*}
        which is contradiction becauce for large $l\in\mathbb{N}$ 
        \begin{align*}
            c + \frac{b_1A_{\eta}^4 + b_2B_{\eta}^4}{4} - l\left(\frac{1}{2} - \frac{1}{2\min\{p,q\}}\right)(1 - \delta)\xi< G_{\eta}(u_0,v_0).
        \end{align*}
        holds. Therefore, there exists a finite number $l \in \mathbb{N}$ such that vanishing case occurs in Step $l$.
    \end{proof}

\begin{Rem}\label{Rem:Chen-Liu}
In the proof of \cite[Proposition~3.1]{Chen-Liu-2} (page 594), the authors deduce 
\begin{align}\label{eq:Chen-Liu-Remark-1}
J_{\lambda, A, \infty}'(z_n^1) = o(1)
\end{align}
from the identity
\begin{align}\label{eq:Chen-Liu-Remark-2}
\langle J_{\lambda, A}'(u_n), \varphi \rangle = \langle J_{\lambda, A}'(u), \varphi \rangle + \langle J_{\lambda, A,\infty}'(z_n^1), \varphi \rangle + o(1),
\end{align}
where $J_{\lambda, A}$ and $J_{\lambda, A, \infty}$ correspond to $G_{\eta}$ and $G_{\eta}^{\infty}$ in the present paper, respectively.
According to \cite{Chen-Liu-2}, \eqref{eq:Chen-Liu-Remark-2} is derived using the fact that $\{u_n\}$ is a Palais--Smale sequence for $J_{\lambda}$ (corresponding to $I_{\eta}$ in the present paper), together with the nonlocal Brezis--Lieb lemma, assumption \ref{assumption:constant at infty} in the present paper, and Corollary \ref{cor:cor_of_splitting} in the present paper(see also \cite[Lemma~2.4]{Chen-Liu-2}).

However, since Corollary \ref{cor:cor_of_splitting}(see \cite[Lemma 2.4]{Chen-Liu-2}) stress only convergence for fixed $\varphi\in H^1(\mathbb{R}^3)$ it seems that the $o(1)$ term in \eqref{eq:Chen-Liu-Remark-2} should be interpreted as vanishing only for each fixed $\varphi \in H^1(\mathbb{R}^3)$, that is,
\[
\langle J_{\lambda, A}'(u_n), \varphi \rangle - \langle J_{\lambda, A}'(u), \varphi \rangle - \langle J_{\lambda, A, \infty}'(z_n^1), \varphi \rangle \to 0,
\]
and hence does not imply convergence uniformly in $\varphi$.

From this perspective, the conclusion \eqref{eq:Chen-Liu-Remark-1} may require a more detailed justification. With Proposition \ref{lem:splitting} now established, the conclusion \eqref{eq:Chen-Liu-Remark-1} follows rigorously. 
\end{Rem}
    
    \begin{Lem}     \label{Lem:Evaluate from below of critical point of G_eta infty }
        Assume $1+\frac{\alpha}{3}<p,q\le 3+\alpha$. There exists $\xi>0$ such that if $(u, v)$ is a nontrivial critical point of $G_{\eta}^{\infty}$, then 
        \begin{align*}
            G_{\eta}^{\infty}(u, v) \ge \left(\frac{1}{2} - \frac{1}{2\min\{p,q\}}\right)(1 - \delta)\xi.
        \end{align*}
        holds.
    \end{Lem}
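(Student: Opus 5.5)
The plan is to combine the Nehari identity for a critical point with the Hardy--Littlewood--Sobolev/Sobolev bound \eqref{eq:convolution-term-est} and the coercivity estimate of Lemma \ref{lem:potential_term_est}. This gives a uniform lower bound on the norm of any nontrivial critical point, and hence on its energy.

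Let $(u,v)\ne(0,0)$ satisfy $(G_\eta^\infty)'(u,v)=0$. Testing with $(u,v)$ itself gives
\begin{align*}
&(a_1+b_1A_\eta^2)\|\nabla u\|_2^2+(a_2+b_2B_\eta^2)\|\nabla v\|_2^2+\int_{\mathbb{R}^3}(V_1^\infty u^2+V_2^\infty v^2-2\lambda uv)\,dx\\
&\quad=\eta\mu D_p(u)+\eta\nu D_q(v),
\end{align*}
where $D_r(w)=\int_{\mathbb{R}^3}(I_\alpha*|w|^r)|w|^r\,dx$.

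By Lemma \ref{lem:potential_term_est} (second inequality, which uses only \ref{assumption:potential-const}), the left side is at least $(1-\delta)\|(u,v)\|_{\bm a,\bm V^\infty}^2$. The $b_i$ terms are nonnegative and can simply be dropped.

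For the right side, \eqref{eq:convolution-term-est} gives $D_p(u)\le C\|u\|_{H^1}^{2p}$ and $D_q(v)\le C\|v\|_{H^1}^{2q}$. In the critical case $q=3+\alpha$ one may instead use \eqref{eq:upper_critical_constant}. Since $\|\cdot\|_{a_i,V_i^\infty}$ is equivalent to the $H^1$ norm, we get, with $t=\|(u,v)\|_{\bm a,\bm V^\infty}$ and $\eta\le1$,
\[
(1-\delta)t^2\le C_1\mu t^{2p}+C_2\nu t^{2q}.
\]
Because $p,q>1$ and $t>0$, this forces $t^2\ge \xi$ for some $\xi>0$ depending only on the data, not on $\eta\in[\tau,1]$ or on $A_\eta,B_\eta$.

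Next we bound the energy from below. Set $m=\min\{p,q\}$ and compute
\begin{align*}
G_\eta^\infty(u,v)-\tfrac{1}{2m}\langle (G_\eta^\infty)'(u,v),(u,v)\rangle
&=\Bigl(\tfrac12-\tfrac1{2m}\Bigr)\Bigl[(a_1+b_1A_\eta^2)\|\nabla u\|_2^2+(a_2+b_2B_\eta^2)\|\nabla v\|_2^2\\
&\qquad+\int_{\mathbb{R}^3}(V_1^\infty u^2+V_2^\infty v^2-2\lambda uv)\,dx\Bigr]\\
&\quad+\eta\mu\Bigl(\tfrac1{2m}-\tfrac1{2p}\Bigr)D_p(u)+\eta\nu\Bigl(\tfrac1{2m}-\tfrac1{2q}\Bigr)D_q(v).
\end{align*}
The last two coefficients are nonnegative since $m\le p,q$. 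Applying Lemma \ref{lem:potential_term_est} to the bracket then gives
\[
G_\eta^\infty(u,v)\ge\Bigl(\tfrac12-\tfrac1{2m}\Bigr)(1-\delta)t^2\ge\Bigl(\tfrac12-\tfrac1{2\min\{p,q\}}\Bigr)(1-\delta)\xi .
\]

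The main point to watch is that $\xi$ must be uniform, i.e. independent of $\eta\in[\bar\eta,1]$ and of the Kirchhoff constants $A_\eta,B_\eta$. This holds because $A_\eta,B_\eta$ only enlarge the left side of the Nehari identity, and $\eta\le1$ only shrinks the right side. The algebra of the cancellation is routine.
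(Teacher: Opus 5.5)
Your proof is correct and follows essentially the same route as the paper. You combine the Nehari identity with Lemma~\ref{lem:potential_term_est} and \eqref{eq:convolution-term-est} to get a lower bound on $\|(u,v)\|_{\bm{a},\bm{V}^{\infty}}$ that is uniform in $\eta$ and $A_\eta,B_\eta$, and then subtract $\frac{1}{2\min\{p,q\}}\langle (G_\eta^\infty)'(u,v),(u,v)\rangle$; letting $\xi$ bound $t^2$ rather than $t$ also keeps the exponents consistent, which the paper's write-up does not quite do.
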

    \begin{proof}
        Without loss of generality we may assume thar $p\le q$. Since $(u, v)$ is a nontrivial critical point of $G_{\eta}^{\infty}$, we have $\langle (G_{\eta}^{\infty})'(u,v), (u,v)\rangle =0$, that is, 
        \begin{align}
            \begin{split}
                &(a_1 + b_1A_{\eta}^2)\|\nabla u\|_2^2 + (a_2 + b_2B_{\eta}^2)\|\nabla v\|_2^2 + V_1^{\infty}\|u\|_2^2 + V_2^{\infty}\|v\|_2^2 - 2\lambda \int_{\mathbb{R}^3} uv\,dx \\
                &= \eta\mu\int_{\mathbb{R}^3}(I_{\alpha}*|u|^p)|u|^pdx + \eta\nu\int_{\mathbb{R}^3}(I_{\alpha}*|v|^q)|v|^qdx.     \label{eq:(u, v) is critical point of G_{eta}^{infty}}
            \end{split}
        \end{align}
        By the definition of $G_{\eta}^{\infty}$, \eqref{eq:(u, v) is critical point of G_{eta}^{infty}} and Lemma \ref{lem:potential_term_est} we have
        \begin{align}\label{eq:G_eta_infty_lower}
        \begin{split}
            G_{\eta}^{\infty}(u, v) &=G_{\eta}^{\infty}(u,v)-\frac{1}{2p}\langle (G_{\eta}^{\infty})'(u,v), (u,v)\rangle  \\
            &= \left(\frac{1}{2} - \frac{1}{2p}\right)(a_1 + b_1A_{\eta}^2)\|\nabla u\|_2^2 + \left(\frac{1}{2} - \frac{1}{2p}\right)(a_2 + b_2B_{\eta}^2)\|\nabla v\|_2^2 \\
            &\quad + \left(\frac{1}{2} - \frac{1}{2p}\right)\left(V_1^{\infty}\|u\|_2^2 + V_2^{\infty}\|v\|_2^2 - 2 \lambda \int_{\mathbb{R}^3} uv\,dx\right)  \\
            &\quad + \eta\nu \left(\frac{1}{2p} - \frac{1}{2q}\right)\int_{\mathbb{R}^3}(I_{\alpha}*|v|^q)|v|^qdx \\
            &\ge \left(\frac{1}{2} - \frac{1}{2p}\right)\left(\|u\|_{a_1, V_1^{\infty}}^2 + \|v\|_{a_2, V_2^{\infty}}^2 - 2 \lambda \int_{\mathbb{R}^3} uv\,dx\right) \\
        &\ge\left(\frac{1}{2}-\frac{1}{2p}\right)(1-\delta)\|(u,v)\|_{\bm{a},\bm{V}^{\infty}}.
            \end{split}
        \end{align}
        Since $\eta\in[\overline{\eta},1]$, by \eqref{eq:(u, v) is critical point of G_{eta}^{infty}}, Lemma \ref{lem:potential_term_est} and \eqref{eq:convolution-term-est}, we see
        \begin{align*}
             &(1 -\delta)\|(u, v)\|_{\bm{a},\bm{V}^{\infty}}^2\\
             \le&\ \|u\|_{a_1, V_1^{\infty}}^2 + \|v\|_{a_2, V_2^{\infty}}^2 - 2\lambda \int_{\mathbb{R}^3} uv\,dx \\
            \le&\ (a_1 + b_1A_{\eta}^2)\|\nabla u\|_2^2 + (a_2 + b_2B_{\eta}^2)\|\nabla v\|_2^2 + V_1^{\infty}\|u\|_2^2 + V_2^{\infty}\|v\|_2^2 - 2\lambda \int_{\mathbb{R}^3} uv\,dx \\
            \le&\ \mu\int_{\mathbb{R}^3}(I_{\alpha}*|u|^p)|u|^pdx + \nu\int_{\mathbb{R}^3}(I_{\alpha}*|v|^q)|v|^qdx \\
            \le&\ \overline{C}(\|(u, v)\|_{\bm{a}, \bm{V}^{\infty}}^{2p} + \|(u, v)\|_{\bm{a}, \bm{V}^{\infty}}^{2q}).
        \end{align*}
        for some $\overline{C}=\overline{C}(N,\alpha,p,q,\mu, \nu, a_1, a_2, V_1^{\infty}, V_2^{\infty})>0$. Hence, we obtain
        \begin{align*}
            0 < \frac{1 - \delta}{\overline{C}} &\le \|(u, v)\|_{\bm{a}, \bm{V}^{\infty}}^{2(p -1) } + \|(u, v)\|_{\bm{a}, \bm{V}^{\infty}}^{2(q - 1)} \\
            &\le
            \begin{cases}
                2\|(u, v)\|_{\bm{a},\bm{V}^{\infty}}^{2(p - 1)} & \text{if } \|(u, v)\|_{\bm{a},\bm{V}^{\infty}} \le 1, \\
                2\|(u, v)\|_{\bm{a}, \bm{V}^{\infty}}^{2(q - 1)} & \text{if } \|(u, v)\|_{\bm{a}, \bm{V}^{\infty}} \ge 1
            \end{cases}
        \end{align*}
        and so if we set
        \begin{align*}
            \xi \coloneqq \min \left\{\left(\frac{1 - \delta}{2\overline{C}}\right)^{\frac{1}{2(p - 1)}}, \left(\frac{1 - \delta}{2\overline{C}}\right)^{\frac{1}{2(q - 1)}}\right\}
        \end{align*}
        then we have
        \begin{align*}
            \|(u, v)\|_{\bm{a},\bm{V}^{\infty}} \ge \xi\ \ \text{ if } (G_{\eta}^{\infty})^{\prime}(u, v) = 0\ \ \text{and}\ \ (u,v)\in \mathscr{H}\setminus \{(0,0)\}.
        \end{align*}
        Therefore, by \eqref{eq:G_eta_infty_lower} we see that
        \begin{align*}
            G_{\eta}^{\infty}(u, v) \ge \left(\frac{1}{2} - \frac{1}{2p}\right)(1 - \delta)\xi.
        \end{align*}
    The proof has been completed.
    \end{proof}

\subsection{Completion of the proof of Theorem A}

   \begin{Lem} \label{Lem:it is critical point}
   Assume that \ref{assumption:continuous}--\ref{assumption:weak differentiable} hold, $\eta \in [\bar{\eta}, 1]$ and one of the following conditions hold
     \begin{enumerate}
     \item[\textup{(1)}] $1+\frac{\alpha}{3}<p\le q<3+\alpha$ and $\mu>0$;
     \item[\textup{(2)}] $1+\frac{\alpha}{3}<p<q=3+\alpha$ and for fixed $\nu$, $\mu$ satisfies $\mu\ge \mu_0$, where $\mu_0=\mu_0(\nu)$ is given in Corollary \ref{lem:critical_case_2}.
     \end{enumerate}
     If $c < m_{\eta}^{\infty}$ and $\{(u_n, v_n)\} \subset \mathscr{H}$ is a bounded $(\mathrm{PS})_c$ sequence, then there exists $(u, v) \in \mathscr{H} \setminus \{(0, 0)\}$ such that
        \begin{align*}
            I_{\eta}(u, v) = c\ \text{ and }\  I_{\eta}^{\prime}(u, v) = 0.
        \end{align*}
    \end{Lem}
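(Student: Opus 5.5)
We apply the global compactness Lemma \ref{Lem:global compactness lemma} to the bounded $(\mathrm{PS})_c$ sequence $\{(u_n,v_n)\}$. It gives a weak limit $(u_0,v_0)$, constants $A_\eta,B_\eta$ with $\|\nabla u_n\|_2^2\to A_\eta^2$, $\|\nabla v_n\|_2^2\to B_\eta^2$, and $G_\eta'(u_0,v_0)=0$. Note that $c>0$ is needed in case (1); in our application $c=c_\eta>0$ by Lemma \ref{Lem:satisfy with assumption of monotonicity trick}, so we will assume $c>0$, which follows anyway from $c$ being a PS level at positive mountain pass height.

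The argument is to exclude alternative (ii). Suppose (ii) holds with some $l\ge1$. We will show $G_\eta(u_0,v_0)\ge \frac{b_1A_\eta^2}{4}\|\nabla u_0\|_2^2+\frac{b_2B_\eta^2}{4}\|\nabla v_0\|_2^2$, which is \eqref{eq:Evaluate_G_eta(u,v)_from_below}, using the Pohozaev identity, \ref{assumption:weak differentiable} and the Hardy inequality; this holds also when $(u_0,v_0)=(0,0)$. For each bubble $(u^{(k)},v^{(k)})$, a critical point of $G^\infty_\eta$, we will relate it to $I_\eta^\infty$ and $m_\eta^\infty$.

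Write $G^\infty_\eta(u,v)=I^\infty_\eta(u,v)+\frac{b_1}{4}(2A_\eta^2\|\nabla u\|_2^2-\|\nabla u\|_2^4)+\frac{b_2}{4}(2B_\eta^2\|\nabla v\|_2^2-\|\nabla v\|_2^4)$. Since $A_\eta^2\ge\|\nabla u^{(k)}\|_2^2$, the critical point $(u^{(k)},v^{(k)})$ of $G^\infty_\eta$ satisfies $J^\infty_\eta(u^{(k)},v^{(k)})\le 0$, as in \eqref{eq:G_eta_infty' + P_eta_infty_(1)} with the Kirchhoff coefficients reduced. By Lemma \ref{Lem:element of M} there is a unique $t_k$ with $((u^{(k)})^{t_k},(v^{(k)})^{t_k})\in\mathcal M^\infty_\eta$, and $J^\infty_\eta\le0$ forces $t_k\le1$. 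The comparison is then carried out with Lemma \ref{Lem:relationship between I_eta and J_eta}, mimicking Tang--Chen. We compute $G^\infty_\eta(u^{(k)},v^{(k)})-\frac18[\langle (G^\infty_\eta)',\cdot\rangle+2\tilde P^\infty_\eta]$ as in \eqref{eq:G_eta_infty_u_1_v_1} and compare it with $I^\infty_\eta(u^t,v^t)$ at $t=t_k$. The target inequality is
\begin{align*}
G^\infty_\eta(u^{(k)},v^{(k)})\ge m^\infty_\eta+\frac{b_1A_\eta^2}{4}\|\nabla u^{(k)}\|_2^2+\frac{b_2B_\eta^2}{4}\|\nabla v^{(k)}\|_2^2-\frac{b_1}{4}\|\nabla u^{(k)}\|_2^4-\frac{b_2}{4}\|\nabla v^{(k)}\|_2^4 .
\end{align*}
Summing these bounds, inserting them into $c+\frac{b_1A_\eta^4+b_2B_\eta^4}{4}=G_\eta(u_0,v_0)+\sum_k G^\infty_\eta(u^{(k)},v^{(k)})$, and using $A_\eta^2=\|\nabla u_0\|_2^2+\sum_k\|\nabla u^{(k)}\|_2^2$ together with $\sum_k\|\nabla u^{(k)}\|_2^4\le A_\eta^4$, gives $c\ge l\,m^\infty_\eta\ge m^\infty_\eta$. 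This contradicts $c<m^\infty_\eta$.

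\textbf{Main obstacle.} Getting exactly the bookkeeping of the $b$-terms right in the bubble estimate is the hard step. If the direct route fails, we will instead use the cruder bound $G^\infty_\eta\ge I^\infty_\eta(u^{t_k},v^{t_k})+\frac{b_1}{4}\bigl(2A_\eta^2-\|\nabla u^{(k)}\|^2\bigr)\|\nabla u^{(k)}\|^2$ from \eqref{eq:relathion_I_eta_J_eta} with $t=t_k$, since $J^\infty_\eta$ has the sign we need.

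Once (ii) is excluded, (i) holds, so $(u_n,v_n)\to(u_0,v_0)$ strongly. Then $A_\eta^2=\|\nabla u_0\|_2^2$ and $B_\eta^2=\|\nabla v_0\|_2^2$, so $G_\eta'=I_\eta'$ at $(u_0,v_0)$, giving $I_\eta'(u_0,v_0)=0$. Continuity of $I_\eta$ gives $I_\eta(u_0,v_0)=c>0$, hence $(u_0,v_0)\ne(0,0)$.
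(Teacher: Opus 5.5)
Your overall architecture matches the paper's: apply Lemma~\ref{Lem:global compactness lemma}, bound $G_\eta(u_0,v_0)$ from below via Pohozaev, \ref{assumption:weak differentiable} and Hardy, bound each bubble from below, sum, and contradict $c<m_\eta^\infty$. However, the bubble estimate you target is too weak, and the summation step does not follow from it. Inserting your bound gives
\[
c\ \ge\ l\,m_\eta^\infty-\frac{b_1}{4}\sum_{k=1}^{l}\|\nabla u^{(k)}\|_2^4-\frac{b_2}{4}\sum_{k=1}^{l}\|\nabla v^{(k)}\|_2^4 .
\]
An \emph{upper} bound $\sum_k\|\nabla u^{(k)}\|_2^4\le A_\eta^4$ cannot remove negative terms. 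It only yields $c\ge l\,m_\eta^\infty-\frac{b_1A_\eta^4+b_2B_\eta^4}{4}$, which contradicts nothing.

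Your fallback fails for a sign reason. In \eqref{eq:relathion_I_eta_J_eta} with $t=t_k$, the term $\frac{1-t_k^8}{8}J_\eta^\infty(u^{(k)},v^{(k)})$ is $\le 0$, precisely because $t_k\le 1$ and $J_\eta^\infty(u^{(k)},v^{(k)})\le 0$, as you observed yourself. So it cannot be discarded, and $I_\eta^\infty(u^{(k)},v^{(k)})$ alone need not be $\ge m_\eta^\infty$.

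The missing idea is to compare $G_\eta^\infty$ with $I_\eta^\infty-\frac18J_\eta^\infty$ rather than with $I_\eta^\infty$. In $I_\eta^\infty-\frac18 J_\eta^\infty$ the potential, coupling and quartic terms cancel exactly; the quartic coefficient is $\frac14-\frac28=0$. What remains is $\frac14(a_1\|\nabla u\|_2^2+a_2\|\nabla v\|_2^2)+\eta\mu\frac{p+\alpha-1}{8p}\int_{\mathbb{R}^3}(I_\alpha*|u|^p)|u|^p\,dx+\eta\nu\frac{q+\alpha-1}{8q}\int_{\mathbb{R}^3}(I_\alpha*|v|^q)|v|^q\,dx$. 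Comparing with \eqref{eq:G_eta_infty_u_1_v_1} gives the exact identity
\[
G_\eta^\infty(u^{(k)},v^{(k)})=\frac{b_1A_\eta^2}{4}\|\nabla u^{(k)}\|_2^2+\frac{b_2B_\eta^2}{4}\|\nabla v^{(k)}\|_2^2+I_\eta^\infty(u^{(k)},v^{(k)})-\frac18J_\eta^\infty(u^{(k)},v^{(k)}),
\]
with no quartic remainder. Now \eqref{eq:relathion_I_eta_J_eta} at $t=t_k$ gives
\[
I_\eta^\infty(u^{(k)},v^{(k)})-\frac18J_\eta^\infty(u^{(k)},v^{(k)})\ge I_\eta^\infty((u^{(k)})^{t_k},(v^{(k)})^{t_k})-\frac{t_k^8}{8}J_\eta^\infty(u^{(k)},v^{(k)})\ge m_\eta^\infty ,
\]
and here $J_\eta^\infty\le 0$ has the helpful sign. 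Sum this with your bound on $G_\eta(u_0,v_0)$ and use $A_\eta^2=\|\nabla u_0\|_2^2+\sum_k\|\nabla u^{(k)}\|_2^2$ (and likewise for $B_\eta^2$). The $b$-terms on the two sides then cancel exactly, giving $c\ge l\,m_\eta^\infty\ge m_\eta^\infty$ without any need for $\sum_k\|\nabla u^{(k)}\|_2^4\le A_\eta^4$.

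The rest of your proposal is fine: the use of $c>0$ to get nontriviality, and the conclusion $I_\eta(u_0,v_0)=c$, $I_\eta'(u_0,v_0)=0$ once alternative (i) holds.
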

    \begin{proof}
        By Lemma \ref{Lem:global compactness lemma}, for $\eta \in [\bar{\eta}, 1]$, there exist a subsequence of $\{(u_n, v_n)\}$(still denoted by $\{(u_n, v_n)\}$), $(u_{0,\eta}, v_{0,\eta}) \in \mathscr{H}$ and $A_{\eta}, B_{\eta} \in \mathbb{R}$ such that
        \begin{align*}
            &(u_n, v_n) \rightharpoonup (u_{0,\eta}, v_{0,\eta})\ \text{weakly in } \mathscr{H}, \\
            &\int_{\mathbb{R}^3} |\nabla u_n|^2\,dx \to A_{\eta}^2, \quad \int_{\mathbb{R}^3} |\nabla v_n|^2\,dx \to B_{\eta}^2, \\
            &G_{\eta}^{\prime}(u_{0,\eta}, v_{0,\eta}) = 0
        \end{align*}
        and either (i) or (ii) holds of Lemma \ref{Lem:global compactness lemma}, where $G_{\eta}$ is defined in \eqref{eq:G_eta}.

        We claim that (i) holds. Suppose that (ii) occurs, that is, there exist $l \in \mathbb{N}, \{y_n^k\} \subset \mathbb{R}^3$ with $|y_n^k| \to \infty$ as $n \to \infty$ for each $k=1,\dots, l$ and nontrivial solutions $(u^{(1)}, v^{(1)}), \ldots, (u^{(l)}, v^{(l)})$ to \eqref{eq:limit problem} such that
        \begin{align}
            & c + \frac{b_1A_{\eta}^4 + b_2B_{\eta}^4}{4} = G_{\eta}(u_{0,\eta}, v_{0,\eta}) + \sum_{k = 1}^l G_{\eta}^{\infty}(u^{(k)}, v^{(k)}),  \label{eq:identity mp lebel} \\
            & \left\|u_n - u_{0,\eta} - \sum_{k = 1}^l u^{(k)}(\,\cdot\,- y_n^{(k)})\right\| \to 0, \quad \left\|v_n - v_{0,\eta} - \sum_{k = 1}^l v^{(k)}(\,\cdot\,- y_n^{(k)})\right\| \to 0, \notag \\
            \shortintertext{and}
            & A_{\eta}^2 = \|\nabla u_{0,\eta}\|_2^2 + \sum_{k = 1}^l \|\nabla u^{(k)}\|_2^2, \quad B_{\eta}^2 = \|\nabla v_{0,\eta}\|_2^2 + \sum_{k = 1}^l \|\nabla v^{(k)}\|_2^2,   \label{eq:A_eta and B_eta}
        \end{align}
        where $G_{\eta}^{\infty}$ is defined in \eqref{eq:G_eta infty}. Since $G_{\eta}^{\prime}(u_{0,\eta}, v_{0,\eta}) = 0$, we have the Pohozaev identity $\tilde{P}_{\eta}(u_{0,\eta}, v_{0,\eta})=0$ holds where $\tilde{P}_{\eta}(u,v)$ is defined in \eqref{eq:Pohozaev_identity_with_G_eta_}. 
 Similary as the proof of  \eqref{eq:Evaluate_G_eta(u,v)_from_below},     combining $G_{\eta}^{\prime}(u_{0,\eta}, v_{0,\eta}) = 0$ with $\tilde{P}_{\eta}(u_{0,\eta}, v_{0,\eta})=0$ we obtain
\begin{align}\label{eq:Evaluate G_eta from below}
            \begin{split}
                G_{\eta}(u_{0,\eta}, v_{0,\eta}) 
                &\ge \frac{b_1A_{\eta}^2}{4}\|\nabla u_{0,\eta}\|_2^2 + \frac{b_2B_{\eta}^2}{4}\|\nabla v_{0,\eta}\|_2^2.   
            \end{split}
        \end{align}
        Since $(G_{\eta}^{\infty})^{\prime}(u^{(k)}, v^{(k)}) = 0, (u^{(k)}, v^{(k)})$ satisfies Pohozaev identity $\tilde{P}_{\eta}^{\infty}(u^{(k)}, v^{(k)})=0$ where $\tilde{P}_{\eta}^{\infty}(u, v)$ is given by \eqref{eq:Pohozaev identity of with G_eta infty-1}. Combining $(G_{\eta}^{\infty})^{\prime}(u^{(k)}, v^{(k)}) = 0$ with $\tilde{P}_{\eta}^{\infty}(u^{(k)}, v^{(k)})=0$, we have
        \begin{align}
            \begin{split}
                0 &= \left\langle (G_{\eta}^\infty)^{\prime}(u^{(k)}, v^{(k)}), (u^{(k)}, v^{(k)}) \right\rangle + 2\tilde{P}_{\eta}^{\infty}(u^{(k)}, v^{(k)}) \\
                &= 2(a_1 + b_1A_{\eta}^2)\|\nabla u^{(k)}\|_2^2 + 2(a_2 + b_2B_{\eta}^2)\|\nabla v^{(k)}\|_2^2 +4(V_1^{\infty}\|u^{(k)}\|_2^2 + V_2^{\infty}\|v^{(k)}\|_2^2) \\
                &\quad - \eta\mu\frac{p +\alpha+ 3}{p}\int_{\mathbb{R}^3}(I_{\alpha}*|u^{(k)}|^p)|u^{(k)}|^pdx - \eta\nu\frac{q +\alpha+ 3}{q}\int_{\mathbb{R}^3} (I_{\alpha}*|v^{(k)}|^q)|v^{(k)}|^qdx \\
                &\quad - 8\lambda \int_{\mathbb{R}^3} u^{(k)}v^{(k)}\,dx. \label{eq:G_eta infty' + P_eta infty}
            \end{split}
        \end{align}
        From \eqref{eq:A_eta and B_eta} and \eqref{eq:G_eta infty' + P_eta infty}, we can see that
        \begin{align}
            J_{\eta}^{\infty}(u^{(k)}, v^{(k)}) \le 0.     \label{eq:G_eta infty' + P_eta infty <= J_eta infty}
        \end{align}
        
        By \eqref{eq:energy-aux-limit}, \eqref{eq:G_eta infty} and \eqref{eq:G_eta infty' + P_eta infty}, we see that
        \begin{align*}
             &\ G_{\eta}^{\infty}(u^{(k)}, v^{(k)})\\ 
            =&\ G_{\eta}^{\infty}(u^{(k)}, v^{(k)}) - \frac{1}{8}\left[\left\langle (G_{\eta}^\infty)^{\prime}(u^{(k)}, v^{(k)}), (u^{(k)}, v^{(k)})\right\rangle + 2\tilde{P}_{\eta}^{\infty}(u^{(k)}, v^{(k)})\right] \\
            =&\ \frac{a_1 + b_1A_{\eta}^2}{4}\|\nabla u^{(k)}\|_2^2 + \frac{a_2 + b_2B_{\eta}^2}{4}\|\nabla v^{(k)}\|_2^2  \\
            &\quad + \eta\mu\frac{p+\alpha - 1}{6p}\int_{\mathbb{R}^3}(I_{\alpha}*|u^{(k)}|^p)|u^{(k)}|^pdx + \eta\mu\frac{q +\alpha- 1}{8q} \int_{\mathbb{R}^3}(I_{\alpha}*|v^{(k)}|^q)|v^{(k)}|^qdx  \\
            =&\ \frac{b_1A_{\eta}^2}{4}\|\nabla u^{(k)}\|_2^2 + \frac{b_2B_{\eta}^2}{4}\|\nabla v^{(k)}\|_2^2 + I_{\eta}^{\infty}(u^{(k)}, v^{(k)}) - \frac{1}{8}J_{\eta}^{\infty}(u^{(k)}, v^{(k)}).
            \end{align*}
            Since $(u^{(k)}, v^{(k)}) \in \mathscr{H} \setminus \{(0, 0)\}$, by Lemma \ref{Lem:element of M}, there exists a unique $t_k > 0$ such that $((u^{(k)})^{t_k}, (v^{(k)})^{t_k}) \in \mathcal{M}_{\eta}^{\infty}$. By using \eqref{eq:relathion_I_eta_J_eta} and \eqref{eq:G_eta infty' + P_eta infty <= J_eta infty}, we obtain the following estimate
            \begin{align}
                \begin{split}
                G_{\eta}^{\infty}(u^{(k)}, v^{(k)}) &\ge \frac{b_1A_{\eta}^2}{4}\|\nabla u^{(k)}\|_2^2 + \frac{b_2B_{\eta}^2}{4}\|\nabla v^{(k)}\|_2^2 \\
                &\quad + I_{\eta}^{\infty}((u^{(k)})^{t_k}, (v^{(k)})^{t_k}) - \frac{t_k^8}{8}J_{\eta}^{\infty}(u^{(k)}, v^{(k)}) \\
                &\ge \frac{b_1A_{\eta}^2}{4}\|\nabla u^{(k)}\|_2^2 + \frac{b_2B_{\eta}^2}{4}\|\nabla v^{(k)}\|_2^2 + m_{\eta}^{\infty}.  \label{eq:G_eta infty >= m_eta infty + something}
            \end{split}
        \end{align}
        It follows from \eqref{eq:identity mp lebel}, \eqref{eq:A_eta and B_eta}, \eqref{eq:Evaluate G_eta from below} and \eqref{eq:G_eta infty >= m_eta infty + something}
        \begin{align*}
            &\ c + \frac{b_1A_{\eta}^4 + b_2B_{\eta}^4}{4} \\
            =&\ G_{\eta}(u_{0,\eta}, v_{0,\eta}) + \sum_{k = 1}^l G_{\eta}^{\infty}(u^{(k)}, v^{(k)}) \\
            \ge&\ \frac{b_1A_{\eta}^2}{4}\|\nabla u_{0,\eta}\|_2^2 + \frac{b_2B_{\eta}^2}{4}\|\nabla v_{0,\eta}\|_2^2 + \sum_{k = 1}^l\left(\frac{b_1A_{\eta}^2}{4}\|\nabla u^{(k)}\|_2^2 + \frac{b_2B_{\eta}^2}{4}\|\nabla v^{(k)}\|_2^2 + m_{\eta}^{\infty}\right) \\
            =&\ lm_{\eta}^{\infty} + \frac{b_1A_{\eta}^2}{4}\left(\|\nabla u_{0,\eta}\|_2^2 + \sum_{k = 1}^l \|\nabla u^{(k)}\|_2^2\right) + \frac{b_2B_{\eta}^2}{4}\left(\|\nabla v_{0,\eta}\|_2^2 + \sum_{k = 1}^l \|\nabla v^{(k)}\|_2^2\right) \\
            =&\ lm_{\eta}^{\infty} + \frac{b_1A_{\eta}^4}{4} + \frac{b_2B_{\eta}^4}{4} \\
            \ge&\ m_{\eta}^{\infty} + \frac{b_1A_{\eta}^4}{4} + \frac{b_2B_{\eta}^4}{4} \quad \text{ for all } \eta \in [\bar{\eta}, 1],
        \end{align*}
        which contradicts Lemma \ref{Lem:MP level is less than limit energy level}. Therefore, (i) holds, that is, $(u_n, v_n) \to (u_{0,\eta}, v_{0,\eta})$ in $\mathscr{H}$ and then we obtain
        \begin{align*}
            I_{\eta}(u_{0,\eta}, v_{0,\eta}) = c \quad \text{ and } \quad I_{\eta}^{\prime}(u_{0,\eta}, v_{0,\eta})=0.
        \end{align*}
        The proof of Lemma \ref{Lem:it is critical point} has been completed.
    \end{proof}

    In order to prove the existence of a ground state solution, we define
    \begin{align*}
        m = \inf_{\mathcal{N}} I(u, v),
    \end{align*}
    where $\mathcal{N} \coloneqq \{(u, v) \in \mathscr{H} \setminus \{(0, 0)\} \mid I^{\prime}(u, v) = 0\}$.
    \begin{Lem} \label{Lem:N is not empty}
        Assume that \ref{assumption:continuous}--\ref{assumption:weak differentiable} hold. Then $\mathcal{N} \neq \emptyset$.
    \end{Lem}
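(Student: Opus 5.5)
The set $\mathcal{N}$ will be shown to be nonempty by producing a nontrivial critical point of $I=I_1$ as a limit of critical points of $I_{\eta}$ with $\eta\uparrow 1$. This is the standard final step of the monotonicity-trick scheme. We work under the hypotheses of Theorem A, case (1) or (2) (with $\mu\ge\mu_0(\nu)$ in case (2)).

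First, by Proposition \ref{Prop:monotonicityu trick} and Lemma \ref{Lem:satisfy with assumption of monotonicity trick}, for almost every $\eta\in[\bar\eta,1]$ there is a bounded $(\mathrm{PS})_{c_\eta}$ sequence for $I_\eta$. By Lemma \ref{Lem:MP level is less than limit energy level} we have $c_\eta<m_\eta^\infty$, and $c_\eta\ge c_1>0$. Lemma \ref{Lem:it is critical point} then gives $(u_\eta,v_\eta)\in\mathscr{H}\setminus\{(0,0)\}$ with $I_\eta(u_\eta,v_\eta)=c_\eta$ and $I_\eta'(u_\eta,v_\eta)=0$. Choose $\eta_n\uparrow1$ in this full-measure set and write $(u_n,v_n)=(u_{\eta_n},v_{\eta_n})$. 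By Lemma \ref{Lem:left conti.}, $c_{\eta_n}\to c_1$.

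The main obstacle is to show that $\{(u_n,v_n)\}$ is bounded in $\mathscr{H}$. Since $(u_n,v_n)$ is a critical point, it satisfies the Pohozaev identity for $I_{\eta_n}$ (regularity from \cite{Matsuzawa} extends to continuous potentials satisfying \ref{assumption:weak differentiable}). We combine $\langle I_{\eta_n}'(u_n,v_n),(u_n,v_n)\rangle+2P_{\eta_n}(u_n,v_n)=0$ exactly as in the derivation of \eqref{eq:Evaluate_G_eta(u,v)_from_below}, with $A_\eta^2,B_\eta^2$ replaced by $\|\nabla u_n\|_2^2,\|\nabla v_n\|_2^2$. Then \ref{assumption:weak differentiable} together with the Hardy inequality (Lemma \ref{Lem: Hardy_ineq}) gives
\begin{align*}
c_{\eta_n}=I_{\eta_n}(u_n,v_n)\ge \frac{1-\theta}{4}\left(a_1\|\nabla u_n\|_2^2+a_2\|\nabla v_n\|_2^2\right)+\frac{b_1}{4}\|\nabla u_n\|_2^4+\frac{b_2}{4}\|\nabla v_n\|_2^4 ,
\end{align*}
plus the nonnegative Choquard terms with coefficients $\frac{p+\alpha-1}{8p}$ and $\frac{q+\alpha-1}{8q}$. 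This bounds the gradients and $D_p(u_n)$, $D_q(v_n)$ uniformly. Next, the Nehari identity $\langle I_{\eta_n}'(u_n,v_n),(u_n,v_n)\rangle=0$ and Lemma \ref{lem:potential_term_est} give $(1-\delta)\|(u_n,v_n)\|_{\bm a,\bm V}^2\le \mu D_p(u_n)+\nu D_q(v_n)$, which is bounded. Hence $\{(u_n,v_n)\}$ is bounded.

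Finally, $\{(u_n,v_n)\}$ is a bounded $(\mathrm{PS})_{c_1}$ sequence for $I=I_1$. Indeed,
\begin{align*}
I(u_n,v_n)&=c_{\eta_n}+(\eta_n-1)B(u_n,v_n)\to c_1,\\
\|I'(u_n,v_n)\|_{\mathscr{H}^*}&\le(1-\eta_n)\|B'(u_n,v_n)\|_{\mathscr{H}^*}\to0,
\end{align*}
using \eqref{eq:convolution-term-est} and the boundedness just established. Since $c_1\le c_{\bar\eta}<m_{\bar\eta}^\infty\le m_1^\infty$ is not what we need, we instead check $c_1<m_1^\infty$ directly: Lemma \ref{Lem:MP level is less than limit energy level} already covers $\eta=1$. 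Lemma \ref{Lem:it is critical point} with $\eta=1$ then yields a nontrivial critical point of $I$, so $\mathcal{N}\ne\emptyset$.
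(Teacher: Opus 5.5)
Your proof is correct and has the same architecture as the paper's. You take nontrivial critical points of $I_{\eta_n}$ at level $c_{\eta_n}$ for a.e.\ $\eta_n\uparrow 1$, bound them uniformly via the Pohozaev identity, and feed the resulting bounded $(\mathrm{PS})_{c_1}$ sequence for $I$ into Lemma~\ref{Lem:it is critical point} at $\eta=1$.

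The only real difference is the boundedness step. The paper uses the single combination $I_{\eta_n}-\frac{1}{2(p+\alpha+3)}J_{\eta_n}$. This keeps the potential and coupling terms with the positive factor $\frac{p+\alpha-1}{2(p+\alpha+3)}$, so Lemma~\ref{lem:potential_term_est} bounds $\|(u_{\eta_n},v_{\eta_n})\|_{\bm{a},\bm{V}}$ in one stroke. The price is that the $v$-Choquard term gets coefficient $\frac{q-p}{2q(p+\alpha+3)}$, so the ordering $p\le q$ is used. Your combination $I_{\eta_n}-\frac18 J_{\eta_n}$ kills the potential, coupling and Kirchhoff terms. It leaves only gradient and Choquard terms, all with positive coefficients whatever the order of $p,q$, and the Nehari identity then recovers the $L^2$ part as a second step. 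Both versions use \ref{assumption:weak differentiable} and the Hardy inequality in the same way.

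Two slips should be fixed, though neither affects the conclusion.

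First, the terms $\frac{b_i}{4}\|\nabla\cdot\|_2^4$ in your displayed lower bound are not there. $I_{\eta_n}$ contains $\frac14(b_1\|\nabla u\|_2^4+b_2\|\nabla v\|_2^4)$ and $J_{\eta_n}$ contains $2(b_1\|\nabla u\|_2^4+b_2\|\nabla v\|_2^4)$, so they cancel exactly, and $I_{\eta_n}-\frac18 J_{\eta_n}$ equals
\[
\tfrac14\bigl(a_1\|\nabla u\|_2^2+a_2\|\nabla v\|_2^2\bigr)-\tfrac18\int_{\mathbb{R}^3}\bigl[(\nabla V_1(x),x)u^2+(\nabla V_2(x),x)v^2\bigr]dx+\eta_n\mu\tfrac{p+\alpha-1}{8p}D_p(u)+\eta_n\nu\tfrac{q+\alpha-1}{8q}D_q(v).
\]
The spurious terms come from transplanting \eqref{eq:Evaluate_G_eta(u,v)_from_below} with $A_\eta^2,B_\eta^2$ frozen at $\|\nabla u_n\|_2^2,\|\nabla v_n\|_2^2$. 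That computation concerns $G_\eta(u_n,v_n)=I_{\eta_n}(u_n,v_n)+\frac{b_1}{4}\|\nabla u_n\|_2^4+\frac{b_2}{4}\|\nabla v_n\|_2^4$, not $I_{\eta_n}(u_n,v_n)$. You only use the resulting bounds on $\|\nabla u_n\|_2$, $\|\nabla v_n\|_2$, $D_p(u_n)$ and $D_q(v_n)$. The corrected inequality, with factor $\frac{1-\theta}{4}$ and $\eta_n\ge\tau>0$, already gives these.

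Second, $\eta\mapsto m_\eta^\infty$ is non-increasing, so $m_{\bar\eta}^\infty\ge m_1^\infty$, not $\le$. Simply drop that sentence: the direct appeal to Lemma~\ref{Lem:MP level is less than limit energy level} at $\eta=1$ is exactly what is needed.
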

    \begin{proof}
        By Proposition \ref{Prop:monotonicityu trick}, Lemma \ref{Lem:satisfy with assumption of monotonicity trick} and Lemma \ref{Lem:it is critical point}, for almost all $\eta \in [\bar{\eta}, 1]$, there exists a $(u_{\eta}, v_{\eta}) \in \mathscr{H} \setminus \{(0, 0)\}$ such that
        \begin{align}
            I_{\eta}(u_{\eta}, v_{\eta}) = c_{\eta} \quad \text{ and } \quad I_{\eta}^{\prime}(u_{\eta}, v_{\eta}) = 0.     \label{eq:PS seq. for c_eta}
        \end{align}
        We choose a sequence $\{\eta_n\} \subset [\bar{\eta}, 1]$ satisfying $\eta_n \to 1$ and $\{(u_{\eta_n}, v_{\eta_n})\} \subset H$ such that $I_{\eta_n}(u_{\eta_n}, v_{\eta_n}) = c_{\eta_n}$ and $I_{\eta_n}^{\prime}(u_{\eta_n}, v_{\eta_n}) = 0$. Since $I_{\eta_n}^{\prime}(u_{\eta_n}, v_{\eta_n}) = 0$, we have the Pohozaev identity $P_{\eta_n}(u_{\eta_n}, v_{\eta_n})=0$ corresponding to the functional $I_{\eta}$ where
        \begin{align*}
            P_{\eta}(u, v) &\coloneqq \frac{1}{2}(a_1\|\nabla u\|_2^2 + a_2\|\nabla v\|_2^2) + \frac{3}{2}\left[\int_{\mathbb{R}^3} (V_1(x)u^2 + V_2(x)v^2)\,dx\right] \\
            &\quad + \frac{1}{2}\left[\int_{\mathbb{R}^3} \left[(\nabla V_1(x), x)u^2 + (\nabla V_2(x), x)v^2\right]\,dx\right] \\
            &\quad + \frac{1}{2}(b_1\|\nabla u\|_2^4 + b_2\|\nabla v\|_2^4) \\
            &\quad - \eta\mu\frac{3+\alpha}{2p}\int_{\mathbb{R}^3}(I_{\alpha}* |u|^p)|u|^pdx - \eta\nu\frac{3+\alpha}{2q}\int_{\mathbb{R}^3} (I_{\alpha}*|v|^q)|)|v|^qdx \\
            &\quad - 3\lambda \int_{\mathbb{R}^3} uv\,dx = 0.
        \end{align*}
        We set $J_{\eta_n}(u, v) \coloneqq \langle I_{\eta_n}^{\prime}(u, v), (u, v) \rangle + 2P_{\eta_n}(u, v)$ for $(u,v)\in \mathscr{H}$. Then
        \begin{align}
            \begin{split}
                J_{\eta_n}(u_{\eta_n}, v_{\eta_n}) \label{eq:J_{eta_n}} &= 2(a_1\|\nabla u_{\eta_n}\|_2^2 + a_2\|\nabla v_{\eta_n}\|_2^2) + 4\int_{\mathbb{R}^3} (V_1(x)u_{\eta_n}^2 + V_2(x)v_{\eta_n}^2)\,dx \\
                &\quad + \int_{\mathbb{R}^3} \left[(\nabla V_1(x), x)u_{\eta_n}^2 + (\nabla V_2(x), x)v_{\eta_n}^2\right]\,dx \\
                &\quad + 2(b_1\|\nabla u_{\eta_n}\|_2^4 + b_2\|\nabla v_{\eta_n}\|_2^4) \\
                &\quad - \eta_n\mu\frac{p +\alpha+ 3}{p}\int_{\mathbb{R}^3} (I_{\alpha}*|u_{\eta_n}|^p)|u_{\eta_n}|^pdx \\
                &\quad - \eta_n\nu\frac{q +\alpha+ 3}{q}\int_{\mathbb{R}^3} (I_{\alpha}*|v_{\eta_n}|^q)|v_{\eta_n}|^qdx- 8\lambda \int_{\mathbb{R}^3} u_{\eta_n}v_{\eta_n}\,dx = 0.
            \end{split}
        \end{align}

        We next show that $\{(u_{\eta_n}, v_{\eta_n})\}$ is bounded in $\mathscr{H}$. From Lemma \ref{Lem:left conti.},  \eqref{eq:energy-aux}, \eqref{eq:PS seq. for c_eta} and \eqref{eq:J_{eta_n}}, we have
        \begin{align*}
            c_{\tau} \ge c_{\eta_n}&= I_{\eta_n}(u_{\eta_n}, v_{\eta_n}) - \frac{1}{2(p+\alpha+3)}J_{\eta_n}(u_{\eta_n}, v_{\eta_n}) \\
            &= \frac{p+\alpha+1}{2(p+\alpha+3)}(a_1\|\nabla u_{\eta_n}\|_2^2 + a_2\|\nabla v_{\eta_n}\|_2^2)\\
            &\quad + \frac{p+\alpha-1}{2(p+\alpha+3)}\left[\int_{\mathbb{R}^3} \left(V_1(x)u_{\eta_n}^2 + V_2(x)v_{\eta_n}^2-2\lambda u_{\eta_n}v_{\eta_n}\right)\,dx\right] \\
            &\quad - \frac{1}{2(p+\alpha+3)} \left[\int_{\mathbb{R}^3} \left[(\nabla V_1(x), x)u_{\eta_n}^2 + (\nabla V_2(x), x)v_{\eta_n}^2\right]\,dx\right] \\
&\quad +\frac{p+\alpha-1}{4(p+\alpha+3)}(b_1\|\nabla u_{\eta_n}\|_2^4+b_2\|\nabla v_{\eta_n}\|_2^4) \\
            &\quad + \eta_n\nu\frac{q - p}{2q(p+\alpha+3)}\int_{\mathbb{R}^3} (I_{\alpha}*|v_{\eta_n}|^q)|v_{\eta_n}|^qdx.
        \end{align*}
        It follows from \ref{assumption:weak differentiable} and the Hardy inequality(see Lemma \ref{Lem: Hardy_ineq}) that
        \begin{align*}
            c_{\tau} &\ge \frac{p+\alpha+1}{2(p+\alpha+3)}(a_1\|\nabla u_{\eta_n}\|_2^2 + a_2\|\nabla v_{\eta_n}\|_2^2)\\
            &\quad + \frac{p+\alpha-1}{2(p+\alpha+3)}\left[\int_{\mathbb{R}^3} \left(V_1(x)u_{\eta_n}^2 + V_2(x)v_{\eta_n}^2-2\lambda u_{\eta_n}v_{\eta_n}\right)\,dx\right] \\
            &\quad - \frac{\theta}{4(p+\alpha+3)}\left(a_1\int_{\mathbb{R}^3} \frac{u_{\eta_n}^2}{|x|^2}\,dx + a_2\int_{\mathbb{R}^3} \frac{v_{\eta_n}^2}{|x|^2}\,dx\right) \\
            &\ge \frac{p+\alpha+1}{2(p+\alpha+3)}(a_1\|\nabla u_{\eta_n}\|_2^2 + a_2\|\nabla v_{\eta_n}\|_2^2) \\
            &\quad + \frac{p+\alpha-1}{2(p+\alpha+3)}\left[\int_{\mathbb{R}^3} \left(V_1(x)u_{\eta_n}^2 + V_2(x)v_{\eta_n}^2-2\lambda u_{\eta_n}v_{\eta_n}\right)\,dx\right] \\
            &\quad - \frac{2\theta}{2(p+\alpha+3)}(a_1\|\nabla u_{\eta_n}\|_2^2 + a_2\|\nabla v_{\eta_n}\|_2^2).
        \end{align*}
        Since $\theta \in [0, 1)$, by Lemma \ref{lem:potential_term_est} we can see that
        \begin{align*}
            c_{\tau} &> \frac{p+\alpha-1}{2(p+\alpha+3)}(a_1\|\nabla u_{\eta_n}\|_2^2 + a_2\|\nabla v_{\eta_n}\|_2^2) \\
            &\quad + \frac{p+\alpha-1}{2(p+\alpha+3)}\int_{\mathbb{R}^3} (V_1(x)u_{\eta_n}^2 + V_2(x)v_{\eta_n}^2-2\lambda u_{\eta_n}v_{\eta_n})\,dx \\
     &\ge\frac{p+\alpha-1}{2(p+\alpha+3)}\left(\|u_{\eta_n}\|_{a_1, V_1}^2 + \|v_{\eta_n}\|_{a_2, V_2}^2 - 2\lambda \int_{\mathbb{R}^3} u_{\eta_n}v_{\eta_n}\,dx\right) \\
     &\ge \frac{p+\alpha-1}{2(p+\alpha+3)}(1-\delta)\|(u_{\eta_n},v_{\eta_n})\|_{\bm{a},\bm{V}}^2.
        \end{align*}
        Hence $\|(u_{\eta_n}, v_{\eta_n})\|_{\bm{a},\bm{V}}$ is bounded in $\mathscr{H}$. Since $\eta_n \to 1$, by Lemma \ref{Lem:left conti.}, we have
        \begin{align*}
        \lim_{n \to \infty} I_1(u_{\eta_n}, v_{\eta_n}) 
            &=\lim_{n \to \infty} \bigg[I_{\eta_n}(u_{\eta_n}, v_{\eta_n})  \\
            &\quad + (\eta_n - 1)\bigg\{\frac{\mu}{2p}\int_{\mathbb{R}^3}(I_{\alpha}*|u_{\eta_n}|^p)|u_{\eta_n}|^pdx + \frac{\nu}{2q}\int_{\mathbb{R}^3}(I_{\alpha}*|v_{\eta_n}|_q)|v_{\eta_n}|^q\bigg\}\bigg] \\
            =&\ \lim_{n \to \infty} I_{\eta_n}(u_{\eta_n}, v_{\eta_n}) \\
            =&\ \lim_{n \to \infty} c_{\eta_n} \\
            =&\ c_1.
        \end{align*}
        Since $I_{\eta_n}'(u_{\eta_n},v_{\eta_n})=0$, by the Hardy--Littlewood--Sobolev inequality, the H\"{o}lder inequality and the Sobolev embedding theorem we have
        \begin{align*}
            & |\langle I_1^{\prime}(u_{\eta_n}, v_{\eta_n}), (\varphi, \psi) \rangle| \\
            &\le\bigg[\left|\langle I_{\eta_n}^{\prime}(u_{\eta_n}, v_{\eta_n}), (\varphi, \psi) \rangle\right|  \\
            &\quad\quad + \mu|\eta_n - 1|\int_{\mathbb{R}^3} (I_{\alpha}*|u_{\eta_n}|^p)|u_{\eta_n}|^{p - 1}|\varphi|\,dx
             + \nu|\eta_n - 1|\int_{\mathbb{R}^3} (I_{\alpha}*|v_{\eta_n}|^q)|v_{\eta_n}|^{q - 1}|\psi|\,dx\bigg] \\
            &\le C|\eta_n - 1|\sup_n\|u_{\eta_n}\|_{\frac{6p}{3+\alpha}}^{2p-1}\|\varphi\|_{H^1(\mathbb{R}^N)}+C|\eta_n - 1|\sup_n\|v_{\eta_n}\|_{\frac{6q}{3+\alpha}}^{2q-1}\|\psi\|_{H^1(\mathbb{R}^N)}  \\
            &\le C'|\eta_n-1|\|(\varphi,\psi)\|_{\bm{a},\bm{V}^{\infty}},
        \end{align*}
        where $C, C'>0$ are constants which depend only on $p$, $q$, $a_1$, $a_2$, $V_1$ and $V_2$. This means $\|I_1'(u_{\eta_n},v_{\eta_n})\|_{\mathscr{H}^*}\to 0$ as $n\to\infty$. Therefore, $\{(u_{\eta_n}, v_{\eta_n})\}$ is a bounded $(\mathrm{PS})_{c_1}$ sequence for $I = I_1$. Then by Lemma \ref{Lem:it is critical point}, $I$ has a nontrivial critical point $(u_0, v_0) \in \mathscr{H}$ and $I(u_0, v_0) = c_1$.
    \end{proof}
    \begin{Lem} \label{Lem:0 < m < infty}
        $0 < m < \infty$.
    \end{Lem}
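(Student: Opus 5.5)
Since $\mathcal{N}\neq\emptyset$ by Lemma \ref{Lem:N is not empty}, we have $m\le I(u_0,v_0)=c_1<\infty$. The real content is therefore $m>0$. The plan is to combine the Nehari identity and the Pohozaev identity satisfied by any $(u,v)\in\mathcal{N}$, exactly as in the boundedness argument of Lemma \ref{Lem:N is not empty} with $\eta=1$. This gives both a positive lower bound for $I$ in terms of the norm and a uniform lower bound for the norm itself.

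\textbf{Step 1: lower bound for $I$ on $\mathcal{N}$.} Let $(u,v)\in\mathcal{N}$. Since $I'(u,v)=0$, the Pohozaev identity $P_1(u,v)=0$ holds, hence $J_1(u,v)=\langle I'(u,v),(u,v)\rangle+2P_1(u,v)=0$. Repeating the computation of Lemma \ref{Lem:N is not empty} (subtracting $\frac{1}{2(p+\alpha+3)}J_1$ from $I$ and using \ref{assumption:weak differentiable}, the Hardy inequality and Lemma \ref{lem:potential_term_est}), I expect to obtain
\begin{align*}
I(u,v)\ge \frac{p+\alpha-1}{2(p+\alpha+3)}(1-\delta)\|(u,v)\|_{\bm{a},\bm{V}}^2 .
\end{align*}
The term involving $(q-p)$ and the Kirchhoff quartic term are nonnegative since $p\le q$. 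The factor in front of the norm is positive because $p>1$ and $\theta<1$.

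\textbf{Step 2: uniform lower bound for the norm.} From $\langle I'(u,v),(u,v)\rangle=0$, I drop the nonnegative $b_i$ terms and use Lemma \ref{lem:potential_term_est} together with \eqref{eq:convolution-term-est} and Lemma \ref{lem:equivalence_of_norm}. This should give
\begin{align*}
(1-\delta)\|(u,v)\|_{\bm{a},\bm{V}}^2\le \overline{C}\left(\|(u,v)\|_{\bm{a},\bm{V}}^{2p}+\|(u,v)\|_{\bm{a},\bm{V}}^{2q}\right).
\end{align*}
Because $p,q>1$ and $(u,v)\neq(0,0)$, this forces $\|(u,v)\|_{\bm{a},\bm{V}}\ge\xi>0$, with $\xi$ independent of $(u,v)$. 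The argument is as in Lemma \ref{Lem:Evaluate from below of critical point of G_eta infty }.

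\textbf{Conclusion and main obstacle.} Combining the two steps gives $m\ge \frac{p+\alpha-1}{2(p+\alpha+3)}(1-\delta)\xi^2>0$. The main point to check is the sign bookkeeping in Step 1. The coefficient of the gradient terms after absorbing the $(\nabla V_i,x)$ contribution via Hardy's inequality must stay positive, which holds since $\theta<1$. We also need that the Pohozaev identity is available for critical points of $I$ with nonconstant $V_i$; this is used already in Lemma \ref{Lem:N is not empty}, so I would simply invoke it there.
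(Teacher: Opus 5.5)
Your proposal is correct and follows the paper's proof essentially step for step. You get $m\le I(u_0,v_0)=c_1$ from Lemma \ref{Lem:N is not empty}, a uniform bound $\|(u,v)\|_{\bm{a},\bm{V}}\ge\xi>0$ on $\mathcal{N}$ from the Nehari identity via Lemma \ref{lem:potential_term_est} and \eqref{eq:convolution-term-est}, and $I\ge\frac{p+\alpha-1}{2(p+\alpha+3)}(1-\delta)\|(u,v)\|_{\bm{a},\bm{V}}^2$ by subtracting $\frac{1}{2(p+\alpha+3)}J$ and using \ref{assumption:weak differentiable} with the Hardy inequality. One small correction: the quartic Kirchhoff term has coefficient $\frac{p+\alpha-1}{4(p+\alpha+3)}>0$, so it is nonnegative because $p>1$, not because $p\le q$.
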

    \begin{proof}
        Clearly $m \le I(u_0, v_0) = c_1 < \infty$, so it remains to show $m > 0$. For any $(u, v) \in \mathcal{N}$, since $\langle I^{\prime}(u, v), (u, v) \rangle = 0$, that is,
        \begin{align*}
            &\|u\|_{a_1, V_1}^2 + \|v\|_{a_2, V_2}^2- 2\lambda \int_{\mathbb{R}^3} uv\,dx + b_1\|\nabla u\|_2^4 + b_2\|\nabla v\|_2^4 \\
            &\quad\quad =\mu\int_{\mathbb{R}^3}(I_{\alpha}*|u|^p)|u|^pdx +\nu\int_{\mathbb{R}^3}(I_{\alpha}*|v|^q)|v|^qdx,
        \end{align*}
        by Lemma \ref{lem:potential_term_est} we obtain
        \begin{align*}
            (1 - \delta)\|(u, v)\|_{\bm{a},\bm{V}}^2 &\le \|u\|_{a_1, V_1}^2 + \|v\|_{a_2, V_2}^2 - 2\lambda \int_{\mathbb{R}^3} uv\,dx \\
            &\le \mu \int_{\mathbb{R}^3}(I_{\alpha}*|u|^p)|u|^pdx +\nu\int_{\mathbb{R}^3} (I_{\alpha}*|v|^q)|v|^qdx \\
            &\le \overline{C}_1(\|(u, v)\|_{\bm{a},\bm{V}}^{2p} + \|(u, v)\|_{\bm{a},\bm{V}}^{2q}) 
        \end{align*}
        for some constant $\overline{C}_1=\overline{C}_1(\alpha, p,q,\mu,\nu, a_1, a_2, V_1, V_2)>0$ which is independent of $(u,v)$. Hence, we obtain
        \begin{align*}
        \|(u,v)\|_{\bm{a},\bm{V}}\ge \min\left\{\left(\frac{1-\delta}{2\overline{C}_1}\right)^{\frac{1}{2p-2}}, \left(\frac{1-\delta}{2\overline{C}_1}\right)^{\frac{1}{2q-2}}\right\}>0
        \end{align*}
        
        On the other hand, for $(u, v) \in \mathcal{N}$, the Pohozaev identity holds:
        \begin{align*}
            P(u, v) &\coloneqq \frac{1}{2}(a_1|\nabla u|^2 + a_2|\nabla v|_2^2) + \frac{3}{2}\left[\int_{\mathbb{R}^3} (V_1(x)u^2 + V_2(x)v^2)\,dx\right] \\
            &\quad + \frac{1}{2}\left[\int_{\mathbb{R}^3} \left[(\nabla V_1(x), x)u^2 + (\nabla V_2(x), x)v^2\right]\,dx\right] \\
            &\quad + \frac{1}{2}(b_1\|\nabla u\|_2^4 + b_2\|\nabla v\|_2^4) - \mu\frac{3+\alpha}{2p} \int_{\mathbb{R}^3}(I_{\alpha}*|u|^p)|u|^pdx - \nu\frac{3+\alpha}{2q}\int_{\mathbb{R}^3} (I_{\alpha}*|v|^q)|v|^qdx\\
            &\quad - 3\lambda \int_{\mathbb{R}^3} uv\,dx = 0.
        \end{align*}
        By the same argument as proof of Lemma \ref{Lem:N is not empty}, we get
        \begin{align*}
            I(u, v) &\ge \frac{p+\alpha-1}{2(p+\alpha+3)}(1-\delta)\|(u, v)\|_{\bm{a},\bm{V}}^2\\
            &\ge\frac{p+\alpha-1}{2(p+\alpha+3)}(1-\delta)\min\left\{\left(\frac{1-\delta}{2\overline{C}_{1}}\right)^{\frac{2}{p-2}}, \left(\frac{1-\delta}{2\overline{C}_{1}}\right)^{\frac{2}{q-2}}\right\}.
        \end{align*}
        Therefore, we obtain $m > 0$.
    \end{proof}
    \begin{proof}[Proof of Theorem A]
        We first note that $c_1 < m_{\eta}^{\infty}$ holds by Lemma \ref{Lem:MP level is less than limit energy level} and  $m \le c_1$ holds by the proof of Lemma \ref{Lem:N is not empty}. Let $\{(u_n, v_n)\}$ be a sequence of nontrivial critical points of $I$ satisfying $I(u_n, v_n) \to m$. Since $I'(u_n, v_n)=0$, one can use the same argument as the proof of Lemma \ref{Lem:N is not empty} to show that $\{(u_n, v_n)\}$ is a bounded in $\mathscr{H}$ and thus $\{(u_n,v_n)\}$ is a bounded $(\mathrm{PS})_m$ sequence of $I$.
         Hence we can use Lemma \ref{Lem:it is critical point} to conclude that there exists $(u, v) \in \mathscr{H} \setminus \{(0, 0)\}$ such that
        \begin{align*}
            I(u, v) = m \quad \text{ and } \quad I^{\prime}(u, v) = 0,
        \end{align*}
        that is, the ground state solution $(u,v)$ for system \eqref{eq:NKC}. The proof of Theorem A has been completed.
    \end{proof}

\section{Proof of Theorem B}

\subsection{The Nehari--Pohozaev manifold}

When $p=1+\frac{\alpha}{3}$, the splitting lemma is not available, and therefore we are not able to obtain the global compactness result. Therefore by imposing a slightly stronger condition on the potentials we obtain a ground state solution directly as a minimizer on the corresponding Nehari--Pohozaev manifold.   

We first define the following functional 
\begin{align}\label{eq:def_of_J}
\begin{split}
J(u,v)&=\langle I'(u,v), (u,v)\rangle+2P(u,v) \\
      &=2(a_1\|\nabla u\|_2^2+a_2\|\nabla v\|_2^2)+4\int_{\mathbb{R}^3}(V_1(x)u^2+V_2(x)v^2-2\lambda uv)dx \\
      &\quad +\int_{\mathbb{R}^3}\{(\nabla V_1(x),x)u^2+(\nabla V_2(x),x)v^2\}dx \\
      &\quad +2(b_1\|\nabla u\|_2^2+b_2\|\nabla v\|_2^2) \\
      &\quad -\mu\frac{p+\alpha+3}{p}\int_{\mathbb{R}^3}(I_{\alpha}*|u|^p)|u|^pdx-\nu\frac{q+\alpha+3}{q}\int_{\mathbb{R}^3}(I_{\alpha}*|v|^q)|v|^qdx
\end{split}
\end{align}
and the following set
\begin{align*}
\mathcal{M}=\{u\in \mathscr{H}\setminus \{(0,0)\}\mid J(u,v)=0\}.
\end{align*}

\begin{Lem}
Assume that \ref{assumption:continuous} and \ref{assumption:potential-2} holds. Then, for any $(u, v) \in \mathscr{H} \setminus \{(0, 0)\}$ the following inequality holds.
 \begin{align}\label{eq:relationship between I_eta and J_eta_general}
        \begin{split}
            I(u,v)\ge &\ I(u^t,v^t)+\frac{1-t^8}{8}J(u,v) \\
                &\quad +\frac{(1-t^4)^2}{4}(a_1(1-\theta)\|\nabla u\|_2^2+a_2(1-\theta)\|\nabla v\|_2^2)
        \end{split}
        \end{align}
for $t>0$. In particular, it holds that
\begin{align}\label{eq:I-(1/8)J}
I(u,v)\ge \frac{1}{8}J(u,v)+\frac{1}{4}(a_1(1-\theta)\|\nabla u\|_2^2+a_2(1-\theta)\|\nabla v\|_2^2).
\end{align}
\end{Lem}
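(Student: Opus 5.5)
The plan is to prove \eqref{eq:relationship between I_eta and J_eta_general} by direct computation, along the lines of Lemma~\ref{Lem:relationship between I_eta and J_eta}. Writing $w^t(x)=tw(t^{-2}x)$, a change of variables gives
\begin{align*}
&\|\nabla w^t\|_2^2=t^4\|\nabla w\|_2^2,\qquad \int_{\mathbb{R}^3}V_i(x)|w^t|^2dx=t^8\int_{\mathbb{R}^3}V_i(t^2x)w^2dx,\qquad \int_{\mathbb{R}^3}u^tv^tdx=t^8\int_{\mathbb{R}^3}uv\,dx,\\
&\int_{\mathbb{R}^3}(I_\alpha*|w^t|^r)|w^t|^rdx=t^{2(r+\alpha+3)}\int_{\mathbb{R}^3}(I_\alpha*|w|^r)|w|^rdx .
\end{align*}
I will then write out $I(u,v)-I(u^t,v^t)-\frac{1-t^8}{8}J(u,v)$ and group it term by term. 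Here I read the Kirchhoff part of $J$ in \eqref{eq:def_of_J} as $2(b_1\|\nabla u\|_2^4+b_2\|\nabla v\|_2^4)$, which is what $\langle I'(u,v),(u,v)\rangle+2P(u,v)$ actually produces.

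Each group is handled separately.
\begin{itemize}
\item \emph{Gradient terms.} Their coefficient is $\frac{1-t^4}{2}-\frac{1-t^8}{4}=\frac{(1-t^4)^2}{4}$, giving $\frac{(1-t^4)^2}{4}(a_1\|\nabla u\|_2^2+a_2\|\nabla v\|_2^2)$.
\item \emph{Kirchhoff terms.} They cancel exactly, since $\frac{1-t^8}{4}-\frac{1-t^8}{8}\cdot 2=0$.
\item \emph{Coupling terms.} They also cancel: $-\lambda(1-t^8)\int uv+\frac{1-t^8}{8}\cdot 8\lambda\int uv=0$.
\item \emph{Potential terms.} They combine to
\[
\frac18\int_{\mathbb{R}^3}\bigl\{4t^8[V_1(x)-V_1(t^2x)]-(1-t^8)(\nabla V_1(x),x)\bigr\}u^2dx ,
\]
and similarly for $V_2,v$.
\end{itemize}
By \ref{assumption:potential-2} the $V_1$ integral is bounded below by $-\frac{\theta a_1(1-t^4)^2}{16}\int u^2/|x|^2$. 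The Hardy inequality (Lemma~\ref{Lem: Hardy_ineq}) then bounds this below by $-\frac{\theta a_1(1-t^4)^2}{4}\|\nabla u\|_2^2$. Combined with the gradient group, this yields exactly the factor $(1-\theta)$ in the claim.

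For the Choquard terms, set $k=r+\alpha+3$ with $r\in\{p,q\}$. Each contributes $\frac{1}{r}h_k(t)\,\mu\int(I_\alpha*|u|^p)|u|^p$ (respectively with $\nu,v,q$), where
\[
h_k(t)=\frac{k}{8}(1-t^8)-\frac12(1-t^{2k}).
\]
I need $h_k\ge0$ on $(0,\infty)$. We have $h_k(1)=0$ and $h_k'(t)=kt^7(t^{2k-8}-1)$. Since $p\ge\frac{3+\alpha}{3}$ forces $k>4$, $h_k'$ is negative on $(0,1)$ and positive on $(1,\infty)$, so $t=1$ is the global minimum and $h_k\ge 0$. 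Dropping these nonnegative terms gives \eqref{eq:relationship between I_eta and J_eta_general}.

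For \eqref{eq:I-(1/8)J}, I will let $t\to0^+$. Clearly $I(u^t,v^t)\to0$, because every term carries a positive power of $t$ and $V_i$ is bounded by \ref{assumption:continuous}, \ref{assumption:constant at infty}. The remaining coefficients tend to $\frac18$ and $\frac14$.

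The main obstacle is purely bookkeeping. The coefficients must match exactly so that the Kirchhoff and $\lambda$ terms cancel, and the constant $\frac{\theta a_i}{2|x|^2}$ in \ref{assumption:potential-2}, combined with the factor $\frac18$ and the Hardy constant $\frac14$, must produce precisely $\theta a_i\frac{(1-t^4)^2}{4}\|\nabla u\|_2^2$. A further point to note is that \ref{assumption:potential-2} is used only where $x\neq0$, which is harmless since it is applied inside the integral.
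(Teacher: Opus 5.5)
Your proposal is correct and follows the paper's proof essentially step by step. You use the same term-by-term grouping of $I(u,v)-I(u^t,v^t)-\frac{1-t^8}{8}J(u,v)$. You use the same rewriting of the potential terms as $\frac18\int_{\mathbb{R}^3}\{4t^8[V_1(x)-V_1(t^2x)]-(1-t^8)(\nabla V_1(x),x)\}u^2\,dx$ (and likewise for $V_2,v$), so that \ref{assumption:potential-2} and the Hardy inequality give the factor $(1-\theta)$. You use the same auxiliary function for the Choquard terms, and you justify its nonnegativity more explicitly via $h_k'(t)=kt^7(t^{2k-8}-1)$ with $k>4$.

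Your reading of the Kirchhoff part of $J$ as $2(b_1\|\nabla u\|_2^4+b_2\|\nabla v\|_2^4)$ is the right one. The paper's displayed $J$ has a typo there, and so does its splitting of the gradient coefficient, where $\frac{1-t^4}{8}$ should be $\frac{1-t^8}{8}$.

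For the ``in particular'' part you do not need \ref{assumption:constant at infty}. Nonnegativity $V_i\ge0$ already gives $\liminf_{t\to0^+}I(u^t,v^t)\ge0$. Alternatively, take $t=0$ in \ref{assumption:potential-2} to get $(\nabla V_i(x),x)\le\frac{\theta a_i}{2|x|^2}$, which yields the estimate directly from $I-\frac18J$.
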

\begin{proof}
Note that
\begin{align}\label{eq:I_ut_vt}
\begin{split}
I(u^t, v^t)=&\ \frac{t^4}{2}(a_1\|\nabla u\|_2+a_2\|\nabla v\|^2)+\frac{t^8}{2}\int_{\mathbb{R}^3}(V_1(t^2x)u^2+V_2(t^2x)v^2-2\lambda uv)dx \\
            &\quad+\frac{t^8}{4}(b_1\|\nabla u\|_2^4+b_2\nabla v\|_2^4) \\
            &\quad-\mu\frac{t^{2(p+\alpha+3)}}{2p}\int_{\mathbb{R}^3}(I_{\alpha}*|u|^p)|u|^pdx-\nu\frac{t^{2(q+\alpha+3)}}{2q}\int_{\mathbb{R}^3}(I_{\alpha}*|v|^q)|v|^qdx.
\end{split}
\end{align}
Hence
\begin{align*}
  &\ I(u,v)-I(u^t, v^t) \\
=&\ \frac{1-t^4}{2}(a_1\|\nabla u\|_2^2+a_2\|\nabla v\|_2^2)+\frac{1-t^8}{4}(b_1\|\nabla u\|_2^4+b_2\|\nabla v\|_2^4) \\
 &\quad +\frac{1}{2}\int_{\mathbb{R}^3}\{(V_1(x)-t^8V_1(t^2x))u^2+(V_2(x)-t^8V_2(t^2x))v^2\}dx-\lambda(1-t^8)\int_{\mathbb{R}^3}uvdx \\
=&\ -\mu\frac{1-t^{2(p+\alpha+3)}}{2p}\int_{\mathbb{R}^3}(I_{\alpha}*|u|^p)|u|^pdx-\nu\frac{1-t^{2(q+\alpha+3)}}{2q}\int_{\mathbb{R}^3}(I_{\alpha}*|v|^q)|v|^qdx.
\end{align*}
Observe that
\begin{align*}
 &\ \dfrac{1-t^4}{2}(a_1\|\nabla u\|_2^2+a_2\|\nabla v\|_2^2) \\
=&\ \frac{1-t^4}{8}(2a_1\|\nabla u\|_2^2+2a_2\|\nabla v\|_2^2)+\frac{(1-t^4)^2}{4}(a_1\|\nabla u\|_2^2+a_2\|\nabla v\|_2^2).
\end{align*}
Using \ref{assumption:potential-2} and the Hardy inequality we obtain
\begin{align*}
 &\frac{1}{2}\int_{\mathbb{R}^3}(V_1(x)-t^8V_1(t^2x))u^2dx+\frac{1}{2}\int_{\mathbb{R}^3}(V_2(x)-t^8V_2(t^2x))v^2dx \\
=&\frac{1-t^8}{8}\int_{\mathbb{R}^3}\{4V_1(x)+(\nabla V_1(x), x)\}u^2dx \\
 &\ +\frac{1}{8}\int_{\mathbb{R}^3}[4t^8\{V_1(x)-V_1(t^2x)\}-(1-t^8)(\nabla V_1(x), x)]u^2 \\ 
 &\ +\frac{1-t^8}{8}\int_{\mathbb{R}^3}\{4V_2(x)+(\nabla V_2(x)\cdot x)\}v^2dx \\
 &\ +\frac{1}{8}\int_{\mathbb{R}^3}[4t^8\{V_2(x)-V_2(t^2x)\}-(1-t^8)(\nabla V_2(x), x)]v^2 \\
\ge &\ \frac{1-t^8}{8}\int_{\mathbb{R}^3}\{4V_1(x)+(\nabla V_1(x)\cdot x)\}u^2+\{4V_2(x)+(\nabla V_2(x),x)v^2\}dx \\
    &\ -\frac{\theta(1-t^4)^2}{4}(a_1\|\nabla u\|_2^2+a_2\|\nabla v\|_2^2).
\end{align*}

Hence we obtain
\begin{align*}
\begin{split}
I(u,v)\ge&\ I(u^t,v^t)+\frac{1-t^8}{8}J(u,v)+\frac{(1-t^4)^2}{4}(a_1(1-\theta)\|\nabla u\|_2^2+a_2(1-\theta)\|\nabla v\|_2^2) \\
 &\ \ \ +\left\{\frac{p+\alpha+3}{8p}(1-t^8)-\frac{1-t^{2(p+\alpha+3)}}{2p}\right\}\mu\int_{\mathbb{R}^3}(I_{\alpha}*|u|^p)|u|^pdx \\
 &\ \ \ +\left\{\frac{q+\alpha+3}{8q}(1-t^8)-\frac{1-t^{2(p+\alpha+3)}}{2q}\right\}\nu\int_{\mathbb{R}^3}(I_{\alpha}*|v|^q)|v|^qdx.
\end{split}
\end{align*}

Finally, observe that for
\begin{align*}
g(t):=\frac{r+\alpha+3}{8r}(1-t^8)-\frac{1-t^{2(r+\alpha+3)}}{2r}
\end{align*}
we have $g(t) > g(1) = 0 $ for all $t \in [0,1)\cup(1,\infty)$, provided that $0 <\alpha < 3$ and $\frac{3+\alpha}{3} \le r \le 3+\alpha$. Therefore, the inequality \eqref{eq:relationship between I_eta and J_eta_general} holds.
\end{proof}
The following corollary follows immediately from the lemma above.
\begin{Cor}\label{Cor:maximum}
   Assume that \ref{assumption:continuous} and \ref{assumption:potential-2} holds. Then for any $(u,v)\in\mathcal{M}$ it holds that
   \begin{align*}
    I(u,v)=\max_{t>0}I(u^t, v^t).
   \end{align*}
\end{Cor}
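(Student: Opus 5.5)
The plan is to apply inequality \eqref{eq:relationship between I_eta and J_eta_general} of the preceding lemma directly to a point of $\mathcal{M}$. Fix $(u,v)\in\mathcal{M}$, so that $(u,v)\ne(0,0)$ and $J(u,v)=0$. For every $t>0$ the lemma then gives
\begin{align*}
I(u,v)\ge I(u^t,v^t)+\frac{(1-t^4)^2}{4}(1-\theta)\left(a_1\|\nabla u\|_2^2+a_2\|\nabla v\|_2^2\right)\ge I(u^t,v^t),
\end{align*}
because $\theta\in[0,1)$ and $a_1,a_2>0$ make the last term nonnegative. Hence $I(u,v)\ge \sup_{t>0}I(u^t,v^t)$.

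For the reverse inequality, note that $u^1=u$ and $v^1=v$ by the definition $w^t(x)=tw(t^{-2}x)$. So $I(u,v)=I(u^1,v^1)\le\sup_{t>0}I(u^t,v^t)$. This shows that the supremum is attained at $t=1$, so it is a maximum and equals $I(u,v)$. If $(u,v)\ne(0,0)$, the gradient term $a_1\|\nabla u\|_2^2+a_2\|\nabla v\|_2^2$ is positive, and then the maximum is strict, i.e.\ attained only at $t=1$. This is not required by the statement.

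The only point needing care is already contained in the previous lemma: the coefficient functions $g(t)$ of the Choquard terms must be nonnegative for $\frac{3+\alpha}{3}\le r\le 3+\alpha$, and the potential terms must be controlled through \ref{assumption:potential-2} and the Hardy inequality. Since that lemma may be assumed, the corollary follows by the substitution $J(u,v)=0$ together with the trivial observation at $t=1$. I expect no real obstacle beyond checking that $(1-\theta)\ge 0$, so that the dropped term has the right sign.
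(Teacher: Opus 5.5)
Your proposal is correct and is exactly the paper's argument: the paper says the corollary follows immediately from the preceding lemma. Concretely, one substitutes $J(u,v)=0$ into \eqref{eq:relationship between I_eta and J_eta_general}, drops the nonnegative term $\frac{(1-t^4)^2}{4}(1-\theta)\left(a_1\|\nabla u\|_2^2+a_2\|\nabla v\|_2^2\right)$, and notes that $u^1=u$ and $v^1=v$. Your extra remark that the maximum is strict at $t=1$ is also correct, since $(u,v)\neq(0,0)$ in $\mathscr{H}$ forces a nonzero gradient; this same strict inequality is what the paper uses for uniqueness in Lemma \ref{lem:tuv_in_M}.
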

\begin{Lem} Assume that \ref{assumption:continuous}, \ref{assumption:potential} and \ref{assumption:potential-2} hold. There exists $\gamma_1, \gamma_2>0$ such that
\begin{align}\label{eq:potential_ineq}
\begin{split}
 &\ \gamma_1\|(u,v)\|^2 \\
\le&\ a_1\|\nabla u\|_2^2+a_2\|\nabla v\|_2^2 \\
 &                       \quad +\frac{1}{2}\int_{\mathbb{R}^3}\left[\{4V_1(x)+(\nabla V_1(x),x)\}u^2+\{4V_2(x)+(\nabla V_2(x), x)\}v^2-8\lambda uv\right]dx \\
 &\quad \le \gamma_2\|(u,v)\|^2
\end{split}
\end{align}
for $(u,v)\in \mathscr{H}$.
\end{Lem}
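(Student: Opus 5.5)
The plan is to read off from \ref{assumption:potential-2} two pointwise bounds on $(\nabla V_i(x),x)$, one from each end of the parameter range $t\in(0,\infty)$. Each bound is then converted into an $H^1$-estimate with the Hardy inequality (Lemma \ref{Lem: Hardy_ineq}), and the coupling term is absorbed using \ref{assumption:potential-const}. Throughout I use the standing assumption \ref{assumption:constant at infty}, which is implicit in the paper's use of \ref{assumption:potential-const}: $V_i(y)\to V_i^{\infty}$ as $|y|\to\infty$ and $V_i\le V_i^\infty$. Fix $x\neq 0$ and $i\in\{1,2\}$.

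\emph{Upper bound ($t\to 0$).} Let $t\to 0^+$ in \ref{assumption:potential-2}. By continuity of $V_i$, the term $4t^8[V_i(x)-V_i(t^2x)]$ tends to $0$, and $(1-t^8)\to 1$, $(1-t^4)^2\to 1$. This yields
\begin{align*}
(\nabla V_i(x),x)\le \frac{\theta a_i}{2|x|^2}.
\end{align*}

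\emph{Lower bound ($t\to\infty$).} Divide \ref{assumption:potential-2} by $t^8$ and let $t\to\infty$. Since $V_i(t^2x)\to V_i^\infty$ for $x\neq 0$, $-(1-t^8)/t^8\to 1$ and $(1-t^4)^2/t^8\to 1$, this gives
\begin{align*}
4V_i(x)+(\nabla V_i(x),x)\ge 4V_i^{\infty}-\frac{\theta a_i}{2|x|^2}.
\end{align*}

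\emph{Lower estimate in \eqref{eq:potential_ineq}.} Insert the lower bound into the quadratic form. By Hardy, $\frac{\theta a_1}{4}\int u^2/|x|^2\,dx\le \theta a_1\|\nabla u\|_2^2$, so the $u$-part is at least $(1-\theta)a_1\|\nabla u\|_2^2+2V_1^\infty\|u\|_2^2$, and similarly for $v$. For the coupling, \ref{assumption:potential-const} gives
\begin{align*}
4\lambda|uv|\le 4\delta\sqrt{V_1^\infty V_2^\infty}|u||v|\le 2\delta(V_1^\infty u^2+V_2^\infty v^2).
\end{align*}
Hence the middle expression is at least
\begin{align*}
(1-\theta)(a_1\|\nabla u\|_2^2+a_2\|\nabla v\|_2^2)+2(1-\delta)(V_1^\infty\|u\|_2^2+V_2^\infty\|v\|_2^2).
\end{align*}
This gives $\gamma_1=\min\{(1-\theta)a_1,(1-\theta)a_2,2(1-\delta)V_1^\infty,2(1-\delta)V_2^\infty\}>0$.

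\emph{Upper estimate in \eqref{eq:potential_ineq}.} Use $4V_i\le 4V_i^\infty$ together with the upper bound on $(\nabla V_i(x),x)$, then Hardy again, so the $u$-part is at most $(1+\theta)a_1\|\nabla u\|_2^2+2V_1^\infty\|u\|_2^2$. Bound $4\lambda|uv|\le 2\lambda(u^2+v^2)$. Together these give $\gamma_2$ depending on $a_i$, $V_i^\infty$ and $\lambda$.

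The delicate step is the two limit passages in \ref{assumption:potential-2}. Both are pointwise for each fixed $x\neq 0$; the set $\{0\}$ has measure zero, so the bounds hold a.e., which suffices inside the integrals. The $t\to\infty$ limit is the one that genuinely uses the existence of $\lim_{|y|\to\infty}V_i(y)$. Without it one would try instead to bound $4V_i(x)+(\nabla V_i(x),x)$ from below by a multiple of $V_i(x)$, which does not seem to follow from \ref{assumption:potential-2} alone.
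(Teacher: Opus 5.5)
Your proposal is correct and follows the paper's proof: the paper also extracts the pointwise bounds $(\nabla V_i(x),x)\le \frac{\theta a_i}{2|x|^2}$ and $4V_i^\infty-\frac{\theta a_i}{2|x|^2}\le 4V_i(x)+(\nabla V_i(x),x)\le 4V_i^\infty+\frac{\theta a_i}{2|x|^2}$ from \ref{assumption:potential-2} (citing Tang--Chen rather than spelling out the $t\to 0$ and $t\to\infty$ limits as you do), then applies the Hardy inequality and absorbs the coupling term via \ref{assumption:potential-const}, arriving at the same $\gamma_1$. Two points in your version are slightly more careful than the written argument: you bound the coupling through $|uv|$, and you state explicitly that you rely on \ref{assumption:constant at infty}, which the lemma's hypotheses omit but which is needed for $V_i^\infty$ to be defined.
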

\begin{proof} As in the proof of \cite[Lemma 2.5]{Tang-Chen} by using \ref{assumption:potential-2} we can obtain
\begin{align}
&(\nabla V_i(x), x)\le\frac{\theta a_1}{2|x|^2},\ \ \ x\in \mathbb{R}^3\setminus\{0\}, \label{eq:V5toV4} \\
&4V_i(x)+(V_i(x), x)\ge 4V_i^{\infty}-\frac{\theta a_i}{2|x|^2},\ \ \ x\in\mathbb{R}^3\setminus\{0\},\label{eq:V5toV4_2} \\
&4V_i^{\infty}-\frac{\theta a_i}{2|x|^2}\le 4V_i(x)+(\nabla V_i(x), x)\le 4V_i^{\infty}+\frac{\theta a_i}{2|x|^2},\ \ \ x\in\mathbb{R}^3\setminus\{0\}. \label{eq:potential_ineq_2}
\end{align}
By the Hardy inequality it holds that
\begin{align*}
&\ a_1\|\nabla u\|_2^2+a_2\|\nabla v\|_2^2 \\
&\quad +\frac{1}{2}\int_{\mathbb{R}^3}\left[\{4V_1(x)+(\nabla V_1(x),x)\}u^2+\{4V_2(x)+(\nabla V_2(x), x)\}v^2-8\lambda uv\right]dx \\
\le&\ a_1\|\nabla u\|_2^2+a_2\|\nabla v\|_2^2 \\
   &\quad +2V_1^\infty\|u\|_2^2+a_1\theta\|\nabla u\|_2^2+2V_2^{\infty}\|v\|_2^2+a_2\theta\|\nabla v\|_2^2+2\lambda\|u\|_2^2+2\lambda\|v\|_2^2 \\
=&\ \max\{a_1(1+\theta), a_2(1+\theta), 2V_1^{\infty}, 2V_2^{\infty}, 2\lambda\}\|(u,v)\|_2^2.
\end{align*}
By \ref{assumption:potential} and \ref{lem:potential_term_est} we obtain
\begin{align*}
&\ a_1\|\nabla u\|_2^2+a_2\|\nabla v\|_2^2 \\
&\quad +\frac{1}{2}\int_{\mathbb{R}^3}\left[\{4V_1(x)+(\nabla V_1(x),x)\}u^2+\{4V_2(x)+(\nabla V_2(x), x)\}v^2-8\lambda uv\right]dx \\
\ge&\ a_1(1-\theta)\|\nabla u\|_2^2+a_2(1-\theta)\|\nabla v\|_2^2+2\int_{\mathbb{R}^3}(V_1^{\infty}u^2+V_2^{\infty}v^2-2\lambda uv)dx \\
   \ge&\ a_1(1-\theta)\|\nabla u\|_2^2+a_2(1-\theta)\|\nabla v\|_2^2+2\int_{\mathbb{R}^3}(V_1^{\infty}u^2+V_2^{\infty}v^2-2\delta\sqrt{V_1^{\infty}V_2^{\infty}} uv)dx \\ 
   \ge&\ a_1(1-\theta)\|\nabla u\|_2^2+a_2(1-\theta)\|\nabla v\|_2^2+2(1-\delta)(V_1^{\infty}\|u\|_2^2+V_2^{\infty}\|v\|_2^2) \\
   \ge&\ \min\{a_1(1-\theta), a_2(1-\theta), 2(1-\delta)V_1^{\infty}, 2(1-\delta)V_2^{\infty}\}\|(u,v)\|_2^2.
\end{align*}
If we set 
\begin{align*}
&\gamma_1=\min\{a_1(1-\theta), a_2(1-\theta), 2(1-\delta)V_1^{\infty}, 2(1-\delta)V_2^{\infty}\}, \\
&\gamma_2=\max\{a_1(1+\theta), a_2(1+\theta), 2V_1^{\infty}, 2V_2^{\infty}, 2\lambda\}
\end{align*}
we get \eqref{eq:potential_ineq}. 
\end{proof}
\begin{Lem}\label{lem:tuv_in_M}
Suppose that \ref{assumption:continuous}, \ref{assumption:constant at infty}, \ref{assumption:potential} and \ref{assumption:potential-2}. For any $(u,v)\in \mathscr{H}\setminus\{(0,0)\}$ there exists $t=t(u,v)>0$ such that $(u^{t(u,v)}, v^{t(u,v)})\in\mathcal{M}$.
\end{Lem}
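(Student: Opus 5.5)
We study the fibering map $h(t):=I(u^t,v^t)$ for $t>0$ and look for an interior critical point of it. The first step is to relate $h'(t)$ to $J$. From \eqref{eq:I_ut_vt}, after differentiating and substituting $y=t^2x$ in the potential terms, a direct computation gives
\begin{align*}
t\,h'(t)=J(u^t,v^t).
\end{align*}
This is the scaling identity behind \eqref{eq:relationship between I_eta and J_eta_general}, since $J=\langle I',(u,v)\rangle+2P$ and $P$ is the derivative along the dilation $x\mapsto t^{-2}x$. To justify the potential part, we write
\begin{align*}
\frac{d}{dt}\Big[t^8\int V_i(t^2x)u^2dx\Big]=\frac{1}{t}\int\{8V_i(y)+2(\nabla V_i(y),y)\}|u^t|^2dy,
\end{align*}
which is legitimate under \ref{assumption:potential-2} because $V_i\in C^1$. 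Combined with the bound \eqref{eq:potential_ineq_2} and the Hardy inequality, this shows that the integrand is controlled by $\|u^t\|_{H^1}^2$. Consequently, a critical point $t_0>0$ of $h$ satisfies $J(u^{t_0},v^{t_0})=0$, i.e.\ $(u^{t_0},v^{t_0})\in\mathcal{M}$.

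The second step is to show that $h$ attains a positive maximum on $(0,\infty)$.

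\emph{Small $t$.} Using \eqref{eq:I_ut_vt} together with $V_i\ge0$ and Lemma \ref{lem:potential_term_est} applied at the point $t^2x$ (via \ref{assumption:potential}, which gives $V_1u^2+V_2v^2-2\lambda uv\ge(1-\delta)(V_1u^2+V_2v^2)\ge0$ pointwise), we get
\begin{align*}
h(t)\ge \frac{t^4}{2}(a_1\|\nabla u\|_2^2+a_2\|\nabla v\|_2^2)-C t^{2(p+\alpha+3)}-Ct^{2(q+\alpha+3)}.
\end{align*}
Since $2(p+\alpha+3)>4$, this yields $h(t)>0$ for small $t>0$ provided $\nabla u$ or $\nabla v$ is nonzero, which always holds for nonzero $H^1$ functions. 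We also have $h(t)\to0$ as $t\to0$.

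\emph{Large $t$.} By \ref{assumption:constant at infty}, $V_i(t^2x)\le V_i^\infty$, so $h(t)\le I^\infty_1(u^t,v^t)$, and the latter tends to $-\infty$ by the proof of Lemma \ref{Lem:satisfy with assumption of monotonicity trick}. The reason is that the $t^8$ terms (Kirchhoff and potential) are dominated by $t^{2(q+\alpha+3)}$ with $q+\alpha+3>4$. Here we use that $v\neq0$ or $u\neq0$: if only $u\ne0$, the exponent is $2(p+\alpha+3)$, and for $p=\frac{3+\alpha}{3}$ we have $2(p+\alpha+3)=\frac{8(3+\alpha)}{3}>8$, which still suffices.

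It follows that $h$ is continuous, positive near $0$ and negative for large $t$, so it has a global maximum at some $t_0>0$, where $h'(t_0)=0$.

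The main obstacle is rigorously justifying the identity $t h'(t)=J(u^t,v^t)$, that is, differentiating under the integral in the potential term, which needs domination by $|(\nabla V_i(y),y)|u^2$. I would handle this with \eqref{eq:potential_ineq_2} and Hardy's inequality, locally uniformly in $t$. Alternatively, I would avoid differentiation altogether: take $t_0$ to be the maximizer and apply \eqref{eq:relationship between I_eta and J_eta_general} to $(u^{t_0},v^{t_0})$ with $s\to1^\pm$. Since $h(t_0)\ge h(t_0s)$, the inequality $I(w)\ge I(w^s)+\frac{1-s^8}{8}J(w)$ only gives a one-sided sign, so instead I would compute the one-sided difference quotients of $h$ at $t_0$ directly from \eqref{eq:I_ut_vt} and conclude $J=0$.
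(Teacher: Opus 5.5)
Your proposal is correct and follows essentially the same route as the paper: you study the fibering map $h(t)=I(u^t,v^t)$ (the paper's $\zeta$), use $t\,h'(t)=J(u^t,v^t)$, show $h>0$ for small $t$ and $h(t)\to-\infty$ as $t\to\infty$, and conclude that an interior maximizer lies on $\mathcal{M}$. Your dominated-convergence justification of the derivative via \eqref{eq:potential_ineq_2} and the Hardy inequality supplies a detail the paper leaves implicit. The paper additionally proves that this $t$ is unique, by applying \eqref{eq:relationship between I_eta and J_eta_general} to $(u^{t_1},v^{t_1})$ and $(u^{t_2},v^{t_2})$ and adding the two inequalities, but that goes beyond what the stated lemma asserts.
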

\begin{proof}
Let $(u,v)\in \mathscr{H}\setminus \{(0,0)\}$ be fixed and define a function $\zeta(t):=I(u^t, v^t)$ for $t>0$. By \eqref{eq:def_of_J} and \eqref{eq:I_ut_vt} we have
\begin{align*}
\zeta'(t)&=2t^3(a_1\|\nabla u\|_2^2+a_2\|\nabla v\|_2^2)+4t^7\int_{\mathbb{R}^3}\{V_1(t^2x)+V_2(t^2x)-2\lambda uv\}dx \\
         &\quad\quad +t^7\int_{\mathbb{R}^3}\{(\nabla V_1(t^2x), t^2x)u^2+(\nabla V_2(t^2x), t^2x)v^2\}dx \\
         &\quad\quad +2t^7(b_1\|\nabla u\|_2^4+b_2\|\nabla v\|_2^4) \\
         &\quad\quad -\mu\frac{p+\alpha+3}{p}t^{2(p+\alpha+3)-1}\int_{\mathbb{R}^3}(I_{\alpha}*|u|^p)|u|^pdx \\
         &\quad\quad-\nu\frac{q+\alpha+3}{q}t^{2(q+\alpha+3)-1}\int_{\mathbb{R}^3}(I_{\alpha}*|v|^q)|v|^qdx \\
         &=\frac{J(u^t, v^t)}{t}. 
\end{align*}
Hence it holds that
\begin{align*}
\zeta'(t)=0\ \ \Leftrightarrow\ \ J(u^t, v^t)=0\ \ \Leftrightarrow\ \ (u^t, v^t)\in\mathcal{M}.
\end{align*}
By the proof of Lemma \ref{Lem:satisfy with assumption of monotonicity trick} it can be shown that $\zeta(t)$ satisfies $\zeta(t)>0$ for small $t>0$ and $\lim_{t\to\infty}\zeta(t)=-\infty$. Thus there exists $t_0>0$ such that $\zeta'(t_0)=0$ and $(u^{t_0}, v^{t_0})\in\mathcal{M}$. 

We now claim that $t_0$ is unique for any $(u,v)\in \mathscr{H}\setminus \{(0,0)\}$. Suppose that there exists $t_1, t_2>0$ such that $(u^{t_1}, v^{t_1})$, $(u^{t_2}, v^{t_2})\in\mathcal{M}$, that is, $J(u^{t_1}, v^{t_1})=J(u^{t_2}, v^{t_2})=0$. 

Since $((u^{t_1})^{t_2/t_1}, (v^{t_1})^{t_2/t_1})=(u^{t_2}, v^{t_2})$, by \eqref{eq:relationship between I_eta and J_eta_general} we obtain
\begin{align*}
I(u^{t_1}, v^{t_1})&\ge I(u^{t_2}, v^{t_2})+\frac{t_1^8-t_2^8}{8t_1^8}J(u^{t_1}, v^{t_1}) \\
                   &\quad+\frac{(t_1^4-t_2^4)^2}{4t_1^4}(a_1(1-\theta)\|\nabla u\|_2^2+a_2(1-\theta)\|\nabla v\|_2^2) \\
                   &=I(u^{t_2}, v^{t_2})+\frac{(t_1^4-t_2^4)^2}{4t_1^4}(a_1(1-\theta)\|\nabla u\|_2^2+a_2(1-\theta)\|\nabla v\|_2^2)
\end{align*}
Similarly we have
\begin{align*}
I(u^{t_2}, v^{t_2})&\ge I(u^{t_1}, v^{t_1})+\frac{t_2^8-t_1^8}{8t_2^8}J(u^{t_2}, v^{t_2}) \\
                   &\quad+\frac{(t_2^4-t_1^4)^2}{4t_2^4}(a_1(1-\theta)\|\nabla u\|_2^2+a_2(1-\theta)\|\nabla v\|_2^2) \\
                   &=I(u^{t_1}, v^{t_1})+\frac{(t_2^4-t_1^4)^2}{4t_2^4}(a_1(1-\theta)\|\nabla u\|_2^2+a_2(1-\theta)\|\nabla v\|_2^2).
\end{align*}
Hence we obtain
\begin{align*}
0\ge(t_1^4-t_2^4)^2\frac{t_1^4+t_2^4}{4t_1^4t_2^4}(a_1(1-\theta)\|\nabla u\|^2+a_2(1-\theta)\|\nabla v\|_2^2). 
\end{align*}
Therefore we conclude that $t_1=t_2$ and there exists a unique $t=t(u,v)>0$ such that $(u^{t(u,v)}, v^{t(u,v)})\in\mathcal{M}$. The proof is now complete. 
\end{proof}

Combining Corollary \ref{Cor:maximum} and Lemma \ref{lem:tuv_in_M}, we get the following identity.
\begin{align}\label{eq:Energy level}
m_{\mathcal{M}}:=\inf_{(u,v)\in\mathcal{M}}I(u,v)=\inf_{(u,v)\in \mathscr{H}\setminus \{(0,0)\}}\max_{t>0}I(u^t, v^t).
\end{align}

 The next lemma can be proved by a simple modification of the proof of \cite[Lemma 4.8]{Matsuzawa}.
   
   \begin{Lem}     \label{Lem:weak limit identity}
        Assume that \ref{assumption:continuous}, \ref{assumption:constant at infty}, \ref{assumption:potential} and \ref{assumption:potential-2} holds. If $u_n \rightharpoonup u$ and $v_n \rightharpoonup v$ in $H^1(\mathbb{R}^3)$, then passing to a subsequence, we have the following identities:
        \begin{align}
            \begin{split}
                I(u_n, v_n) &= I(u, v) + I(u_n - u, v_n - v) \\
                &\quad + \frac{1}{2}\left(b_1\|\nabla u\|_2\|\nabla (u_n - u)\|_2^2 + b_2\|\nabla v\|_2^2\|\nabla (v_n - v)\|_2^2\right) + o_n(1),  \label{eq:Identity of I} 
            \end{split} \\
            \begin{split}
                \langle I^{\prime}(u_n, v_n), (u_n, v_n) \rangle &= \langle I^{\prime}(u, v), (u, v) \rangle + \langle I^{\prime}(u_n - u, v_n - v), (u_n - u, v_n - v) \rangle \\
                &\quad + 2\left(b_1\|\nabla u\|_2^2\|\nabla (u_n - u)\|_2^2 + b_2\|\nabla v\|_2^2\|\nabla (v_n - v)\|_2^2\right) + o_n(1),   \label{eq:Identity of I'}
            \end{split}
        \end{align}
        and
        \begin{align} \label{eq:Identity of J}
            \begin{split}
                J(u_n, v_n) &= J(u, v) + J(u_n - u, v_n - v) \\
                &\quad + 4\left(b_1\|\nabla u\|_2^2\|\nabla (u_n - u)\|_2^2 + b_2\|\nabla v\|_2^2\|\nabla (v_n - v)\|_2^2\right) + o_n(1).  
            \end{split}
        \end{align}
        Here $o_n(1)$ means that $o_n(1) \to 0$ as $n \to \infty$.
    \end{Lem}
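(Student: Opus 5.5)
The plan is to decompose each of $I$, $\langle I'(\cdot),\cdot\rangle$ and $J$ into its quadratic, quartic (Kirchhoff), potential, coupling and Choquard parts, and to show that each part splits along $u_n=u+(u_n-u)$, $v_n=v+(v_n-v)$ up to an explicit cross term. First I pass to a subsequence with $u_n\to u$, $v_n\to v$ almost everywhere in $\mathbb{R}^3$, which is possible by local compactness of $H^1(\mathbb{R}^3)\hookrightarrow L^2_{\rm loc}$.

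For the quadratic terms, weak convergence gives $\|\nabla u_n\|_2^2=\|\nabla u\|_2^2+\|\nabla(u_n-u)\|_2^2+o_n(1)$, and the same for $\int V_i(x)u_n^2$. The latter holds because $w\mapsto\int V_1 u w\,dx$ is a bounded linear functional, since $V_1$ is bounded by \ref{assumption:constant at infty}. The coupling term splits likewise: $\int u_nv_n=\int uv+\int(u_n-u)(v_n-v)+o_n(1)$, since $\int u(v_n-v)\to0$ and $\int (u_n-u)v\to 0$.

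For the Kirchhoff term, set $A=\|\nabla u\|_2^2$ and $a_n=\|\nabla(u_n-u)\|_2^2$. Then $\|\nabla u_n\|_2^4=(A+a_n+o_n(1))^2=A^2+a_n^2+2Aa_n+o_n(1)$, using boundedness of $a_n$. With coefficients $\frac14$ in $I$, $1$ in $\langle I',\cdot\rangle$ and $2$ in $J$, this produces exactly the cross terms $\frac12 b_1Aa_n$, $2b_1Aa_n$ and $4b_1Aa_n$ appearing in the three identities. The Choquard terms split by the nonlocal Brezis--Lieb Lemma \ref{lem:nonlocal-B-L}. For $p=\frac{3+\alpha}{3}$ it is applied with $r=p$, where boundedness in $L^{2}$ suffices; this case is allowed since $r>\frac{3+\alpha}{6}$. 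For $q$ it is applied in $L^{\frac{6q}{3+\alpha}}$, which is bounded by Sobolev. No splitting lemma is needed here, only the energy-level Brezis--Lieb statement.

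The only genuinely new ingredient relative to \cite[Lemma 4.8]{Matsuzawa} is the term $\int(\nabla V_i(x),x)u_n^2\,dx$ in $J$, and I expect this to be the main obstacle. By \eqref{eq:potential_ineq_2}, $W_i(x):=(\nabla V_i(x),x)$ satisfies $|W_i(x)|\le 4V_i^\infty+\frac{\theta a_i}{2|x|^2}+4|V_i(x)|$, so $|W_i|\le C(1+|x|^{-2})$. The Hardy inequality (Lemma \ref{Lem: Hardy_ineq}) then makes $w\mapsto\int W_i u w\,dx$ a bounded bilinear form on $H^1$. Expanding $u_n^2=u^2+2u(u_n-u)+(u_n-u)^2$, the cross term $\int W_iu(u_n-u)$ is a bounded linear functional of $u_n-u$ applied to a weakly null sequence, so it tends to $0$. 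This gives $\int W_iu_n^2=\int W_iu^2+\int W_i(u_n-u)^2+o_n(1)$ without needing $W_i\to0$ at infinity.

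Collecting all the pieces gives \eqref{eq:Identity of I}, \eqref{eq:Identity of I'} and \eqref{eq:Identity of J}.
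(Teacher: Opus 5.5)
Your proposal is correct and follows essentially the same route as the paper. The paper defers the routine splittings (quadratic, Kirchhoff, coupling, and the Choquard terms via the nonlocal Brezis--Lieb lemma) to \cite[Lemma 4.8]{Matsuzawa}, and proves only the point you flag: by \eqref{eq:potential_ineq_2} together with the Schwarz and Hardy inequalities, $\varphi\mapsto\int_{\mathbb{R}^3}\{4V_i(x)+(\nabla V_i(x),x)\}w\varphi\,dx$ is a bounded linear functional on $H^1(\mathbb{R}^3)$, so the cross term vanishes by weak convergence. The only difference from your argument is cosmetic: you bound $(\nabla V_i(x),x)$ by itself and treat $\int_{\mathbb{R}^3} V_i(x)u_n^2\,dx$ separately, whereas the paper bounds the combination $4V_i(x)+(\nabla V_i(x),x)$ at once.
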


\begin{proof}
The proof is almost the same as that of \cite[Lemma 4.8]{Matsuzawa}. The only point that requires special attention is in the proof of the identity for $J$, where we need to show that
\begin{align*}
F(u):=\int_{\mathbb{R}^3}\{4V_1(x)+(\nabla V_1(x), x)\}u^2dx,\quad G(v):=\int_{\mathbb{R}^3}\{4V_2(x)+(\nabla V_2(x), x)\}v^2dx
\end{align*}
satisfy
\begin{align*}
F(u_n)=F(u)+F(u_n-u)+o_n(1),\quad G(v_n)=G(v)+G(v_n-v)+o_n(1). 
\end{align*}
To prove these identities, it is sufficient to show that
\begin{align*}
\int_{\mathbb{R}^3}\{4V_1(x)+(\nabla V_1(x), x)\}u(u_n-u)dx=o_n(1),\quad \int_{\mathbb{R}^3}\{4V_2(x)+(\nabla V_2(x), x)\}v(v_n-v)dx=o_n(1). 
\end{align*}
To this end, we show that for any $w\in H^1(\mathbb{R}^3)$, the mapping
\begin{align}\label{eq:bounded-linear}
\varphi\mapsto \int_{\mathbb{R}^3}\{4V_i(x)+(\nabla V_i(x), x)\}w\varphi dx,\quad i=1,2
\end{align}
defines a bounded linear functional on $H^1(\mathbb{R}^3)$. We first note that, in view of \eqref{eq:potential_ineq_2}, it holds that
\begin{align*}
|4V_i(x)+(\nabla V_i(x), x)|\le 4V_i^{\infty}+\frac{\theta a_i}{2|x|^2},\quad x\in\mathbb{R}^3\setminus\{0\}. 
\end{align*}
Hence, by the Schwarz inequality and the Hardy inequality, we obtain
\begin{align*}
&\ \left|\int_{\mathbb{R}^3}\{4V_i(x)+(\nabla V_i(x), x)\}w\varphi dx\right| \\
\le&\ \left(\int_{\mathbb{R}^3}|4V_i(x)+(\nabla V_i(x), x)|w^2dx\right)^{\frac{1}{2}}\left(\int_{\mathbb{R}^3}|4V_i(x)+(\nabla V_i(x), x)|\varphi^2dx\right)^{\frac{1}{2}} \\
\le&\ \left(\int_{\mathbb{R}^3}4V_i^{\infty}w^2 + \int_{\mathbb{R}^3}\frac{\theta a_i}{2|x|^2}w^2dx\right)^{\frac{1}{2}}\left(\int_{\mathbb{R}^3}4V_i^{\infty}\varphi^2 + \int_{\mathbb{R}^3}\frac{\theta a_i}{2|x|^2}\varphi^2dx\right)^{\frac{1}{2}} \\
\le&\ \max\{4V_i^{\infty}, 2\theta a_i\} \|w\|_{H^1(\mathbb{R}^3)} \|\varphi\|_{H^1(\mathbb{R}^3)}. 
\end{align*}
This implies that functional defined in \eqref{eq:bounded-linear} is a bounded linear functional on $H^1(\mathbb{R}^3)$. The proof is complete.
\end{proof}
\begin{Lem}     \label{Lem:positive on NP}
        Assume that \ref{assumption:continuous}, \ref{assumption:constant at infty}, \ref{assumption:potential} and \ref{assumption:potential-2} holds. There exists $\underline{C}> 0$ such that 
        \begin{align}
            \|(u, v)\|_{\bm{a},\bm{V}} \ge \underline{C} \label{eq:positive on NP}
        \end{align}
        for all $(u,v)\in\mathcal{M}$.
    \end{Lem}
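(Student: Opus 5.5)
We would prove the lower bound by combining the identity $J(u,v)=0$ with the coercivity estimate \eqref{eq:potential_ineq} and the Hardy--Littlewood--Sobolev bound \eqref{eq:convolution-term-est}, following the argument of Lemma \ref{Lem:Evaluate from below of critical point of G_eta infty } and Lemma \ref{Lem:0 < m < infty}. Fix $(u,v)\in\mathcal{M}$. Since $J(u,v)=0$ and the Kirchhoff terms $2(b_1\|\nabla u\|_2^4+b_2\|\nabla v\|_2^4)$ in $J$ are nonnegative, we can drop them. This gives
\begin{align*}
2\Big[a_1\|\nabla u\|_2^2+a_2\|\nabla v\|_2^2+\frac12\int_{\mathbb{R}^3}\big(\{4V_1+(\nabla V_1,x)\}u^2+\{4V_2+(\nabla V_2,x)\}v^2-8\lambda uv\big)dx\Big]
\le \mu\tfrac{p+\alpha+3}{p}D_p(u)+\nu\tfrac{q+\alpha+3}{q}D_q(v),
\end{align*}
where $D_r(w)=\int(I_\alpha*|w|^r)|w|^r dx$. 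The bracket on the left is exactly the quantity that \eqref{eq:potential_ineq} bounds from below by $\gamma_1\|(u,v)\|^2$.

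For the right-hand side we apply \eqref{eq:convolution-term-est}: $D_r(w)\le C\|w\|_{H^1}^{2r}$ for $r\in[\frac{3+\alpha}{3},3+\alpha]$. This also covers the endpoint $p=\frac{3+\alpha}{3}$, where $\|w\|_{6p/(3+\alpha)}=\|w\|_2$. Hence
\[
2\gamma_1\|(u,v)\|^2\le \overline{C}\big(\|(u,v)\|^{2p}+\|(u,v)\|^{2q}\big).
\]
Because $p,q>1$ (indeed $p=1+\alpha/3>1$), dividing by $\|(u,v)\|^2>0$ and splitting into the cases $\|(u,v)\|\le1$ and $\|(u,v)\|\ge1$ gives a positive lower bound
\[
\|(u,v)\|\ge\min\{(\gamma_1/\overline{C})^{1/(2p-2)},(\gamma_1/\overline{C})^{1/(2q-2)}\}.
\]
Lemma \ref{lem:equivalence_of_norm} then turns this into the bound \eqref{eq:positive on NP} for $\|(u,v)\|_{\bm{a},\bm{V}}$, with $\underline{C}$ equal to $\sqrt{\beta_1}$ times that minimum.

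The main obstacle is the lower half of \eqref{eq:potential_ineq}, which handles the possibly sign-changing term $(\nabla V_i(x),x)$ through \eqref{eq:V5toV4_2} and the Hardy inequality. Since that lemma is already stated, we take it as given. We would only check that $J$ enters with the correct coefficient: the $\lambda$-term in \eqref{eq:def_of_J} is $-8\lambda uv$, which matches the $-8\lambda uv$ inside the factor $\frac12$ of \eqref{eq:potential_ineq} once the bracket is multiplied by $2$.
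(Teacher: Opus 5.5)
Your proposal is correct and follows essentially the same route as the paper. You use $J(u,v)=0$, drop the nonnegative Kirchhoff terms, bound the quadratic part from below by \eqref{eq:potential_ineq} and the nonlocal terms from above by \eqref{eq:convolution-term-est}, then divide and split into the cases $\|(u,v)\|\le 1$ and $\|(u,v)\|\ge 1$. The only difference is cosmetic: you work in the $H^1$ norm and convert with Lemma \ref{lem:equivalence_of_norm} at the end, whereas the paper mixes the two norms in a single chain, and your tracking of the constants is in fact cleaner than the paper's.
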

\begin{proof}
Take any $(u,v)\in\mathcal{M}$. By $J(u,v)=0$ and Lemma \ref{eq:potential_ineq}, \eqref{eq:convolution-term-est} and Lemma \ref{lem:equivalence_of_norm}  we have
\begin{align*}
    &\ 2\gamma_1\|(u,v)\|_{\bm{a}, \bm{V}}^2 \\
\le&\ 2a_1\|\nabla u\|_2^2+a_2\|\nabla v\|_2^2 \\
 &                       \quad +\int_{\mathbb{R}^3}\left[\{4V_1(x)+(\nabla V_1(x),x)\}u^2+\{4V_2(x)+(\nabla V_2(x), x)\}v^2-8\lambda uv\right]dx \\
   &\ +2(b_1\|\nabla u\|_2^2+b_2\|\nabla v\|_2^2) \\
=&\ \mu\frac{p+\alpha+3}{p}\int_{\mathbb{R}^3}(I_{\alpha}*|u|^p)|u|^pdx+\nu\frac{q+\alpha+3}{q}\int_{\mathbb{R}^3}(I_{\alpha}*|v|^q)|v|^qdx \\
\le&\ C(\|\bm(u,v)\|_{\bm{a}, \bm{V}}^{2p}+\|(u,v)\|_{\bm{a},\bm{V}}^{2q})
\end{align*}
for some constant $C=C(p,q,\alpha, a_1, a_2, V_1, V_2)>0$.   Hence, we have
        \begin{align*}
            0 < \frac{2\gamma_1}{C} &\le \|(u, v)\|_{\bm{a}, \bm{V}}^{2p - 2} + \|(u, v)\|_{\bm{a}, \bm{V}}^{2q - 2} \\
            &
            \begin{cases}
                \le 2\|(u, v)\|_{\bm{a},\bm{V}}^{2p - 2}, & \text{if } \|(u, v)\|_{\bm{a},\bm{V}} \le 1, \\
                \le 2\|(u, v)\|_{\bm{a},\bm{V}}^{2q - 2}, & \text{if } \|(u, v)\|_{\bm{a},\bm{V}} \ge 1
            \end{cases}
        \end{align*}
        and so \eqref{eq:positive on NP} holds if we set
        \begin{align*}
            \underline{C}\coloneqq \min \left\{\left(\frac{1 -\delta}{C}\right)^{\frac{1}{2p - 2}}, \left(\frac{(1-\delta)}{C}\right)^{\frac{1}{2q - 2}}\right\}.
        \end{align*}
        The proof of Lemma \ref{Lem:positive on NP} has been completed. 
\end{proof}
   \begin{Lem}     \label{Lem:inf is positive}
         Assume that \ref{assumption:continuous}, \ref{assumption:constant at infty}, \ref{assumption:potential} and \ref{assumption:potential-2} holds. Then $m_{\mathcal{M}}=\inf_{(u,v)\in\mathcal{M}} > 0$.
    \end{Lem}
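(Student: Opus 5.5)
The plan is to combine the inequality \eqref{eq:I-(1/8)J} with the uniform lower bound of Lemma \ref{Lem:positive on NP}. The aim is a positive lower bound for $I$ on $\mathcal{M}$ that does not depend on the element.

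Take any $(u,v)\in\mathcal{M}$, so $J(u,v)=0$. By \eqref{eq:I-(1/8)J} we have
\begin{align*}
I(u,v)\ge \frac{1-\theta}{4}\left(a_1\|\nabla u\|_2^2+a_2\|\nabla v\|_2^2\right).
\end{align*}
This controls only the gradient part, whereas Lemma \ref{Lem:positive on NP} bounds the full norm $\|(u,v)\|_{\bm{a},\bm{V}}$ from below. To close the gap, I would avoid \eqref{eq:I-(1/8)J} alone and instead take a combination $I-\kappa J$ with a suitable $\kappa\in(0,\frac18]$. The natural choice is to mimic the computation in the proof of Lemma \ref{Lem:N is not empty}, namely $I-\frac{1}{2(p+\alpha+3)}J$, which is legitimate since $J=0$ on $\mathcal{M}$.

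With $p\le q$, the coefficient of the $q$-convolution term in this combination is $\nu\frac{q-p}{2q(p+\alpha+3)}\ge0$, and the $p$-term cancels exactly. The potential terms are handled as in Lemma \ref{Lem:N is not empty}: by \eqref{eq:V5toV4} (which follows from \ref{assumption:potential-2}) and the Hardy inequality, $\int(\nabla V_i(x),x)u^2\,dx\le 2\theta a_i\|\nabla u\|_2^2$. This yields
\begin{align*}
I(u,v)\ge \frac{p+\alpha-1}{2(p+\alpha+3)}\left(\|u\|_{a_1,V_1}^2+\|v\|_{a_2,V_2}^2-2\lambda\int_{\mathbb{R}^3}uv\,dx\right),
\end{align*}
using $\theta<1$ to absorb the gradient loss. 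Here $p+\alpha-1>0$ even when $p=\frac{3+\alpha}{3}$. Lemma \ref{lem:potential_term_est} then bounds the right-hand side below by $\frac{(p+\alpha-1)(1-\delta)}{2(p+\alpha+3)}\|(u,v)\|_{\bm{a},\bm{V}}^2$.

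Finally, Lemma \ref{Lem:positive on NP} gives $\|(u,v)\|_{\bm{a},\bm{V}}\ge\underline{C}$ on $\mathcal{M}$. Hence $I(u,v)\ge \frac{(p+\alpha-1)(1-\delta)}{2(p+\alpha+3)}\underline{C}^2>0$ uniformly on $\mathcal{M}$. Since $\mathcal{M}\neq\emptyset$ by Lemma \ref{lem:tuv_in_M}, taking the infimum gives $m_{\mathcal{M}}>0$.

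The main obstacle is the bookkeeping of the gradient coefficients. The Kirchhoff $b_i$-terms have a favorable sign, but the coefficient of $a_i\|\nabla u\|_2^2$ after subtracting the $\theta$-Hardy loss must remain positive. Its value is $\frac{p+\alpha+1-\theta}{2(p+\alpha+3)}$, which is positive, so the step goes through.

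If the weights do not line up cleanly, there is a fallback. Either combine the gradient bound from \eqref{eq:I-(1/8)J} with the lower bound of \eqref{eq:potential_ineq} applied through $J=0$, or argue by contradiction: a sequence in $\mathcal{M}$ with $I\to0$ would have gradients tending to zero by \eqref{eq:I-(1/8)J}, and then the $J=0$ relation together with the HLS estimate \eqref{eq:convolution-term-est} and Lemma \ref{Lem:positive on NP} would force a contradiction.
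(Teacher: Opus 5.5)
Your proposal is correct and follows essentially the same route as the paper: evaluate $I-\frac{1}{2(p+\alpha+3)}J$ on $\mathcal{M}$, discard the nonnegative Kirchhoff and $\nu$-terms (using $q>p$), control $\int_{\mathbb{R}^3}(\nabla V_i(x),x)u^2\,dx$ by \eqref{eq:V5toV4} and the Hardy inequality, and conclude with Lemma \ref{lem:potential_term_est} and Lemma \ref{Lem:positive on NP}. There is one harmless arithmetic slip: the Hardy loss is $\frac{2\theta}{2(p+\alpha+3)}a_i\|\nabla u\|_2^2$, so the gradient coefficient is $\frac{p+\alpha+1-2\theta}{2(p+\alpha+3)}$ rather than $\frac{p+\alpha+1-\theta}{2(p+\alpha+3)}$; since $\theta<1$ it still exceeds $\frac{p+\alpha-1}{2(p+\alpha+3)}$, so the conclusion is unchanged.
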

    
\begin{proof}
Take any $(u,v)\in\mathcal{M}$. Since $q>p$ we have 
\begin{align*}
 I(u,v)&=I(u, v) - \frac{1}{2(p+\alpha+3)}J(u, v) \\
            &= \frac{p+\alpha+1}{2(p+\alpha+3)}(a_1\|\nabla u\|_2^2 + a_2\|\nabla v\|_2^2)\\
            &\quad + \frac{p+\alpha-1}{2(p+\alpha+3)}\left[\int_{\mathbb{R}^3} \left(V_1(x)u^2 + V_2(x)v^2-2\lambda uv\right)\,dx\right] \\
            &\quad - \frac{1}{2(p+\alpha+3)} \left[\int_{\mathbb{R}^3} \left[(\nabla V_1(x), x)u^2 + (\nabla V_2(x), x)v^2\right]\,dx\right] \\
&\quad +\frac{p+\alpha-1}{4(p+\alpha+3)}(b_1\|\nabla u\|_2^4+b_2\|\nabla v\|_2^4) \\
            &\quad + \nu\frac{q - p}{2q(p+\alpha+3)}\int_{\mathbb{R}^3} (I_{\alpha}*|v|^q)|v|^qdx \\
            &\ge\ \frac{p+\alpha+1}{2(p+\alpha+3)}(a_1\|\nabla u\|_2^2+a_2\|\nabla v\|_2^2)+ \frac{p+\alpha-1}{2(p+\alpha+3)}\left[\int_{\mathbb{R}^3} \left(V_1(x)u^2 + V_2(x)v^2-2\lambda uv\right)\,dx\right] \\
               &\quad - \frac{1}{2(p+\alpha+3)} \left[\int_{\mathbb{R}^3} \left[(\nabla V_1(x), x)u^2 + (\nabla V_2(x), x)v^2\right]\,dx\right] \\
        \end{align*}
By \eqref{eq:V5toV4} and the Hardy inequality we obtain
\begin{align*}
 &\ \int_{\mathbb{R}^3}(\nabla V_1(x),x)u^2dx\le 2\theta a_1\|\nabla u\|_2^2,\ \int_{\mathbb{R}^3}(\nabla V_2(x), x)v^2dx\le 2\theta a_2\|\nabla v\|_2^2
\end{align*}
Hence by Lemma \ref{lem:potential_term_est} we obtain
\begin{align}\label{eq:energy_on_N-P_manifold}
  &I(u, v) - \frac{1}{2(p+\alpha+3)}J(u, v)  \\
\ge&\ \frac{p+\alpha-1}{2(p+\alpha+3)}(a_1\|\nabla u\|_2^2+a_2\|\nabla v\|_2^2)) \\
   &\ \ \ \ \ \ \ \ \ \ \ \ + \frac{p+\alpha-1}{2(p+\alpha+3)}\left[\int_{\mathbb{R}^3} \left(V_1(x)u^2 + V_2(x)v^2-2\lambda uv\right)\,dx\right]  \\
   =&\ \frac{p+\alpha-1}{2(p+\alpha+3)}\left(\|u\|_{a_1, V_1}^2+\|v\|_{a_2, V_2}^2-2\lambda \int_{\mathbb{R}^3}uvdx\right) \\
   \ge&\ \frac{p+\alpha-1}{2(p+\alpha+3)}(1-\delta)\|(u,v)\|_{\bm{a},\bm{V}}^2\ge\underline{C}^2. 
\end{align}
Hence we conclude that $m_{\mathcal{M}}>0$ holds.  
\end{proof}
The following lemma can be proved by the same way as in \cite[Lemma 5.2]{Matsuzawa}.
   \begin{Lem}     \label{Lem:ground state}
        Assume that \ref{assumption:continuous}, \ref{assumption:constant at infty}, \ref{assumption:potential} and \ref{assumption:potential-2} holds. If $(u, v) \in \mathcal{M}$ satisfies $m_{\mathcal{M}} = I(u, v)$ with $m_{\mathcal{M}}$ defined in \eqref{eq:Energy level}, then $(u, v)$ is a critical point of $I$.
    \end{Lem}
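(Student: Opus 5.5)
The plan follows the deformation/quantitative argument of Tang--Chen (and \cite[Lemma 5.2]{Matsuzawa}). Suppose, to the contrary, that $(u,v)\in\mathcal{M}$ satisfies $I(u,v)=m_{\mathcal{M}}$ but $I'(u,v)\neq0$. Then there are $\varrho>0$ and $\kappa>0$ with $\|I'(w,z)\|_{\mathscr{H}^*}\ge\kappa$ whenever $\|(w,z)-(u,v)\|\le 3\varrho$. The continuity of $t\mapsto (u^t,v^t)$ in $\mathscr{H}$ allows us to pick $\epsilon\in(0,1)$ so small that $\|(u^t,v^t)-(u,v)\|\le\varrho$ for $t\in[1-\epsilon,1+\epsilon]$.

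The first step is a strict energy gap away from $t=1$. Apply \eqref{eq:relationship between I_eta and J_eta_general} with $J(u,v)=0$. This gives
\[
I(u^t,v^t)\le m_{\mathcal{M}}-\frac{(1-t^4)^2}{4}(1-\theta)\bigl(a_1\|\nabla u\|_2^2+a_2\|\nabla v\|_2^2\bigr).
\]
Since $(u,v)\neq(0,0)$, the gradient part is nonzero (a nonzero $H^1$ function has nonzero gradient). Hence
\[
\max_{t\notin(1-\epsilon,1+\epsilon)} I(u^t,v^t)\le m_{\mathcal{M}}-\tilde\epsilon
\]
for some $\tilde\epsilon>0$.

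The second step applies the quantitative deformation lemma (Willem, Lemma 2.3) with $S=B_\varrho((u,v))$, $c=m_{\mathcal{M}}$ and $\varepsilon_0<\min\{\tilde\epsilon/2,\kappa\varrho/8\}$. This yields a continuous $\eta:\mathscr{H}\to\mathscr{H}$ with the following properties: $\eta$ is the identity outside $I^{-1}([m_{\mathcal{M}}-2\varepsilon_0,m_{\mathcal{M}}+2\varepsilon_0])\cap S_{2\varrho}$, it does not increase $I$, and it maps $I^{m_{\mathcal{M}}+\varepsilon_0}\cap S$ into $I^{m_{\mathcal{M}}-\varepsilon_0}$. Set $\gamma(t)=\eta(u^t,v^t)$ for $t\in[1-\epsilon,1+\epsilon]$. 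By Corollary \ref{Cor:maximum}, $I(u^t,v^t)\le m_{\mathcal{M}}$ along the fibre, so $\max_t I(\gamma(t))<m_{\mathcal{M}}$. At the endpoints, $I\le m_{\mathcal{M}}-\tilde\epsilon<m_{\mathcal{M}}-2\varepsilon_0$, so $\eta$ does not move them and $\gamma(1\pm\epsilon)=(u^{1\pm\epsilon},v^{1\pm\epsilon})$.

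The third step, which I expect to be the main obstacle, is showing that $\gamma$ meets $\mathcal{M}$. Define $\Psi(t)=J(\gamma(t))$. Uniqueness of the fibre maximum (Lemma \ref{lem:tuv_in_M}) and the formula $\zeta'(t)=J(u^t,v^t)/t$ give $J(u^{1-\epsilon},v^{1-\epsilon})>0>J(u^{1+\epsilon},v^{1+\epsilon})$. Continuity of $J$ on $\mathscr{H}$ is needed for the intermediate value theorem. It holds because the term $\int(\nabla V_i(x),x)w^2$ is continuous on $H^1$, by the bound \eqref{eq:potential_ineq_2} together with the Hardy inequality, exactly as in the proof of Lemma \ref{Lem:weak limit identity}; the Choquard terms are continuous by \eqref{eq:convolution-term-est}. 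The intermediate value theorem then gives $t_0$ with $J(\gamma(t_0))=0$. To get $\gamma(t_0)\in\mathcal{M}$ we also need $\gamma(t_0)\neq(0,0)$. This follows because $\gamma(t_0)$ lies within $2\varrho$ of $(u,v)$, and we choose $\varrho$ smaller than $\|(u,v)\|/3$, which is positive by Lemma \ref{Lem:positive on NP}. Consequently $m_{\mathcal{M}}\le I(\gamma(t_0))<m_{\mathcal{M}}$, a contradiction, and so $I'(u,v)=0$.
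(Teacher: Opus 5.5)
Your proof is correct and follows essentially the same route as the paper. The paper defers to \cite[Lemma 5.2]{Matsuzawa}, i.e.\ the Tang--Chen deformation argument: a strict energy gap along the fibre from \eqref{eq:relationship between I_eta and J_eta_general}, the quantitative deformation lemma on a small ball around $(u,v)$, and an intermediate value argument for $J$ along the deformed fibre that produces a point of $\mathcal{M}$ with energy below $m_{\mathcal{M}}$. Your explicit checks fill in details the cited argument usually leaves implicit: continuity of $J$ via \eqref{eq:potential_ineq_2} and the Hardy inequality, the sign of $J(u^{1\pm\epsilon},v^{1\pm\epsilon})$ from continuity of $\zeta'$ and the strict maximum at $t=1$, and the deformed point being nonzero.
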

\subsection{Completion of the proof of Theorem B}

Before prove Theorem B, let us recall the energy functional of the corresponding limit problem introduced in Section 4. We denote 
\begin{align}\label{eq:energy-limit}
I^{\infty}(u,v):=I_1^{\infty}(u,v), 
\end{align}
where $I_1^{\infty}$ is given by \eqref{eq:energy-aux-limit} with $\eta=1$ and 
 \begin{align*}
        m^{\infty} \coloneqq \inf_{(u, v) \in \mathcal{M}^{\infty}} I^{\infty}(u, v),
    \end{align*}
    where $\mathcal{M}^{\infty}$ is the Nehari--Pohozaev manifold defined by 
    \begin{align} 
 &\mathcal{M}^{\infty} \coloneqq \{(u, v) \in \mathscr{H} \setminus \{(0, 0)\} \mid J^{\infty}(u, v) = 0\},  \notag\\
&J^{\infty}(u, v):=J_1^{\infty}(u,v), \label{eq:N-P-functional-limit}
\end{align}
where $J_1^{\infty}(u,v)$ is given in \eqref{eq:identity of NP} with $\eta=1$.
\begin{Lem}\label{m<m_infty}
Assume that \ref{assumption:continuous}, \ref{assumption:constant at infty}, \ref{assumption:potential} and \ref{assumption:potential-2}. If $\nu\ge \nu_0(\mu)$ for fixed $\mu$, then it holds that $m_{\mathcal{M}}<m^{\infty}$, where $\nu_0(\mu)$ is given in \eqref{eq:critical_case_2}. 
\end{Lem}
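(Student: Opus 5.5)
We use the ground state $(u^{\infty},v^{\infty})$ of the limit problem as a test pair for $m_{\mathcal{M}}$. Since $\nu\ge\nu_0(\mu)$, Proposition \ref{prop:limit_problem_has_GS}(3) with $\eta=1$ gives a positive ground state $(u^{\infty},v^{\infty})\in\mathcal{M}^{\infty}$ with $I^{\infty}(u^{\infty},v^{\infty})=m^{\infty}$. It is a positive minimizer, so $u^{\infty},v^{\infty}>0$ everywhere on $\mathbb{R}^3$.

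Next we project this pair onto $\mathcal{M}$. By Lemma \ref{lem:tuv_in_M} there is a unique $t_0=t(u^{\infty},v^{\infty})>0$ with $((u^{\infty})^{t_0},(v^{\infty})^{t_0})\in\mathcal{M}$. Then \eqref{eq:Energy level}, or equivalently Corollary \ref{Cor:maximum}, gives
\begin{align*}
m_{\mathcal{M}}\le I((u^{\infty})^{t_0},(v^{\infty})^{t_0}).
\end{align*}

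We then compare $I$ and $I^{\infty}$ along this scaled pair. By the scaling formula \eqref{eq:I_ut_vt}, $I$ and $I^{\infty}$ differ only in the potential terms:
\begin{align*}
I^{\infty}(w^{t},z^{t})-I(w^{t},z^{t})=\frac{t^8}{2}\int_{\mathbb{R}^3}\big\{[V_1^{\infty}-V_1(t^2x)](w)^2+[V_2^{\infty}-V_2(t^2x)](z)^2\big\}dx.
\end{align*}
By \ref{assumption:constant at infty} the integrands are nonnegative. Since $V_i\not\equiv V_i^{\infty}$ and $V_i$ is continuous, $V_i^{\infty}-V_i(t_0^2x)>0$ on a nonempty open set. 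Positivity of $u^{\infty},v^{\infty}$ then makes the difference strictly positive. Hence
\begin{align*}
I((u^{\infty})^{t_0},(v^{\infty})^{t_0})<I^{\infty}((u^{\infty})^{t_0},(v^{\infty})^{t_0}).
\end{align*}

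Finally we need $I^{\infty}((u^{\infty})^{t_0},(v^{\infty})^{t_0})\le I^{\infty}(u^{\infty},v^{\infty})=m^{\infty}$. This follows from \eqref{eq:relathion_I_eta_J_eta} with $\eta=1$ and $t=t_0$, using $J^{\infty}(u^{\infty},v^{\infty})=0$; equivalently, Lemma \ref{Lem:element of M} says the map $t\mapsto I^{\infty}((u^{\infty})^t,(v^{\infty})^t)$ attains its maximum at $t=1$. Chaining the three inequalities gives $m_{\mathcal{M}}<m^{\infty}$.

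The main point needing care is the strictness step. If $(u^{\infty},v^{\infty})$ were only known to be nontrivial rather than positive, strict inequality could fail in the degenerate case where one component vanishes. Here Proposition \ref{prop:limit_problem_has_GS} guarantees both components are positive, so strictness is secured. We should also check that $t_0$ is finite and positive, which Lemma \ref{lem:tuv_in_M} gives. Assumption \ref{assumption:potential-2} enters only through Lemma \ref{lem:tuv_in_M} and \eqref{eq:Energy level}.
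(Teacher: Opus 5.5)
Your proposal is correct and follows the paper's proof essentially verbatim: test with the positive limit ground state, project it onto $\mathcal{M}$, obtain strictness from \ref{assumption:constant at infty} together with positivity, and use that $t=1$ maximizes $I^{\infty}$ along the fiber. Your choice of Lemma \ref{lem:tuv_in_M} for the projection onto $\mathcal{M}$ (instead of Lemma \ref{Lem:element of M}, which concerns $\mathcal{M}_{\eta}^{\infty}$), together with your use of $J^{\infty}(u^{\infty},v^{\infty})=0$, supplies exactly the justifications the paper's short argument leaves implicit.
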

\begin{proof}
By Proposition \ref{prop:limit_problem_has_GS}, there exists a $(u^{\infty}, v^{\infty})\in \mathscr{H}$ such that
\begin{align*}
J(u^{\infty}, v^{\infty})=0,\ I(u^{\infty}, v^{\infty})=m^{\infty},\ u^{\infty}>0, v^{\infty}>0. 
\end{align*}
By Lemma \ref{Lem:element of M} there exists $t_0>0$ such that $((u^{\infty})^{t_0}, (v^{\infty})^{\infty})\in \mathcal{M}$. Hence by \ref{assumption:constant at infty} we obtain
\begin{align*}
m_{\mathcal{M}}\le I((u^{\infty})^{t_0}, (v^{\infty})^{t_0})< I^{\infty}((u^{\infty})^{t_0}, (v^{\infty})^{t_0})\le I^{\infty}(u^{\infty}, v^{\infty})=m^{\infty}.
\end{align*}
The proof is completed. 
\end{proof}

\begin{Lem}\label{Lem:m_attained}
Assume that \ref{assumption:continuous}, \ref{assumption:constant at infty}, \ref{assumption:potential} and \ref{assumption:potential-2}. If $\nu\ge \nu_0(\mu)$ for fixed $\mu$, then there exists $(u_0,v_0)\in\mathcal{M}$ such that $I(u_0,v_0)=\mathcal{M}$, where $\nu_0(\mu)$ is a constant given in \eqref{eq:critical_case_2}. 
\end{Lem}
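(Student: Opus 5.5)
We minimize $I$ directly on $\mathcal{M}$ and use the strict inequality $m_{\mathcal{M}}<m^{\infty}$ (Lemma \ref{m<m_infty}) to exclude loss of mass at infinity. The argument runs in five steps.

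\textbf{Step 1 (a bounded minimizing sequence and its weak limit).} Take a minimizing sequence $\{(u_n,v_n)\}\subset\mathcal{M}$, so $I(u_n,v_n)\to m_{\mathcal{M}}$. By \eqref{eq:energy_on_N-P_manifold}, $I(u_n,v_n)$ controls $\|(u_n,v_n)\|_{\bm{a},\bm{V}}^2$, hence the sequence is bounded. Passing to a subsequence, $(u_n,v_n)\rightharpoonup(u_0,v_0)$ in $\mathscr{H}$ and a.e. We then exclude vanishing. If $\sup_y\int_{B_1(y)}(u_n^2+v_n^2)\to0$, Lions' lemma gives $u_n,v_n\to0$ in $L^s$ for $s\in(2,6)$. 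This kills the $q$-term, since $\frac{6q}{3+\alpha}\in(2,6)$.

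\textbf{Step 2 (the $L^2$-critical term and the main obstacle).} The $p$-term has $\frac{6p}{3+\alpha}=2$ and does not vanish, which is where the difficulty lies. I would use $J(u_n,v_n)=0$ together with \eqref{eq:potential_ineq} and \eqref{eq:lower_critical_constant}. Dropping the gradient terms, this gives
\[
2(1-\delta)V_1^{\infty}\|u_n\|_2^2\lesssim \mu\frac{p+\alpha+3}{p}\,\mathcal{S}_*^{-p}\|u_n\|_2^{2p}+o_n(1)
\]
(after adjusting for $4V_1+(\nabla V_1,x)\ge 4V_1^\infty-\theta a_1/(2|x|^2)$ with Hardy). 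Thus either $\|u_n\|_2\to0$, in which case $J=0$ forces $(u_n,v_n)\to0$, contradicting Lemma \ref{Lem:positive on NP}, or $\|u_n\|_2^2$ stays above a threshold of the form $\big(\frac{4(3+\alpha)}{\mu(12+\alpha)}V_1(1-\delta)\mathcal{S}_*\big)^{\frac{3+\alpha}{3}}$. In the second case, $I-\frac18J\ge\frac{\alpha}{2(3+\alpha)}\cdot(\dots)$ gives $m_{\mathcal{M}}$ at least the right-hand side of \eqref{eq:critical_case_2}. That contradicts $m_{\mathcal{M}}<m^\infty$ and \eqref{eq:critical_case_2} for $\nu\ge\nu_0(\mu)$. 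Matching the constants exactly with \eqref{eq:critical_case_2} is the delicate computation.

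\textbf{Step 3 (reducing to $(u_0,v_0)\neq(0,0)$).} Non-vanishing gives $y_n$ with a nonzero weak limit of the translates. If $y_n$ stays bounded, then $(u_0,v_0)\neq(0,0)$. If $|y_n|\to\infty$, compare with the limit functional: since $V_i\le V_i^\infty$, for the translated sequence $(\tilde u_n,\tilde v_n)$ we have $I^\infty(\tilde u_n,\tilde v_n)=I(u_n,v_n)+o_n(1)$ and $J^\infty(\tilde u_n,\tilde v_n)=J(u_n,v_n)+o_n(1)$. The potential differences vanish because $V_i\to V_i^\infty$ and $\nabla V_i$ terms are controlled by \eqref{eq:potential_ineq_2} as the mass escapes. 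Apply the splitting in Lemma \ref{Lem:weak limit identity}, in its $\infty$-version, to $(\tilde u_n,\tilde v_n)\rightharpoonup(\tilde u,\tilde v)\ne0$, then project $(\tilde u,\tilde v)$ onto $\mathcal{M}^\infty$ as in the argument below. This yields $m^\infty\le m_{\mathcal{M}}$, a contradiction. So we may assume $(u_0,v_0)\neq(0,0)$.

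\textbf{Step 4 (comparing $(u_0,v_0)$ with the remainder).} Set $(w_n,z_n)=(u_n-u_0,v_n-v_0)$. By Lemma \ref{Lem:weak limit identity},
\[
m_{\mathcal{M}}=I(u_0,v_0)+I(w_n,z_n)+\tfrac12(\cdots)+o_n(1),\qquad 0=J(u_0,v_0)+J(w_n,z_n)+4(\cdots)+o_n(1).
\]
Using \eqref{eq:I-(1/8)J}, $I(w_n,z_n)-\frac18J(w_n,z_n)\ge0$, and the cross Kirchhoff terms contribute $\frac12-\frac12=0$ after subtracting $\frac18 J$. Hence
\[
I(u_0,v_0)-\tfrac18J(u_0,v_0)\le m_{\mathcal{M}}.
\]
If $J(u_0,v_0)\le0$, choose $t_0\in(0,1]$ with $(u_0^{t_0},v_0^{t_0})\in\mathcal{M}$ (Lemma \ref{lem:tuv_in_M}). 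Inequality \eqref{eq:relationship between I_eta and J_eta_general} then gives
\[
m_{\mathcal{M}}\le I(u_0^{t_0},v_0^{t_0})\le I(u_0,v_0)-\tfrac{1-t_0^8}{8}J(u_0,v_0)\le I(u_0,v_0)-\tfrac18J(u_0,v_0)\le m_{\mathcal{M}},
\]
so $(u_0^{t_0},v_0^{t_0})$ attains the infimum.

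\textbf{Step 5 (excluding $J(u_0,v_0)>0$).} In that case $J(w_n,z_n)<0$ for large $n$. Project $(w_n,z_n)$ onto $\mathcal{M}$ with $t_n<1$, again using Lemma \ref{lem:tuv_in_M}. By \eqref{eq:relationship between I_eta and J_eta_general}, $m_{\mathcal{M}}\le I(w_n^{t_n},z_n^{t_n})\le I(w_n,z_n)-\frac18J(w_n,z_n)+o_n(1)$. This is strictly below $m_{\mathcal{M}}$ by the positive quantity $I(u_0,v_0)-\frac18J(u_0,v_0)\ge\frac14(1-\theta)(a_1\|\nabla u_0\|^2+a_2\|\nabla v_0\|^2)>0$, a contradiction. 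This needs $\nabla u_0\neq0$ or $\nabla v_0\neq0$, which holds since $(u_0,v_0)\neq0$.
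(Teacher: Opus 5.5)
Your proposal is correct and follows the paper's proof essentially step for step. The shared structure is: a bounded minimizing sequence, exclusion of vanishing via \eqref{eq:lower_critical_constant} and the energy threshold guaranteed by $\nu\ge\nu_0(\mu)$, exclusion of escape to infinity by comparison with $m^\infty$ using $m_{\mathcal M}<m^\infty$, and finally the Brezis--Lieb splitting of $\Psi=I-\frac18J$ combined with the fibering projection. Your deviations are cosmetic: you exclude vanishing directly from $J(u_n,v_n)=0$ via \eqref{eq:V5toV4_2} and Hardy, you use $\Psi(w_n,z_n)\ge0$ in place of weak lower semicontinuity, and your minimizer is $(u_0^{t_0},v_0^{t_0})$ rather than $(u_0,v_0)$ itself.

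Two small corrections. First, in Step 3 the case distinction should be on whether $(u_0,v_0)=(0,0)$ rather than on $\{y_n\}$. The identity $I^\infty(\tilde u_n,\tilde v_n)=I(u_n,v_n)+o_n(1)$ needs $u_n,v_n\to0$ in $L^2_{\mathrm{loc}}$, which then forces $|y_n|\to\infty$. Second, the constant matching is not delicate: by Lemma \ref{lem:critical_case}(2), $\nu_0(\mu)$ need only be taken large enough that $m^\infty$ lies below the $\nu$-independent threshold your Step 2 produces.
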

\begin{proof}
Let $\{(u_n,v_n)\}$ be a minimizing sequence in $\mathcal{M}$, that is, 
\begin{align*}
(u_n, v_n)\in \mathcal{M}\ \ n=1,2,\cdots,\ \ \lim_{n\to\infty}I(u_n,v_n)=c_{\mathcal{M}}.
\end{align*}
Since $J(u_n,v_n)=0$, \eqref{eq:energy_on_N-P_manifold} implies that
\begin{align*}
m_{\mathcal{M}}+o(1)=I(u_n,v_n)&=I(u_n, v_n)-\frac{1}{2(p+\alpha+3)}J(u_n,v_n) \\
                               &\ge\frac{p+\alpha-1}{2(p+\alpha+3)}(1-\delta)\|(u_n, v_n)\|_{\bm{a},\bm{V}}^2. 
                               \end{align*}
Thus, $\{(u_n, v_v)\}$ is bounded in $\mathscr{H}$ and by passing to a subsequence we may assume that $(u_n, v_n)\rightharpoonup (u_0, v_0)$ in $\mathscr{H}$ for some $(u_0, v_0)\in \mathscr{H}$ and 
\begin{align*}
&u_n\to u_0,\ \ v_n\to v_0\ \ \mbox{in}\ \ L^s_{\mathrm{loc}}(\mathbb{R}^3), \\
&u_n(x)\to u_0(x),\ \ v_n(x)\to v_0(x)\ \ \mbox{a.e.\ in}\ \ \mathbb{R}^3.
\end{align*}

\noindent
\textbf{Step 1.} We show that $(u_0, v_0)\ne (0,0)$ holds. 

Suppose that $(u_0, v_0)=(0,0)$.  We first claim:

\noindent
\begin{Claim} There hold that
\begin{align}
&\int_{\mathbb{R}^3}(V_1(x)-V_1^{\infty})u_n^2dx\to 0,\ \ \int_{\mathbb{R}^3}(V_2(x)-V_2^{\infty})v_n^2dx\to 0, \label{eq:Thm_B_proof_1} \\
&\int_{\mathbb{R}^3}(\nabla V_1(x), x)u_n^2dx\to 0,\ \ \int_{\mathbb{R}^3}(\nabla V_2(x),x)v_n^2dx\to 0. \label{eq:Thm_B_proof_2}
\end{align}
\end{Claim}
We note that \eqref{eq:Thm_B_proof_1} is proved by the same way as \eqref{eq:integral term converges to 0}. We now prove first one of \eqref{eq:Thm_B_proof_2}. By 
\eqref{eq:potential_ineq_2} we have
\begin{align}\label{eq:Thm_B_proof_3}
4\{V_1^{\infty}-V_1(x)\}-\frac{\theta a_1}{2|x|^2}\le (\nabla V_1(x), x)\le 4\{V_1^{\infty}-V_1(x)\}+\frac{\theta a_1}{2|x|^2}\ \ x\in\mathbb{R}^3\setminus \{0\}.
\end{align}
Take any $\varepsilon>0$. Since $\{u_n\}$ is bounded in $L^2(\mathbb{R}^3)$ we can take $R>0$ so that
\begin{align}\label{eq:Thm_B_proof_3-2}
\int_{\mathbb{R}^3\setminus B_R(0)}\frac{\theta a_1}{2|x|^2}u_n^2dx \le\frac{\theta a_1}{2R^2}\sup_{n\in\mathbb{N}}\|u_n\|_2^2<\varepsilon.
\end{align}
In view of \eqref{eq:Thm_B_proof_1}, \eqref{eq:Thm_B_proof_3} and \eqref{eq:Thm_B_proof_3-2} we have
\begin{align*}
-\varepsilon\le\liminf_{n\to\infty}\int_{\mathbb{R}^3\setminus B_R(0)}(\nabla V_1(x),x)u_n^2dx\le\limsup_{n\to\infty}\int_{\mathbb{R}^3\setminus B_R(0)}(\nabla V_1(x),x)u_n^2dx\le\varepsilon.
\end{align*}
Since $\varepsilon>0$ is chosen arbitrarily we have
\begin{align}\label{eq:Thm_B_proof_4}
\lim_{n\to\infty}\int_{\mathbb{R}^3\setminus B_R(0)}(\nabla V_1(x),x)u_n^2dx=0.
\end{align}

On the other hand since $V_1\in C^1(\mathbb{R}^3)$ there exists $M>0$ such that $|(\nabla V_1(x), x)|\le M$ for $x\in B_R(0)$. Noting that $u_n\to 0$ on $L^2(B_R(0))$ we have
\begin{align}\label{eq:Thm_B_proof_5}
\begin{split}
\left|\int_{B_R(0)}(\nabla V_1(x),x)u_n^2dx\right|&\le \int_{B_R(0)}|(\nabla V_1(x),x)||u_n|^2dx \\
&\le M\int_{B_R(0)}|u_n|^2dx\to 0\ \ \text{as}\ n\to\infty.
\end{split}
\end{align}
Combining \eqref{eq:Thm_B_proof_4} and \eqref{eq:Thm_B_proof_5} we the obtain first identity of \eqref{eq:Thm_B_proof_2}.

From the definition of $I$, $I^{\infty}$, $J$ and $J^{\infty}$ we have
\begin{align*}
I^{\infty}(u,v)\to m_{\mathcal{M}}\ \ \text{and}\ \ J^{\infty}(u,v)\to 0.
\end{align*}

We next claim: 

\begin{Claim}
There exists $\xi>0$, $R>0$ and $\{y_n\}\subset\mathbb{R}^3$ such that
\begin{align}\label{eq:by_Lions_result}
\int_{B_R(y_b)}(u_n^2+v_n^2)dx\ge \xi.
\end{align}
\end{Claim}
Otherwise by Lemma \ref{Lions-th} we have $u_n\to 0$ and $v_n\to 0$ in $L^s(\mathbb{R}^3)$ $(s\in(2,6))$. We also show that
\begin{align}\label{eq:B_nu_0}
\int_{\mathbb{R}^3}(I_{\alpha}*|u_n|^{1+\frac{\alpha}{3}})|u_n|^{1+\frac{\alpha}{3}}dx\to 0\ \ \text{as}\ n\to\infty.
\end{align}
By the Hardy--Littlewood-Sobolev inequality we may assume that there exists $B(\nu)\in\mathbb{R}$ such that
\begin{align*}
\int_{\mathbb{R}^3}(I_{\alpha}*|u_n|^{1+\frac{\alpha}{3}})|u_n|^{1+\frac{\alpha}{3}}dx\to B(\nu)\ \ \text{as}\ n\to\infty.
\end{align*}
By the fact $J^{\infty}(u_n,v_n)=o_n(1)$, it follows that 
        \begin{align}\label{eq:nu-big-3}
            \begin{split}
 &2(a_1\|\nabla u_n\|_2^2 + a_2\|\nabla v_n\|_2^2) + 4\left(V_1^{\infty}\|u_n\|_2^2 + V_2^{\infty}\|v_n\|_2^2- 2\lambda\int_{\mathbb{R}^3}u_nv_ndx\right) \\
 &+2(b_1\|\nabla u_n\|_2^4+b_2\|\nabla v_n\|_2^4) =\frac{12+\alpha}{3+\alpha}\mu B({\nu})+o_n(1).
            \end{split}
        \end{align}
By \eqref{eq:lower_critical_constant} we see that 
\begin{align*}
\begin{split}
  &\ 4V_1(1-\delta)\mathcal{S}_*B(\nu)^{\frac{3}{3+\alpha}}+o_n(1) \\
\le &\ 4V_1(1-\delta)\|v_n\|_2^2 \\
\le &\ 2(a_1\|\nabla u_n\|_2^2 + a_2\|\nabla v_n\|_2^2) + 4\left(V_1\|u_n\|_2^2 + V_2\|v_n\|_2^2- 2\lambda\int_{\mathbb{R}^3}uvdx\right) \\
   &\quad +2(b_1\|\nabla u_n\|_2^4+b_2\|\nabla v_n\|_2^4) \\
=&\ \frac{12+\alpha}{3+\alpha}\mu B(\nu)+o_n(1).
\end{split}
\end{align*}
We now assume that $B(\nu)>0$. Then we obtain
\begin{align}\label{eq:B_nu_est}
\left(\frac{4(3+\alpha)V_1(1-\delta)}{\mu(12+\alpha)}\mathcal{S}_*\right)^{\frac{\alpha+3}{\alpha}}\le B(\nu). 
\end{align}
On the other hand, from \eqref{eq:B_nu_est} and the fact that $I^{\infty}(u_n, v_n)=m_{\mathcal{M}}+o_n(1)$ and $J^{\infty}(u_n, v_n)=o_n(1)$ we obtain
\begin{align*}
    &\ m_{\mathcal{M}}+o_n(1) \\
    =&\ I^{\infty}(u_n,v_n)-\frac{1}{8}J^{\infty}(u_n,v_n) \\
\ge &\ \frac{1}{2}(a_1\|\nabla u_n\|_2^2+a_2\|\nabla v_n\|_2^2) \\
     &\ \ +\frac{\alpha+1}{2(3+\alpha)}\mu\int_{\mathbb{R}^3}(I_{\alpha}*|u_n|^{1+\frac{\alpha}{3}})|u_n|^{1+\frac{\alpha}{3}}dx+\frac{q+\alpha-1}{8q}\nu\int_{\mathbb{R}^3}(I_{\alpha}*|v_n|^q)|v_n|^qdx+o_n(1) \\
     \ge&\ \frac{\alpha+1}{2(\alpha+3)}\mu\left(\frac{4(3+\alpha)V_1(1-\delta)}{\mu(12+\alpha)}\mathcal{S}_*\right)^{\frac{3+\alpha}{\alpha}}+o_n(1). 
\end{align*}
and 
\begin{align*}
m_{\mathcal{M}}\ge \frac{\alpha+1}{2(\alpha+3)}\mu\left(\frac{4(3+\alpha)V_1(1-\delta)}{\mu(12+\alpha)}\mathcal{S}_*\right)^{\frac{3+\alpha}{\alpha}}.
\end{align*}
This is a contradiction to \eqref{eq:critical_case_2} with $\eta=1$ and the fact that $m_{\mathcal{M}}<m^{\infty}=m^{\infty}_1(\mu, \nu)$. Hence we have $B(\nu)=0$ and \eqref{eq:B_nu_0}. However from $J^{\infty}(u_n,v_n)\to 0$ and Lemma \ref{lem:potential_term_est} it holds that
\begin{align*}
            \begin{split}
 &\ 2(1-\delta)\|(u,v)\|_{\bm{a},\bm{V}}^2 \\
\le&\ 2(1-\delta)\|(u,v)\|_{\bm{a},\bm{V}^{\infty}}^2  \\         
\le &\ 2(a_1\|\nabla u_n\|_2^2 + a_2\|\nabla v_n\|_2^2) + 4\left(V_1^{\infty}\|u_n\|_2^2 + V_2^{\infty}\|v_n\|_2^2- 2\lambda\int_{\mathbb{R}^3}u_nv_ndx\right) \\
 &+2(b_1\|\nabla u_n\|_2^4+b_2\|\nabla v_n\|_2^4) =o_n(1),
            \end{split}
\end{align*}
which contradicts Lemma  \ref{Lem:positive on NP}. Therefore we conclude that \eqref{eq:by_Lions_result} for some $\xi>0$, $R>0$ and $\{y_n\}\subset\mathbb{R}^3$. 

Let $\widetilde{u}_n(x)=u_n(x+y_n)$, $\widetilde{v}_n(x)=v_n(x+y_n)$. By \eqref{eq:by_Lions_result} we have
\begin{align*}
J^{\infty}(\widetilde{u}_n, \widetilde{v}_n)=o_n(1),\ \ I^{\infty}(\widetilde{u}_n, \widetilde{v}_n)=m_{\mathcal{M}}+o_n(1),\ \ \int_{B_R(0)}(\widetilde{u}_n^2+\widetilde{v}_n^2)dx\ge\xi.
\end{align*}
Hence, by the same argument, there exist $(\widetilde{u}, \widetilde{v})\in \mathscr{H}\setminus \{(0,0)\}$ such that
\begin{align*}
&\widetilde{u}_n\rightharpoonup\widetilde{u},\ \ \widetilde{v}_n\rightharpoonup\widetilde{v}\ \ \mbox{in}\ \ H^1(\mathbb{R}^3), \\
&\widetilde{u}_n\to \widetilde{u},\ \ \widetilde{v}_n\to \widetilde{v}\ \ \mbox{in}\ \ L^s_{\mathrm{loc}}(\mathbb{R}^3)\ \ (s\in [2,6)), \\
&\widetilde{u}_n(x)\to \widetilde{u}(x),\ \ \widetilde{v}_n(x)\to \widetilde{v}(x)\ \ \mbox{a.e.\ in}\ \ \mathbb{R}^3.
\end{align*}
Let $\widetilde{\omega}_n:=\widetilde{u}_n-\widetilde{u}$ and $\widetilde{\sigma}_n:=\widetilde{v}_n-\widetilde{v}$.  Lemma \ref{Lem:weak limit identity} yields 
\begin{align}\label{eq:Thm_B_proof_6}
\begin{split}
I^{\infty}(\widetilde{u}_n, \widetilde{v}_n)=&I^{\infty}(\widetilde{u}, \widetilde{v})+I^{\infty}(\widetilde{\omega}_n, \widetilde{\sigma}_n) \\
&\ +\frac{1}{2}(b_1\|\nabla\widetilde{u}\|_2^2\|\nabla \widetilde{\omega}_n\|_2^2+b_2\|\nabla\widetilde{v}\|_2^2\|\nabla\widetilde{\sigma}_n\|_2^2)+o_n(1), \\
J^{\infty}(\widetilde{u}_n, \widetilde{v}_n)&=J^{\infty}(\widetilde{u}, \widetilde{v})+J^{\infty}(\widetilde{\omega}_n, \widetilde{\sigma}_n)\\
&\ +4(b_1\|\nabla\widetilde{u}\|_2^2\|\nabla \widetilde{\omega}_n\|_2^2+b_2\|\nabla\widetilde{v}\|_2^2\|\nabla\widetilde{\sigma}_n\|_2^2)+o_n(1).
\end{split}
\end{align}

Set
\begin{align*}
\Psi_{\infty}(u,v):=I^{\infty}(u,v)-\frac{1}{8}J^{\infty}(u,v)
\end{align*}
By \eqref{eq:energy-limit}, \eqref{eq:N-P-functional-limit} and \eqref{eq:Thm_B_proof_6} we obtain
\begin{align*}
&m_{\mathcal{M}}+o_n(1)=\Psi_{\infty}(\widetilde{\omega}_n,\widetilde{\sigma}_n)=\Psi_{\infty}(\widetilde{u},\widetilde{v})+\Psi_{\infty}(\widetilde{\omega}_n, \widetilde{\sigma}_n), \\
&J^{\infty}(\widetilde{\omega}_n, \widetilde{\sigma}_n)\le -J^{\infty}(\widetilde{u}, \widetilde{v})+J^{\infty}(\widetilde{\omega}_n, \widetilde{\sigma}_n)=-J^{\infty}(\widetilde{u}, \widetilde{v})+o_n(1).
\end{align*}
      If there exist $\{(\widetilde{\omega}_{n_j}, \widetilde{\sigma}_{n_j})\} \subset \{(\widetilde{\omega}_n, \widetilde{\sigma}_n)\}$ such that $(\widetilde{\omega}_{n_j}, \widetilde{\sigma}_{n_j}) = (0,0)$ for all $j \in \mathbb{N}$, then we heve
        \begin{align*}
                \Psi_{\infty}(\widetilde{u}, \widetilde{v}) = m_{\mathcal{M}}\ \ \text{and}\ \ J^{\infty}(\widetilde{u}, \widetilde{v}) = 0,
        \end{align*}
        which implies that $(\widetilde{u}, \widetilde{v})\in\mathcal{M}^{\infty}$ and $m_{\mathcal{M}}=I(\widetilde{u}, \widetilde{v})\ge m^{\infty}$. However this is a contradiction to Lemma \ref{m<m_infty}. 
        
        Next, we assume that $(\widetilde{\omega}_n, \widetilde{\sigma}_n) \neq (0, 0)$ for all large $n$. By Lemma \ref{lem:tuv_in_M} for each such $n$, there exists a unique $t_n > 0$ such that $((\widetilde{\omega}_n)^{t_n}, (\widetilde{\sigma}_n)^{t_n}) \in \mathcal{M}^{\infty}$. 
        
        Now we prove the following claim.
        \begin{Claim}
            $J^{\infty}(\widetilde{u}, \widetilde{v}) \le 0$.
        \end{Claim}
        If $J^{\infty}(u, v) > 0$, then \eqref{eq:Thm_B_proof_6} implies $J^{\infty}(\widetilde{\omega}_n, \widetilde{\sigma}_n) < 0$ for large $n$. Using \eqref{eq:energy-limit}, \eqref{eq:N-P-functional-limit} and \eqref{eq:relationship between I_eta and J_eta_general}, we obtain
        \begin{align}
            \begin{split}
                m_{\mathcal{M}} - \Psi_{\infty}(\widetilde{u}, \widetilde{v}) + o(1) &= \Psi_{\infty}(\widetilde{\omega}_n, \widetilde{\sigma}_n) \label{eq:Identity of Psi and m}\\
                &= I^{\infty}(\widetilde{\omega}_n, \widetilde{\sigma}_n) - \frac{1}{8}J^{\infty}(\widetilde{\omega}_n, \widetilde{\sigma}_n) \\
                &\ge I((\widetilde{\omega}_n)^{t_n}, (\widetilde{\sigma}_n)^{t_n}) - \frac{t_n^8}{8}J(\widetilde{\omega}_n, \widetilde{\sigma}_n) \\
                &\ge m^{\infty},
            \end{split}
        \end{align}
        which implies 
        \begin{align*}
                0>m_\mathcal{M}-m^{\infty}\ge \Psi_{\infty}(\widetilde{u}, \widetilde{v}) \ge \frac{1}{4}(a_1\|\nabla \widetilde{u}\|_2^2+a_2\|\nabla \widetilde{v}\|_2^2).
        \end{align*}
         This is a contradiction to $(\widetilde{u},\widetilde{v})\ne (0,0)$.  The claim has been proved. 
        
        Since $I^{\infty}(\widetilde{u}_n, \widetilde{v}_n)=m_\mathcal{M}+o_n(1)$, $J^{\infty}(\widetilde{u}_n, \widetilde{v}_n)=o_n(1)$ it holds that 
        \begin{align*}
            m_{\mathcal{M}} &= \lim_{n \to \infty} \left[I^{\infty}(\widetilde{u}_n, \widetilde{v}_n) - \frac{1}{8}J^{\infty}(\widetilde{u}_n, \widetilde{v}_n)\right] \\
            &= \lim_{n \to \infty} \left[\frac{1}{4}(a_1\|\nabla \widetilde{u}_n\|_2^2 + a_2\|\nabla \widetilde{v}_n\|_2^2)  \right.\\
            &\quad \left. + \frac{p+\alpha - 1}{8p}\mu \int_{\mathbb{R}^3}(I_{\alpha}*|\widetilde{u}_n|^p)|\widetilde{u}_n|^pdx + \frac{q +\alpha- 1}{8q}\nu\int_{\mathbb{R}^3}(I_{\alpha}*|\widetilde{v}_n|^q)|\widetilde{v}_n|_q^qdx \right].
        \end{align*}
        By the weak lower semicontinuity of norms of $D^{1,2}(\mathbb{R}^3)$ and Lemma \ref{lem:nonlocal-B-L}, we have
        \begin{align}\label{ineq:m>I-J/8}
        \begin{split}
            m_{\mathcal{M}} &\ge \frac{1}{4}(a_1\|\nabla \widetilde{u}\|_2^2 + a_2\|\nabla \widetilde{v}\|_2^2) \\
            &\quad + \frac{p +\alpha- 1}{8p}\mu\int_{\mathbb{R}^3}(I_{\alpha}*|\widetilde{u}|^p)|\widetilde{u}|^pdx + \frac{q +\alpha- 1}{8q}\nu\int_{\mathbb{R}^3}(I_{\alpha}*|\widetilde{v}|^q)|\widetilde{v}|^qdx  \\
            &= I^{\infty}(\widetilde{u}, \widetilde{v}) - \frac{1}{8}J^{\infty}(\widetilde{u}, \widetilde{v}). 
\end{split}
\end{align}
On the other hand, since $(\widetilde{u}, \widetilde{v}) \neq (0, 0)$, there exists $t > 0$ such that $(\widetilde{u}^t, \widetilde{v}^t) \in \mathcal{M}$. Then by  \eqref{eq:relationship between I_eta and J_eta} we get
    \begin{align}\label{ineq:I-J/8>m}
    m_{\mathcal{M}}\ge I(\widetilde{u},\widetilde{v})-\frac{1}{8}J(\widetilde{u},\widetilde{v})\ge I(\widetilde{u}^t, \widetilde{v}^t) - \frac{t^8}{8}J(\widetilde{u}, \widetilde{v}) \ge m^{\infty},
    \end{align}
    which is again a contradiction to Lemma \ref{m<m_infty}. 

    Therefore we conclude that $(u_0, v_0)\ne (0,0)$ and Step 1 is now complete. 

\noindent
\textbf{Step 2:} We prove $J(u_0, v_0)=0$ and $I(u_0, v_0)=m_{\mathcal{M}}$. 

We proceed as Step 1. Let $\omega_n:=u_n-u_0$, $\sigma_n:=v_n-v_0$. Then
\begin{align*}
&\omega_n\rightharpoonup 0,\ \ \sigma_n\rightharpoonup 0\ \ \mbox{in}\ \ H^1(\mathbb{R}^3), \\
&\omega_n\to 0,\ \ \sigma_n\to 0\ \ \mbox{in}\ \ L^s_{\mathrm{loc}}(\mathbb{R}^3)\ \ (s\in [2,6)), \\
&\omega_n(x)\to 0,\ \ \sigma_n(x)\to 0\ \ \mbox{a.e.\ in}\ \ \mathbb{R}^3.
\end{align*}
Define
\begin{align*}
\Psi(u,v):=I(u,v)-\frac{1}{8}J(u,v).
\end{align*}
By \eqref{eq:relationship between I_eta and J_eta_general} we obtain
\begin{align}\label{eq:Thm_B_proof_7}
\begin{split}
&I(u_n,v_n)=I(u_0,v_0)+I(\omega_n, \sigma_n)+\frac{1}{2}(b_1\|\nabla u_0\|_2^2\|\nabla \omega_n\|_2^2+b_2\|\nabla v_0\|_2^2\|\nabla\sigma_n\|_2^2)+o_n(1), \\
&J(u_n, v_n)=J(u_0, v_0)+J(\omega_n, \sigma_n)+4(b_1\|\nabla\widetilde{u}\|_2^2\|\nabla \widetilde{\omega}_n\|_2^2+b_2\|\nabla\widetilde{v}\|_2^2\|\nabla\widetilde{\sigma}_n\|_2^2)+o_n(1).
\end{split}
\end{align}
and 
\begin{align*}
&m_{\mathcal{M}}+o_n(1)=\Psi(\omega_n,\sigma_n)=\Psi(u_0,v_0)+\Psi(\omega_n, \sigma_n), \\
&J(\omega_n, \sigma_n)\le -J(u_0, v_0)+J(\omega_n, \sigma_n)=-J(u_0, v_0)+o_n(1).
\end{align*}
     If there exist $\{(\omega_{n_j}, \sigma_{n_j})\} \subset \{({\omega}_n, \sigma_n)\}$ such that $(\omega_{n_j}, \sigma_{n_j}) = (0,0)$ for all $j \in \mathbb{N}$, then we heve
        \begin{align*}
                \Psi(u_0, v_0) = m_{\mathcal{M}}\ \ \text{and}\ \ J(u_0, v_0) = 0,
        \end{align*}
        which implies that $(u_0, v_0)\in\mathcal{M}$ and $m_{\mathcal{M}}=I(u_0, v_0)$.  
        
        Next, we assume that $(\omega_n, \sigma_n) \neq (0, 0)$ for all large $n$. By Lemma \ref{lem:tuv_in_M} for each such $n$, there exists a unique $t_n > 0$ such that $((\omega_n)^{t_n}, ({\sigma}_n)^{t_n}) \in \mathcal{M}$. 
        
        Now we prove the following claim.
        \begin{Claim}
            $J(u_0, v_0) \le 0$.
        \end{Claim}
        If $J(u_0, v_0) > 0$, then \eqref{eq:Thm_B_proof_7} implies $J(\omega_n, \sigma_n) < 0$ for large $n$. Using \eqref{eq:def_of_I}, \eqref{eq:def_of_J} and \eqref{eq:relationship between I_eta and J_eta_general}, we obtain
        \begin{align} \label{eq:Identity of Psi and m_2}
            \begin{split}
                m_{\mathcal{M}} - \Psi(u_0, v_0) + o_n(1) &= \Psi(\omega_n, \sigma_n)\\
                &= I(\omega_n, \sigma_n) - \frac{1}{8}J(\omega_n, \sigma_n) \\
                &\ge I((\omega_n)^{t_n}, (\sigma_n)^{t_n}) - \frac{t_n^8}{8}J(\omega_n, \sigma_n) 
                \ge m_{\mathcal{M}}.
            \end{split}
        \end{align}
        The above inequality and \eqref{eq:I-(1/8)J} imply 
        \begin{align*}
                0\ge \Psi(u_0, v_0) \ge \frac{1}{4}(a_1(1-\theta)\|\nabla u_0\|_2^2+a_2(1-\theta)\|\nabla v_0\|_2^2)
        \end{align*}
        and $u_0=v_0=0$ almost everywhere in $\mathbb{R}^3$. This is a contradiction to $(u_0,v_0)\ne (0,0)$.  The claim has been proved. 

We have to note that from \eqref{eq:V5toV4}, \eqref{eq:V5toV4_2} and the Hardy inequality there hold that
\begin{align*}
&a_1(1-\theta)\|\nabla u\|_2^2\le a_1\|\nabla u\|_2^2-\frac{1}{2}\int_{\mathbb{R}^3}(\nabla V_1(x), x)u^2dx\le a_1(1+\theta)\|\nabla u\|_2^2, \\
&a_2(1-\theta)\|\nabla v\|_2^2\le a_2\|\nabla v\|_2^2-\frac{1}{2}\int_{\mathbb{R}^3}(\nabla V_2(x), x)v^2dx\le a_2(1+\theta)\|\nabla v\|_2^2.
\end{align*}
Hence 
\begin{align*}
(\!(\varphi, \psi)\!)=a_i\int_{\mathbb{R}^3}\nabla\varphi\cdot\nabla \psi dx-\frac{1}{2}\int_{\mathbb{R}^3}(\nabla V_i(x),x)\varphi\psi dx\ \ \mbox{for}\ \ \varphi,\psi\in H^1(\mathbb{R}^3)
\end{align*}
defines an inner product on $D^{1,2}(\mathbb{R}^3)$ and the corresponding norm given by
\begin{align*}
\vertiii{\varphi}=\left(a_i\|\nabla\varphi\|_2^2-\frac{1}{2}\int_{\mathbb{R}^3}(\nabla V_i(x), x)\varphi^2dx\right)^{\frac{1}{2}}.
\end{align*}
is equivalent to the usual norm of $D^{1,2}(\mathbb{R}^3)$. 
Therefore by $I(u_n, v_n)=m_\mathcal{M}+o(1)$, $J(u_n, v_n)=0$, the weak lower semicontinuity of norms of $D^{1,2}(\mathbb{R}^3)$ and Lemma \ref{lem:nonlocal-B-L} it holds that 
        \begin{align}\label{ineq:m>I-J/8_2}
        \begin{split}
            m_{\mathcal{M}} &= \lim_{n \to \infty} \left[I(u_n, v_n) - \frac{1}{8}J(u_n, v_n)\right] \\
            &= \lim_{n \to \infty} \left[\frac{1}{4}(a_1\|\nabla u_n\|_2^2 + a_2\|\nabla v_n\|_2^2)-\frac{1}{8}\int_{\mathbb{R}^3}\{(\nabla V_1(x),x)u_n^2+(\nabla V_2(x),x)v^2_n\}dx  \right.\\
            &\quad \left. + \frac{p+\alpha - 1}{8p}\mu \int_{\mathbb{R}^3}(I_{\alpha}*|u_n|^p)|u_n|^pdx + \frac{q +\alpha- 1}{8q}\nu\int_{\mathbb{R}^3}(I_{\alpha}*|v_n|^q)|v_n|_q^qdx \right] \\
           &\ge \frac{1}{4}(a_1\|\nabla u_0\|_2^2 + a_2\|\nabla v_0\|_2^2)-\frac{1}{8}\int_{\mathbb{R}^3}\{(\nabla V_1(x),x)u_0^2+(\nabla V_2(x),x)v_0^2\}dx   \\
            &\quad + \frac{p +\alpha- 1}{8p}\mu\int_{\mathbb{R}^3}(I_{\alpha}*|u_0|^p)|u_0|^pdx + \frac{q +\alpha- 1}{8q}\nu\int_{\mathbb{R}^3}(I_{\alpha}*|v_0|^q)|v_0|^qdx  \\
            &= I(u_0, v_0) - \frac{1}{8}J(u_0, v_0). 
\end{split}
\end{align}
   On the other hand, since $(u_0, v_0) \neq (0, 0)$, there exists $t > 0$ such that $(u_0^t, v_0^t) \in \mathcal{M}$. Then by \eqref{eq:relationship between I_eta and J_eta_general} we get
    \begin{align}\label{ineq:I-J/8>m_2}
    I(u_0,v_0)-\frac{1}{8}J(u_0,v_0)\ge I(u_0^t, v_0^t) - \frac{t^8}{8}J(u_0, v_0) \ge m_{\mathcal{M}}.
    \end{align}
    Combining \eqref{ineq:m>I-J/8_2} and \eqref{ineq:I-J/8>m_2} we can obtain
    \begin{align}\label{m=I-J/8}
   I(u_0,v_0)-\frac{1}{8}J(u_0,v)=I(u_0^t, v_0^t)-\frac{t^8}{8}J(u_0,v_0)=m_{\mathcal{M}}.
    \end{align}
    From the above identities we can conclude that 
        \begin{align*}
            J(u_0, v_0) = 0\ \ \text{and}\ \ I(u_0, v_0) = m_{\mathcal{M}}
        \end{align*}
holds. In fact if $J(u_0,v_0)<0$, then $I(u_0^t,v_0^t)=m_{\mathcal{M}}+(t^8/8)J(u_0,v_0)<m$, which is contradiction to $(u_0^t, v_0^t)\in\mathcal{M}$ and the definition of $m_{\mathcal{M}}$. Therefore, we conclude that $(u_0,v_0)\in \mathcal{M}$ and that $m_{\mathcal{M}}$ is indeed attained at $(u_0,v_0)$.
\end{proof}

\begin{proof}[Proof of Theorem B]
Theorem B immediately follows from Lemma \ref{Lem:ground state} and Lemma \ref{Lem:m_attained}. 
\end{proof}

\section*{Appendix}

In this Appendix we provede the proof of Lemmas \ref{lem:B-L-2} and \ref{lem:B-L-3}.

\begin{proof}[Proof of Lemma \ref{lem:B-L-2}]
From the condition on $h$ we can show that for any $\varepsilon>0$ there exists $C_{H,\varepsilon}>0$ such that
\begin{align*}
|H(a+b)-H(a)-H(b)|\le 2^{-1}\left(\varepsilon^{\frac{N+\alpha}{2N}} |a|^{\frac{N+\alpha}{N}}+C_{H,\varepsilon}|b|^{\frac{N+\alpha}{N-2}}\right)\ \ \mbox{for}\ \ a,b\in\mathbb{R}.
\end{align*}
Then we obtain
\begin{align*}
|H(a+b)-H(a)-H(b)|^{\frac{2N}{N+\alpha}}\le \varepsilon |a|^2+C_{H,\varepsilon}^{\frac{2N}{N+\alpha}}|b|^{2^*}\ \ (a,b\in\mathbb{R}).
\end{align*}
We next set 
\begin{align*}
W_n=\varepsilon|w_n-w|^2+C_{H,\varepsilon}^{\frac{2N}{N+\alpha}}|w|^{2^*}-|H(w_n)-H(w_n-w)-H(w)|^{\frac{2N}{N+\alpha}}.
\end{align*}
We note that $W_n\ge 0$ and $W_n\to C_{H,\varepsilon}^{\frac{2N}{N+\alpha}}|w|^{2^*}$ a.e. $x\in\Omega$. Since $\{w_n\}$ is bounded in $H^1(\mathbb{R}^N)$ there exists $M>0$ such that $\|w_n\|_{L^2(\Omega)}\le M$. By Fatou's lemma we also have $\|w\|_{L^2(\Omega)}\le M$. 

By using the Fatou's lemma again we obtain
\begin{align*}
C_{H,\varepsilon}^{\frac{2N}{N+\alpha}}\int_{\Omega}|w|^{2^*}dx&\le \liminf_{n\to\infty}\int_{\Omega}W_ndx \\
        &\le \liminf_{n\to\infty}\left\{\varepsilon\|w_n-w\|_{L^2(\Omega)}^2+C_{H, \varepsilon}^{\frac{2N}{N+\alpha}}\int_{\Omega}|w|^{2^*}dx\right\} \\
        &\ \ \ \ \ \ -\limsup_{n\to\infty}\int_{\Omega}|H(w_n)-H(w_n-w)-H(w)|^{\frac{2N}{N+\alpha}}dx \\
        &\le (2M)^2\varepsilon+C_{H, \varepsilon}^{\frac{2N}{N+\alpha}}\int_{\Omega}|w|^{2^*}dx-\limsup_{n\to\infty}\int_{\Omega}|H(w_n)-H(w_n-w)-H(w)|^{\frac{2N}{N+\alpha}}dx.
        \end{align*}
Therefore we obtain
\begin{align*}
0\le \limsup_{n\to\infty}\int_{\Omega}|H(w_n)-H(w_n-w)-H(w)|^{\frac{2N}{N+\alpha}}dx\le 4M^2\varepsilon.
\end{align*}
Since $\varepsilon>0$ is arbitrary we conclude that
\begin{align*}
\lim_{n\to\infty}\int_{\Omega}|H(w_n)-H(w_n-w)-H(w)|^{\frac{2N}{N+\alpha}}dx=0.
\end{align*}
\end{proof}

\begin{proof}[Proof of Lemma \ref{lem:B-L-3}] Although this lemma is given in \cite[Lemma 2.5]{Cassani-Zhang}, the proof is not given. We give the proof for reader's convenience. 
Set $W(t) = |t|^{r-2}t$. Then $W$ is differentiable and satisfies $W'(t) = (r - 1)|t|^{r - 2}$. 

We first note that
\begin{align} \label{eq:H-ineq}
\begin{split}
|W(w_n) - W(w_n - w) - W(w)|
&= \left| \int_0^1 \frac{d}{dt} \{ W(w_n - w + t w) - W(t w) \} \, dt \right| \\
&= \left| w \int_0^1 \{ W'(w_n - w + t w) - W'(t w) \} \, dt \right| \\
&\le (r-1)|w| \int_0^1 \left( |w_n - w + t w|^{r-2} + |t w|^{r-2} \right) \, dt.
\end{split}
\end{align}

By the well-known inequality
\begin{align} \label{eq:well-known-0}
(a + b)^q \le 2^q (a^q + b^q)\ \ \ \ (a,b,q\ge 0)
\end{align}
we obtain
\begin{align*}
|w_n - w + t w|^{r - 2} \le 2^{r - 2} (|w_n - w|^{r - 2} + |w|^{r - 2}) \quad \text{for all } t \in [0,1].
\end{align*}
Hence,
\begin{align*}
|W(w_n) - W(w_n - w) - W(w)| \le 2^{r - 2} |w| |w_n - w|^{r - 2} + (2^{r - 2} + 1)|w|^{r - 1}.
\end{align*}

Applying \eqref{eq:well-known-0} again, we get
\begin{align} \label{eq:H-ineq-2}
\begin{split}
 &\ |W(w_n) - W(w_n - w) - W(w)|^{\frac{s}{r - 1}}  \\
\le&\ (r-1)^{\frac{s}{r-1}}2^{\frac{s}{r - 1}} \left( (2^{r - 2})^{\frac{s}{r - 1}} |w|^{\frac{s}{r - 1}} |w_n - w|^{\frac{(r - 2)s}{r - 1}} + (2^{r - 2} + 1)^{\frac{s}{r - 1}} |w|^s \right) \\
\le&\ C_{r,s} \left( |w|^{\frac{s}{r - 1}} |w_n - w|^{\frac{(r - 2)s}{r - 1}} + |w|^s \right)
\end{split}
\end{align}
for some constant $C_{r,s,1} > 0$.

Now take any $R > 0$. By H\"{o}lder's inequality, we have
\begin{align}\label{eq:splitting-estimate-outside}
 &\ \int_{\Omega \setminus B_R(0)} |W(w_n) - W(w_n - w) - W(w)|^{\frac{s}{r - 1}} dx \\
\le&\  C_{r,s} \left\{ \left( \int_{\Omega \setminus B_R(0)} |w|^s dx \right)^{\frac{1}{r - 1}} \|w_n - w\|_{L^s(\Omega)}^{\frac{(r - 2)s}{r - 1}} + \int_{\Omega \setminus B_R(0)} |w|^s dx \right\}.
\end{align}

Given any $\varepsilon > 0$, since $2 \le s \le \frac{2N}{N - 2}$, $w \in L^s(\Omega)$, and $ \|w_n - w\|_{L^s(\Omega)}$ is bounded, there exists $R > 0$ such that
\begin{align} \label{eq:H-ineq-3}
\int_{\Omega \setminus B_R(0)} |W(w_n) - W(w_n - w) - W(w)|^{\frac{s}{r - 1}} dx \le \frac{\varepsilon}{2}.
\end{align}

Next, for $\rho > 0$, set
\begin{align*}
A_n^-(\rho) = \{ x \in \Omega : |w_n(x) - w(x)| \le \rho \}\quad\text{and}\quad
A_n^+(\rho) = \{ x \in \Omega : |w_n(x) - w(x)| > \rho \}.
\end{align*}
Then, it holds that
\begin{align*}
  &\ \int_{\Omega \cap B_R(0)} |W(w_n) - W(w_n - w) - W(w)|^{\frac{s}{r - 1}} dx \\
=&\ \int_{\Omega \cap B_R(0) \cap A_n^+(\rho)}|W(w_n) - W(w_n - w) - W(w)|^{\frac{s}{r - 1}} \\
  &\ \ \ + \int_{\Omega \cap B_R(0) \cap A_n^-(\rho)}|W(w_n) - W(w_n - w) - W(w)|^{\frac{s}{r - 1}}dx \\
=:& I_n + J_n.
\end{align*}

We first consider $J_n$. As in the derivation of \eqref{eq:H-ineq-2}, we obtain
\begin{align*}
|W(w_n) - W(w_n - w) - W(w)|^{\frac{s}{r - 1}} \le C_{r,s} \left( |w_n - w|^{\frac{s}{r - 1}} |w|^{\frac{(r - 2)s}{r - 1}} + |w_n - w|^s \right).
\end{align*}
By the H\"{o}lder inequality we obtain
\begin{align*}
J_n \le C_{r,s} \left( \rho^{\frac{s}{r - 1}} \|w\|_{L^s(\Omega)}^{\frac{r - 2}{r - 1}} |B_R(0)|^{\frac{1}{r - 1}} + \rho^s |B_R(0)| \right).
\end{align*}
Hence, for fixed $R>0$ above, we can choose $\rho > 0$ such that
\begin{align} \label{eq:H-ineq-4}
J_n \le \frac{\varepsilon}{2}.
\end{align}

Now for $I_n$, as the proof of \eqref{eq:splitting-estimate-outside} we obtain
\begin{align*}
I_n \le C_{r,s} \left\{ \left( \int_{\Omega \cap B_R(0) \cap A_n^+(\rho)} |w|^s dx \right)^{\frac{1}{r - 1}} \|w_n - w\|_{L^s(\Omega)}^{\frac{(r - 2)s}{r - 1}} + \int_{\Omega \cap B_R(0) \cap A_n^+(\rho)} |w|^s dx \right\}.
\end{align*}

Since $w_n \to w$ in $L^2(\Omega \cap B_R(0))$, it holds for any fixed $\rho > 0$ above that
\begin{align*}
|\Omega \cap B_R(0) \cap A_n^+(\rho)| \le \frac{1}{\rho^2} \int_{\Omega \cap B_R(0)} |w_n - w|^2 dx \to 0 \quad \text{as } n \to \infty,
\end{align*}
and thus
\begin{align*}
\int_{\Omega \cap B_R(0) \cap A_n^+(\rho)} |w|^s dx \to 0 \quad \text{as } n \to \infty.
\end{align*}
This implies $I_n \to 0$. Since $\|w_n-w\|_{L^s(\Omega)}$ is bounded, combining \eqref{eq:H-ineq-3} and \eqref{eq:H-ineq-4}, we conclude
\begin{align*}
\limsup_{n \to \infty} \int_\Omega |W(w_n) - W(w_n - w) - W(w)|^{\frac{s}{r - 1}} dx \le \varepsilon.
\end{align*}
Since $\varepsilon > 0$ is arbitrary, the proof is complete.

\end{proof}

\section*{Data availability} No data was used for the research described in the article.

\section*{Declarations}

\subsection*{Conflict of interest} 
The authors declare that they have no conflict of interest.

\end{document}